\documentclass[a4paper,12pt]{article}

\usepackage[T1]{fontenc}
\usepackage{amsmath}
\usepackage{amssymb}
\usepackage{amsfonts}
\usepackage{amsthm}
\usepackage{indentfirst}
\usepackage{psfrag}
\usepackage{amscd,stmaryrd,latexsym}

\newtheorem{thm}{Theorem}[section]
\newtheorem{lem}{Lemma}[section]
\newtheorem{cor}{Corollary}[section]

\newtheorem{Prop}{Proposition}

\theoremstyle{remark}

\theoremstyle{definition}

\theoremstyle{remark}
\newtheorem{oss}{Remark}[section]

\def\eps{\mathop{\varepsilon}}
\def\partone{\mathop{\sharp_1}}
\def\parttwo{\mathop{\sharp_2}}
\def\psv{\mathop{\Psi}}
\def\fiav{\mathop{\tilde{\varphi}}}
\def\aav{\mathop{\tilde{\alpha}}}

\def\ovr{\mathop{\overline{r}}}
\newcommand{\be}{\begin{equation}}
\newcommand{\ee}{\end{equation}}
\newcommand{\R}{\mathbb{R}}
\newcommand{\N}{\mathbb{N}}
\newcommand{\C}{\mathbb{C}}
\newcommand{\CP}{\mathbb{C}\mathbb{P}}
\newcommand{\Z}{\mathbb{Z}}
\newcommand{\Q}{\mathbb{Q}}

\newcommand\res{\mathop{\hbox{\vrule height 7pt width .5pt depth 0pt
\vrule height .5pt width 6pt depth 0pt}}\nolimits}

\def\ti{\tilde}
\def\lf{\left}
\def\rg{\right}

\def\al{\alpha}
\def\la{\lambda}

\def\ep{\varepsilon}
\def\ds{\displaystyle}
\def\ov{\overline}

\def\om{\omega}
\def\p{\partial}

\begin{document}

\title{\textbf{The regularity of Special Legendrian Integral Cycles }}
\author{\textit{Costante Bellettini} \\ ETH Z\"urich \and \textit{Tristan Rivi\`ere}\\ ETH Z\"urich}
\date{}
\maketitle

\textbf{Abstract}. \textit{Special Legendrian Integral Cycles in $S^5$} are the links of the tangent cones to \textit{Special Lagrangian integer multiplicity rectifiable currents} in Calabi-Yau 3-folds. We show that such \textit{Special Legendrian Cycles} are smooth except at isolated points.

\section{Introduction}

Some years ago, in a survey paper \cite{DoT}, S.K. Donaldson and R.P. Thomas gave a fresh
boost to the analysis of non-linear \textit{gauge theories} in geometry by
exhibiting heuristically links between some invariants in complex
geometry and spaces of solutions to Yang-Mills equations in dimensions
higher than the usual conformal 4 dimensions for these equations.  In
\cite{Ti} G. Tian described the loss of compactness of sequences of some \textit{Yang-Mills Fields} in dimension larger than 4. This loss of compactness arises along $(n-4)$-rectifiable objects, called the blow-up sets. It plays a crucial role in the compactification procedure of the space of the solutions of $\Omega$-anti-self-dual instantons (the generalisation of the usual 4-dimensional instantons to dimensions larger than 4).

Can one expect the blow-up set to be more than just 
rectifiable?  What is its exact nature?

At such a level of generality this question is wide open and difficult. The situation is better understood for some sub-classes of solutions: one example is given by the so-called $SU(4)$-Instantons in a Calabi-Yau 4-fold.
The concentration set is, in this case, the carrier of a \textit{calibrated} rectifiable cycle. Among these cycles we find for instance the {\it Special Lagrangian Integral Currents}. This provides one possible field of application for \textit{Special Lagrangian Geometry} or \textit{calibrated geometries} in general.

\medskip

Further reasons for studying \textit{Special Lagrangians} come from \textit{String Theory}, more precisely from \textit{Mirror Symmetry}. According to this model, our universe is a product of the standard Minkowsky space $\R^4$ with a Calabi-Yau 3-fold $Y$. Based on physical grounds, the so called \textit{SYZ-conjecture} (named after Strominger, Yau and Zaslov) expects, roughly speaking, that this Calabi-Yau 3-fold can be fibrated by (possibly singular) Special Lagrangians, whence the interest in understanding the singularities of a \textit{Special Lagrangian current}. The compactification of the \textit{dual fibration} should lead to the mirror partner of $Y$. See the survey paper by Joyce \cite{Joyce} for a more thorough explanation.

\medskip
We remark also that, as all calibrated geometries (see \cite{HL} or \cite{Haskins}), \textit{Special Lagrangian Geometry} provides examples of \textit{volume-minimizing} submanifolds or currents; Special Lagrangians are a particularly large family. Having such examples helps the understanding of the possible singular behaviour of such minimizers.

\medskip

\textbf{General description of the problem: setting and results}.
In the complex euclidean space $\C^3$ with the standard coordinates $z=(z_1, z_2, z_3)$, $z_i=x_i+i y_i$, consider the constant differential 3-form 
\[\Omega = Re(dz^1 \wedge dz^2 \wedge dz^3).\]
This is the so called \textit{Special Lagrangian calibration}, introduced and analysed in \cite{HL}. We recall some notions from calibrated geometry, referring to the quoted paper for a broader exposition. Given a $p$-form $\phi$ on a Riemannian manifold $(M,g)$, the comass of $\phi$ is defined to be
\[||\phi||^*:=  \sup \{\langle \phi_x, \xi_x \rangle: x \in M, \xi_x \text{ is a unit simple $p$-vector at } x\}.\]
A form $\phi$ of comass one is called a \textit{calibration} if it is closed ($d \phi = 0$); when it is non-closed it is referred to as a \textit{semi-calibration}. 

Let $\phi$ be a \textit{calibration} or a \textit{semi-calibration}; among the oriented $p$-dimensional planes that constitute the Grassmannians $G(p, T_xM)$, we pick those that (represented as unit simple $p$-vectors) realize $\langle \phi_x, \xi_x \rangle = 1$ and define the set $\mathcal{G}(\phi)$ of  \textit{$m$-planes calibrated by $\phi$}:
\[\mathcal{G}(\phi) = \cup_{x \in M} \{\xi_x \in G(p, T_xM): \langle \phi_x, \xi_x \rangle = 1 \}.\]

\medskip

We recall now the notion of calibrated cycle. For definitions and notations from Geometric Measure Theory we refer to \cite{F} or \cite{Giaquinta}. 

\medskip

An \underline{integer $m$-cycle} $C$ in $M$ is an integer multiplicity rectifiable current of dimension $m$ without boundary, i.e.
\begin{description}
	\item[(i)] \textit{Rectifiability}: there is a countable family of oriented $C^1$ submanifolds $N_i$ of dimension $m$ in $M$; in each of them we take a $\mathcal{H}^m$-measurable subset $\mathcal{N}_i$, so that the $\mathcal{N}_i$-s are disjoint; the union $\mathcal{C}=\cup_i \mathcal{N}_i$ is a so-called \textit{oriented rectifiable set}. 
	
	$\mathcal{C}$ possesses an \textit{oriented approximate tangent plane} $\mathcal{H}^m$-a.e. (see \cite{F} or \cite{Giaquinta}). On $\mathcal{C}$ an integer valued and locally summable \textit{multiplicity function} $\theta$ is given, $\theta \in L^1_{\text{loc}}(\mathcal{C}; \Z)$; the action of the current $C$ on any $m$-form $\psi$ which is smooth and compactly supported in $M$, is given by
	\[C(\psi)= \int_{\mathcal{C}} \theta(x) \langle \psi_x, \xi_x \rangle d \mathcal{H}^m(x),\]
where $\xi_x$ is the tangent at $x$ represented as a unit simple vector.

	\item[(ii)] \textit{Closedness}: the boundary $\partial C$ of the current is $0$ \footnote{The term \textit{cycle} refers to the absence of boundary.}. Explicitly: for any smooth $(m-1)$-form $\alpha$, which is compactly supported in $M$,
	\[(\partial C)(\alpha):=C(d \alpha) = 0.\]
\end{description}

The class of integer-multiplicity, rectifiable currents of dimension $m$ in $M$ is denoted by $\mathcal{R}_m(M)$. The support $spt(C)$ of the current is defined as the complement of the open set 
\[\cup \{A: C(\psi)=0 \text{ for all $m$-forms } \psi \text{ compactly supported in } A\}.\]

The underlying rectifiable set $\mathcal{C}$ is sometimes referred to as the \textit{carrier} of the current $C$.

\medskip

We recall the notions of \underline{Smooth Points and Singular Points}. A point $x \in \mathcal{C}$ is said to be a \textit{smooth point} if there is a ball $B_r(x)$ in which the current acts as a smooth $m$-submanifold $\mathcal{V}$, i.e. if there is some constant $N \in \mathbb{N}$ such that for any smooth $m$-form $\psi$ compactly supported in $B_r(x)$ 
\[C(\psi)=N \int_{\mathcal{V}} \psi. \]

The set of smooth points is open in $\mathcal{C}$; its complement in $\mathcal{C}$ is called the \textit{singular set of $C$}, denoted by \textit{Sing C}.

\medskip

For a current in $\mathcal{R}_m(M)$, at $\mathcal{H}^m$-almost every point $x \in \mathcal{C}$ denote by $T_x \mathcal{C}$ the $m$-dimensional oriented approximate \textit{tangent plane} to the underlying rectifiable set $\mathcal{C}$; given a (semi)-calibration $\phi$, $C$ is said to be \textit{calibrated by $\phi$} if 
\[\text{for } \mathcal{H}^m \text{-almost every } x, \;T_x \mathcal{C} \in \mathcal{G}(\phi).\]
When $\phi$ is a closed form, then a current calibrated by $\phi$ is locally homologically volume-minimizing; (closed) calibrations were introduced in the foundational paper \cite{HL}.

\medskip

Returning to our case, being $\Omega$ constant, it is obviously closed; as shown in \cite{HL} it has comass one. Currents in $\mathcal{R}_3(\C^3)$, calibrated by $\Omega$, are called \textit{Special Lagrangians}.

Let $N$ denote the radial vector field $N:=r \frac{\partial}{\partial r}$ in $\mathbb{C}^3$ and define the \emph{normal part} of $\Omega$ by
\[\Omega_N := \iota_N \Omega,\]
where $\iota$ is the interior product. We will work in the sphere $S^5 \subset \mathbb{C}^3$, with the induced metric. Consider the pull back of $\Omega_N$ on the sphere via the canonical inclusion map $\mathcal{E}: S^5 \hookrightarrow \mathbb{C}^3$:
\[\omega := \mathcal{E}^\ast \Omega_N.\]
An easy computation shows that
\[\omega= Re(z_1 dz^2 \wedge dz^3 + z_2 dz^3 \wedge dz^1 + z_3 dz^1 \wedge dz^2).\]
$\omega$ is a 2-form on $S^5$ of comass one. Indeed, $|N|=1$ on $S^5$ and for any simple 2-vector $\xi$ in $T S^5$
\[|\omega(\xi)|=|\Omega(N\wedge\xi)| \leq \|N\wedge\xi\|=\|\xi\|.\]
Equality is surely reached when $N\wedge\xi$ is a Special Lagrangian $3$-plane, compare Proposition \ref{Prop:leglagr}.
We remark that both $\Omega$ and $\omega$ are $SU(3)$-invariant. As explained in \cite{HL} (Section II.5) or \cite{Haskins} (Section 2.2), $\omega$ is non-closed. 

$\omega$ is referred to as the \textit{Special Legendrian semi-calibration}.
Rectifiable currents in $S^5$ calibrated by $\omega$ are called \textit{Special Legendrians}.

Our main result is the following:

\begin{thm}
\label{thm:main}
An integer multiplicity rectifiable current $C$ without boundary calibrated by $\omega$ (this is called a Special Legendrian integral cycle) in $S^5$ can only have isolated singularities (therefore finite). 

In other words: $C$ is, out of isolated points, the current of integration along a smooth Special Legendrian submanifold with smooth integer multiplicity.
\end{thm}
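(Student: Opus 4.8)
The plan is to run the standard machinery of calibrated geometry — monotonicity, tangent cone analysis, dimension reduction à la Almgren–Federer — but the non-closedness of $\omega$ forces us to work with \emph{almost minimizing} currents rather than minimizers, so the first task is to establish that a Special Legendrian cycle is almost area-minimizing in $S^5$ (equivalently, that $N\res C$, the cone over $C$ in $\C^3$, is a Special Lagrangian current and hence area-minimizing in $\C^3$). Since $\Omega$ is closed and calibrates $N\wedge\xi$ whenever $\xi$ calibrates $\omega$, the cone $\widehat C$ over $C$ is $\Omega$-calibrated; this gives a monotonicity formula for $\widehat C$ at the origin and, by restricting to $S^5$, an almost-monotonicity formula for mass ratios $r\mapsto \frac{\|C\|(B_r(x))}{r^2}$ at every point $x\in S^5$, with an error controlled by the comass of $d\omega$. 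From almost-monotonicity one extracts, at any point and any sequence of scales, a tangent cone which is itself a $2$-dimensional Special Legendrian cone in some $S^5$ (again passing through the cone in $\C^3$, which is Special Lagrangian and invariant under dilations, hence a cone in the GMT sense).

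Second, I would classify the possible tangent cones at a point $x$. A tangent cone to $C$ at $x$ corresponds, via the $N$-cone construction, to a Special Lagrangian cone in $\C^3$ that is additionally invariant under a further dilation, i.e. a cone with vertex a line; equivalently the link in $S^5$ is a Special Legendrian cycle which is itself translation-invariant along a geodesic, so it is a cylinder over a Special Legendrian cycle in a totally geodesic $S^3\subset S^5$ — but Special Legendrian objects in $S^3$ are $1$-dimensional Legendrian curves, i.e. great circles with multiplicity (the only closed Legendrian curves in $S^3$ calibrated by the relevant $1$-form are totally geodesic). Hence every tangent cone at a point is a finite sum of multiplicity-$m_i$ totally geodesic $2$-spheres (equatorial $S^2$'s in $S^5$) through $x$. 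The key structural fact is then: such a cone has density $\geq 1$, and density exactly $1$ forces the cone to be a single multiplicity-one plane, which by Allard's regularity theorem (applicable since $C$ has bounded generalized mean curvature, $\omega$ being merely a semi-calibration produces a first-variation bounded by $\|d\omega\|^*$) gives smoothness near $x$. So $\mathrm{Sing}\,C = \{x : \Theta(C,x)\geq 1+\delta_0\}$ for a dimensional gap $\delta_0>0$, coming from the fact that the next possible density value after $1$ is $2$ (two planes) — there is no Special Legendrian cone of density in $(1,2)$.

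Third, with the tangent cones classified and a density gap in hand, I would deduce that $\mathrm{Sing}\,C$ has Hausdorff dimension $0$ by Federer's dimension reduction: if $\dim \mathrm{Sing}\,C \geq 1$, one finds an iterated tangent cone that splits off a line, i.e. a translation-invariant Special Legendrian cycle in $S^5$; but we just argued that a translation-invariant (along a geodesic) Special Legendrian cycle is a union of equatorial $S^2$'s through the axis, and such a configuration is smooth away from the two poles — no whole line of singularities — contradiction, unless the splitting line sits inside a stratum which the classification of $1$-dimensional Special Legendrians in $S^3$ rules out. Hence $\mathcal{H}^1(\mathrm{Sing}\,C)=0$, so $\mathrm{Sing}\,C$ is totally disconnected; then one upgrades "dimension $0$" to "isolated" by an $\varepsilon$-regularity / no-holes argument: near a singular point the density is close to an integer $\geq 2$, and a Simon-type uniqueness-of-tangent-cone argument (using that the tangent cone — a union of planes — is integrable/has no infinitesimal deformations, so one gets a Łojasiewicz inequality) shows the current decomposes into finitely many $C^{1,\alpha}$ sheets near $x$ meeting only at $x$, so each singular point is isolated, and by compactness of $S^5$ the singular set is finite.

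The main obstacle is the \emph{classification of tangent cones}, specifically showing there is no $2$-dimensional Special Legendrian cone in $S^5$ of density strictly between $1$ and $2$ other than the flat plane: the cylinder reduction to $S^3$ handles cones with a line of symmetry, but a general area-minimizing (Special Lagrangian) cone in $\C^3$ need not be a cone over a line, so one must rule out the existence of genuinely singular Special Legendrian cones of small density. This is where the low dimension is essential — here the estimates of Nance/Bryant or a direct second-variation argument bound the density of a non-planar minimizing Special Lagrangian cone in $\C^3$ away from $1$ by a definite gap — and where the bulk of the analytic work will lie; the remaining ingredients (Allard, dimension reduction, Simon's uniqueness) are by now standard once the almost-minimizing property and the cone classification are in place.
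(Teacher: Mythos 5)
Your proposal has a genuine gap at the decisive point. The classification of tangent cones, which you identify as "the main obstacle," is in fact the easy part: a tangent cone to $C$ at $x$ is a $2$-dimensional cone calibrated by the constant form $\omega_x$, which restricts to the K\"ahler form of the complex structure $J_x$ on the contact hyperplane $H^4_x$; hence the cone is a $J_x$-holomorphic cone in $\C^2$, i.e.\ a finite sum of complex lines with integer multiplicities. No Nance--Bryant-type estimates are needed and there is no mysterious density gap to establish. The hard case, which your argument does not handle, is a singular point $x_0$ whose tangent cone is a \emph{single} plane with multiplicity $Q\ge 2$. There the Simon--{\L}ojasiewicz machinery you invoke does not apply: Simon's theorem requires a multiplicity-one cone with smooth compact link, and the claim that the current "decomposes into finitely many $C^{1,\alpha}$ sheets near $x$ meeting only at $x$" is precisely Almgren's branching problem, not a consequence of integrability of the cone. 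Branch points exist for $J$-holomorphic and Special Legendrian currents (e.g.\ $z\mapsto(z^2,z^3)$), so the current near $x_0$ is genuinely a $Q$-valued graph, and the whole content of the theorem is to show that such branch points, and lower-multiplicity singularities, cannot accumulate at $x_0$. Your dimension-reduction step only yields $\mathcal H^1(\mathrm{Sing}\,C)=0$, which is compatible with a non-isolated (even uncountable) singular set, so everything hinges on the upgrade you leave to the unjustified "Simon-type" step.

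For comparison, the paper attacks exactly this case by induction on $Q$: it writes $C$ locally as a $Q$-valued graph over a disk using positively-intersecting foliations by $3$-surfaces, derives first-order perturbed Cauchy--Riemann equations from the calibration condition, proves a $W^{1,2}$ and H\"older estimate for the average of the branches, and then runs a Carleman-type unique continuation argument (\`a la Taubes/Aronszajn) to exclude accumulation of multiplicity-$Q$ singularities, plus a separate homological degree argument to exclude accumulation of lower-order ones. Some substitute for this analytic core (or for Chang's adaptation of Almgren's center manifold and frequency function) is indispensable; without it the proof does not close.
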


\begin{oss}
This result is optimal. We will provide an example in the next section, see remark \ref{oss:example}.
\end{oss}

Still from \cite{HL} (Section II.5) or \cite{Haskins} (Section 2.2), the 2-currents of $S^5$ on which $\omega$ restricts to the area form are exactly those such that the cone built on them is calibrated by $\Omega$:

\begin{Prop}
\label{Prop:leglagr}
(\cite{HL} or \cite{Haskins}) A rectifiable current $T$ in $S^5$ is a Special Legendrian if and only if the cone on $T$ 
\[C(T)= \{ tx \in \mathbb{R}^6 : x \in T , t>0\}\]
is Special Lagrangian.
\end{Prop}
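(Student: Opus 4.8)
The assertion being recalled from \cite{HL} (or \cite{Haskins}) is local and pointwise in nature: both ``Special Legendrian'' and ``Special Lagrangian'' constrain only the a.e.\ approximate tangent plane of a rectifiable current, so the natural strategy is to set up a dictionary between the tangent planes of $T$ and those of $C(T)$, and then to observe that under this dictionary the two calibration conditions literally coincide. I would carry this out in three steps.

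First I would record the elementary geometry of the cone. The map $\Phi:S^5\times(0,\infty)\to\C^3\setminus\{0\}$, $\Phi(x,t)=tx$, is a diffeomorphism, and $C(T)=\Phi_{\#}\big(T\times\llbracket(0,\infty)\rrbracket\big)$ as currents (up to the $\mathcal H^3$--null set $\{0\}$); in particular $C(T)$ is an integer multiplicity rectifiable $3$--current, of locally finite mass since $\mathbf M\big(C(T)\res B_R\big)=c\,R^3\,\mathbf M(T)<\infty$, and the multiplicity of $C(T)$ at $tx$ equals that of $T$ at $x$. At a point $x\in S^5$ where $T$ has an approximate tangent $2$--plane, represented by a unit simple $2$--vector $\xi_x\subset T_xS^5$, the current $C(T)$ has, at every point $tx$ with $t>0$, the approximate tangent $3$--plane represented by the unit simple $3$--vector $N\wedge\xi_x$, where $N=r\frac{\partial}{\partial r}$ is the radial unit vector at $tx$ (here one uses that dilations are conformal and map $C(T)$ to itself, so the tangent plane at $tx$ is the parallel translate of $\mathrm{span}(x)\oplus\xi_x$). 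This is a bijection, for a.e.\ points, between the approximate tangent planes of $T$ and those of $C(T)$: the tangent plane of any cone over a subset of $S^5$ automatically contains the radial direction, and removing it leaves a plane inside $T_xS^5$.

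Second, I would carry out the one-line algebraic identity. Since $\Omega$ is a constant (translation invariant) form on $\C^3$, for every $t>0$ and every $x\in S^5$,
\[
\langle\Omega_{tx},\,N\wedge\xi_x\rangle=\langle\Omega_x,\,N\wedge\xi_x\rangle=\langle(\iota_N\Omega)_x,\,\xi_x\rangle=\langle(\Omega_N)_x,\,\xi_x\rangle=\langle\omega_x,\,\xi_x\rangle,
\]
where the third equality is the definition of the interior product together with the fact that $\xi_x$ is tangent to $S^5$, so that $\omega=\mathcal E^{\ast}\Omega_N$ may be paired with it. Moreover $\|N\wedge\xi_x\|=\|\xi_x\|=1$, since $N\perp T_xS^5$ and $|N|=1$ on $S^5$. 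Recall finally that $\Omega$ has comass one (shown in \cite{HL}) and that $\omega$ has comass one (the inequality $|\omega(\xi)|=|\Omega(N\wedge\xi)|\le\|N\wedge\xi\|=\|\xi\|$ established in the preamble); hence, for both forms, ``calibrated'' means exactly that the pairing with the (oriented) tangent plane equals $+1$.

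Third, I would combine the two steps. Using the bijection of tangent planes from Step 1 and the identity from Step 2: $T$ is calibrated by $\omega$ iff $\langle\omega_x,\xi_x\rangle=1$ for $\mathcal H^2$--a.e.\ $x\in\mathrm{spt}(T)$, iff $\langle\Omega_{tx},N\wedge\xi_x\rangle=1$ for $\mathcal H^3$--a.e.\ point $tx$ of $C(T)$, iff the approximate tangent plane of $C(T)$ lies in $\mathcal G(\Omega)$ a.e., iff $C(T)$ is calibrated by $\Omega$. This establishes both implications simultaneously. I expect the only genuinely delicate point to be the current-theoretic bookkeeping in Step 1 --- checking that $\Phi_{\#}\big(T\times\llbracket(0,\infty)\rrbracket\big)$ is precisely $C(T)$ with matching orientations and multiplicities, that the vertex $\{0\}$ contributes nothing (e.g.\ by a density/dimension argument or the constancy theorem), and that approximate tangent planes transform as claimed. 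None of this is conceptually difficult, but it is where one must be careful; the entire calibration/semicalibration content of the Proposition is contained in the single computation of Step 2.
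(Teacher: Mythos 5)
The paper does not prove this Proposition at all: it is quoted verbatim from \cite{HL} (Section III.1/II.5) and \cite{Haskins}, which is why the statement carries the citation in place of a proof. Your argument is the standard Harvey--Lawson one and is correct: the a.e.\ bijection $\xi_x \leftrightarrow N\wedge\xi_x$ between oriented tangent planes of $T$ and of $C(T)$, the identity $\langle\Omega_{tx},N\wedge\xi_x\rangle=\langle\omega_x,\xi_x\rangle$ coming from the constancy of $\Omega$ and the definition $\omega=\mathcal E^{\ast}\iota_N\Omega$, and the fact that both forms have comass one, together give both implications at once. Two harmless points to tidy up: $N=r\frac{\partial}{\partial r}$ is the position vector field, so it is unit only on $S^5$ and at $tx$ one should pair $\Omega$ with $\frac{\partial}{\partial r}\wedge\xi_x$ (the identity is unaffected since $\Omega$ is constant and multilinear); and the orientation bookkeeping is automatic because $N\wedge\xi_x=\xi_x\wedge N$ for a $2$-vector $\xi_x$, so no sign can be lost in identifying $C(T)$ with $\Phi_{\#}(T\times\llbracket(0,\infty)\rrbracket)$.
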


We know that Special Lagrangian currents (as a particular case of currents calibrated by a closed form) are (locally) homologically area-minimizing in $\mathbb{C}^3$; from \cite{Alm} we know that volume-minimizing $3$-cycles are smooth outside a set of Hausdorff dimension $1$. In the case of a cone, this roughly translates into having radial lines of singularities, possibly accumulating onto each other. We establish here that there can only by a finite number of such lines.

We remark here that Special Lagrangians can be defined in general \textit{Calabi-Yau n-folds}, see \cite{Joyce}; Special Lagrangians are known to possess tangent cones at all points (see \cite{HL} sect. II.5), and such cones are Special Lagrangian cones in $\C^n$. Thanks to Proposition \ref{Prop:leglagr}, our result can be restated as follows:

\begin{cor}
Tangent cones to a Special Lagrangian in a Calabi-Yau $3$-fold have a singular set made of at most finitely many lines passing through the vertex.
\end{cor}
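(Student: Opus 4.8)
The plan is to reduce the global regularity statement to a fine analysis of the possible tangent objects at an arbitrary point of the cycle. Since $C$ is $\omega$-calibrated and $\omega$ is a semi-calibration of comass one, $C$ is stationary in $S^5$ (almost-minimizing: the first variation is bounded because $d\omega$ is bounded), so Allard's regularity theory and the monotonicity formula apply; in particular $C$ has a well-defined density $\Theta(C,x)\ge 1$ at every point of its support, the density is upper semicontinuous, and at each point there exists at least one tangent cone, which is an area-minimizing (indeed $\Omega$-calibrated, via Proposition \ref{Prop:leglagr} applied in the tangent space identified with $\mathbb{C}^3$) integral cone of dimension $2$ in $\mathbb{R}^6$. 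The key structural input is that a $2$-dimensional area-minimizing integral cone in $\mathbb{R}^6$ is a union of finitely many half-planes (or rather oriented $2$-planes) meeting along the axis, because its link is a stationary integral $1$-cycle in $S^5$, hence a finite union of great circle arcs (geodesics) — a $1$-dimensional stationary integral varifold is a finite sum of geodesic segments with integer weights, and the cone condition forces these to be arcs of great circles through antipodal points, i.e. full great circles, so the cone is a finite sum of $2$-planes. If this multiplicity-one-on-each-plane cone is to be area-minimizing and $\Omega$-calibrated, one shows it must in fact be a single multiplicity-$N$ plane: a union of two or more distinct calibrated planes through the origin is not calibrated unless they coincide, since $\Omega$ restricted to a $2$-plane cannot equal the area form (it is the restriction of a $3$-form). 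Hence every tangent cone to $C$ is a plane with integer multiplicity.

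From here the strategy is the classical one for currents with flat tangent cones. At a point $x_0$ where some tangent cone is a multiplicity-$N$ plane $P$, one wants to upgrade "one tangent cone is flat" to "$C$ is a smooth multiplicity-$N$ submanifold near $x_0$", i.e. prove an $\varepsilon$-regularity theorem: if the excess (the $L^2$-distance of $C$ from a plane at scale $r$, combined with the Lagrangian/Legendrian constraint) is small at some scale, then $C$ is a $C^{1,\alpha}$ graph over that plane at a smaller scale, and then elliptic bootstrapping (the Legendrian/minimal surface system) gives smoothness. The natural tool is Allard's regularity theorem for stationary (or almost-stationary, which is our case since $dC$ is controlled) integral varifolds: since $\Theta(C,x_0)=N$ and the tangent cone is the plane $P$ with multiplicity $N$, the density is "quantized" and equals its value on a plane, so by upper semicontinuity and the constancy of density on the tangent cone, in a small ball the rescalings of $C$ converge to $N\llbracket P\rrbracket$, the one-sided Allard condition $\Theta < N+1-\delta$ holds, and Allard's theorem yields the $C^{1,\alpha}$ graph. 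Actually, because the multiplicity could be $N>1$, one should combine this with a decay-of-excess argument (a Simon-type or De Giorgi-type iteration) to separate sheets; alternatively one invokes that the tangent cone being a multiplicity-$N$ plane is "integrable" and use the uniqueness-of-tangent-cones machinery (Allard–Almgren / Simon) to conclude the approach to the plane is at a definite rate. Either way the outcome is: the set of points of $\mathcal{C}$ with a flat tangent cone is open and $C$ is smooth there.

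The remaining point — and this is where the dimension-three hypothesis on the Calabi-Yau, equivalently the $S^5$ ambient, is essential — is to rule out a non-discrete singular set, i.e. to show that the set of points whose every tangent cone is \emph{not} flat is locally finite. By the above, at a non-flat point the only possibility would be... but we just argued \emph{all} tangent cones are flat! So the subtlety must be that I have been too quick: a $2$-dimensional area-minimizing cone in $\mathbb{R}^6$ need not have multiplicity one on its link, and with higher multiplicity the link can be a geodesic \emph{traversed several times} or a union of great circles — but more importantly a stationary \emph{integral} $1$-cycle in $S^5$ that arises as the link of a \emph{minimizing} $2$-cone: is it automatically a union of great circles? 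One must show that the link cannot be, e.g., a piecewise-geodesic with genuine corners of total turning compatible with stationarity (a "triod"-type configuration in $S^5$). Stationarity of a $1$-varifold forbids corners entirely (the first variation picks up a Dirac mass at any corner), so indeed the link is geodesic, i.e. great circles — confirming that all tangent cones are planes. Hence the theorem should follow: every tangent cone is flat, so by the $\varepsilon$-regularity/Allard argument every point is a smooth point, and $\emph{Sing}\,C$ is empty.

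Since the paper explicitly says the result is \emph{optimal} with isolated singularities occurring, the gap in the naive argument must be that Proposition \ref{Prop:leglagr} is used in the \emph{other} direction — the relevant cones are Special Lagrangian \emph{cones} in $\mathbb{C}^3$, whose \emph{links} are Special Legendrian $1$-cycles in $S^5$, and a Special Legendrian $1$-cycle need not be a single great circle: it is a stationary integral $1$-cycle, hence a finite union of great circles, but these may be \emph{several distinct} great circles (a configuration of coplanar or non-coplanar $\mathbb{C}$-lines and their real rotations), giving a genuinely singular $2$-cone. So the true plan is: (1) show every tangent cone at a point of $C$ is a finite union of $2$-planes meeting only at the origin, with the number of planes bounded by the density, hence uniformly bounded; (2) classify the $\omega$-calibrated, equivalently $\Omega$-calibrated, unions of planes and show the only \emph{smooth} (unique-tangent-cone, graphical) situation is a single plane; (3) prove an $\varepsilon$-regularity theorem: away from the finitely-many "branch" configurations the cycle is a smooth graph — this is the Allard-type step, combined with the special structure of the Legendrian minimal surface equation for interior estimates; (4) finally, an \emph{upper bound on the number of singular points}: each singular point contributes a definite amount of density drop or a definite amount of "bent area" by the classification in (2), and since the total mass of $C$ in $S^5$ is finite and the density is upper semicontinuous with values that jump by definite amounts at singular points, the singular set is finite. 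The hard part will be step (3)–(4): establishing $\varepsilon$-regularity with the precise multiplicity control, and then converting the discreteness of the singular set (which already follows from an $\varepsilon$-regularity statement saying singular points are isolated in $\mathcal{C}$) into \emph{finiteness}, which needs compactness of $S^5$ plus a uniform lower bound on the separation of singular points — equivalently a uniform lower bound on the density excess $\Theta(C,x)-1$ at any singular point $x$, which in turn comes from the classification of the non-flat tangent cones as unions of at least two planes having density at least $2$ along the axis.
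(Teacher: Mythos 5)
Your proposal never engages with the actual logical structure of this corollary, which in the paper is an immediate consequence of Theorem \ref{thm:main} via Proposition \ref{Prop:leglagr}: a tangent cone to a Special Lagrangian in a Calabi--Yau $3$-fold is a $3$-dimensional Special Lagrangian cone in $\C^3$, its link is therefore a Special Legendrian integral cycle in $S^5$, Theorem \ref{thm:main} says that this $2$-dimensional link has only finitely many singular points, and the singular set of the cone is the cone over the singular set of the link, i.e.\ at most finitely many radial lines. Instead you try to rebuild the regularity theory from scratch, and in doing so you confuse the dimensions throughout: the cones relevant here are $3$-dimensional with $2$-dimensional links, so the classification of stationary integral $1$-cycles in spheres as unions of great circles, which you lean on repeatedly, is simply not the object in play. (It would be relevant for tangent cones \emph{of the link itself} at one of its points, but that is a different statement.)

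Two of your key claims are also false. First, a sum of distinct calibrated planes through the origin \emph{is} calibrated --- the calibration condition is pointwise on the approximate tangent planes, and each plane satisfies it individually; the paper's own example $L_0+L$ in remark \ref{oss:example} is exactly such a singular configuration, and it is why the result is optimal rather than giving an empty singular set (a conclusion you reach at one point before retracting it). Second, your route to finiteness via ``a uniform lower bound on the density excess $\Theta(C,x)-1$ at any singular point'' fails at branch points: a singular point of the Special Legendrian link can have tangent cone $Q\llbracket D\rrbracket$, a single plane with multiplicity $Q$, hence the \emph{same} density as nearby smooth multiplicity-$Q$ points, so no density gap separates singular from smooth points and Allard's theorem (which requires density close to $1$) does not apply. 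This is precisely the ``difficult case'' that occupies most of the paper --- the multivalued-graph description, the first-order perturbed Cauchy--Riemann system, the $W^{1,2}$ and H\"older estimates for the average, and the unique continuation argument of sections \ref{PDEaverage}--\ref{uniquecont}, plus the homological argument excluding accumulation of lower-order singularities. None of this machinery, which is the actual content needed, appears in your outline.
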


From \cite{Simons} (Prop. 6.1.1), $T$ in $S^5$ is minimal, in the sense of vanishing mean curvature, if and only if $C(T) \subset \mathbb{C}^3$ is minimal. Therefore, Special Legendrians are minimal currents in $S^5$ (although not necessarily area-minimizing).

Relying on \cite{Alm}, Chang proved in \cite{Chang} the corresponding regularity result for area-minimizing $2$-dimensional currents.
 
One advantage coming from the existence of the calibration, as will be seen, is the fact that the current can locally be described as integration along a multi-valued graph satisfying a first order elliptic PDE; the general problem of volume-minimizing currents, instead, requires an elliptic problem of order two, see \cite{Alm} or  \cite{Chang}. It is also remarkable that the general regularity theory for mass-minimizing currents developed by Almgren is extremely hard; his Big Regularity Paper \cite{Alm} comprises a thousand pages and it is therefore helpful to have shorter (and relatively easier) self-contained proofs of regularity results for some sub-classes of minimizing currents, such as Special Lagrangian currents or $J$-holomorphic currents (see \cite{Taubes}, \cite{RT1}, \cite{RT2}).

\medskip

\textbf{The proof}. We are now giving the sketch of our proof. We are basically following the same structure as \cite{Taubes} and \cite{RT1}, where the regularity of $J$-holomorphic cycles in a $4$-dimensional ambient manifold was shown. In our case we have a fifth coordinate to deal with, which introduces new challenging difficulties, as will be seen.

A standard blow-up analysis tells us that at any point $x$ of $S^5$ the multiplicity function $\displaystyle  \theta(x)= \lim_{r\to 0}\frac{M(C \res B_r(x))}{\pi r^2}$  is\footnote{For general integral cycles, the limit $\lim_{r\to 0}\frac{M(C \res B_r(x))}{\pi r^2}$ exists a.e. and coincides with the absolute value $|\theta|$ of the multiplicity  assigned in the definition of integer cycle. In our case $\lim_{r\to 0}\frac{M(C \res B_r(x))}{\pi r^2}$ is well-defined everywhere, therefore we can choose (everywhere) this natural representative for $\theta$, after having chosen the correct orientation for the approximate tangent plane.} an integer $Q$. The monotonicity formula (see \cite{PR} or \cite{Sim}) tells us that, at any $x_0$, $\displaystyle \frac{M(C \res B_r(x_0))}{r^2}$ is monotonically non-increasing as $r \downarrow 0$, whence we get that $\theta$ is upper semi-continuous, therefore the set 
\[\mathcal{C}^Q :=\{x \in S^5: \theta(x) \leq Q\}\] 
is open in $S^5$; this allows a proof by induction of our result, indeed we can restrict the current to $\mathcal{C}^Q$ and consider increasing integers $Q$ (see section \ref{PDEaverage}).

One key ingredient is the construction of families of $3$-dimensional surfaces $\Sigma$ which locally foliate $S^5$ and that have the property of intersecting positively the Special Legendrian ones. As in \cite{RT1}, this algebraic property can be exploited to provide a self-contained proof of the uniqueness of tangent cones for our current. This result was proved for general semi-calibrated cycles in \cite{PR} and for general area-minimizing ones in \cite{White} using a completely different approach\footnote{The proof in \cite{White} relies however on the area-minimality property which is not generally true for Special Legendrians.}. Further, the positiveness of intersection allows us to describe our current as a multivalued graph from a disk of $\C$ into $\mathbb{\R}^3$. 

Currents of integration along multivalued graphs constitute one of the important objects of interest in Geometric Measure Theory. Multivalued graphs were introduced by Almgren in \cite{Alm} for the study of Dirichlet-minimizing and volume-minimizing currents and were lately revisited in a new flavour in \cite{DeLelSpa}.

The inductive step is divided into two parts: in the first one we show that there is no possibility for an accumulation of singularities of multiplicity $Q$ to a singularity of the same multiplicity. In the second part we exclude accumulation of lower order singularities to a singularity of higher order.

For the first, we introduce the first order PDEs that describe the calibrating condition. These equations turn out to be, in appropriate coordinates, perturbations of the classical Cauchy-Riemann equations, with three real functions and two real variables, however. Using these PDEs we prove a $W^{1,2}$ estimate for the average of the branches of our multivalued graph. We remark here that in theorem \ref{thm:w12} we give a proof of the $W^{1,2}$-estimate different than the one in \cite{RT1}, where the authors had the further hypothesis that \textit{Sing C} was $\mathcal{H}^2$-negligible. Then, in section \ref{uniquecont}, by a suitable adaptation of the unique continuation argument used in \cite{Taubes}, we prove that the multivalued graph obtaining by subtracting the average from each branch cannot have accumulation of zeros, thereby concluding the first part of the inductive step. The techniques we employ to show the partial integration formulas for multivalued graphs are more typical of geometric measure theory; we also provide in lemma \ref{lem:partint} a step that was incomplete in \cite{Taubes}.

For the second part of the inductive step we use an homological argument inspired by the one used in \cite{Taubes}, where the same statement was proved in the case of $J$-holomorphic cycles in a $4$-manifold, although in our case the existence of the fifth coordinate induces new difficulties and a more involved argument.

\medskip

\textbf{Acknowledgments}: the authors are very grateful to Gang Tian for having suggested this problem to them and for very fruitful discussions.

\section{Preliminaries: the construction of positively intersecting foliations}
\label{preliminaries}

In this section we are going to construct in a generic way a smooth 3-surface $\Sigma$ in $S^5$ with the property that, anytime $\Sigma$ intersects a special Legendrian $L$ transversally, this intersection is positive, i.e., the orientation of $T_p L \wedge T_p \Sigma$ agrees with that of $T_p S^5$ ($S^5$ being oriented according to the outward normal). Then we will construct foliations made with families of 3-surfaces of this kind.

\medskip
\textbf{Contact structure}.
Now we recall some basic facts on the geometry of the contact structure associated to the Special Legendrian calibration in $S^5$, see \cite{Haskins} for more details. 

$S^5$ inherits from the symplectic manifold $\displaystyle (\mathbb{C}^3, \sum_{i=1}^3 dz^i \wedge d\overline{z}^i)$ the contact structure given by the form 
\[\gamma := \mathcal{E}^\ast \iota_N (\sum_{i=1}^3 dz^i \wedge d\overline{z}^i).\]
This is a 1-form with the contact property saying that $\gamma \wedge (d \gamma) ^2 \neq 0$ everywhere; the associated distribution of hyperplanes is $ker (\gamma (p)) \subset T_p S^5$. In the sequel the hyperplane of the distribution at $p$ will be denoted by $H_p^4$, where $H$ stands for horizontal \footnote{This is nothing else but the universal horizontal connection associated to the Hopf projection $S^5 \to \CP^2$ sending $(z_1, z_2, z_3)  \to [z_1, z_2, z_3]$. The fibers $e^{i\theta}p$, $\theta \in [0,2\pi]$ and $p \in S^5$, are great circles in $S^5$ and the hyperplanes $H^4_p$ of the horizontal distribution are everywhere orthogonal to the fibers. This structure is $SU(3)$-invariant.}. The condition on $\gamma$ is equivalent to the non-integrability of this distribution, i.e. it is impossible (even locally) to find a 4-surface in $S^5$ which is everywhere tangent to the $H^4$. The vectors $v$ orthogonal to $H^4$ are called vertical; they are everywhere tangent to the Hopf fibers $e^{i \theta}(z_1,z_2,z_3) \subset S^5$.

\medskip

\textbf{Special Legendrians are tangent to the horizontal distribution}.
The Special Legendrian calibration $\omega$ has the property that any calibrated 2-plane in $T S^5$ must be contained in $H^4$. Therefore, Special Legendrian submanifolds are everywhere tangent to the horizontal distribution and they are a particular case of the so called Legendrian curves, which are the maximal dimensional integral submanifolds of the contact distribution. We can shortly justify this as follows: recall that $\omega$ and the horizontal distribution are invariant under the action of $SU(3)$. At the point $(1,0,0) \in S^5$ the Special Legendrian semi-calibration is easily\footnote{Recall that we are using standard coordinates $z_j=x_j+iy_j$, $j=1,2,3$ on $\C^3$.} computed: $\omega_{(1,0,0)}= dx^2 \wedge dx^3 - dy^2 \wedge dy^3$. Then if a unit simple 2-vector in $T_{(1,0,0)} S^5$ is calibrated, it must lie in the 4-plane spanned by the coordinates $x_2, y_2, x_3, y_3$, which is the horizontal hyperplane $H^4_{(1,0,0)}$ orthogonal to the Hopf fiber $e^{i \theta} (1,0,0)$. The $SU(3)$-invariance of $\omega$ and of $\{H^4\}$ implies that, at all points on the sphere, Special Legendrians are tangent to the horizontal distribution.

\medskip

\textbf{J-structure and J-invariance}.
We introduce now a further structure: on each hyperplane $H_p^4$, $\omega$ restricts to a non-degenerate 2-form, so we get a symplectic structure and we can we define the (unique) linear map
\[J_p : H_p^4 \to H_p^4\]
characterized by the properties that $J_p^2=-Id$ and, for $v,w \in H_p^4$,
\begin{equation}
\label{eq:sljh}
\omega(p)(v,w)= \omega(p)(J_p v,J_p w),\;\;\; \langle v, w \rangle_{T_p S^5} = \omega(p)(v,J_p w).
\end{equation}
This is a standard construction from symplectic geometry and the uniqueness of the $J_p$ at each point implies that we get a smooth endomorphism of the horizontal bundle; in our case the setting is simple enough to allow an explicit expression of $J_p$ in coordinates, as follows.

$\omega_{(1,0,0)}= dx^2 \wedge dx^3 - dy^2 \wedge dy^3$ and recall that $H^4_{(1,0,0)}$ is spanned by the coordinates $x_2, y_2, x_3, y_3$. Then choose 
\[ J_{(1,0,0)} := \left \{ \begin{array}{lll}
\frac{\partial}{\partial x_2} & \to &  \frac{\partial}{\partial x_3}   \\ 
\frac{\partial}{\partial y_2} & \to & - \frac{\partial}{\partial y_3} 
\end{array} \right .\]
The conditions in (\ref{eq:sljh}) hold true at this point.

For any $p \in S^5$, take $g \in {SU(3)}/{SU(2)}$ sending $p$ to $(1,0,0)$. The $SU(2)$ in the quotient is the stabilizer of $H^4_{(1,0,0)}$. This stabilizer leaves $J_{(1,0,0)}$ invariant (any element of $SU(2)$ commutes with $J_{(1,0,0)}$) and we can define, for $v \in H^4_p$,
\[J_p(v):= dg^{-1}(J_{(1,0,0)}(dg(v))).\]
Thus we get a smooth $J$-structure on the horizontal bundle.

From the properties in (\ref{eq:sljh}), if a simple unit 2-vector $v \wedge w$ in $H^4_p$ is calibrated by $\omega$, then
\[1=\omega_p(v,w)= \omega_p(J_p v,J_p w)= \langle J_p v, w \rangle_{T_p S^5} \]
so
\[v \wedge w \text{ is a Special Legendrian plane }\Leftrightarrow J_p(v \wedge w):= J_p v \wedge J_p w= v \wedge w,\]
i.e. 

\begin{Prop}
\label{Prop:slc}
A 2-plane in $T_p S^5$ is Special Legendrian if and only if it lies in $H_p^4$ (horizontal for the Hopf connection) and it is $J_p$-invariant for the $J$-structure above.
\end{Prop}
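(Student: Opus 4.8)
The plan is to prove the two implications separately, using only the identities in (\ref{eq:sljh}) and the two facts already established above, namely that every $\omega$-calibrated $2$-plane at $p$ is contained in $H_p^4$, and that $J_p$ is a well-defined endomorphism of $H_p^4$ with $J_p^2=-\mathrm{Id}$. For the \emph{only if} direction, let $\xi=v\wedge w$ be a Special Legendrian $2$-plane with $\{v,w\}$ an orthonormal basis. By the horizontality statement, $v,w\in H_p^4$, so $J_pv,J_pw$ are defined, and the chain $1=\omega_p(v,w)=\omega_p(J_pv,J_pw)=\langle J_pv,w\rangle_{T_pS^5}$ holds by the first and then the second identity of (\ref{eq:sljh}). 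A short computation from (\ref{eq:sljh}) gives $|J_pv|=|v|$ (since $\langle J_pv,J_pv\rangle=\omega_p(J_pv,-v)=\omega_p(v,J_pv)=\langle v,v\rangle$), so $|J_pv|=|w|=1$ and the Cauchy–Schwarz equality case forces $J_pv=w$. Then $J_p\xi=J_pv\wedge J_pw=w\wedge(-v)=v\wedge w=\xi$, i.e.\ $\xi$ is $J_p$-invariant.

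For the \emph{if} direction, let $\xi\subset H_p^4$ be a $J_p$-invariant $2$-plane and pick a unit vector $v\in\xi$; then $J_pv\in\xi$ as well. I would first check that $\{v,J_pv\}$ is orthonormal: from the second identity in (\ref{eq:sljh}), $\langle v,J_pv\rangle=\omega_p(v,J_p(J_pv))=-\omega_p(v,v)=0$, and $\langle J_pv,J_pv\rangle=\omega_p(J_pv,-v)=\omega_p(v,J_pv)=\langle v,v\rangle=1$. Hence $\xi=\mathrm{span}\{v,J_pv\}$ with $v\wedge J_pv$ a unit simple $2$-vector, and $\langle\omega_p,v\wedge J_pv\rangle=\omega_p(v,J_pv)=\langle v,v\rangle=1$; since $\omega$ has comass one, $\xi$ is calibrated, i.e.\ Special Legendrian. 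Combining the two directions with the already-proved fact that calibrated planes are horizontal yields the stated equivalence.

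I do not expect a genuine obstacle: the argument is a purely linear-algebraic computation in the compatible triple $(H_p^4,\langle\cdot,\cdot\rangle,\omega_p,J_p)$, and much of the \emph{only if} direction was already assembled in the text preceding the statement. The only point needing a remark is that $J_p$ is well defined and satisfies (\ref{eq:sljh}) pointwise everywhere: one verifies directly at $(1,0,0)$ that $J_{(1,0,0)}$ (sending $\partial_{x_2}\mapsto\partial_{x_3}$, $\partial_{y_2}\mapsto-\partial_{y_3}$) obeys (\ref{eq:sljh}) and commutes with $\mathrm{Stab}(H^4_{(1,0,0)})=SU(2)$, and then $SU(3)$-invariance of $\omega$ and of the horizontal distribution transports both the construction and the identities (\ref{eq:sljh}) to every $p\in S^5$; in particular the non-degeneracy of $\omega_p|_{H_p^4}$ needed for $J_p$ to exist follows from the explicit expression $dx^2\wedge dx^3-dy^2\wedge dy^3$ at $(1,0,0)$. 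Alternatively, one could dispense with the abstract construction and run the whole argument at $(1,0,0)$, where the $J$-invariant $2$-planes in $\mathrm{span}\{\partial_{x_2},\partial_{y_2},\partial_{x_3},\partial_{y_3}\}$ are exactly the complex lines of that complex structure, evaluate the $2$-form on such a line to get $1$, and conclude for general $p$ by applying an element of $SU(3)$.
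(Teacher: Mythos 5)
Your proof is correct and follows essentially the same route as the paper: the key step in both is the chain $1=\omega_p(v,w)=\omega_p(J_pv,J_pw)=\langle J_pv,w\rangle$ from (\ref{eq:sljh}), combined with $SU(3)$-invariance to reduce to the point $(1,0,0)$. You merely fill in details the paper leaves implicit (that $J_p$ is an isometry, the Cauchy--Schwarz equality forcing $J_pv=w$, and the explicit converse), which is a faithful completion rather than a different argument.
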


\medskip
Since all the above introduced objects are invariant under the action of $SU(3)$, we can afford to work at a given point of $S^5$; from now on we will focus on a neighbourhood of the point $(1,0,0) \in S^5$, where we are using the complex coordinates $(z_1, z_2, z_3)=(x_1, y_1,x_2, y_2, x_3, y_3)$ of $\mathbb{C}^3$.
\medskip

\textbf{Positive 3-surface}.
We are now ready for the construction of a 3-surface with the property of positive intersection.

Oriented $m$-planes in $\C^3$ will be identified with unit simple $m$-vectors in $\C^3$. In particular, $T S^5$ is oriented so that $T S^5 \wedge \frac{\partial}{\partial r}=\C^3$.

Writing down the Special Lagrangian calibration explicitly
\[\Omega=  dx^1 \wedge dx^2 \wedge dx^3 - dx^1 \wedge dy^2 \wedge dy^3 - dy^1 \wedge dx^2 \wedge dy^3 - dy^1 \wedge dy^2 \wedge dx^3,\]
it is straightforward to see that
\[\mathcal{L}_0 = \frac{\partial}{\partial x_1} \wedge \frac{\partial}{\partial x_2} \wedge \frac{\partial}{\partial x_3}\]
is a  Special Lagrangian 3-plane passing through the origin of $\C^3$ and through the point $(1,0,0)$.
We now consider the following family $\{\mathcal{L}_\theta\}_{\theta \in (-\eps, \eps)}$ of Special Lagrangian planes, where $\{(e^{i \theta},0,0)\}_{\theta \in (-\eps, \eps)}$ is the fiber containing $(1,0,0)$ and $\mathcal{L}_\theta$ goes through the point $(e^{i \theta},0,0)$:

\[\mathcal{L}_\theta = \left( \begin{array}{ccc}
e^{i \theta} & 0 & 0\\
0 & e^{-i \theta} & 0\\
0 & 0 & 1 \end{array} \right)_* \mathcal{L}_0 = \]\[=(\cos \theta \frac{\partial}{\partial x_1} + \sin \theta \frac{\partial}{\partial y_1} )\wedge (\cos \theta \frac{\partial}{\partial x_2} - \sin \theta \frac{\partial}{\partial y_2} ) \wedge \frac{\partial}{\partial x_3},\]
which is Special Lagrangian since it has been obtained by pushing forward $\mathcal{L}_0$ by an element in $SU(3)$.

We introduce the 4-surface $\Sigma^4$ in $\mathbb{C}^3$ obtained by attaching the $\mathcal{L}_\theta$-planes along the fiber $\{(e^{i \theta},0,0)\}_{\theta \in (-\eps, \eps)}$: this 4-surface can be expressed as 
\[\Sigma^4 = (a e^{i \theta}, b e^{-i \theta}, c)\]
parametrized with $(a,b,c)\in \mathbb{R}^3 \setminus \{0\}, \;\; \theta \in (-\eps, \eps)$. Then define
\[\Sigma = \Sigma^4 \cap S^5.\]
As stated in the coming lemma \ref{lem:positiveintersectionplanes}, this 3-surface has the desired property of intersecting Special Legendrians positively.
\medskip

We can make the equivalent construction starting from the form $\omega$ restricted to the fiber $\{(e^{i \theta},0,0)\}_{\theta \in (-\eps, \eps)}$:
\[\omega= \cos \theta(dx^2 \wedge dx^3 - dy^2 \wedge dy^3) + \sin \theta (-dx^2 \wedge dy^3 -dy^2 \wedge dx^3 )\]
and explicitly writing down the J-structure on $H_{(e^{i \theta},0,0)}^4$ introduced above. On $H_{(e^{i \theta},0,0)}^4$ we can use coordinates $(x_2, y_2, x_3, y_3)$ since $H^4 \wedge v = T S^5$, $T S^5 \wedge \frac{\partial}{\partial r} = \mathbb{C}^3$ and $v = i \frac{\partial}{\partial r}$, so $H^4 = \frac{\partial}{\partial x_2} \wedge \frac{\partial}{\partial y_2} \wedge \frac{\partial}{\partial x_3} \wedge \frac{\partial}{\partial y_3}$.
\[ J_\theta = J_{(e^{i \theta},0,0)}:= \left \{ \begin{array}{lll}
\frac{\partial}{\partial x_2} & \to & \cos \theta \frac{\partial}{\partial x_3} - \sin \theta \frac{\partial}{\partial y_3}  \\ 
\frac{\partial}{\partial y_2} & \to & -\cos \theta \frac{\partial}{\partial y_3} - \sin \theta \frac{\partial}{\partial x_3} \\ 
\frac{\partial}{\partial x_3} & \to & -\cos \theta \frac{\partial}{\partial x_2} + \sin \theta \frac{\partial}{\partial y_2} \\ 
\frac{\partial}{\partial y_3} & \to & \cos \theta \frac{\partial}{\partial y_3} + \sin \theta \frac{\partial}{\partial x_2}  \end{array} \right .\]
So $J_\theta$ is represented by the matrix $J_0 A_\theta$, where \footnote{In complex notation, looking at $H_{(e^{i \theta},0,0)}^4$ as $\C^2_{z_2, z_3}$, we can write  \[A_\theta = \left ( \begin{array}{cc}
e^{i \theta} & 0  \\ 
0 & e^{-i \theta} \end{array} \right ).\]}

\[J_0 = \left ( \begin{array}{cccc}
0 & 0 & -1 & 0  \\ 
0 & 0 & 0 & 1  \\
1 & 0 & 0 & 0  \\
0 & -1 & 0 & 0 \end{array} \right ),
A_\theta = \left ( \begin{array}{cccc}
\cos \theta & \sin \theta & 0 & 0  \\ 
-\sin \theta & \cos \theta & 0 & 0  \\
0 & 0 & \cos \theta & -\sin \theta  \\
0 & 0 & \sin \theta & \cos \theta \end{array} \right ).\]

If $v \wedge w$ is a J-invariant 2-plane in $H_0^4$, with $w=J_0 v$, then $A_\theta ^{-1} v \wedge w$ is $J_\theta$ invariant, in fact $J_\theta (A_\theta ^{-1} v) = J_0 A_\theta A_\theta ^{-1} v = J_0 v = w$. Take the geodesic 2-sphere $L_0$ tangent to the $J_0$-holomorphic plane 
\[\frac{\partial}{\partial x_2} \wedge \frac{\partial}{\partial x_3}.\]
This Special Legendrian 2-sphere $L_0$ coincides with $\mathcal{L}_0 \cap S^5$ introduced above.
The 2-plane 
\[A_\theta ^{-1}\frac{\partial}{\partial x_2} \wedge \frac{\partial}{\partial x_3} =  (\cos \theta \frac{\partial}{\partial x_2} - \sin \theta \frac{\partial}{\partial y_2}) \wedge \frac{\partial}{\partial x_3}\]
is therefore $J_\theta$ holomorphic and the geodesic 2-sphere tangent to it is $\mathcal{L}_\theta \cap S^5$. $\Sigma$ is the 3-surface obtained from the union of those Special Legendrian spheres as $\theta \in (-\eps, \eps)$.

\begin{lem}
\label{lem:positiveintersectionplanes}
There is an $\eps_0>0$ small enough such that for any $\eps <\eps_0$ the following holds:

let $S$ be any Special Legendrian current in $B_{\eps} (1,0,0) \subset S^5$; then, at any point $p$ where $T_p S$ is defined and transversal to $T_p \Sigma$,  $S$ and $\Sigma$ intersect each other in a positive way, i.e.
\[T_p S \wedge T_p \Sigma = T_p S^5.\]
\end{lem}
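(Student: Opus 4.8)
The plan is to reduce the claim to a pointwise statement about $J$-invariant $2$-planes and the tangent planes to $\Sigma$, and then to verify positivity by an explicit computation at $\theta=0$ followed by an openness/continuity argument. First I would observe that, by the $SU(3)$-equivariance of all the structures involved and the way $\Sigma$ was constructed, it suffices to understand the linear-algebra picture at the points $(e^{i\theta},0,0)$ of the Hopf fiber: any point $p$ of $\Sigma$ near $(1,0,0)$ lies on one of the Special Legendrian spheres $\mathcal{L}_\theta\cap S^5$, and after applying the $SU(3)$-element $\mathrm{diag}(e^{-i\theta},e^{i\theta},1)$ together with a stabilizing $SU(2)$-rotation we may assume $p$ is as close as we like to $(1,0,0)$ with $\theta$ as small as we like. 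Thus the whole lemma follows from a statement of the form: there is $\eps_0>0$ such that for $|\theta|<\eps_0$, if $P$ is any $J_\theta$-invariant oriented $2$-plane in $H^4_{(e^{i\theta},0,0)}$ (these are exactly the Special Legendrian planes, by Proposition \ref{Prop:slc}) transversal to $T_{(e^{i\theta},0,0)}\Sigma$, then $P\wedge T_{(e^{i\theta},0,0)}\Sigma$ is a positive multiple of the orientation of $T_{(e^{i\theta},0,0)}S^5$.

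Next I would compute $T\Sigma$ along the fiber explicitly. From the parametrization $\Sigma^4=(ae^{i\theta},be^{-i\theta},c)$ one reads off that at the point $(e^{i\theta},0,0)$ (i.e. $a=1$, $b=c=0$) the tangent space to $\Sigma^4$ is spanned by $\partial_a=(e^{i\theta},0,0)$, $\partial_b=(0,e^{-i\theta},0)$, $\partial_c=(0,0,1)$, and $\partial_\theta=(ie^{i\theta},0,0)$; intersecting with $TS^5$ (equivalently, dropping the radial direction $\partial_a$, which at this point is exactly $\partial/\partial r$) gives $T_{(e^{i\theta},0,0)}\Sigma=\mathrm{span}\{\partial_b,\partial_c,\partial_\theta\}$, and since $\partial_\theta=i\,\partial/\partial r$ is the vertical (Hopf) direction $v$, we get $T_{(e^{i\theta},0,0)}\Sigma = v\wedge\big(\tfrac{\partial}{\partial x_2}\cos\theta\ \text{–type combinations}\big)$ — concretely, writing in the $(x_2,y_2,x_3,y_3)$ coordinates on $H^4$, $T\Sigma$ contains $v$ and the two horizontal vectors $A_\theta^{-1}(\partial/\partial y_2)$ and $A_\theta^{-1}(\partial/\partial x_3)$ (this is just the span of $\partial_b,\partial_c$ read in these coordinates, using the $A_\theta$ identification noted in the excerpt). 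So $T\Sigma$ is the direct sum of the vertical line and a specific $2$-plane $W_\theta\subset H^4$, namely $W_\theta=A_\theta^{-1}\,\mathrm{span}\{\partial/\partial y_2,\partial/\partial x_3\}$. Since any Special Legendrian $P$ lies in $H^4$, the wedge $P\wedge T_p\Sigma=\pm\,P\wedge v\wedge W_\theta$, and because $H^4\wedge v=TS^5$ with a fixed orientation, positivity of intersection is equivalent to positivity, inside $\Lambda^4 H^4$, of $P\wedge W_\theta$ relative to the fixed orientation of $H^4$ — i.e. it reduces to a statement about pairs of $2$-planes in the $4$-dimensional symplectic vector space $(H^4,\omega_\theta)$.

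Then I would handle $\theta=0$ by hand and conclude by continuity. At $\theta=0$, $W_0=\mathrm{span}\{\partial/\partial y_2,\partial/\partial x_3\}$, and a general $J_0$-holomorphic oriented $2$-plane can be written as the graph of a $\C$-linear map, or more simply one checks directly that for every such $P$ transversal to $W_0$ the wedge $P\wedge W_0$ has the sign of the orientation of $H^4$; the clean way is to note that for $J$-invariant $2$-planes $P,P'$ in a positively oriented Hermitian $\C^2$ one has $P\wedge P'\ge 0$ always (with equality iff they share a line), which is the standard positivity of intersection of complex subspaces. Hence at $\theta=0$ the intersection number is $+1$ whenever it is nonzero. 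For $|\theta|$ small, $J_\theta$ is a small perturbation of $J_0$ and $W_\theta$ a small perturbation of $W_0$; the set of Special Legendrian planes is the compact manifold of $J_\theta$-invariant planes, so the continuous function $(\theta,P)\mapsto$ (sign/value of $P\wedge W_\theta$ in $\Lambda^4H^4$) cannot jump — more carefully, on the compact set of pairs $(\theta,P)$ with $P$ transversal to $W_\theta$ and, say, making angle $\ge\delta$ with it, the wedge is bounded away from $0$ for $\theta=0$, hence stays positive for $|\theta|<\eps_0(\delta)$; letting $\delta\to0$ and using that the transversal pairs are exhausted this way gives a uniform $\eps_0$. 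Unwinding the $SU(3)$-reduction then proves the lemma for all of $B_{\eps_0}(1,0,0)$.

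The main obstacle I anticipate is not any single hard estimate but the bookkeeping of orientations: one must pin down, once and for all, the induced orientation on $S^5$ (outward normal), on $H^4$ (via $H^4\wedge v=TS^5$), and on $TS^5\wedge\partial/\partial r=\C^3$, and then track signs through the identification $H^4\cong\C^2_{z_2,z_3}$, the $A_\theta$-twist, and the passage from $\Sigma^4$ to $\Sigma=\Sigma^4\cap S^5$ — a sign error anywhere flips the conclusion. The only genuinely mathematical input, the positivity $P\wedge P'\ge 0$ for complex $2$-planes, is classical; everything else is continuity on a compact set.
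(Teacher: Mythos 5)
Your overall strategy -- exhibit a Special Legendrian $2$-plane inside $T_p\Sigma$, invoke positivity of intersection of $J$-invariant $2$-planes, and absorb the rest by continuity -- is the same as the paper's, but two steps as written do not go through. The first is the reduction to points of the Hopf fiber. The element $\mathrm{diag}(e^{-i\theta},e^{i\theta},1)$ does preserve $\Sigma$ and brings $p$ onto the leaf $\mathcal{L}_0\cap S^5$, but the rotations that would then move a point of that leaf to $(1,0,0)$ are the $SO(3)\subset SU(3)$ rotations $A_q$, and these do \emph{not} preserve $\Sigma$ (they produce the different surfaces $\Sigma_q$ of the parallel foliation); the $SU(2)$ stabilizer of $(1,0,0)$ fixes $(1,0,0)$ and cannot move $p$ there. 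So a general intersection point $p$ cannot be assumed to lie on the fiber, and your explicit computation of $T\Sigma$ covers only fiber points. The paper argues instead directly at an arbitrary $p\in\Sigma$: by construction $T_p\Sigma$ contains the $2$-plane $l^1_p\wedge l^2_p$ tangent to the Special Legendrian leaf sphere through $p$ (hence $J_p$-invariant in $H^4_p$), together with a third vector $w_p$ which is only approximately vertical; continuity is used solely to guarantee $H^4_p\wedge w_p=T_pS^5$ for $\eps$ small.

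The second gap is in the positivity argument itself. Establishing $P\wedge W_0>0$ at $\theta=0$ and then perturbing does not yield a uniform $\eps_0$: the set of transversal pairs is not compact, the lower bound you get on pairs making angle $\geq\delta$ gives some $\eps_0(\delta)$ which may degenerate as $\delta\to 0$, and "letting $\delta\to 0$" does not exhaust the transversal pairs uniformly. The fix -- and the whole point of the construction of $\Sigma$ -- is that the horizontal $2$-plane contained in $T\Sigma$ is itself $J$-invariant (Special Legendrian) at \emph{every} point of $\Sigma$, being tangent to a leaf; so the classical positivity of two transversal complex planes applies exactly and pointwise, with no limiting argument in $\theta$. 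Note also a concrete slip: along the fiber that plane is $A_\theta^{-1}\,\mathrm{span}\{\partial/\partial x_2,\partial/\partial x_3\}$ (the span of your $\partial_b=(0,e^{-i\theta},0)=\cos\theta\,\partial/\partial x_2-\sin\theta\,\partial/\partial y_2$ and $\partial_c=\partial/\partial x_3$), not $A_\theta^{-1}\,\mathrm{span}\{\partial/\partial y_2,\partial/\partial x_3\}$; the latter is not $J_0$-invariant, so the positivity of complex planes would not even apply to it.
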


\begin{proof}[\bf{proof of lemma \ref{lem:positiveintersectionplanes}}]

\[T_{(e^{i \theta},0,0)} \Sigma = A_\theta ^{-1}\frac{\partial}{\partial x_2} \wedge \frac{\partial}{\partial x_3} \wedge v_\theta ,\]
so, along the fiber, the tangent space to $\Sigma$ is spanned by two vectors $l^1, l^2$ such that $l^1 \wedge l^2$ is Special Legendrian and by the vertical vector $v_\theta$. At any other point $p$ of $\Sigma$, the tangent space always contains two directions $l_p^1, l_p^2$ such that $l_p^1 \wedge l_p^2$ is Special Legendrian (from the construction of $\Sigma$). The third vector $w$, orthogonal to these two and such that $l_p^1 \wedge l_p^2 \wedge w = T \Sigma$, drifts from the vertical direction as the point moves away from the fiber, but by continuity, for a small neighbourhood $B_{\eps} (1,0,0) $, we still have that 
\[H_p^4 \wedge w_p = T_p S^5.\]
On the other hand, it is a general fact that, given a 4-plane with a J-structure, two transversal J-invariant planes always intersect positively. Therefore 
\[T_p S \wedge l_p^1 \wedge l_p^2 = H_p^4\]
at any point $p$, so
\[T_p S \wedge T_p \Sigma = T_p S \wedge (l_p^1 \wedge l_p^2 \wedge w_p) =(T_p S \wedge l_p^1 \wedge l_p^2 )\wedge w_p= T_p S^5.\]

\end{proof}

\medskip

\textbf{First parallel foliation}.
Now we are going to exhibit a 2-parameter family of 3-surfaces that foliate $B_{\eps} (1,0,0)$ and have the property of positive intersection. Consider the Special Legendrian 2-sphere
\[L = (- \frac{\partial}{\partial x_1} \wedge \frac{\partial}{\partial y_2} \wedge \frac{\partial}{\partial y_3}) \cap S^5.\]
This is going to be the space of parameters. Consider $SO(3)$ and let it act on the 3-space $- \frac{\partial}{\partial x_1} \wedge \frac{\partial}{\partial y_2} \wedge \frac{\partial}{\partial y_3}$. We are only interested in the subgroup of rotations having axis in the plane $\frac{\partial}{\partial y_2} \wedge \frac{\partial}{\partial y_3}$. This subgroup is isomorphic to $SO(3)/ S$, where $S$ is the stabilizer of a point, in our case the point $(1,0,0) \in - \frac{\partial}{\partial x_1} \wedge\frac{\partial}{\partial y_2} \wedge \frac{\partial}{\partial y_3}$. Thus the rotations in this subgroup can be parametrized over the points of $L =  \left(- \frac{\partial}{\partial x_1} \wedge\frac{\partial}{\partial y_2} \wedge \frac{\partial}{\partial y_3} \right)\cap S^5$ and we will write $A_q$ for the rotation sending $(1,0,0)$ to $q \in L$. We extend $A_q$ to a rotation of the whole $S^5$ by letting it act diagonally on $\mathbb{R}^3 \oplus \mathbb{R}^3 = \mathbb{C}^3$. Then define
\[\Sigma_q = A_q (\Sigma),\]
for $q \in L$. Since $A_q \in SU(3)$, Special Legendrian spheres are invariant and $A_q(e^{i \theta}(1,0,0))= e^{i \theta}A_q((1,0,0))=e^{i \theta} q$, so the fiber through $(1,0,0)$ is sent into the fiber through $q$. 
Therefore, for a fixed $q$, $\Sigma_q$ is a 3-surface of the same type as $\Sigma$, that is, it contains the fiber through $q$ and is made of the union of Special Legendrian spheres smoothly attached along the fiber. By the $SU(3)$-invariance of $\omega$, from lemma \ref{lem:positiveintersectionplanes} we get that $\Sigma_q$ has the property of intersecting positively any transversal Special Legendrian $S$.

For the sequel define $L_{\eps}=L \cap B_{\eps} (1,0,0)$.

\begin{lem}
\label{lem:firstfoliation}
The 3-surfaces $\Sigma_q$, as $q \in L_{\eps}$, foliate a neighbourhood of  $(1,0,0)$ in $S^5$.
\end{lem}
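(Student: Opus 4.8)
The plan is to exhibit the leaves $\Sigma_q$ as the plaques of a single local diffeomorphism, so that the foliation statement reduces, via the inverse function theorem, to a transversality computation at the base point. Concretely, I would fix a small $\delta>0$ and consider the map
\[\Psi:\;L_\eps\times\big(\Sigma\cap B_\delta(1,0,0)\big)\longrightarrow S^5,\qquad \Psi(q,\sigma):=A_q(\sigma),\]
which is smooth because $q\mapsto A_q$ is smooth near $(1,0,0)$ (for $q$ close to $(1,0,0)$ there is a unique small rotation with the prescribed axis carrying $(1,0,0)$ to $q$) and because $\Sigma$ is a smooth embedded $3$-surface near $(1,0,0)$ — the latter follows from the parametrization $\Sigma^4=(ae^{i\theta},be^{-i\theta},c)$, whose differential at $(a,b,c,\theta)=(1,0,0,0)$ sends $\partial_a,\partial_b,\partial_c,\partial_\theta$ to $\frac{\partial}{\partial x_1},\frac{\partial}{\partial x_2},\frac{\partial}{\partial x_3},\frac{\partial}{\partial y_1}$, so $\Sigma^4$ is a smooth $4$-manifold there meeting $S^5$ transversally (as $\frac{\partial}{\partial x_1}=\frac{\partial}{\partial r}\notin T_{(1,0,0)}S^5$). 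Both source and target are $5$-dimensional at $\big((1,0,0),(1,0,0)\big)$.

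Next I would compute $d\Psi$ at $\big((1,0,0),(1,0,0)\big)$. Since $A_{(1,0,0)}=\mathrm{Id}$, the restriction of $d\Psi$ to the $\Sigma$-directions is the inclusion of
\[T_{(1,0,0)}\Sigma=\mathrm{span}\Big(\tfrac{\partial}{\partial x_2},\tfrac{\partial}{\partial x_3},\tfrac{\partial}{\partial y_1}\Big)\]
(the first two vectors from $T_{(1,0,0)}L_0$, the last from the fiber direction $v=i\frac{\partial}{\partial r}$); and because $A_q(1,0,0)=q$ by construction, the curve $q\mapsto\Psi(q,(1,0,0))$ is simply the identity on $L_\eps$, so the restriction of $d\Psi$ to the $L_\eps$-directions is the inclusion of $T_{(1,0,0)}L$. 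Since $L=\big(-\frac{\partial}{\partial x_1}\wedge\frac{\partial}{\partial y_2}\wedge\frac{\partial}{\partial y_3}\big)\cap S^5$ is the unit sphere of the $3$-plane spanned by $x_1,y_2,y_3$,
\[T_{(1,0,0)}L=\mathrm{span}\Big(\tfrac{\partial}{\partial y_2},\tfrac{\partial}{\partial y_3}\Big),\]
and therefore
\[\mathrm{Im}\,d\Psi_{((1,0,0),(1,0,0))}=T_{(1,0,0)}\Sigma\oplus T_{(1,0,0)}L=\mathrm{span}\Big(\tfrac{\partial}{\partial y_1},\tfrac{\partial}{\partial x_2},\tfrac{\partial}{\partial y_2},\tfrac{\partial}{\partial x_3},\tfrac{\partial}{\partial y_3}\Big)=T_{(1,0,0)}S^5,\]
so $d\Psi$ is an isomorphism.

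By the inverse function theorem, $\Psi$ then restricts, after possibly shrinking $\eps$ and $\delta$, to a diffeomorphism from $L_\eps\times W$ (with $W=\Sigma\cap B_\delta(1,0,0)$) onto an open neighbourhood $V$ of $(1,0,0)$ in $S^5$. For each $q\in L_\eps$ the slice $\Psi(\{q\}\times W)=\Sigma_q\cap V$ is a smooth $3$-surface; the slices are pairwise disjoint since $\Psi$ is injective on $L_\eps\times W$, and they exhaust $V$ since $\Psi$ is onto; hence they foliate $V$, which is the claim. The only substantive point is the transversality $T_{(1,0,0)}\Sigma\cap T_{(1,0,0)}L=\{0\}$ verified above — it is exactly here that the choice of $L$ as a Special Legendrian sphere lying in the complementary directions $x_1,y_2,y_3$ is used; everything else is the standard mechanism turning a transverse pair (leaf, parameter space) into a local foliation, together with the elementary remark that $A_q$ depends smoothly on $q$ near the base point.
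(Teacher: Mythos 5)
Your proposal is correct and follows essentially the same route as the paper: both consider the map $(q,\sigma)\mapsto A_q(\sigma)$ on $L_\eps\times\Sigma$, compute its differential at the base point to see that it sends the $\Sigma$-directions to $\mathrm{span}(\partial_{x_2},\partial_{x_3},\partial_{y_1})$ and the $L$-directions to $\mathrm{span}(\partial_{y_2},\partial_{y_3})$, and invoke the inverse function theorem to conclude that the slices $\Sigma_q$ foliate a neighbourhood of $(1,0,0)$. Your additional remarks (smoothness of $\Sigma$ near the base point, $A_q(1,0,0)=q$ forcing the identity on the $L$-factor) only make explicit steps the paper leaves implicit.
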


\begin{proof}[\bf{proof of lemma \ref{lem:firstfoliation}}]
Parametrize $L_{\eps}$ with normal coordinates $(s,t)$, with $\frac{\partial}{\partial s}= \frac{\partial}{\partial y_2}, \frac{\partial}{\partial t} =-\frac{\partial}{\partial y_3} $ and $\Sigma= \Sigma_0$ with $(a,b,c,\theta)\in (S^2 \cap B_{\eps}(1,0,0)) \times (-\eps, \eps)$, with $(a,b,c)\in S^2 \subset \mathbb{R}^3 = \frac{\partial}{\partial x_1} \wedge \frac{\partial}{\partial x_2} \wedge \frac{\partial}{\partial x_3}$ and $\theta \in (-\eps, \eps)$ as done during the construction (we set $a=(1-b^2-c^2)^{1/2}$).
Consider the function $\psi: \Sigma \times L_{\eps} \rightarrow S^5$ defined as
\[\psi (p,q) = A_q (p)\]
for $p=(b,c,\theta) \in \Sigma$, $q=(s,t) \in L_{\eps}$. Analysing the action of the differential $d \psi$ on the basis vectors at $(0,0) \in  \Sigma \times L_{\eps}$ we get:
\[ \frac{\partial \psi}{\partial b} = \frac{\partial}{\partial x_2}, \frac{\partial \psi}{\partial c} = \frac{\partial}{\partial x_3}, \frac{\partial \psi}{\partial \theta} = \frac{\partial}{\partial y_1}, \frac{\partial \psi}{\partial s} = \frac{\partial}{\partial y_2}, \frac{\partial \psi}{\partial t} = -\frac{\partial}{\partial y_3} .\]
so the Jacobian determinant at $0$ is $1$ and $\psi$ is a diffeomorphism in some neighbourhood of $(1,0,0)$ where we can introduce the new set of coordinates $(b,c,\theta,s,t)$.
Therefore, the family $\{\Sigma_{s,t}\}_{(s,t)\in L}$ foliates an open set that we can assume to be $\psi(\Sigma \times L_{\eps})$ if both $\Sigma$ and $L_{\eps}$ were taken small enough.
\end{proof}

\medskip

\textbf{Coordinates induced by the first parallel foliation}. Recall that, in each $H_p^4$ we are interested in the possible calibrated 2-planes, which, as shown above, must be $J_p$-invariant. The set of these 2-planes is parametrized by the complex lines in $\mathbb{C}^2$ and is therefore diffeomorphic to $\mathbb{CP}^1$. We are often going to identify $H^4$ with $\mathbb{C}^2$ (respectively $\mathbb{CP}^1$, if we are interested in the complex lines) with the following coordinates: on $H_{(1,0,0)}^4$ we set $H_{(1,0,0)}^4= T L \oplus T (L_0) = \mathbb{C}_{s+it} \oplus \mathbb{C}_{b+ic}$, where $L, L_0$ are the Special Legendrians introduced above; $T L,T L_0$ are $\mathbb{C}$-orthogonal complex lines in $H_{(1,0,0)}^4$, $T L=\frac{\partial}{\partial s} \wedge \frac{\partial}{\partial t}$ and $T L_0 = \frac{\partial}{\partial b} \wedge \frac{\partial}{\partial c}$. Then the complex line $L$ will be represented by $[1,0]$ in $\mathbb{CP}^1$ and $L_0$ by $[0,1]$. Extend these coordinates to the other hyperplanes $H^4$ via the rotations $A_q$ as above, so, at any $H_p^4$ we have that, for the unique $\Sigma$ containing $p$:
\begin{equation}
\label{eq:coordindparfol}
T_p L = [1,0], \;\;\; T_p \Sigma \cap H_p^4 =[0,1].
\end{equation}

\medskip

\textbf{Families of parallel foliations}.
We will often need to use not only the foliation constructed, but a family of foliations. Keeping as base coordinates the coordinates that we just introduced, we can perform a similar construction. The foliation we constructed is parametrized by $q \in L$ with the property that $T_q \Sigma_q \cap H_q^4 =[0,1]\in \mathbb{CP}^1$. For $X$ in a neighbourhood of $[0,1]\in \mathbb{CP}^1$, e.g. $\{X=[Z,W] \in \mathbb{CP}^1, :|Z| \leq |W|\}$, we start from the 3-surface $\Sigma_0^X$ built as follows: the Special Legendrian spheres that we attach to the fiber should have tangent planes in the direction $X \in \mathbb{CP}^1$. Then, for any such fixed $X$, we still have a foliation of a neighbourhood of $(1,0,0)$, parametrized on $L$ and made of the 3-surfaces
\begin{equation}
\label{eq:parallelfoliations}
\Sigma_q^X := A_q (\Sigma_0^X), \;\;\; q\in L.
\end{equation}
We will refer to $\Sigma_q^X$ as to the 3-surface born at $q$ in the direction $X$. The original surfaces we built will be denoted $\Sigma^{[0,1]}$. By the $SU(3)$-invariance of $\omega$, from lemma \ref{lem:positiveintersectionplanes} we get the positiveness property for $\Sigma_q^X$:

\begin{cor}
For any $q$, $\Sigma_q^X$ has the property of intersecting positively any transversal Special Legendrian $S$, i.e. at any point $p$ where $T_p S$ is defined and transversal to $T_p \Sigma_q^X$,
\[T_p S \wedge T_p \Sigma_q^X = T_p S^5.\]
\end{cor}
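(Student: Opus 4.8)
The plan is to obtain this corollary from Lemma \ref{lem:positiveintersectionplanes} in two steps: first establish the positive-intersection property for the model surface $\Sigma_0^X$ born at $(1,0,0)$ in the direction $X$, and then transport it to $\Sigma_q^X=A_q(\Sigma_0^X)$ using that $A_q\in SU(3)$.

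For the first step, the key remark is that the proof of Lemma \ref{lem:positiveintersectionplanes} never used the specific direction $[0,1]$: it only used that, in a small ball around $(1,0,0)$, the surface is a smooth union of Special Legendrian $2$-spheres attached along the Hopf fiber, so that at every point $p$ the tangent space splits as $T_p\Sigma=l_p^1\wedge l_p^2\wedge w_p$ with $l_p^1\wedge l_p^2$ a $J_p$-invariant (hence Special Legendrian) $2$-plane inside $H_p^4$ and with $w_p$ vertical along the fiber, hence — by continuity, for $\eps$ small — still satisfying $H_p^4\wedge w_p=T_pS^5$. Since $\Sigma_0^X$ is built exactly in this way, the only change being that the attached spheres are tangent to $X$ along the fiber, I would simply re-run the two-line computation of Lemma \ref{lem:positiveintersectionplanes}: for a transversal Special Legendrian $S$, the positivity of the intersection of two transversal $J_p$-invariant planes inside the $4$-plane $H_p^4$ gives $T_pS\wedge l_p^1\wedge l_p^2=H_p^4$, whence
\[T_pS\wedge T_p\Sigma_0^X=(T_pS\wedge l_p^1\wedge l_p^2)\wedge w_p=H_p^4\wedge w_p=T_pS^5.\]

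For the second step, I would use that $A_q\in SU(3)$ acts on $\C^3$ as an element of $SO(6)$ fixing the outward radial direction, hence preserves $S^5$ together with its orientation (normalised by $TS^5\wedge\frac{\partial}{\partial r}=\C^3$), preserves $N$ and therefore the contact form $\gamma$ and the horizontal distribution $\{H^4\}$, and — being an isometry under which $\omega$ is invariant — carries Special Legendrian currents to Special Legendrian currents and transversal intersections to transversal intersections. Then, given a transversal Special Legendrian $S$ and a point $p$ at which $T_pS$ is transversal to $T_p\Sigma_q^X$, I would set $p'=A_q^{-1}(p)\in\Sigma_0^X$ and $S'=A_q^{-1}(S)$, note that $A_q^{-1}$ sends $B_\eps(q)$ to $B_\eps(1,0,0)$ so that the first step applies and yields $T_{p'}S'\wedge T_{p'}\Sigma_0^X=T_{p'}S^5$, and push this identity forward by the orientation-preserving isometry $d(A_q)_{p'}\colon T_{p'}S^5\to T_pS^5$ to conclude that $T_pS\wedge T_p\Sigma_q^X=T_pS^5$.

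There is no genuine obstacle here; the only point deserving attention is the bookkeeping in the second step, namely that a single $A_q$ simultaneously preserves the orientation of $S^5$, the horizontal distribution, and the class of Special Legendrian currents — each of which is immediate from $A_q\in SU(3)$ and the $SU(3)$-invariance of $\omega$ recorded in this section. Equivalently, one could observe that Lemma \ref{lem:positiveintersectionplanes}, together with its proof, goes through verbatim with $\Sigma$ replaced by any $SU(3)$-image of any $\Sigma_0^X$, since that proof only invoked the $SU(3)$-invariant data $\omega$, $\{H^4\}$, $J$ and the Special Legendrian spheres attached along a fiber.
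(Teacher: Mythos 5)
Your proposal is correct and follows essentially the same route as the paper, which deduces the corollary in one line from the $SU(3)$-invariance of $\omega$ together with Lemma \ref{lem:positiveintersectionplanes}; you simply make explicit the two observations the paper leaves implicit (that the proof of that lemma is insensitive to the direction $X$, and that $A_q$ transports all the relevant structures). No issues.
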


For a fixed $X$, a parallel foliation $\{\Sigma_p^X\}$ (as $p$ ranges over $L_{\eps}$) gives rise in a neighbourhood of $(1,0,0)$ to a system of five real coordinates. The adjective \textit{parallel} is reminiscent of this resemblance to a cartesian system of coordinates in the chosen neighbourhood. There are several reasons why we produced parallel foliations keeping freedom on the "direction" $X$; they will be clear later on.

\medskip

\textbf{Families of polar foliations}.
So far we have been dealing with "parallel" foliations. We turn now to "polar" foliations\footnote{The term \textit{polar} is used as reminiscent of the standard polar coordinates in the plane.}. 

Notice that, a point in $L$ being fixed, say $0$, we have that, as $X$ runs over a neighbourhood of $[0,1]\in \CP^1$, the family $\{\Sigma_0^X\}$ foliates a conic neighbourhood of $\Sigma_0^{[0,1]}$. Observe that the rotations in $SO(3) \subset SU(3)$ fixing the fiber through $q\in S^5$ have for differentials exactly the rotations in $SU(2)$ on $H_q^4$. Denoting $R_{X,Y}$ the rotation whose differential sends $X$ to $Y\in H_q^4$, we have $R_{X,Y}(\Sigma_q^X)= \Sigma_q^Y$.
 
\begin{lem}
\label{lem:polarfoliations}
With the above notations, let $U$ be a small enough neighbourhood of $Y \in \mathbb{CP}^1$ and consider $\Sigma_q^Y$ for some point $q$. Let $L^Y \subset \Sigma_q^Y$ be the special legendrian sphere tangent to $Y$ at $q$. Then
\[(\cup_{X \in U} \Sigma_q^X) - \{e^{i \theta}q\}\]
is a neighbourhood of $L^Y - \{q\}$.
\end{lem}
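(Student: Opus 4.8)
The plan is to realise the union $\bigcup_{X\in U}\Sigma_q^X$ as an orbit of the single $3$-surface $\Sigma_q^Y$ under a small neighbourhood of the identity in the rotation group, and then to verify that the associated evaluation map is a submersion at every point of $L^Y$ other than $q$. By the $SU(3)$-invariance of all the structures, I would first normalise $q=(1,0,0)$ and $Y=[0,1]$, so that $L^Y=\mathcal{L}_0\cap S^5$ and $\Sigma_q^Y=\Sigma^{[0,1]}$ carries its explicit parametrisation $(ae^{i\theta},be^{-i\theta},c)$, $(a,b,c)\in S^2$, whose slice $\{\theta=0\}$ is $L^Y$. Let $G_q$ be the group of rotations fixing $q$ (hence fixing the Hopf fibre $\{e^{i\theta}q\}$ pointwise) whose differentials on $H^4_q$ are the $SU(2)$ described above; the $R_{X,Y}$ of the statement live in $G_q$. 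From $R_{X,Y}(\Sigma_q^X)=\Sigma_q^Y$, together with the fact that the $U(1)$-stabiliser of $Y$ in $G_q$ maps $\Sigma_q^Y$ to itself (a direct check on the construction: such a rotation commutes with the $A_\theta$ propagating the attached spheres along the fibre, so it fixes each of them by uniqueness of the geodesic sphere through a point tangent to a given plane), one obtains $\bigcup_{X\in U}\Sigma_q^X=\bigcup_{R\in V}R(\Sigma_q^Y)$ for a neighbourhood $V$ of the identity in $G_q$.

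Next I would introduce the evaluation map $\mu\colon G_q\times\Sigma_q^Y\to S^5$, $\mu(R,p)=R(p)$, and reduce the lemma to the claim that $\mu$ is a submersion at $(e,p_0)$ for every $p_0\in L^Y\setminus\{q\}$ in the small ball on which we work. Granting this, submersivity produces an $S^5$-neighbourhood of $p_0$ contained in $\mu(V\times\Sigma_q^Y)=\bigcup_{R\in V}R(\Sigma_q^Y)$; since (locally) the fibre through $q$ meets $L^Y$ only at $q$, the point $p_0$ lies off that fibre and the neighbourhood can be shrunk to avoid it. As $L^Y\setminus\{q\}$ itself sits inside $\Sigma_q^Y$ away from the fibre, the union of these neighbourhoods is an open set, disjoint from $\{e^{i\theta}q\}$, containing $L^Y\setminus\{q\}$, which is exactly the assertion. (Equivalently, one can bypass the group language and directly compute the Jacobian of $(X,a,b,c,\theta)\mapsto S^5$, as was done for Lemma~\ref{lem:firstfoliation}.)

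It remains to run the submersion check. Since $G_q$ fixes $q$, the image of $d\mu_{(e,p_0)}$ equals $T_{p_0}\Sigma_q^Y+T_{p_0}(G_q\cdot p_0)$, the sum of the tangent to $\Sigma_q^Y$ and the tangent to the $G_q$-orbit of $p_0$. Writing $p_0=(a,b,c)$ real, with $(b,c)\neq(0,0)$ (i.e. $p_0\neq q$) and $a=\cos\operatorname{dist}(p_0,q)\neq0$ (true throughout the small ball), a short coordinate computation gives $T_{p_0}\Sigma_q^Y=T_{p_0}L^Y\oplus\langle a\,\partial_{y_1}-b\,\partial_{y_2}\rangle$, where $T_{p_0}L^Y$ is the horizontal $J$-holomorphic plane of real vectors orthogonal to $p_0$ and the second summand is the $\partial_\theta$-direction, essentially the vertical $\partial_{y_1}$ at $q$ up to the small term $-b\,\partial_{y_2}$; while $T_{p_0}(G_q\cdot p_0)$, the orbit being a $3$-sphere (the $SU(2)$-stabiliser of the nonzero vector $(b,c)$ is trivial), contains $\langle\partial_{y_2},\partial_{y_3}\rangle$ plus one more real direction inside $T_{p_0}L^Y$. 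Adding these, the only point to verify is that $\partial_{y_1}$ is recovered, which it is precisely because $\partial_{y_2}$ already lies in the orbit tangent space and $a\neq0$; hence the two spaces span $T_{p_0}S^5$. I expect this transversality computation — the clean identification of the single "missing" direction as the vertical at $q$ — to be the main obstacle; it degenerates exactly at $p_0=q$, where the $G_q$-orbit collapses to a point, which is why $q$, and along the fibre the whole circle $\{e^{i\theta}q\}$, must be excised. A secondary, bookkeeping-type point is the verification in the first step that the $U(1)$-stabiliser of $Y$ genuinely preserves $\Sigma_q^Y$, so that the union over $X\in U$ really is an orbit of a neighbourhood of the identity in the $3$-dimensional group $G_q$; and one should keep in mind that everything is local, $L^Y$ being a disc whose only fibre point is $q$.
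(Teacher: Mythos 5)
Your proof is correct and follows essentially the same route as the paper: both realise $\cup_{X\in U}\Sigma_q^X$ (near $L^Y$) as the image of an evaluation map obtained by rotating $\Sigma_q^Y$, and both verify that this map has full rank at points of $L^Y$ off the Hopf fibre, the rank degenerating exactly on the fibre. The only difference is presentational — the paper parametrises by $U\subset\CP^1$ and argues the rank count qualitatively, whereas you parametrise by the $3$-dimensional stabiliser $G_q$ and carry out the $\mathfrak{su}(2)$ orbit-tangent computation explicitly, which makes the identification of the missing vertical direction $\partial_{y_1}$ (and hence the need for $a\neq 0$) completely concrete.
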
 

\begin{proof}[\bf{proof of lemma \ref{lem:polarfoliations}}]
Introduce the function $\psi: \Sigma \times U \rightarrow S^5$ sending $(p,X)$, $p \in \Sigma = \Sigma_q^Y$, $X \in U$, to the point $R_{Y,X}(p)\in \Sigma_q^X$. Observe that, in a neighbourhood of $(1,0,0)$, the differential of $\psi$ is different from zero except at the points of the Hopf fiber through $(1,0,0)$. Indeed, on this fiber, $d \psi$ restricted to the 3-space $T \Sigma$ has rank 3 and $T \Sigma \cong Y_1 \wedge Y_2 \wedge v$, with $Y_1 \wedge Y_2$ the 2-plane in $\C^2$ represented by $Y$. At any point $F$ among these, $d \psi$ is zero on the tangent space to $U$ at $Y$, since the image $\psi(F,X)$ is constantly equal to $F$ for any $X$. For any fixed point $p$ not on the fiber and for $X$ on a curve in $U$ through $Y$,  $\psi(p,X)$ is a curve transversal to $\Sigma_p^Y$, since we are moving $p$ by the rotation $R_{Y,X}$. Therefore the differential $d \psi(p, Y)$ has rank 2 when restricted to the tangent to $U$ at $Y$, while on the complementary 3-space $d \psi$ still has rank 3 by smoothness. Therefore we get the desired result.
\end{proof}
 
\begin{oss}
\label{oss:coincidesphere}
We remark here that a $3$-surface $\Sigma$ of the type just exhibited above, is foliated by Special Legendrian spheres, so the Special Legendrian structure restricted to $\Sigma$ is integrable; a Special Legendrian integral cycle contained in such a $\Sigma$ must locally be one of these spheres.
\end{oss} 
 
\begin{oss}
\label{oss:example}
With the above notations, $L_0 + L$ is a Special Legendrian cycle with isolated singularities at the points $(1,0,0)$ and $(-1,0,0)$. This example shows that our regularity result is optimal. The reader may consult \cite{Haskins} for further explicit examples of Special Legendrian surfaces.
\end{oss}
 
\section{Tools from intersection theory}

In this section we recall some basic facts about the blowing-up of the current at a point and about the Kronecker intersection index (for the related issues in geometric measure theory we refer to \cite{Giaquinta}); then we show that this index is preserved when we send a blown-up sequence to the limit. 

\medskip
Let $C$ be the Special Legendrian cycle that we are studying.
The blow-up analysis of the current $C$ around a point $x_0$ is performed as follows: consider a dilation of $C$ around $x_0$ of factor $r$ which, in normal coordinates around $x_0$, is expressed by the push-forward of $C$ under the action of the map  $\displaystyle \frac{x-x_0}{r}$:
\[C_{x_0,r}(\psi)=\left( \frac{x-x_0}{r}\right)_\ast C (\psi) = C \left( \left(\frac{x-x_0}{r}\right)^\ast \psi\right).\]
From \cite{PR} or \cite{Sim} we have the monotonicity formula\footnote{This formula is proved in \cite{PR} for semi-calibrated currents and in \cite{Sim} for currents of vanishing mean curvature; both cases apply here.} which states that, for any $x_0$, the function
\[\frac{M(C \res B_r(x_0))}{r^2}\] 
is monotonically non-increasing as $r \downarrow 0$, therefore the limit 
\[ \theta(x):= \lim_{r\to 0}\frac{M(C \res B_r(x))}{\pi r^2}\] 
exists for any point $x\in S^5$. This limit coincides (a.e.) with the multiplicity $\theta$ assigned in the definition\footnote{The multiplicity $\theta$ can be assumed to be positive by choosing the right orientation for the approximate tangent planes to the current.} of integer cycle, whence the use of the same notation. We can therefore speak of the \textit{multiplicity function $\theta$} as a (everywhere) well-defined function on $\mathcal{C}$.

\medskip

We recall the definitions of \underline{weak-convergence} and \underline{flat-convergence} for a sequence $T_n$ of currents in $\mathcal{R}_m$ to $T \in \mathcal{R}_m$. We remark, however, that the notions of weak-convergence and flat-convergence turn out to be equivalent for integral currents of equibounded mass and boundary mass (as it is in our case), see \cite{Sim} 31.2 or \cite{Giaquinta} page 516.

We say that $T_n \rightharpoonup T$ \textit{weakly} when we look at the dual pairing with $m$-forms, i.e. if $T_n(\psi) \to T(\psi)$ for any smooth and compactly supported $m$-form $\psi$.

$T_n \to T$ \textit{in the Flat-norm} if the quantity $\mathcal{F}(T-T_n):=\inf\{M(A)+M(B): T-T_n=A+\p B, A \in \mathcal{R}_m, B \in \mathcal{R}_{m+1}\}$ goes to $0$ as $n\to \infty$.

\medskip

The fact that $\displaystyle \frac{M(C \res B_r(x_0))}{r^2}$ is monotonically non-increasing as $r \downarrow 0$ gives that, for $r\leq r_0$, we are dealing with a family of currents $\{C_{x_0,r}\}$ which are boundaryless and equibounded in mass; by Federer-Fleming's compactness theorem\footnote{See \cite{Giaquinta} page 141. }, there exist a sequence $r_n \to 0$ and a rectifiable boundaryless current $C_\infty$ such that
\[C_{x_0,r_n} \to C_\infty \text { in Flat-norm.}\]
$C_\infty$ turns out to be a cone (a so called tangent cone to $C$ at $x_0$) with density at the origin the same as the density of $C$ at $x_0$ and calibrated by $\omega_{x_0}$ (see \cite{HL} section II.5); being $J_{x_0}$-holomorphic this cone must be a sum of $J_{x_0}$-holomorphic planes, so $C_\infty= \oplus_{i=1}^Q D_i$, where the $D_i$-s are (possibly coinciding) Special Legendrian disks.
An important question for regularity issues is to know whether this tangent cone is unique or not, or, in other words, if $C_\infty$ is independent of the chosen $\{r_n\}$: the answer happens to be positive in our situation. We are going to give a self-contained proof of it in the next section (theorem \ref{thm:uniqtangcone}) based on the tools from this section.

\medskip

What kind of geometric information can we draw from the existence of a tangent cone? The following lemma shows that, considering a blown-up sequence $C_{r_n, x_0}$ tending to one possible tangent cone $C_\infty$, we can fix a conic neighbourhood of $C_\infty$, as narrow as we want, and if we neglect a ball around zero of any radius $R<1$ the restrictions of $C_{r_n, x_0}$ to the annulus $B_1 \setminus B_R$ are supported in the chosen neighbourhood for $n$ large enough.

We can assume without loss of generality to perform the blow-up analysis about the point $0$.

\begin{oss}
It is a standard fact that two distinct sequences $C_{r_n, 0}$ and $C_{\rho_n, 0}$ must tend to the same tangent cone if $a \leq \frac{r_n}{\rho_n} \leq b$ for some positive numbers $a$ and $b$. See \cite{Morgan}.
\end{oss}

\begin{lem}
\label{lem:cono0}
Let $C$ be a Special Legendrian cycle with  $0 \in C$ and let $0<R<1$. With the above notations, let ${\rho_n}\to 0$ be such that $C_{\rho_n, 0} \rightharpoonup C_\infty = \oplus_{i=1}^Q D_i $. Denote by $A_R$ the annulus $\{x\in B_1, |x|\geq R\}$ and by $E_{\eps}$ the set $\{x\in B_1, dist(x, C_\infty) \leq \eps |x| \}$. Then, for any ${\eps} >0$, there is $n_0 \in \N$ large enough such that
\[spt C_{\rho_n, 0} \cap A_R \subset E_{\eps} \]
for $n\geq n_0$.
\end{lem}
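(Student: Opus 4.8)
The plan is to argue by contradiction. The two ingredients are, on one hand, the \emph{lower} density bound that the monotonicity formula forces at every point of $spt\, C_{\rho_n,0}$ and, on the other hand, the \emph{vanishing} of the mass of $C_{\rho_n,0}$ on small balls staying away from $spt\, C_\infty$; the latter is the place where the semi-calibration is used, since without it cancellation of oppositely oriented sheets could occur in the limit. So assume the conclusion fails: there are $\eps>0$, a subsequence (not relabelled) and points $p_n\in spt\, C_{\rho_n,0}\cap A_R$ with $dist(p_n,C_\infty)>\eps\,|p_n|\geq\eps R$. By compactness of $\overline{A_R}$ we may pass to a further subsequence so that $p_n\to p_\infty$ with $|p_\infty|\geq R$ and $dist(p_\infty,C_\infty)\geq\eps R>0$. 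Put $\delta:=\eps R/4$, so that $\overline{B_{2\delta}(p_\infty)}\cap spt\, C_\infty=\emptyset$, hence $\|C_\infty\|(B_{2\delta}(p_\infty))=0$, while $B_\delta(p_n)\subset B_{2\delta}(p_\infty)$ for $n$ large. (The dilated currents $C_{\rho_n,0}$ are defined on balls of radius $\rho_n^{-1}r_0\to\infty$ and $C_\infty$ on all of $\R^5$, so letting these balls reach slightly outside $B_1$ causes no harm; we stay inside $B_2$.)

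For the lower bound, since $p_n\in spt\, C_{\rho_n,0}$ the density of $C_{\rho_n,0}$ at $p_n$ is $\geq1$ (the multiplicity of an integral cycle is a positive integer a.e.\ on the carrier and, by the monotonicity formula recalled before the lemma, upper semicontinuous, hence $\geq1$ on all of $spt$). Applying the monotonicity formula to $C_{\rho_n,0}$ — which, read in the dilated normal coordinates, holds with a multiplicative constant tending to $1$ as $n\to\infty$ — gives, for all $n$ large,
\[M\!\left(C_{\rho_n,0}\res B_\delta(p_n)\right)\ \geq\ \tfrac{\pi}{2}\,\delta^2,\qquad\text{hence}\qquad M\!\left(C_{\rho_n,0}\res B_{2\delta}(p_\infty)\right)\ \geq\ \tfrac{\pi}{2}\,\delta^2\ >\ 0.\]

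For the matching upper bound I would freeze the semi-calibration at the blow-up point: let $\omega_0:=\omega(0)$, a \emph{constant} $2$-form on $\R^5$. Since $\omega$ is smooth, $\|\omega_0\|^\ast=1$ and $\langle\omega,\vec{C}\rangle=1$ a.e., the dilation forces $\langle\omega_0,\vec{C}_{\rho_n,0}\rangle=1+O(\rho_n)$ on $B_2$, while $M(C_{\rho_n,0}\res B_2)$ is uniformly bounded by monotonicity. Choosing a smooth $\phi$ with $\chi_{B_\delta(p_\infty)}\leq\phi\leq\chi_{B_{2\delta}(p_\infty)}$ and recalling that $C_\infty$ is a sum of $J_0$-holomorphic disks (so $\langle\omega_0,\vec{C}_\infty\rangle=1$ a.e.) with $spt\,\phi\cap spt\, C_\infty=\emptyset$, the weak convergence $C_{\rho_n,0}\rightharpoonup C_\infty$ yields
\[M\!\left(C_{\rho_n,0}\res B_\delta(p_\infty)\right)\ \leq\ \int\phi\,d\|C_{\rho_n,0}\|\ =\ C_{\rho_n,0}(\phi\,\omega_0)+O(\rho_n)\ \longrightarrow\ C_\infty(\phi\,\omega_0)\ =\ \int\phi\,d\|C_\infty\|\ =\ 0 .\]
Since $B_\delta(p_n)\subset B_{2\delta}(p_\infty)$, this contradicts the lower bound of the previous paragraph, and the lemma follows.

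The step I expect to be the real obstacle is precisely the local convergence of masses used in the last display: for a generic weakly convergent sequence of integral currents, mass is only lower semicontinuous, so $M(C_{\rho_n,0}\res B_\delta(p_\infty))\to\|C_\infty\|(B_\delta(p_\infty))=0$ cannot be taken for granted — sheets with opposite orientation might pile up near $p_\infty$. The semi-calibration is exactly what excludes this: it pins $\langle\vec{C}_{\rho_n,0},\omega_0\rangle$ to be $1+o(1)$, so that $\|C_{\rho_n,0}\|$ agrees up to $o(1)$ with the weakly continuous linear functional $\psi\mapsto C_{\rho_n,0}(\psi\,\omega_0)$, and weak convergence of currents upgrades to convergence of masses on sets disjoint from $spt\, C_\infty$. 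The remaining verifications — that the monotonicity constant for the dilated currents tends to $1$ (because in dilated normal coordinates the metric converges in $C^2$ to the Euclidean one), and the elementary estimate $\langle\omega_0,\vec{C}_{\rho_n,0}\rangle=1+O(\rho_n)$ — are routine.
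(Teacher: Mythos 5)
Your proof is correct and follows essentially the same strategy as the paper's: a lower mass bound at the bad points from the monotonicity formula, against an upper bound obtained by converting mass into the weakly continuous pairing with the (frozen) calibrating form, which forces the mass near a limit point off $spt\,C_\infty$ to vanish. The only cosmetic difference is that the paper performs an extra dilation by $|x_n|$ to place the bad point on $\partial B_1$ before applying monotonicity, whereas you work directly with $C_{\rho_n,0}$ at the points $p_n$ in the annulus; both variants rest on the same two ingredients.
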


\begin{proof}[\bf{proof of lemma \ref{lem:cono0}}]
Arguing by contradiction, we assume the existence of $\eps_0 >0$ such that 
\[\forall n \;\;\exists x_n \in spt C_{\rho_n, 0} \cap E_{{\eps}_0} ^c \cap A_R.\]
Recall that the sequence $C_{\rho_n |x_n|, 0}$ also converges weakly to the same tangent cone $C_\infty$ since $\displaystyle R \leq \frac{\rho_n |x_n|}{\rho_n} \leq 1$. From the monotonicity formula we have
\[M\left (C_{\rho_n |x_n|, 0}\; \res \; B_{\frac{{\eps}_0}{2}}\left(\frac{x_n}{|x_n|}\right )\right) \geq \frac{\pi{\eps}_0^2}{4}.\]
By compactness, modulo extraction of a subsequence, we can assume that $\displaystyle \frac{x_n}{|x_n|} \to x_\infty \in \partial B_1 \cap \overline{E_{{\eps}_0} ^c}.$ Then, since for $n$ large enough $B_{\frac{3{\eps}_0}{4}}(x_\infty) \supset B_{\frac{{\eps}_0}{2}}\left(\frac{x_n}{|x_n|}\right)$, we get
\[M\left (C_{\rho_n |x_n|, 0} \; \res \; B_{\frac{3{\eps}_0}{4}}(x_\infty)\right) \geq \frac{\pi{\eps}_0^2}{4}.\]
Recall that, from the semi-calibration property, we have
\[M\left (C_{\rho_n |x_n|, 0} \; \res \; B_{\frac{3{\eps}_0}{4}}(x_\infty)\right) = \left (C_{\rho_n |x_n|, 0} \; \res \; B_{\frac{3{\eps}_0}{4}}(x_\infty)\right)\left ( \frac{id}{\rho_n |x_n|}^* \omega\right );\]
besides
\[\frac{id}{\rho_n |x_n|}^* \omega \stackrel{C^\infty (B_1)}{\longrightarrow} \omega_0\]
as $n \to \infty$, where $\omega_0$ is the constant $2$-form $\omega(0)$.
Putting all together, we can write (the first equality expresses the fact that $\omega_0$ is a calibration for $C_\infty$)
\[M\left (C_\infty \; \res \; B_{\frac{3{\eps}_0}{4}}(x_\infty)\right) = \left (C_\infty \; \res \; B_{\frac{3{\eps}_0}{4}}(x_\infty)\right)\left (\omega_0 \right ) =\]\[= \lim_n \left (C_{\rho_n |x_n|, 0} \; \res \; B_{\frac{3{\eps}_0}{4}}(x_\infty)\right) (\omega_0) = \lim_n \left (C_{\rho_n |x_n|, 0} \; \res \; B_{\frac{3{\eps}_0}{4}}(x_\infty)\right)\left ( \frac{id}{\rho_n |x_n|}^* \omega\right ) =\]

\begin{equation}
\label{eq:computationcono0}
= \lim_n M\left (C_{\rho_n |x_n|, 0} \; \res \; B_{\frac{3{\eps}_0}{4}}(x_\infty)\right) \geq \frac{\pi{\eps}_0^2}{4},
\end{equation}

which contradicts the fact that $\displaystyle spt C_\infty \cap B_{\frac{3{\eps}_0}{4}}(x_\infty) = \emptyset$.
\end{proof}

\medskip
We need some more tools from intersection theory. For the theory of intersection and of the Kronecker index we refer to \cite{Giaquinta}, chap.5, sect. 3.4. We recall the definition of the index relevant to our case. 

Let $f:\mathbb{R}^5 \times \mathbb{R}^5 \to \mathbb{R}^5$ be the function $f(x,y)=x-y$. The \textit{Kronecker intersection index} $k(S,T)$ for two currents of complementary dimensions $S \in \mathcal{R}_k(\R^5), T \in \mathcal{R}_{5-k}(\R^5)$ is defined under the following conditions:
\begin{equation}
\label{eq:noncrossingboundaries}
spt S \cap spt(\p T)=\emptyset \text{ and } spt T \cap spt(\p S)=\emptyset,
\end{equation}
which imply
\[0 \notin f(spt(\partial(S \times T))).\] 
Then there is an $\eps>0$ such that $B_{\eps}(0) \cap f(spt(\partial(S \times T))) = \emptyset$. By the constancy theorem (\cite{Giaquinta} page 130) we can define the index $k(S, T)$ as the only number such that\footnote{We are using $f_*$ to denote the push-forward under $f$; in \cite{Giaquinta} the notation is $f_\sharp$. 

The brackets $\llbracket B_{\eps}(0) \rrbracket$ denote the current of integration on $B_{\eps}(0)$.}
\[f_*(S \times T) \res B_{\eps}(0)= k(S, T)\llbracket B_{\eps}(0) \rrbracket.\]
$k(S, T)$ turns out to be an integer; when $S$ and $T$ are standard submanifolds $k(S, T)$ just counts intersections with signs.

\medskip

In the following lemma we focus on a chosen sequence $C_{\rho_n, 0}$ converging to a possible cone $C_\infty = \oplus_{i=1}^Q D_i $. For notational convenience we rename the sequence $C_n$ and the limit $C$.

\begin{lem}
\label{lem:limcon}
Let $C_n \rightharpoonup C$ in $B_1$. Take $\Sigma$ to be any 3-surface such that $\Sigma \cap C \cap \partial B_1 =\emptyset$. Then, for all $n$ large enough, $k(C_n, \Sigma) =k(C, \Sigma)$, where $k$ is the Kronecker index just defined. 
\end{lem}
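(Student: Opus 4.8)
The plan is to read off the Kronecker index $k(S,\Sigma)$, for $S$ any of the $2$-currents at hand, as the constant integer multiplicity of the top-dimensional current $f_*(S\times\Sigma)$ on a fixed small ball $B_{\epsilon_0}(0)$, to show that this multiplicity is stable under $C_n\rightharpoonup C$, and then to invoke that an integer-valued sequence converging to an integer is eventually constant.

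\textbf{Step 1 (the index $k(C_n,\Sigma)$ is defined for $n$ large, with a $\epsilon_0$ uniform in $n$).} First I would make sure the index makes sense. Both $\partial(C_n\res\overline{B_1})$ and $\partial(\Sigma\res\overline{B_1})$ are carried by $\partial B_1$, so the non-crossing requirements $(\ref{eq:noncrossingboundaries})$ for the pair $(C_n,\Sigma)$ reduce to disjointness statements between $spt(C_n)\cap\partial B_1$ and $\Sigma\cap\partial B_1$. The hypothesis $\Sigma\cap C\cap\partial B_1=\emptyset$ says the two compact subsets $spt(C)\cap\partial B_1$ and $\Sigma\cap\partial B_1$ of the sphere lie at some positive distance $2\eta$. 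On the other hand, applying Lemma \ref{lem:cono0} on the annuli $\{R\le|x|<1\}$ and letting $R\uparrow 1$ (so that the slices of $C_n$ close to $\partial B_1$ all sit inside the conic neighbourhood $E_\epsilon$ of $C$), one obtains that $spt\big(\partial(C_n\res\overline{B_1})\big)$ lies in the $\epsilon$-neighbourhood of $spt(C)\cap\partial B_1$ once $n$ is large. Choosing $\epsilon<\eta$ gives $(\ref{eq:noncrossingboundaries})$ for $(C_n,\Sigma)$, and moreover keeps $0$ at distance $\ge\epsilon_0>0$ from $f\big(spt(\partial(C_n\times\Sigma))\big)$ for some $\epsilon_0$ independent of the large index $n$; shrinking $\epsilon_0$ one may assume it also works for $C$. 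Hence on $B_{\epsilon_0}(0)$ one has $f_*(C_n\times\Sigma)=k(C_n,\Sigma)\,\llbracket B_{\epsilon_0}(0)\rrbracket$ and $f_*(C\times\Sigma)=k(C,\Sigma)\,\llbracket B_{\epsilon_0}(0)\rrbracket$.

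\textbf{Step 2 (mass-continuity of $S\mapsto f_*(S\times\Sigma)$).} Since $C_n\rightharpoonup C$ with equibounded masses and boundary masses (monotonicity), $C_n\to C$ in the flat norm, so I would write $C_n-C=A_n+\partial B_n$ with $A_n\in\mathcal R_2$, $B_n\in\mathcal R_3$ compactly supported and $M(A_n)+M(B_n)\to 0$. Using the product rule $(\partial B_n)\times\Sigma=\partial(B_n\times\Sigma)+B_n\times\partial\Sigma$ together with the fact that $f_*(B_n\times\Sigma)$ is a $6$-dimensional current in $\mathbb R^5$, hence vanishes, one gets
\[f_*(C_n\times\Sigma)-f_*(C\times\Sigma)=f_*(A_n\times\Sigma)+f_*(B_n\times\partial\Sigma),\]
whose mass is $\le (\mathrm{Lip}\,f)^5\big(M(A_n)M(\Sigma)+M(B_n)M(\partial\Sigma)\big)\to 0$, $\Sigma$ being a fixed $3$-surface of finite mass (and $\partial\Sigma$ carried by $\partial B_1$). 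So $f_*(C_n\times\Sigma)\to f_*(C\times\Sigma)$ in mass, a fortiori when evaluated on any fixed form.

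\textbf{Step 3 (conclusion) and the main difficulty.} I would then pick $\psi=h\,dx^1\wedge\dots\wedge dx^5$ with $h\in C^\infty_c(B_{\epsilon_0}(0))$ and $\int h=1$; evaluating the currents of Step 1 on $\psi$ and using Step 2, $k(C_n,\Sigma)=f_*(C_n\times\Sigma)(\psi)\to f_*(C\times\Sigma)(\psi)=k(C,\Sigma)$, and since both sides are integers this forces $k(C_n,\Sigma)=k(C,\Sigma)$ for all $n$ large. The only genuinely delicate step is Step 1: one has to rule out that the non-crossing condition $(\ref{eq:noncrossingboundaries})$ degenerates along the sequence — i.e. that $f\big(spt(\partial(C_n\times\Sigma))\big)$ creeps towards $0$ — and this is exactly where the hypothesis $\Sigma\cap C\cap\partial B_1=\emptyset$, combined with the support-localization of blow-ups from Lemma \ref{lem:cono0}, is essential. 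Everything else (the product rule, the vanishing of the over-dimensional pushforward, the mass bounds) is routine.
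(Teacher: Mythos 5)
Your proposal is correct and follows essentially the same route as the paper's proof: flat-norm decomposition $C_n-C=A_n+\partial B_n$, uniform well-posedness of the index near $\partial B_1$ obtained from Lemma \ref{lem:cono0} together with the hypothesis $\Sigma\cap C\cap\partial B_1=\emptyset$, and collapse of the mass of $f_*\bigl((C_n-C)\times\Sigma\bigr)$ forcing the integer multiplicity on the fixed ball $B_{\epsilon_0}(0)$ to stabilize. The only (harmless) variation is in the boundary term: the paper disposes of $k(\partial R_n,\Sigma)$ exactly, via the intersection identity $\partial R_n\cap\Sigma=-\partial(R_n\cap\Sigma)$ evaluated on the constant $1$, whereas you dispose of it asymptotically by noting that the over-dimensional pushforward $f_*(B_n\times\Sigma)$ vanishes and bounding the mass of $f_*(B_n\times\partial\Sigma)$ by $M(B_n)M(\partial\Sigma)\to 0$; both arguments are valid.
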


\begin{proof}[\bf{proof of lemma \ref{lem:limcon}}]
Define $T_n := C-C_n$. $T_n \to 0$ in the Flat-norm of $B_1$, so we can write $T_n = S_n + \partial R_n$, with $M(T_n)+M(S_n) \to 0$, where $S_n \in \mathcal{R}_2$ and $R_n \in \mathcal{R}_3$.
From the hypothesis on $\Sigma$ we can choose $\eps >0$ small enough to ensure that $\Sigma \cap E_{\eps} \cap A_R = \emptyset$, where $E_{\eps} \cap A_R = \{x\in B_1, |x|\geq R, dist(x, C) \leq \eps |x| \}$, for some suitable $0<R<1$. For all $n$ big enough, from lemma \ref{lem:cono0} , we get that $spt \; T_n \cap A_R \subset E_{\eps}$; in particular, the condition on the boundaries of $\Sigma$ and $C$ is fulfilled and the intersection index $k(T_n, \Sigma)$ is well-defined.

Denote by $\tau_a \Sigma$, as in \cite{Giaquinta}, the push-forward $(\tau_a)_*[\Sigma]$ of $\Sigma$ by the translation map $\tau_a$, where $a$ is a vector. The Kronecker index is invariant by homotopies keeping the boundaries condition, so we can assume that all the intersections we will deal with are well defined as integer 0-dim rectifiable currents: in fact, for a fixed $n$, the intersection $T_n \cap \tau_a \Sigma$ exists for a.e. $a$, and $n$ runs over a countable set.
Obviously $$k(C_n - C, \Sigma)=k(S_n, \Sigma)+ k(\partial R_n, \Sigma);$$ we are going to show that both terms on the r.h.s. are zero.

From \cite{Giaquinta} we have that ($k$ counts the points of intersection with signs)
\[k(\partial R_n, \Sigma) = (\partial R_n \cap \Sigma )(1).\] On the other hand,
\[\partial R_n \cap \Sigma = R_n \cap \partial \Sigma - \partial (R_n \cap \Sigma) = - \partial (R_n \cap \Sigma)\]
since $\partial \Sigma = 0$ in $B_1$. So
\[\partial (R_n \cap \Sigma) (1) = (R_n \cap \Sigma) (d 1)=0,\] which implies $k(\partial R_n, \Sigma)=0$.
\newline Consider now $k(S_n, \Sigma)$ and recall that $\partial S_n = \partial T_n$. We have that $spt \partial S_n \cap \Sigma = \emptyset$ and $spt  S_n \cap \partial\Sigma = \emptyset$, so $0 \notin f(spt(\partial(S_n \times \Sigma)))$ and this index is well-defined and given by
\[f_*(S_n \times \Sigma) = k(S_n, \Sigma)\llbracket B_{\eps}(0) \rrbracket, \]
where $f:\mathbb{R}^5 \times \mathbb{R}^5 \to \mathbb{R}^5$ is $f(x,y)=x-y$ and $\eps$ is such that $B_{\eps} \cap f(spt(\partial(S_n \times \Sigma))) = \emptyset$; thanks to lemma \ref{lem:cono0}, $\eps$ can be chosen independently of $n$. So, for a fixed $\eps$, we have that

\begin{equation}
\label{eq:indextozero}
f_*(S_n \times \Sigma) = k(S_n, \Sigma)\llbracket B_{\eps}(0) \rrbracket 
\end{equation}
holds for all $n$ large enough. By assumption we know that $M(S_n) \to 0$, therefore $M(S_n \times \Sigma) \to 0$ and $M(f_*(S_n \times \Sigma)) \to 0$ since $f$ is Lipschitz; but then, for $\eps$ fixed and $k \in \mathbb{N}$, the only possibility for the r.h.s. of (\ref{eq:indextozero}) to go to zero in mass-norm is that eventually $k(S_n, \Sigma) = 0$ . So we can conclude that $k(T_n, \Sigma)=0$ for all large enough $n$.
\end{proof}

\begin{oss}
\label{oss:remmult}
If $Q$ is the multiplicity at $0$ and $\Sigma = \Sigma_0$ such that $\Sigma_0$ is transversal to all $D_i$ that constitute the tangent cone $C$, then $k(C_i)=Q$ for $i$ greater than some $i_0$. By homotopy, this also holds for small translations $\tau_a\Sigma_0$ (the condition of non-intersection at the boundaries must be kept during the homotopy).
\end{oss}

\section{Uniqueness of the tangent cone - easy case of non-accumulation - Lipschitz estimate}

The uniqueness of the tangent cone at an arbitrary point of the Special Legendrian follows from the more general result proved in \cite{PR} for general semi-calibrated integral 2-cycles. In this section, using the tools developed in the previous sections, we will give a self-contained proof of this uniqueness in our situation. The section then continues with proofs in the same flavour of the two other results quoted in the title of the section.

\medskip

We shall use in the following lemma a common notation for both families of foliations $(\Sigma_p^X)_p$ ($X$ fixed) and $(\Sigma_p^X)_X$ ($p$ fixed), respectively the parallel and polar families constructed in (\ref{eq:parallelfoliations}) and in lemma \ref{lem:polarfoliations}. We shall therefore denote by $\Sigma^a$ a family of 3-surfaces foliating and open set of $\mathbb{R}^5$, parametrized by $a$ which belongs to a 2-dimensional open ball $A$. 

$C$ is our Special Legendrian current in $\mathbb{R}^5$.

\begin{lem}
\label{lem:technical}
Let $A$ be an open ball in $\mathbb{R}^2$ (or any regular open set), and consider an open set $W$ in $\mathbb{R}^5$ of the form ($0<r<R$)
\[ W= (\cup_{a \in A} \Sigma^a ) \cap (B_R - \overline{B_r}).\] 
Assume that $C \res W \neq 0$ and that $\partial (C \res \overline{W}) \subset \cup_{a \in \partial A} \Sigma^a$. 
Then $k(C \res W, \Sigma^a)\geq 1$ for any $a\in A$.
\end{lem}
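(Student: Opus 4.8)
The plan is to exploit two ingredients: the positive-intersection property of the foliating $3$-surfaces $\Sigma^a$ (Corollary after (\ref{eq:parallelfoliations}), via Lemma \ref{lem:positiveintersectionplanes}) and the homotopy invariance of the Kronecker index under deformations that keep the boundary non-crossing condition (\ref{eq:noncrossingboundaries}). First I would observe that the hypothesis $\partial(C\res\overline W)\subset\cup_{a\in\partial A}\Sigma^a$ together with the fact that the $\Sigma^a$ foliate $W$ guarantees, for every fixed $a\in A$, that $\mathrm{spt}\,\partial(C\res W)\cap\Sigma^a=\emptyset$ (since $\Sigma^a$ with $a$ interior is disjoint from all $\Sigma^{a'}$, $a'\in\partial A$) and that $\mathrm{spt}(C\res W)\cap\partial\Sigma^a=\emptyset$ after shrinking/arranging $\Sigma^a$ so its relevant boundary sits on $\partial B_R\cup\partial B_r$ or outside $\overline W$; hence $k(C\res W,\Sigma^a)$ is well-defined and, by the constancy-type argument, is independent of $a\in A$ (one connects any two interior parameters by a path in $A$, and along the path the non-crossing condition persists because $C\res W$ has boundary only over $\partial A$). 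So it suffices to compute $k(C\res W,\Sigma^a)$ for one convenient $a$, or equivalently to show it is $\geq 1$ for some $a$.

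\textbf{Reduction to a single intersection count.} Next I would use $C\res W\neq 0$: pick a point $p$ in the support of $C\res W$ at which the approximate tangent plane $T_pC$ exists (this holds $\mathcal H^2$-a.e. on the carrier, and the carrier of $C\res W$ has positive $\mathcal H^2$ measure since the current is nonzero). Let $a_0\in A$ be the unique parameter with $p\in\Sigma^{a_0}$. If $T_pC$ happens to be transversal to $T_p\Sigma^{a_0}$, then near $p$ the intersection $C\res W\cap\Sigma^{a_0}$ contributes, by the positive-intersection corollary, a strictly positive integer to $k(C\res W,\Sigma^{a_0})$ (each transversal intersection point carries sign $+1$, with multiplicity equal to the integer multiplicity of $C$ there, which is $\geq 1$); since there are no cancellations, $k(C\res W,\Sigma^{a_0})\geq 1$. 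If instead $T_pC$ is tangent to $\Sigma^{a_0}$, I would perturb the parameter slightly to $a$ near $a_0$: because the $\Sigma^a$ genuinely foliate (their tangent spaces sweep out a full neighbourhood transverse direction, cf. the Jacobian computation in Lemma \ref{lem:firstfoliation}), for a.e.\ nearby $a$ the slice $C\res W\cap\Sigma^a$ is a well-defined $0$-current and is nonempty near the piece of $C$ that crossed the foliation leaf through $p$; again positivity of intersections forces $k(C\res W,\Sigma^a)\geq 1$. Combined with the $a$-independence from the first paragraph, this yields $k(C\res W,\Sigma^a)\geq 1$ for every $a\in A$.

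\textbf{Making the slicing rigorous.} The cleaner way to organize the above, which I would actually write, is: since $C\res W$ is a $2$-cycle relative to its boundary and the $\Sigma^a$ foliate $W$ by $3$-surfaces whose "transverse" $2$-dimensional parameter is $A$, define the projection-type map $\pi:W\to A$ sending a point to the unique $a$ with the point in $\Sigma^a$; then $k(C\res W,\Sigma^a)$ equals (up to sign, fixed to be $+1$ by our orientation conventions) the degree of $\pi$ restricted to $C\res W$ over the value $a$, i.e.\ the slice mass with signs $\pi_*(C\res W)=k\,\llbracket A\rrbracket$ near interior points by constancy. Positivity of intersection says every regular slice point counts $+1$ times a positive integer multiplicity, so $k=\deg\geq 0$ with equality iff the slice is empty for a.e.\ $a$, which would force $\pi_*(C\res W)=0$, i.e.\ $C\res W$ homologous into the boundary leaves; but $\partial(C\res W)$ lies over $\partial A$, so $\pi_*(C\res W)$ has no boundary in $A$ and being $0$ would be consistent only if $C\res W$ itself projects to something null-homologous — here I would instead directly contradict $C\res W\neq0$ using the positive-intersection property to see the slice cannot vanish for a.e.\ $a$ (a nonzero positively-oriented $2$-current transverse to a foliation by $3$-surfaces must meet a.e.\ leaf).

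\textbf{Main obstacle.} The delicate point is the last one: ruling out that $k=0$, equivalently showing a nonzero special Legendrian $2$-current in $W$ cannot be "squeezed between the leaves" $\Sigma^a$ so as to miss almost every leaf. One must be careful that tangential (non-transversal) intersections do not spoil the count and that the slicing $C\res W\cap\Sigma^a$ is genuinely defined for a.e.\ $a$ with the expected boundary formula $\partial(C\res W\cap\Sigma^a)=\pm(\partial(C\res W))\cap\Sigma^a$; this is where one invokes the coarea/slicing theory for currents relative to the submersion $\pi$ together with the positivity from Lemma \ref{lem:positiveintersectionplanes} to conclude the slice has strictly positive mass for a.e.\ $a$, hence $k(C\res W,\Sigma^a)\geq1$.
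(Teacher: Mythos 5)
Your overall architecture matches the paper's: establish that $k(C\res W,\Sigma^a)$ is well-defined and independent of $a\in A$ by homotopy invariance (using that the boundary of $C\res\overline W$ sits only over $\partial A$), then reduce to producing a single $\overline a$ with a nonempty transversal intersection, where positivity of intersection (Lemma \ref{lem:positiveintersectionplanes}) forces $k\geq 1$. That part is fine.

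However, there is a genuine gap at exactly the point you flag as the "main obstacle", and the tools you propose there do not close it. The case to exclude is that the slice $(C\res W)\cap\Sigma^a$ vanishes for a.e.\ $a$. If that happens, the $\mathcal H^2$-a.e.\ defined tangent planes of $C\res W$ must lie inside the leaves, so $C\res W$ is locally \emph{contained} in a single $3$-surface $\Sigma^{a_0}$. Your assertion that "a nonzero positively-oriented $2$-current transverse to a foliation by $3$-surfaces must meet a.e.\ leaf" begs the question: a $2$-current sitting inside one $3$-dimensional leaf is not transverse to the foliation and meets only that one leaf, and neither coarea/slicing nor the positivity of transversal intersections can rule this configuration out. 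Likewise, in your perturbation step you assume $C$ "crossed" the leaf through $p$, which is precisely what fails in the containment case. The paper resolves this with a geometric input you do not use: by Remark \ref{oss:coincidesphere}, each $\Sigma^a$ is foliated by Special Legendrian $2$-spheres and the Special Legendrian structure restricted to $\Sigma^a$ is integrable, so a Special Legendrian cycle contained in $\Sigma^{a_0}$ must locally coincide with one of these spheres; such a sphere restricted to the annulus $B_R-\overline{B_r}$ necessarily produces boundary on $\partial B_r$ and $\partial B_R$, contradicting the hypothesis that $\partial(C\res\overline W)\subset\cup_{a\in\partial A}\Sigma^a$. Without this step (or an equivalent substitute), the proof is incomplete.
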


\begin{proof}[\bf{Sketch of the proof of lemma \ref{lem:technical}}]
There are some technicalities that we are going to skip, we will give the following sketch.
The main remark here is that there exists $\overline {a} \in A$ s.t. $(C \res W ) \cap \Sigma^ {\overline{a}}$ exists, is transversal and non-zero. Indeed, from general intersection theory, $(C \res W ) \cap \Sigma^a$ exists and is transversal for almost any $a \in A$; now, if all these $a$-s would lead to a zero-intersection, then each tangent plane to $C \res W$ (tangent planes exist $\mathcal{H}^2$-a.e.) should be contained in one $\Sigma^a$, and this would imply that $C$ is locally contained in one $\Sigma^a$, for an $a$ in the interior of $A$ due to the assumption $C\res W \neq 0$; but the structure of $\Sigma$ is made in such a way that the Special Legendrian $C$ should coincide with one of the Special Legendrian spheres that build $\Sigma$ up; then $C \res W$ must have boundary on $\p B_r$ and $\p B_R$, which is a contradiction.
\newline Once we have the desired $\overline{a}$, we can write $k(C \res W, \Sigma^{\overline{a}})\geq 1$ thanks to lemma \ref{lem:positiveintersectionplanes}; but then by homotopy the same holds for $k(C \res W, \Sigma^a)\geq 1$ independently of $a$, since the boundaries of $\Sigma^a$ and $C\res W$ do not cross during the homotopy.
\end{proof}

\textbf{Uniqueness of the tangent cone}. We start with the following:
\begin{lem}
\label{lem:uniqtangconeI}
Take any point $x_0$ of a Special Legendrian cycle $C$ and be $Q$ its multiplicity. Then there exists a unique choice of $n$ distinct Special Legendrian disks $D_1, ... D_n$ going through $x_0$ such that any tangent cone at $x_0$ must be of the form $T_{x_0} C = \oplus_{k=1}^n N_k D_k$, for some $N_k\in \N \setminus{0}$ satisfying $\sum_{k=1}^n N_k=Q$.
\end{lem}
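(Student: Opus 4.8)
The plan is to show that the set of Special Legendrian disks through $x_0$ that can appear in \emph{some} tangent cone is forced, via the positive intersection property, to consist of finitely many disks with prescribed multiplicities, and that these multiplicities are the same for every tangent cone. I would work in normal coordinates centred at $x_0=0$, where any tangent cone is of the form $\oplus_{i} D_i$, a sum of $J_0$-holomorphic (hence Special Legendrian) 2-planes through the origin, as recalled in Section 3. First I would fix one tangent cone $C_\infty=\oplus_{k=1}^n N_k D_k$ obtained as a flat limit of some blow-up sequence $C_{\rho_n,0}$, with the $D_k$ distinct and $N_k\in\N\setminus\{0\}$, $\sum N_k=Q$ (existence of such a representation is clear: group coinciding planes). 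The goal is to show \emph{any other} tangent cone $C'_\infty$, obtained from a different sequence $C_{\sigma_m,0}$, has exactly the same disks $D_k$ with the same multiplicities $N_k$.

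The key mechanism is the Kronecker index together with Lemma \ref{lem:limcon} and Lemma \ref{lem:technical}. Given a disk $D_k$ appearing in $C_\infty$, choose one of the parallel foliations $\{\Sigma_p^{X}\}_p$ constructed in Section \ref{preliminaries} so that the ``central'' leaf $\Sigma_0^{X}$ contains the Special Legendrian sphere tangent to $D_k$ at $0$, and so that for $p$ near $0$ the leaf $\Sigma_p^{X}$ is transversal to all the finitely many disks $D_1,\dots,D_n$ of $C_\infty$ (a generic choice, since there are finitely many disks). By Remark \ref{oss:remmult} and positivity of intersection (Lemma \ref{lem:positiveintersectionplanes}), for $p$ close to $0$ but $p\neq 0$ the index $k(C_\infty, \Sigma_p^{X})$ equals $N_k$ — the leaf through such $p$ meets only the sphere tangent to $D_k$, and it meets it transversally with multiplicity $N_k$ by the blow-up description; for $p=0$ the leaf passes through the common vertex and one instead reads off $\sum_k N_k=Q$ by taking $\Sigma$ transversal to all $D_i$. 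Now, applying Lemma \ref{lem:limcon} to the sequence $C_{\rho_n,0}\rightharpoonup C_\infty$, for $n$ large $k(C_{\rho_n,0},\Sigma_p^{X})=k(C_\infty,\Sigma_p^{X})$, and likewise $k(C_{\sigma_m,0},\Sigma_p^{X})=k(C'_\infty,\Sigma_p^{X})$ for $m$ large. But $C_{\rho_n,0}$ and $C_{\sigma_m,0}$ are dilations of the \emph{same} current $C$; by choosing the scales interleaved (or by a connectedness/compactness argument on the set of scales, using that the index is a homotopy invariant and is locally constant in the scale away from scales where a boundary crossing occurs — here boundaries are empty), the index $k(C_{r,0},\Sigma_p^{X})$ is independent of the small scale $r$. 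Hence $k(C'_\infty,\Sigma_p^{X})=k(C_\infty,\Sigma_p^{X})$ for every admissible leaf $\Sigma_p^{X}$.

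From this equality of indices I would recover the geometric conclusion. Using the polar foliations of Lemma \ref{lem:polarfoliations}, as the ``direction'' $X$ ranges over $\CP^1$ and the base point $p$ over a neighbourhood of $0$ in $L$, the leaves $\Sigma_p^{X}$ sweep out a full neighbourhood of $0$ in $S^5$ minus the Hopf fiber, and each Special Legendrian disk through $0$ is the tangent-at-$0$ sphere of exactly one central leaf $\Sigma_0^{X}$. The function $D\mapsto (\text{index of }C'_\infty\text{ with leaves accumulating onto the sphere tangent to }D)$ detects precisely which disks occur in $C'_\infty$ and with which multiplicity: it is nonzero exactly at the $D$ appearing in $C'_\infty$, with value the corresponding multiplicity, by positivity of intersection (no cancellation). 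Since this function agrees with the one computed from $C_\infty$, we conclude $C'_\infty$ has the same disks $D_k$ with the same $N_k$. The constraint $\sum N_k=Q$ comes from Remark \ref{oss:remmult}. Uniqueness of the collection $\{(D_k,N_k)\}$ as a set then follows, and the lemma is proved.

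The main obstacle I expect is the step asserting that $k(C_{r,0},\Sigma_p^{X})$ is independent of the small scale $r$ — equivalently, that the index does not jump as $r$ varies continuously in $(0,r_0]$. The subtlety is that as $r$ shrinks the support of $C_{r,0}$ can in principle develop points on $\Sigma_p^{X}\cap\partial B_1$, destroying the boundary condition \eqref{eq:noncrossingboundaries} needed to define the homotopy; one must argue, using Lemma \ref{lem:cono0} and the fact that \emph{every} tangent cone at $0$ avoids a fixed conic neighbourhood complement, that a single foliation-leaf can be chosen clear of $\mathrm{spt}\,C_{r,0}\cap\partial B_1$ uniformly for all small $r$ (not just along one blow-up sequence). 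This requires knowing a priori a lower bound on the ``gap'' between the support and the leaf that is uniform in $r$, which is where the compactness of the space of tangent cones and the monotonicity formula must be combined carefully; this is exactly the kind of technicality flagged in the sketch of Lemma \ref{lem:technical}.
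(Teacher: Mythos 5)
Your proposal has a genuine gap at the step you yourself flag as the main obstacle, and the fix you sketch for it is circular. You want to conclude that $k(C_{r,0},\Sigma_p^{X})$ is independent of the scale $r$ by invoking ``the fact that every tangent cone at $0$ avoids a fixed conic neighbourhood complement''. But that uniform-in-$r$ conic containment is precisely Lemma \ref{lem:cono}, which in the paper is \emph{deduced from} Lemma \ref{lem:uniqtangconeI}; before the lemma is proved, only Lemma \ref{lem:cono0} is available, and it controls the support of $C_{r,0}$ only along a single blow-up sequence converging to a single subsequential cone. Without knowing that all subsequential limits share the same support, the set $\bigl\{x/|x| : x\in \mathrm{spt}\,C,\ 0<|x|\le r_0\bigr\}\subset \partial B_1$ (which is what $\mathrm{spt}\,C_{r,0}\cap\partial B_1$ sweeps out as $r$ varies) can a priori fill a large region of $\partial B_1$ --- the ``spiralling cone'' scenario --- and no single leaf $\Sigma_p^X$ can be guaranteed to stay clear of it for all small $r$ simultaneously. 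Compactness of the space of tangent cones plus monotonicity does not exclude this: the set of subsequential limits is connected in the flat topology and could consist of a continuum of cones with rotating supports, which is exactly what must be ruled out. So the asserted scale-independence of the index is not established, and the argument does not close. (A secondary imprecision: a generic leaf $\Sigma_p^X$ transversal to the cone meets \emph{all} the disks $D_j$, so its index against $C_\infty$ is $Q$, not $N_k$; to read off $N_k$ one must first restrict the current to a conic neighbourhood $E^i_\eps$ of $D_k$, as is done later in Theorem \ref{thm:uniqtangcone}. Note also that the lemma as stated only concerns uniqueness of the \emph{supports}; the multiplicities are handled separately in that theorem.)

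The paper avoids the circularity by arguing by contradiction with three nested scales rather than by tracking the index continuously in $r$. If two subsequential cones $C_\infty^{(1)}\ni D_0$ and $C_\infty^{(2)}$ had distinct supports, one fixes a scale $\rho_{i_0}$ at which the current approximates $C_\infty^{(2)}$, so that $k(C_{\rho_{i_0}},\Sigma)=Q$ and the boundary of $C\res B_{\rho_{i_0}}$ lies in a $\delta$-cone around $C_\infty^{(2)}$, disjoint from the $\delta$-cone $E_0^\delta$ around $D_0$. A smaller scale $r_{i_1}$ (from the sequence converging to $C_\infty^{(1)}$) produces nonzero mass of $C$ inside an annulus of $E_0^\delta$; the window $W_2=(B_{\rho_{i_0}}\setminus B_{\rho_{i_2}})\cap E_0^\delta$ then satisfies the hypotheses of Lemma \ref{lem:technical}, which forces $k(C\res W_2,\Sigma)\ge 1$, and positivity of intersection elsewhere yields $k(C\res B_{\rho_{i_0}},\Sigma)\ge Q+1$, a contradiction. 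If you want to repair your write-up, this excess-intersection contradiction is the missing idea; some version of it seems unavoidable, since the uniform conic confinement you rely on is a consequence, not a tool, at this stage.
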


\begin{oss}
This result "almost" gives the uniqueness of the tangent cone. What still is missing, is the fact that the multiplicities $N_k$ are also uniquely determined. This will be achieved in theorem \ref{thm:uniqtangcone}.
\end{oss}

\begin{proof}[\bf{proof of lemma \ref{lem:uniqtangconeI}}]
Assume, without loss of generality, that the point at which we are working is $0$ and be $Q$ its multiplicity. Argue by contradiction: take two tangent cones $C_\infty ^{(1)}$ and $C_\infty ^{(2)}$ having distinct supports, and two blown-up sequences $\{C_{r_i,0}\}$ and $\{C_{\rho_i,0}\}$ converging to each of them (we drop the $0$ from the notation $C_{r,0}$):
\[C_{r_i} \rightharpoonup C_\infty ^{(1)},  \;\;\;\;  C_{\rho_i} \rightharpoonup C_\infty ^{(2)}.\]
Take a positive $\delta$ much smaller than the angular distance 
$$\displaystyle \widehat {C_\infty ^{(1)}, C_\infty ^{(2)}}= \min_{D_i \neq D_j} \widehat {D_i ^{(1)}, D_j^{(2)}}>>\delta >0$$
(the distance is given by the Fubini-Study metric in $\mathbb{CP}^1 \cong \mathbb{P}H_0^4$ and is strictly positive by the contradiction assumption). Moreover assume, without loss of generality, that the disk of $C_\infty ^{(1)}$ on which the minimum is achieved is $D_0$, the disk represented by $[1,0]\in \CP^1$. By abuse of notation we will write $D_0 \in C_\infty ^{(1)}$ to express the fact that $D_0$ is one of the disks that build up the cone $C_\infty ^{(1)}$. Choose $\rho_{i_0}$ such that
\begin{description}
	\item[(i)] for $j \geq i_0$, $\partial (C \res B_{\rho_j})$ is contained in $E_2^\delta$, the $\delta$-conic-neighbourhood of $C_\infty ^{(2)}$ (possible by lemma \ref{lem:cono0});
	\item[(ii)] $k(C_{\rho_j}, \Sigma_0^{[0,1]})= Q$ for any $j \geq i_0$. Remark that we can assume, without loss of generality, that $\Sigma_0^{[0,1]}$ is transversal to $C_\infty ^{(2)}$. By homotopy, it also holds that $k(C_{\rho_j}, \Sigma_0^X)= Q$ for any $j \geq i_0$ and any $\Sigma_0^X$ with $X\in \CP^1$ in a $\delta$-neighbourhood of $[0,1]$. Indeed, the homotopy keeps the condition of non-crossing boundaries expressed in (\ref{eq:noncrossingboundaries}).
\end{description}
Choose now $r_{i_1}<\rho_{i_0}$ such that
\begin{description}
	\item[(iii)] denoting by $E_0^\delta$ the $\delta$-conic-neighbourhood of $D_0$ and by 
	\[W_1=(B_{r_{i_1}} \setminus B_{\frac{r_{i_1}}{2}}) \cap E_0^\delta,\] we have
	\[C \res W_1 \neq 0,\]
	which will be true for $i$ large enough since $C_{r_i} \rightharpoonup C_\infty ^{(1)} \ni D_0$.
\end{description}
Take now $\rho_{i_2} <<\frac{r_{i_1}}{2}$. Denote by $W_2 =(B_{\rho_{i_0}} \setminus B_{\rho_{i_2}}) \cap E_0^\delta \supset W_1$. A.e. $\Sigma_0$ of the polar foliation of $W_2$ born at $0$ is transversal to $C$ and the intersection exists; (i) and (iii) ensure that the boundary and  interior conditions that allow us to use lemma \ref{lem:technical} are satisfied. Then
\[k(C \res B_{\rho_{i_0}}, \Sigma) = k(C \res B_{\rho_{i_1}}, \Sigma) + k(C \res (B_{\rho_{i_0}} \setminus B_{\rho_{i_1}}), \Sigma)= \]\[=k(C_{\rho_{i_1}}, \Sigma) + k(C \res W_2, \Sigma)  + k(C \res ((B_{\rho_{i_0}} \setminus B_{\rho_{i_1}}) \setminus W_2), \Sigma)= \]\[=Q + k(C \res W_2, \Sigma) + k(C \res ((B_{\rho_{i_0}} \setminus B_{\rho_{i_1}}) \setminus W_2), \Sigma) \geq Q+1,\]
from the positivity ($\geq 0$) of intersection in $(B_{\rho_{i_0}} \setminus B_{\rho_{i_1}}) \setminus W_2$ and the strict positiveness ($\geq 1$) guaranteed in $W_2$. This contradicts (ii).
\end{proof}

Now that this "almost uniqueness" of the tangent cone is established, we can improve lemma \ref{lem:cono0} as follows:

\begin{lem}
\label{lem:cono}
Let $\{D_k\}_{k=1}^n$ be the uniquely determined disks on which any tangent cone to $C$ at $0$ must be supported. Let us therefore write $T=\cup_k D_k$ for this well-determined support. Denote by $E_{\eps}$ the cone $\{x\in B_1, dist(x, T) \leq \eps |x| \}$. Then for any ${\eps} >0$ there is $\rho_{\eps}$ small enough such that for any $\rho \leq \rho_{\eps}$ 
\[spt C_{\rho, 0} \cap E_{\eps} ^c = \emptyset.\]
\end{lem}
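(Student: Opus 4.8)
The plan is to upgrade Lemma \ref{lem:cono0} by combining it with the ``almost uniqueness'' just established in Lemma \ref{lem:uniqtangconeI}. The point of Lemma \ref{lem:cono0} was that for a \emph{fixed} blown-up sequence $C_{\rho_n,0}\rightharpoonup C_\infty=\oplus_i D_i$, the supports eventually concentrate in an $\eps$-conic neighbourhood of that particular $C_\infty$, but away from the origin (on an annulus $A_R$). We now know that the \emph{support} $T=\cup_k D_k$ of \emph{every} tangent cone is the same, so morally every blow-up concentrates around the same $T$; we must however (a) remove the restriction to the annulus $A_R$ (i.e. handle the region near $0$ as well) and (b) make the statement uniform over all radii $\rho\le\rho_\eps$, not just along a sequence.

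First I would argue by contradiction: suppose there is $\eps_0>0$ and a sequence $\rho_n\to 0$ and points $x_n\in \operatorname{spt}C_{\rho_n,0}$ with $x_n\in E_{\eps_0}^c$, i.e. $\operatorname{dist}(x_n,T)>\eps_0|x_n|$. By Federer--Fleming compactness, after passing to a subsequence $C_{\rho_n,0}\rightharpoonup C_\infty$ for some tangent cone $C_\infty$; by Lemma \ref{lem:uniqtangconeI} this $C_\infty$ is supported exactly on $T$. Now I distinguish the two scales of $|x_n|$. If (along a further subsequence) $|x_n|$ stays bounded below, say $|x_n|\ge R>0$, then $x_n\in \operatorname{spt}C_{\rho_n,0}\cap A_R\cap E_{\eps_0}^c$, which directly contradicts Lemma \ref{lem:cono0} applied to this subsequence. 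If instead $|x_n|\to 0$, I rescale again: set $\sigma_n:=\rho_n|x_n|$ and consider $C_{\sigma_n,0}=\bigl(\tfrac{x}{|x_n|}\bigr)_\ast C_{\rho_n,0}$. Since $\operatorname{dist}(x_n,T)>\eps_0|x_n|$ and $T$ is a cone, the rescaled point $y_n:=x_n/|x_n|\in \operatorname{spt}C_{\sigma_n,0}$ lies on $\partial B_1$ with $\operatorname{dist}(y_n,T)>\eps_0$. By monotonicity the $C_{\sigma_n,0}$ are boundaryless with equibounded mass, so a subsequence converges to some tangent cone $C'_\infty$, again supported on $T$ by Lemma \ref{lem:uniqtangconeI}; but the mass-concentration / lower-semicontinuity argument of Lemma \ref{lem:cono0} (the displayed computation culminating in \eqref{eq:computationcono0}, using that $\tfrac{id}{\sigma_n}^\ast\omega\to\omega_0$ in $C^\infty(B_1)$ and that $\omega_0$ calibrates $C'_\infty$) forces $M\bigl(C'_\infty\res B_{\eps_0/2}(y_\infty)\bigr)\ge \pi\eps_0^2/4$ for an accumulation point $y_\infty\in\partial B_1$ with $\operatorname{dist}(y_\infty,T)\ge\eps_0$, contradicting $\operatorname{spt}C'_\infty\subset T$. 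This closes the contradiction and yields a $\rho_\eps$ working for the full family $\rho\le\rho_\eps$, since the contradiction hypothesis was against all sufficiently small radii, not just a sequence.

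The main obstacle, and the reason this is not literally Lemma \ref{lem:cono0}, is precisely the passage near the vertex: Lemma \ref{lem:cono0} only controls $\operatorname{spt}C_{\rho_n,0}$ on the annulus $A_R$ and says nothing about $B_R$, because a single blow-up only ``sees'' the cone structure asymptotically. The fix is the double-blow-up $\sigma_n=\rho_n|x_n|$, which converts a bad point close to the origin at scale $\rho_n$ into a bad point on $\partial B_1$ at scale $\sigma_n$; the crucial input that makes this legitimate is that \emph{all} tangent cones (at all these nested scales) share the same support $T$ --- without Lemma \ref{lem:uniqtangconeI} one could only conclude concentration near \emph{some} cone, which might differ at different scales and would not give a single neighbourhood $E_\eps$. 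One should also take a little care that the ratio $\sigma_n/\rho_n=|x_n|$ lies in a fixed compact subinterval of $(0,\infty)$ only after passing to the subsequence where $|x_n|$ is bounded below; in the remaining case $|x_n|\to 0$ one genuinely needs the independent convergence of $C_{\sigma_n,0}$ to a (possibly new) tangent cone, which is exactly what the compactness theorem plus the monotonicity formula provide.
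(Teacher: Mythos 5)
Your proposal is correct and follows essentially the same route as the paper's own proof: argue by contradiction, pass to the rescaled sequence $\sigma_n=\rho_n|x_n|$, use Lemma \ref{lem:uniqtangconeI} to conclude that the limit of $C_{\sigma_n,0}$ is supported on the same $T$ (this is exactly the substitution the paper flags as ``the only modification'' compared to Lemma \ref{lem:cono0}), and then run the mass-concentration computation of \eqref{eq:computationcono0} against the fact that $\operatorname{spt}C'_\infty\subset T$ while $y_\infty\notin E_{\eps_0}$. The case split between $|x_n|\ge R$ and $|x_n|\to0$ is harmless but unnecessary — the rescaling argument of your second case already covers the first, which is why the paper handles both at once.
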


\begin{proof}[\bf{proof of lemma \ref{lem:cono}}]
The proof is similar to the one of lemma \ref{lem:cono0}. Assume the existence of $\eps_0 >0$ and ${\rho_n}\to 0$ contradicting the claim and argue as in the proof of lemma \ref{lem:cono0}. The only modification in the proof consists in using the "almost uniqueness" of the tangent cone at $0$ (lemma \ref{lem:uniqtangconeI}) instead of the condition $\displaystyle R \leq \frac{\rho_n |x_n|}{\rho_n} \leq 1$. If $C_{\rho_n, 0}$ converges to the cone $C_\infty= \oplus_{k=1}^n N_k D_k$, then $C_{\rho_n |x_n|, 0}$ must tend to a limiting cone $\tilde{C}_\infty = \oplus_{k=1}^n \tilde{N}_k D_k$. So the computation in (\ref{eq:computationcono0}) can be performed with $\tilde{C}_\infty$ instead of $C_\infty$, still leading to a contradiction since the supports of $\tilde{C}_\infty$ and $C_\infty$ are the same.
\end{proof}

Now we can complete the proof of the uniqueness of the tangent cone:

\begin{thm}
\label{thm:uniqtangcone}
The tangent cone at any point $x_0$ of a Special Legendrian cycle $C$ is unique.
\end{thm}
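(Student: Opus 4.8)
The plan is to lean on lemma \ref{lem:uniqtangconeI}, which already fixes, for the point under consideration (taken to be $0$), a unique family of distinct Special Legendrian disks $D_1,\dots,D_n$ through $0$ such that every tangent cone at $0$ has the form $\bigoplus_{k=1}^n N_k D_k$ with $N_k\in\N\setminus\{0\}$ and $\sum_k N_k=Q$. Since the support $T=\bigcup_k D_k$ is thereby already independent of the chosen blow-up sequence, what remains is to show that each multiplicity $N_k$ is independent of it as well; I would recover each $N_k$ as a Kronecker index with a suitable foliation $3$-surface, so it is enough to treat $k=1$.

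First I would build the test surface. Pick $q\in D_1$ with $|q|=\tfrac12$ and let $\Sigma$ be one of the foliation $3$-surfaces constructed in Section \ref{preliminaries}, with base point $q$, small enough that (a) $\Sigma$ is transversal to $D_1$ with $\Sigma\cap D_1=\{q\}$, (b) $\Sigma\subset\{\tfrac14<|x|<\tfrac34\}$, and (c) $\partial\Sigma$ lies at some distance $\eps>0$ from $D_1$ — this is automatic, the transversality in (a) forcing $\partial\Sigma$ to sit at distance from $D_1$ comparable to the size of $\Sigma$ — and, the $D_j$ being pairwise distinct, $\Sigma$ is disjoint from every $D_j$ with $j\neq1$ (true once $\Sigma$ is small compared to the angles separating $D_1$ from the other $D_j$, using (b)). Then, by (b)--(c), the index $k(D_j \res B_1,\Sigma)$ is defined for every $j$; it is $0$ for $j\neq1$ (disjoint supports) and $1$ for $j=1$, the unique intersection point $q$ being transversal and, because $D_1$ is a Special Legendrian plane and $\Sigma$ is a foliation surface with the positive intersection property (lemma \ref{lem:positiveintersectionplanes} and its $SU(3)$-invariant consequences), positive. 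Hence for every tangent cone $C_\infty=\bigoplus_k N_k D_k$ at $0$ — for which $k(C_\infty,\Sigma)$ is defined since $T$ meets neither $\partial\Sigma$ nor $\partial B_1$ near $\Sigma$ — one gets $k(C_\infty,\Sigma)=\sum_k N_k\,k(D_k \res B_1,\Sigma)=N_1$.

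Next I would show that $k(C_{\rho,0},\Sigma)$ is a well-defined integer for all small $\rho$ and is independent of $\rho$. For the first assertion: $C_{\rho,0}$ is a cycle, so the condition $spt(\partial C_{\rho,0})\cap\Sigma=\emptyset$ is vacuous, and by lemma \ref{lem:cono} there is $\rho_\eps>0$ with $spt\,C_{\rho,0}$ contained in the $\eps$-cone about $T$ for all $\rho\leq\rho_\eps$, hence disjoint from $\partial\Sigma$ by (c). For the second: $\rho\mapsto C_{\rho,0}$ is weakly (hence, masses being equibounded by the monotonicity formula and boundaries vanishing, flat-) continuous, so the flat-decomposition argument used in the proof of lemma \ref{lem:limcon}, applied to $C_{\rho,0}-C_{\rho',0}$ for $\rho,\rho'$ close with the boundary conditions kept by the uniform confinement just noted, shows $\rho\mapsto k(C_{\rho,0},\Sigma)$ is locally constant on $(0,\rho_\eps]$, hence constant. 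Combining: if $\rho_i\to0$ and $\rho_i'\to0$ produce tangent cones $C_\infty^{(1)}=\bigoplus_k N_k^{(1)}D_k$ and $C_\infty^{(2)}=\bigoplus_k N_k^{(2)}D_k$, then lemma \ref{lem:limcon} gives, for $i$ large,
\[N_1^{(1)}=k(C_\infty^{(1)},\Sigma)=k(C_{\rho_i,0},\Sigma)=k(C_{\rho_i',0},\Sigma)=k(C_\infty^{(2)},\Sigma)=N_1^{(2)},\]
and letting $k=1$ range over $1,\dots,n$ yields $N_k^{(1)}=N_k^{(2)}$ for all $k$, i.e. $C_\infty^{(1)}=C_\infty^{(2)}$.

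I expect the main obstacle to be precisely the uniform (in $\rho$) confinement of $spt\,C_{\rho,0}$ to a narrow cone about $T$: the order of quantifiers matters — one fixes $\Sigma$ first, then chooses $\eps$ small accordingly, and only then invokes lemma \ref{lem:cono} (itself available thanks to the almost-uniqueness of lemma \ref{lem:uniqtangconeI}). Without such a $\rho$-independent bound on $spt\,C_{\rho,0}$, the integer $k(C_{\rho,0},\Sigma)$ could in principle jump as $\rho\downarrow0$, which is exactly what must be excluded to obtain uniqueness of the tangent cone.
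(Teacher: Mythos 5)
Your proposal is correct and follows essentially the same route as the paper: isolate the multiplicity $N_k$ of a single disk as a Kronecker index, use lemma \ref{lem:cono} (itself resting on lemma \ref{lem:uniqtangconeI}) to keep the boundary condition (\ref{eq:noncrossingboundaries}) uniformly in $\rho$, deduce constancy of the index along the rescaling by flat-continuity/homotopy, and pass to the limit via lemma \ref{lem:limcon}. The only (immaterial) difference is that you shrink the test surface $\Sigma$ so that it meets only $D_1$, whereas the paper keeps a surface transversal to all the $D_k$ and instead restricts the current to the separated conic neighbourhoods $E_{\eps}^i$.
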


\begin{proof}[\bf{proof of theorem \ref{thm:uniqtangcone}}]
With the result and the notations of lemma \ref{lem:uniqtangconeI} in mind, we only have to exclude that the multiplicities $N_k$ may depend on the chosen sequence that we blow-up. 

Choose $\eps$ small enough to ensure that different $\eps$-neighbourhoods
\[E_{\eps}^i= \{x\in B_1, dist(x, D_i) \leq \eps |x| \}, \;\; E_{\eps}^j= \{x\in B_1, dist(x, D_j) \leq \eps |x| \}\]
of different disks $D_i$ and $D_j$ do not overlap, i.e. $E_{\eps}^i \cap E_{\eps}^j = \{0\}$.

Rotate $B_1$ in order to have that the family $\Sigma_p:=\Sigma_p^{[0,1]}$ is transversal to all the disks $D_k$. Then, for $p$ in a neighbourhood $B_\delta$ of $0$ and for all small enough $r$, the index $k(C_r, \Sigma_p)$ is well-defined since lemma \ref{lem:cono} ensures the condition (\ref{eq:noncrossingboundaries}) of non-crossing-boundaries. 

The key observation is that the rescaled $C_r$ form a continuous (with respect to $r$) family of currents (with respect to the flat-topology) and they are always constrained in the $E_{\eps}$-neighbourhood given by lemma \ref{lem:cono}.
Fix $i$: the fact that the $E_{\eps}^k$ are well separated implies that, for any $p \in B_\delta$,
$$\p(C_r \res E_{\eps}^i) \cap \Sigma_p =\emptyset, \;\; (C_r \res E_{\eps}^i) \cap \p \Sigma_p =\emptyset.$$ 
Moreover, due to the mentioned continuity, as $r \to 0$ the currents $C_r \res E_{\eps}^i$ are all homotopic to each other, and these homotopies keep the condition (\ref{eq:noncrossingboundaries}) between $C_r \res E_{\eps}^i$ and $\Sigma_p\res E_{\eps}^i$. 

Therefore $k(C_r \res E_{\eps}^i, \Sigma_p)$ must stay constant as $r\to 0$, so there is a well-determined $N_i \in \N$ such that $k(C_r \res E_{\eps}^i, \Sigma_p)=N_i$. Then any limiting cone $C_\infty$ must satisfy $k(C_\infty \res E_{\eps}^i, \Sigma_p)=N_i$, with the same proof as in lemma \ref{lem:limcon}. This means that  $C_\infty \res E_{\eps}^i= N_i D_i$, so all the multiplicities $N_k$ are uniquely determined.

\end{proof}

\medskip

\textbf{Easy case of non-accumulation}.
The following result solves the "easy case" of non-accumulation of singularities of multiplicity $Q$ to a singularity $p$ of the same multiplicity: this "easy case" arises when the tangent cone at $p$ is not made up of $Q$ times the same disk. We will see how to handle the "difficult case" (tangent cone made of $Q$ times the same plane) in sections \ref{PDEaverage} and \ref{uniquecont}.

Define the set $Sing^Q$ of singularities of multiplicity (or order) $Q$ of the Special Legendrian cycle $C$:
\[Sing^Q:=\{p \in C: p \text{ is a singular point}, \;\theta(p)=Q\}=\mathcal{C}^Q \cap Sing C.\]

In the same fashion we will use the notation
\[Sing^{\leq Q}:=\{p \in C: p \text{ is a singular point }, \theta(p)\leq Q\}.\]

\begin{thm}
\label{thm:nonacchigh}
For a Special Legendrian cycle $C$, assume $0 \in Sing^Q$, $T_0 C \neq Q \llbracket D \rrbracket$, i.e. $T_0 C = \oplus_{k=1}^n N_k D_k$, where $D_k$ are distinct Special Legendrian disks and $n \geq 2$. Then $\exists r> 0$ such that
\[Sing^Q \cap B_r(0) = \{0\}.\]
\end{thm}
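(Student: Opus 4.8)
The plan is to argue by contradiction using the intersection-theoretic machinery developed above, exactly as in the uniqueness argument of Lemma \ref{lem:uniqtangconeI} but localized around accumulating singularities. Suppose there is a sequence $p_i \to 0$ of points in $Sing^Q$ with $p_i \neq 0$. Set $r_i = |p_i| \to 0$ and consider the blow-ups $C_{r_i,0}$. By the uniqueness of the tangent cone (Theorem \ref{thm:uniqtangcone}) we have $C_{r_i,0} \rightharpoonup T_0 C = \oplus_{k=1}^n N_k D_k$ with $n\geq 2$. In the rescaled picture the points $q_i := p_i/r_i$ lie on $\partial B_1$, so up to a subsequence $q_i \to q_\infty \in \partial B_1$. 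The key point is that the rescaled currents are singular of multiplicity $Q$ at $q_i$, hence (by upper semicontinuity of $\theta$ together with Lemma \ref{lem:cono}) the limit point $q_\infty$ must lie on the support $T = \cup_k D_k$ of the cone; since the $D_k$ are distinct planes through $0$ and $n\geq 2$, the point $q_\infty$ lies in the relative interior of exactly one disk, say $D_1$, and away from all the others and from $0$.

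Next I would localize. Fix $\eps>0$ small enough that the conic neighbourhoods $E_\eps^k$ of the $D_k$ meet only at $0$, and choose a small ball $B_\sigma(q_\infty)$ contained in $E_\eps^1$, bounded away from $0$ and from the other $E_\eps^k$. By Lemma \ref{lem:cono}, for $i$ large the restriction $C_{r_i,0}\res B_\sigma(q_\infty)$ is supported in $E_\eps^1$ and is flat-close to $N_1 D_1 \res B_\sigma(q_\infty)$, which is a \emph{smooth} flat disk with integer multiplicity $N_1$. Here one invokes the fact that inside $B_\sigma(q_\infty)$ the cone restricts to a single multiplicity-$N_1$ plane, so $C_{r_i,0}\res B_\sigma(q_\infty)$ is a Special Legendrian cycle that is flat-close, hence (by Allard-type regularity for semi-calibrated / minimal cycles, or equivalently by the graphicality obtained from the positive intersection property with the foliations $\Sigma_p^X$) is itself a smooth multiplicity-$N_1$ graph over $D_1$ near $q_\infty$ for $i$ large. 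But $q_i \to q_\infty$ and each $q_i$ is a \emph{singular} point of $C_{r_i,0}$ of multiplicity $Q > N_1$ (the strict inequality because $n\geq 2$ and all $N_k\geq 1$), contradicting smoothness of $C_{r_i,0}$ near $q_\infty$.

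The main obstacle is the regularity/graphicality input: one needs that, once the current is flat-close to a single multiplicity-$N$ plane in a ball, it is actually a smooth $N$-sheeted graph there, with no singular points. For area-minimizers this is Allard's theorem; in the semi-calibrated Legendrian setting one should instead use the positive-intersection property established in Section \ref{preliminaries}: the families $\Sigma_p^X$ foliate the ball and intersect $C$ positively, so $C\res B_\sigma(q_\infty)$ meets each leaf in exactly $N$ points counted with (positive, hence unit) multiplicity once we are close enough to the flat disk, which forces $C$ to be an $N$-valued graph over the disk of parameters; the branches then satisfy the first-order elliptic (perturbed Cauchy–Riemann) system, and since the graph is flat-close to a single plane the $N$ branches are uniformly close, so they cannot collide — giving genuine smoothness. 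I would isolate this as the technical step and cross-reference the relevant regularity statement; modulo it, the contradiction closes and $Sing^Q\cap B_r(0)=\{0\}$ for some $r>0$.
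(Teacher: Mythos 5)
Your overall strategy is genuinely different from the paper's. The paper never localizes around the limit of the rescaled singular points; instead it fixes a $3$-surface near the accumulation point $p\in D_2$, observes that the multiplicity-$Q$ singularity at $y_n$ already forces $k(C_{\rho_n,0}\res B_{\eps}(y_n),\Sigma_{y_n}^Y)=Q$ (remark \ref{oss:remmult}), and then applies lemma \ref{lem:technical} to a region $W_n$ near the \emph{other} disk $D_1$ to produce at least one further positive intersection, so the total Kronecker index would be $\geq Q+1$, contradicting $k(C_{\rho_n,0},\Sigma_p^Y)\leq Q$. Your route — blow up at rate $|p_i|$, send $q_i=p_i/|p_i|$ to a limit $q_\infty$ lying in the relative interior of a single disk $D_1$ of the cone, and contradict the local multiplicity $N_1\leq Q-1$ there — is legitimate in outline and potentially more elementary, since it avoids the intersection-index bookkeeping.

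However, the step you yourself flag as the main obstacle is a genuine gap, and your proposed repair does not work. Flat-closeness of $C_{r_i,0}\res B_\sigma(q_\infty)$ to the multiplicity-$N_1$ plane does \emph{not} yield smoothness when $N_1\geq 2$: Allard's theorem requires density ratio close to $1$, and no $\varepsilon$-regularity theorem for higher-multiplicity planes is available at this stage — producing one is essentially the content of the remaining sections of the paper (the ``difficult case'' $T_xC=Q\llbracket D\rrbracket$). Your fallback, that the $N_1$ branches of the graph are uniformly close and ``so cannot collide,'' is false: at a branch point of a multivalued graph (e.g.\ $z\mapsto\{\pm z^{1/2}\}$) the branches are uniformly close and nevertheless collide. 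Fortunately, your contradiction does not need smoothness at all: it suffices to know that $q_\infty$ cannot be a limit of multiplicity-$Q$ points of the $C_{r_i,0}$. This follows from upper semicontinuity of the density under the convergence $C_{r_i,0}\rightharpoonup C_\infty$ with $q_i\to q_\infty$: the monotonicity formula gives $M(C_{r_i,0}\res B_\rho(q_i))\geq Q\pi\rho^2$, and since the currents are calibrated their masses converge on balls exactly as in the computation (\ref{eq:computationcono0}) of lemma \ref{lem:cono0}; hence $\theta_{C_\infty}(q_\infty)\geq Q$, whereas the density of the cone at a point of $D_1\setminus\{0\}$ (distinct $J_0$-holomorphic planes meet only at the origin) is $N_1\leq Q-1$. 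With the smoothness claim replaced by this density argument, your proof closes.
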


\begin{proof}[\bf{proof of theorem \ref{thm:nonacchigh}}]
The proof uses techniques similar to those from theorem \ref{thm:uniqtangcone}. 
By contradiction, assume $\exists \; x_n \to 0$, with $x_n \in Sing^Q$. Rename the $D_i$'s so that $D_1$ and $D_2$ realize the minimum $\gamma$ of the angular distances $\widehat{D_i, D_j}$. $\gamma >0$ since $T_0 C \neq Q \llbracket D \rrbracket$ and $\gamma \leq \frac{\pi}{2}$ since this is the maximum for the Fubini-Study metric. 
\newline Define $\rho_n = 2 |x_n|$ and blow up about $0$ using $\rho_n$ as rescaling factors. Up to a possible exchange of the roles of $D_1$ and $D_2$ and up to a subsequence, we can assume $\frac{x_n}{2|x_n|} \to p \in D_2 \cap \partial B_{1/2}$. Rotate $B_1$ to ensure that $D_1$ and $D_2$ are contained in the $\frac{3 \pi}{4}$-cone around $D_0 \cong [1,0]$ and that $\Sigma_p^{[0,1]}$ is transversal to the disks $\{D_j\}_{j=1}^n$. Take $\alpha << \gamma$; for all $n$ large enough, thanks to lemma \ref{lem:cono} we can ensure that 
\[C_{{\rho_n},0} \subset \cup_{i=1}^n E_i^\alpha,\]
where $E_i^\alpha$ denotes the cone $E^\alpha$ around $D_i$. Thanks to the position of $D_1$ and $D_2$, we can find a small enough ball $U \subset \mathbb{CP}^1$ centered at $[0,1]$ such that for any $X \in U$ we have that $\Sigma_p^X$ is transversal to the disks $\{D_j\}_{j=1}^n$  and that $D_j \cap \Sigma_p^X \cap \partial B_1 = \emptyset$ for $j=1,...,n$.
\newline In this situation, thanks to lemma \ref{lem:limcon}, we know that for $X \in U$ and for all large enough $n$
\[k(C_{{\rho_n},0}, \Sigma_p^X)= k(T_0 C, \Sigma_p^X)= \sum_{j=1}^n N_j k(D_j,\Sigma_p^X) \leq Q.\]
Let $V$ be a ball strictly smaller than $U$ with the same center. Take now $n_0$ large enough in such a way that:
\begin{description}
	\item[(i)] $y_n =  \frac{x_n}{2|x_n|}$ is close enough to $p$ to ensure, for $n \geq n_0$, 
	\[\forall X \in V\;\;\; \Sigma_{y_n}^X \text{ satisfies } D_j \cap \Sigma_{y_n}^X \cap \partial B_1 = \emptyset;\]
	\item[(ii)] defining $\displaystyle W_n := E_1^\alpha \cap (\cup_{X \in V} \Sigma_{y_n}^X) $,
	\[\cap_{n\geq n_0} W_n \text{ contains an open set $W$}\]
	($W$ is a neighbourhood of a "piece" of $D_1$).
\end{description}
Thanks to the convergence $C_{{\rho_n},0} \rightharpoonup T_0 C \ni D_1$, we can ensure that for all $n$ large enough
\[C_{{\rho_n},0} \res W \neq 0,\]
which trivially implies $C_{{\rho_n},0} \res W_n \neq 0$; $W_n$ is foliated by $\cup_{X \in V} \Sigma_{y_n}^X$ and $\displaystyle \partial (C_{{\rho_n},0} \res \overline{W_n}) \subset \cup_{X \in \partial V} \Sigma_{y_n}^X$ (by slicing). Then by lemma \ref{lem:technical} we have that, for all $Y \in V$, 
\[k(C_{{\rho_n},0} \res W_n, \Sigma_{y_n}^Y) \geq 1.\]
On the other hand, for $\eps$ small enough, $k(C_{{\rho_n},0} \res B_{\eps}(y_n), \Sigma_{y_n}^Y)=Q$ for the $Y$-s such that $Y\neq D_j$, by remark \ref{oss:remmult}. Since $W_n \cap B_{\eps}(y_0)=\emptyset$, we get 
\[k(C_{{\rho_n},0},\Sigma_{y_n}^Y) \geq Q+1  \]
But then 
\[k(C_{{\rho_n},0}, \Sigma_p^Y) \geq Q+1\]
by homotopy, contradicting $k(C_{{\rho_n},0}, \Sigma_p^Y) \leq Q$.
\end{proof}

\textbf{Lipschitz-type estimate}.
The following theorem still uses the same ideas and is of central importance for treating the more delicate case of a singular point $p$ having a tangent cone which is $Q$ times the same plane. We can without loss of generality assume that the plane involved is $D_0 \cong [1,0]$. The result shows the "continuous behaviour" of tangent cones at points of multiplicity $Q$ as they approach $p$.

\begin{thm}
\label{thm:lipesttan}
Let $0$ be a singular point of order $Q$ of a Special Legendrian cycle, $0 \in Sing^Q$, with $T_0 C= Q \llbracket D_0 \rrbracket$. Then $\forall \{y_n \} \to 0$ sequence of points having multiplicity $Q$, the following holds: 
\[T_{y_n} C \to Q \llbracket D_0 \rrbracket.\]
\end{thm}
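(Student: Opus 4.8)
The plan is to argue by contradiction and to reproduce, for the current blown up at $0$, the intersection-theoretic mechanism of the proof of Theorem \ref{thm:nonacchigh}; the new feature is that the ``extra'' positive intersection must now be produced by a disk of the tangent cone \emph{at the approaching singular point} $y_n$, since $T_0 C=Q\llbracket D_0\rrbracket$ has no second disk to exploit.

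Suppose the conclusion fails. By Theorem \ref{thm:uniqtangcone} each $T_{y_n}C$ is a well defined sum $\bigoplus_k N_k^{(n)}\llbracket D_k^{(n)}\rrbracket$ with $\sum_k N_k^{(n)}=Q$; compactness of $\mathbb{CP}^1\cong\mathbb{P}H_0^4$ and the bound $\le Q$ on the number of disks let me pass to a subsequence along which $D_k^{(n)}\to\bar D_k$, the multiplicities stabilise, and the limit $\bigoplus_k N_k\llbracket\bar D_k\rrbracket\neq Q\llbracket D_0\rrbracket$; so some $\bar D_1$ has $2\eta_0:=\widehat{\bar D_1,D_0}>0$ and $N_1\ge 1$. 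I would then blow up $C$ at $0$ at scales $\rho_n\downarrow 0$ to be fixed below, putting $C_n:=C_{\rho_n,0}$ and $z_n:=y_n/\rho_n$; for any choice with $\rho_n\to 0$ one has $C_n\rightharpoonup Q\llbracket D_0\rrbracket$ (Theorem \ref{thm:uniqtangcone}), $\operatorname{spt}C_n\subset E_\epsilon$ an arbitrarily thin conic neighbourhood of $D_0$ (Lemma \ref{lem:cono}), whence (a further subsequence) $z_n\to z_\infty\in D_0$; moreover the multiplicity of $C_n$ at $z_n$ is $Q$ and its tangent cone there is $T_{y_n}C\supseteq N_1\llbracket D_1^{(n)}\rrbracket$ with $\widehat{D_1^{(n)},D_0}\ge\eta_0$ for $n$ large.

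The contradiction is set up through two indices. After an $SU(3)$ rotation (keeping $D_0=[1,0]$) I would fix a small ball $U\subset\mathbb{CP}^1$ of directions transversal to $D_0$ and to all $D_k^{(n)}$ ($n$ large), along which the foliations $\Sigma^X_\bullet$ are defined. On one side, since $C_n\rightharpoonup Q\llbracket D_0\rrbracket$ and $\Sigma^X_q$ meets $D_0$ in one point, positive by Lemma \ref{lem:positiveintersectionplanes}, for $X\in U$ and $q$ near $z_\infty$ (so that $\Sigma^X_q\cap D_0\cap\partial B_1=\emptyset$), Lemma \ref{lem:limcon} yields $k(C_n,\Sigma^X_q)=Q$ for $n$ large. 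On the other side I claim $k(C_n,\Sigma^X_{z_n})\ge Q+1$ for $X$ in a ball $V\Subset U$: indeed $k\bigl(C_n\res B_{\epsilon_0}(z_n),\Sigma^X_{z_n}\bigr)=Q$ by Remark \ref{oss:remmult} applied (in translated form) at $z_n$, while the $N_1\ge 1$ sheets of $C_n$ carrying the disk $D_1^{(n)}$, being forced by $\operatorname{spt}C_n\subset E_\epsilon$ to leave the $\epsilon$-neighbourhood of $D_0$ along $D_1^{(n)}$ and return to it, can be enclosed in a region $W_n\subset\bigl(\bigcup_{X\in V}\Sigma^X_{z_n}\bigr)\cap(B_R(z_n)\setminus\overline{B_r(z_n)})$ disjoint from $B_{\epsilon_0}(z_n)$, foliated by the polar family $\{\Sigma^X_{z_n}\}_{X\in V}$ with $\partial(C_n\res\overline{W_n})\subset\bigcup_{X\in\partial V}\Sigma^X_{z_n}$ and $C_n\res W_n\neq 0$, so that $k(C_n\res W_n,\Sigma^X_{z_n})\ge 1$ by Lemma \ref{lem:technical}; adding these two disjoint contributions and the non-negative remainder gives $Q+1$. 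Homotoping $\Sigma^X_{z_n}$ to $\Sigma^X_q$ ($q$ near $z_\infty$) --- legitimate because $\operatorname{spt}C_n$ stays in the thin cone about $D_0$ while $X$ is far from $D_0$, so \eqref{eq:noncrossingboundaries} is preserved --- transports the lower bound to $k(C_n,\Sigma^X_q)\ge Q+1$, contradicting $k(C_n,\Sigma^X_q)=Q$. Hence no far limit disk $\bar D_1$ occurs, i.e. $T_{y_n}C\to Q\llbracket D_0\rrbracket$.

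The hard part is the last structural input: producing $W_n$ at a scale bounded \emph{uniformly} away from $z_n$, so that it is genuinely disjoint from the ball $B_{\epsilon_0}(z_n)$ carrying the full multiplicity $Q$. In Theorem \ref{thm:nonacchigh} the second disk belongs to $T_0C$, whose vertex is the blow-up centre, so it is visible in $C_n$ at unit scale and sits near a point far from the mass of multiplicity $Q$; here $D_1^{(n)}$ belongs to $T_{y_n}C$, whose vertex $z_n$ is \emph{not} the blow-up centre, so a priori $C_n$ resembles $T_{y_n}C$ only at scales $\to 0$ about $z_n$ and $W_n$ threatens to collapse onto $z_n$. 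I see two routes: either choose the scales $\rho_n$ adaptively so that the tangent-cone behaviour of $C$ at $y_n$ appears at unit scale about $z_n$ --- which forces $|z_n|\to\infty$, hence working on balls $B_{R_n}$ with $R_n\to\infty$ and a diagonal argument to keep $C_n\rightharpoonup Q\llbracket D_0\rrbracket$ and Lemmas \ref{lem:cono}, \ref{lem:limcon} valid there --- or keep $\rho_n=2|y_n|$ and iterate the Kronecker index through the dyadic scales between the emergence scale of $D_1^{(n)}$ and $\epsilon_0$, using positivity of intersection. Carrying this out while keeping all transversality and boundary hypotheses of Lemmas \ref{lem:technical} and \ref{lem:limcon} in force is the technical heart of the argument; granted it, the contradiction is exactly as above.
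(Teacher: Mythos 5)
Your overall architecture is the paper's: blow up at $0$ with $\rho_n=2|y_n|$, show $k(C_n,\Sigma^X_q)=Q$ for surfaces born on $D_0$ with direction $X$ making a definite angle with $D_0$, produce an extra unit of intersection near $z_n$, and transport it by homotopy to get $Q+1$. But the step you yourself flag as ``the technical heart'' is a genuine gap, and neither of the two routes you sketch is carried out (nor is either the route the paper actually takes). The difficulty is real: with a fixed $\epsilon_0$ you cannot simultaneously assert that $B_{\epsilon_0}(z_n)$ carries index exactly $Q$ and that the sheets responsible for $D_1^{(n)}$ can be enclosed in a foliated region $W_n$ disjoint from $B_{\epsilon_0}(z_n)$, because the tangent-cone picture at $z_n$ may only emerge at scales tending to $0$ with $n$.

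The paper closes this gap with two observations you are missing. First, no uniform scale separation is required: the extra intersection comes not from ``seeing'' $D_1^{(n)}$ at a definite scale but from exhibiting a single point $p\in \operatorname{spt} C_n$ lying in a thin conic neighbourhood $E^\beta_{z_n,l}$ of the leaf $\Sigma^{l}_{z_n}$ (the leaf containing the far disk $D^l_{z_n}$) but \emph{off} that leaf. Such a $p$ exists by a rigidity/barrier argument: if $C_n\res E^\beta_{z_n,l}$ were contained in $\Sigma^l_{z_n}$, then by Remark \ref{oss:coincidesphere} it would coincide with one of the Special Legendrian spheres foliating $\Sigma^l_{z_n}$, namely the one tangent to $D^l_{z_n}$, and that sphere leaves the thin cone $E_0^{\epsilon}$ around $D_0$, contradicting Lemma \ref{lem:cono}. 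Second, once $p$ is found (at whatever $n$-dependent scale), the radius of the multiplicity-$Q$ ball is chosen \emph{afterwards}: taking $r_n\ll\operatorname{dist}(z_n,p)$, one has $k(C_n\res B_{r_n}(z_n),\Sigma^Y_{z_n})=Q$ for $Y$ near the direction of $p$ (Remark \ref{oss:remmult}), while Lemma \ref{lem:technical} applied to the polar neighbourhood $W=(\cup_Y\Sigma^Y_{z_n})\cap(B_1\setminus B_{r_n}(z_n))$ of $p$ yields the extra $+1$; the two regions are disjoint by construction, and the boundary conditions on $\partial B_{r_n}(z_n)$ and $\partial B_1$ follow from $C_n\res B_{r_n}(z_n)\subset E^{\delta_n}_{z_n}$ and $C_n\subset E_0^{\epsilon}$. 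Since $p$ sits in the conic neighbourhood of $\Sigma^l_{z_n}$, its direction from $z_n$ makes angle at least $3\alpha/4$ with $D_0$, so the final homotopy to a surface born on $D_0$ goes through as you intended. Without the rigidity step producing $p$ and the adaptive choice of $r_n$, your argument does not close.
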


\begin{oss}
The convergence in the statement can of course be understood in the Flat-sense for currents in the tangent bundle and what we are proving is:
\[\forall \eps \; \exists \delta \; \text{s.t.} \; |x-0| < \delta \text{ and } \theta(x)=Q \Rightarrow \mathcal{F}(T_x C - Q \llbracket D_0 \rrbracket) < \eps.\]

We give however a more concrete definition in terms of "angles" between the disks.

We are going to speak of "the angle between $D_0$ and $D_p$" although these disks may lie in the horizontal hyperplanes at different points. More precisely: let $D_0 \subset H^4_0$ and $D_p \subset H^4_p$ be holomorphic disks for the respective J-structures. Then we can define $\widehat{D_p, D_0}$ after identifying the two hyperplanes according to the coordinates induced by the first parallel foliation, see (\ref{eq:coordindparfol}) in section \ref{preliminaries}, and taking the distance in the Fubini-Study metric. The convergence above amounts of course to the fact that the angles between $D_0$ and the disks of $T_{x_n}$ go to $0$.

In the same fashion we will speak of $\widehat{\Sigma_p^X, D_0}$ for some $3$-surface born at $p$, meaning the angle between $X$ and $D_0$ as just explained.
\end{oss}

\begin{proof}[\bf{proof of theorem \ref{thm:lipesttan}}]
Assume, by contradiction, that there exists $\{y_n\} \to 0$ such that $T_{y_n} C \not \to Q \llbracket D_0 \rrbracket$. Take as rescaling factors $\rho_n = 2 |y_n|$ and blow up about $0$. Denote $x_n = \frac{y_n}{2 |y_n|}$ and keep denoting $\oplus_{i=1}^Q D_n^i$ the tangent disks at $x_n$. Now, up to a subsequence, for some $\alpha >0$, $\widehat{D_n^i, D_0} \geq \alpha >0$ and $x_n \in \partial B_{1/2}$ hold for all $n$. Choose $\eps << \alpha$ such that $E_0^{\eps} \cap \partial B_1$ is disjoint from any $\Sigma_p$ of the set $\{\Sigma_p \; |\; p\in E_0^{\eps} \cap \partial B_{1/2}, \widehat{\Sigma_p, D_0}\geq \frac{\alpha}{2}\}$. For a large enough $n$ 
\begin{description}
	\item[(i)] $\displaystyle C_{{\rho_n},0} \subset E_0^{\eps}$,
	\item[(ii)] $k(C_{{\rho_n},0}, \Sigma_q)=Q \;\;\; \forall \Sigma_q$ with $\widehat{\Sigma_q, D_0}\geq \frac{\alpha}{2}$ and $q \in D_0 \cap B_{3/4}(0)$.
\end{description}
For notational convenience, call $x$ the point $x_n \in C_{{\rho_n},0}$. Let $T_x C_{{\rho_n},0} = \oplus_{i=1}^n N_i D_x^i$ (the total multiplicity is $Q$) and denote by $\Sigma_x^i$ the 3-surface born at $x$ and containing $D_x^i$. From the contradiction assumption, at least for one index $l$, $\widehat{D_x^l,D_0}$ is greater than a positive number very close to $\alpha$. 
\newline
Observe now the following:
Take $\beta << \min_{i\neq j}\{\alpha, \widehat{D_x^i, D_x^j}\}$. Consider the cone $E_{x,l}^\beta$ around $\Sigma_x^l$. It is not possible that for all $\Sigma_x^X$ foliating $E_{x,l}^\beta$ except $\Sigma_x^l$ the intersection with $C_{{\rho_n},0} \res E_{x,l}^\beta$ is empty, otherwise $C_{{\rho_n},0}\res E_{x,l}^\beta  \subset \Sigma_x^l$. This would imply that the boundaryless current $C_{{\rho_n},0}\res E_{x,l}^\beta$ must escape the barrier $E_0^{\eps}$ (by remark \ref{oss:coincidesphere}, $C_{{\rho_n},0}$ would have to coincide with the Special Legendrian 2-sphere tangent to $D_x^l$), which contradicts lemma \ref{lem:cono}.
\newline
So take\footnote{$p$ and $P$ depend on $n$, we are not explicitly writing this dependence in order not to make the notation too heavy.} $p \in spt C_{{\rho_n},0} \cap E_{x,l}^\beta$, $p \notin \Sigma_x^l$. Let $\Sigma_x^P = \Sigma_{x,p}$ be the 3-surface born at $x$ going through $p$; surely $\widehat{P,D_0} \geq \frac{3 \alpha}{4}$. We are going to show now that, up to tilting $\Sigma_x^P$ a bit, we can assume that it is transversal to $C$ and the intersection is still non-zero. 
\newline Take $\displaystyle \delta_n << \widehat{D_x^l, \Sigma_{x,p}}$ and $r_n << dist(x,p)$ in such a way that 
\begin{description}
	\item[(iii)]$k(C_{{\rho_n},0} \res B_{r_n} (x), \Sigma_{x,p}) = Q \;\;\;\text{ ($\Sigma_{x,p}$ is transversal to $T_x C_{{\rho_n},0}$}),$
	 \item[(iv)]$C_{{\rho_n},0} \res B_{r_n} (x) \subset E_x^{\delta_n}$ (the $\delta_n$-conic neighbourhood of $T_x C$).
\end{description}
By homotopy (see the remark following lemma \ref{lem:limcon}) 
\[k(C_{{\rho_n},0} \res B_{r_n}(x), \Sigma_x^Y) = Q\] 
for all $Y$ in a small ball around $P$ in $\mathbb{CP}^1$. The ball should be chosen small enough so that $\Sigma_x^Y$ stays away from $E_x^{\delta_n}$ and $\widehat{Y,D_0} \geq \frac{ \alpha}{2}$. We are going to apply lemma  \ref{lem:technical}: 
\[W = (\cup_Y \Sigma_x^Y) \cap (B_1 \setminus B_{r_n} (x))\]
is a foliated neighbourhood of $p$ and there is no boundary on $\overline{W} \cap \partial B_1$ or $\overline{W} \cap \partial B_{r_n} (x)$ thanks to (i) and (iv). So, for the $Y$-s in the chosen ball, if $r_n$ is small enough (recall remark \ref{oss:remmult}), then
\[k(C_{{\rho_n},0},\Sigma_x^Y )= k(C_{{\rho_n},0} \res B_{r_n}(x), \Sigma_x^Y) + k(C_{{\rho_n},0} \res (B_1 \setminus B_{r_n} (x)), \Sigma_x^Y) \geq Q + 1.\]
But, by homotopy, $k(C_{{\rho_n},0},\Sigma_x^Y )= k(C_{{\rho_n},0},\Sigma_w^Y)$ for some $w \in D_0 \cap B_{3/4}(0)$. So we have contradicted (ii) (we have to choose $Y$ so that $\Sigma_x^Y$ is transversal to $T_xC$, but this is not restrictive in the argument since it only excludes finitely many $Y$-s).
\end{proof}

The result just proved will be restated as a Lipschitz-type estimate (for the multi-valued graph describing the current) in corollary \ref{cor:lipest}.

\section{First part of the proof of theorem \ref{thm:main}: \newline coordinates, PDEs and average}
\label{PDEaverage}

Having established the previous results, in this section we start the proof of the regularity theorem \ref{thm:main}, which will go on in the next sections.

The proof proceeds by induction. By the monotonicity formula, the multiplicity function is upper semi-continuous on the Special Legendrian $C$, therefore the set of points with multiplicity $\geq N$, for $N \in \mathbb{N}$ is closed in $C$. Then, to achieve our result, a singular point $q$ with multiplicity $Q$ being given, we only need to show that singular points of multiplicity $\leq Q$ cannot accumulate to $q$. The idea is hence to prove this result by induction on the multiplicity $Q$: at each inductive step, we will assume that we are working in a neighbourhood where $Q$ is the maximal multiplicity.

\textbf{Basis of induction : Q=1} We are in an open set where all points of the Special Legendrian $C$ have multiplicity $1$. Since $C$ is minimal ($H=0$) and boundaryless, we can deduce the smoothness in this set straight from Allard's theorem, see \cite{Sim}. We can however provide a self-contained argument here: from theorem \ref{thm:lipesttan} we know that the tangent planes are continuous, therefore $C$ is a $C^1$ current. A classical bootstrapping argument then leads to $C^\infty$ regularity.

\textbf{Assumptions for the inductive step : Q-1 $\Rightarrow$ Q}. We are in an open ball $B$, where $Sing^Q$ is a closed set (that could a priori have positive $\mathcal{H}^2$ - measure) and $C \setminus Sing^Q$ is smooth except at the points $Sing^{\leq Q-1}$, which are isolated in the open set $C\setminus Sing^Q$. 
\newline We are going to divide the proof of the inductive step into two parts: 
\begin{description}
	\item[${\partone}$]: $Sing^Q$ is made of isolated points in B, i.e. there is no possibility of accumulation of singularities of multiplicity $Q$ to another singularity $p$ of the same multiplicity;
	\item[${\parttwo}$]: singularities of multiplicity $\leq Q-1$ cannot accumulate on a singularity of multiplicity $Q$.
\end{description}

\medskip

The proof of ${\partone}$ begins here and goes on throughout this section and the next one. For $p \in Sing^Q$ having a tangent cone which is not $Q$ times the same disk, the result is just theorem \ref{thm:nonacchigh}. 
\newline Therefore we only need to prove ${\partone}$ if the tangent cone at $p$ is $Q \llbracket D \rrbracket$. 

\medskip

\textbf{Coordinates}. We are now going to choose appropriate coordinates to guarantee later a $W^{1,2}$-type estimate. In order to do that, we will need the result contained in the next lemma. First observe the following:

\begin{oss}
\label{oss:remcmplxlines}
Due to the construction of $\Sigma$, given any 3-surface $\Sigma_q^X$ and for any point $p \in \Sigma_q^X$, then $T_p(\Sigma_q^X) \cap H_p^4$ is a complex line in $H_p^4$. This can be seen as follows: $T_p(\Sigma_q^X) \cap H_p^4$ is a two-dimensional subspace since $\Sigma_q^X$ is transversal to $H_p^4$; moreover one of the Special Legendrian spheres foliating (and building up) $\Sigma_q^X$ must go through $p$ and it is tangent to $H_p^4$.
\end{oss}

\begin{oss}
In the construction of the 3-surfaces $\Sigma_q^X$ performed in section \ref{preliminaries}, $q$ was taken in a neighbourhood of the Special Legendrian 2-sphere $L_0$. We can parametrize this neighbourhood of $L_0$ with a complex coordinate $w$ such that the point $(1,0,0)\in L_0$ has coordinate $0$. By abuse of notation we will also write $\Sigma_w^X$ instead of $\Sigma_q^X$ when the point $q \in L_0$ has coordinate $w$.
\end{oss}

\begin{lem}
\label{lem:setcoords}
There exist open neighbourhoods $V,U$ of $[0,1]$ in $\mathbb{CP}^1$ so that we can define\footnote{By $B_1^5$ we mean the 5-dimensional ball of radius $1$. Analogously for $B_2^2$, which we implicitly identify with the disk in $\C$ of radius $2$.} the function:
\newline $d: B_1^5 \times V \to B_2^2 \times U$,  given by $d(p,Y)=(w,X)$ s.t. $\Sigma_w^X$ contains $p$ and $Y \subset T_p \Sigma_w^X$. Moreover, $d$ is of class $C^1$.
\newline In other words, for any point $p \in B_1^5$ and any almost vertical direction $Y$ there exist a unique point $w \in L_0$ and direction $X$ such that $\Sigma_w^X$ goes through $p$ with direction $Y$. Moreover this correspondence is $C^1$.
\end{lem}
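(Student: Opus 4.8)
\textbf{Plan of proof of Lemma \ref{lem:setcoords}.}

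The plan is to view $d$ as the inverse of an explicit evaluation map and apply the inverse function theorem, exactly as in the proofs of lemmas \ref{lem:firstfoliation} and \ref{lem:polarfoliations}. First I would set up the ``forward'' map. Consider the space of data $(w,X)$, with $w$ ranging over a neighbourhood of $0$ in $L_0\cong\C_w$ and $X$ over a neighbourhood $U$ of $[0,1]$ in $\CP^1$; to each such pair the construction in section \ref{preliminaries} associates the $3$-surface $\Sigma_w^X$, which is smooth jointly in $(w,X)$ and diffeomorphic to $(\text{disk})\times(-\eps,\eps)$ via a parametrization $p=p(w,X,\zeta)$, $\zeta\in\Sigma$. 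At each $p\in\Sigma_w^X$ the plane $T_p\Sigma_w^X\cap H_p^4$ is, by remark \ref{oss:remcmplxlines}, a complex line, hence a well-defined point $Y=Y(w,X,\zeta)\in\mathbb{P}H_p^4\cong\CP^1$; set
\[
\Phi(w,X,\zeta)=\bigl(p(w,X,\zeta),\,Y(w,X,\zeta)\bigr)\in B_1^5\times\CP^1 .
\]
This $\Phi$ is $C^1$ (indeed as smooth as the foliation construction allows, which is $C^\infty$). The claimed map $d$ is then obtained by composing $\Phi^{-1}$ with the projection $(w,X,\zeta)\mapsto(w,X)$; so everything reduces to showing $\Phi$ is a $C^1$-diffeomorphism from a neighbourhood of the reference fibre onto a neighbourhood of the reference point.

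Next I would compute $d\Phi$ at the base point, the point $(1,0,0)\in S^5$ with the reference surface $\Sigma_0^{[0,1]}$ and $Y=[0,1]$. The domain is $5$-dimensional: two real parameters from $w$, two real parameters from $X$, three real parameters from $\zeta$ along $\Sigma$ — wait, that is $7$; the point is that moving $\zeta$ along the fibre direction of $\Sigma$ and simultaneously varying $w$ is redundant, so one fixes $\zeta$ to move in the $2$-plane $T_pL_0$ complementary to the fibre, giving $2+2+2=6$, and the seventh, vertical, direction is pinned because $\Phi$ lands in $S^5\times\CP^1$ and the $\CP^1$-factor together with the horizontal directions already fills out the tangent space — more cleanly, I would argue that $\Phi$ restricted to a suitable $C^1$ submanifold of dimension $5$ (the natural domain $B_1^5\times V$ after the redundancy is removed) has invertible differential. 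Concretely, at the base point: varying $w$ moves $p$ in the directions $\frac{\partial}{\partial s},\frac{\partial}{\partial t}$ of $T L_0$ (as in lemma \ref{lem:firstfoliation}) while keeping $Y$ essentially fixed to first order; varying $\zeta$ within $\Sigma_0^{[0,1]}$ transverse to $L_0$ moves $p$ in the directions $\frac{\partial}{\partial x_2},\frac{\partial}{\partial x_3}$ together with the vertical $\frac{\partial}{\partial y_1}$; and varying $X$ moves the $\CP^1$-component $Y$ with rank $2$ (this is precisely the transversality computed in the proof of lemma \ref{lem:polarfoliations}, where $d\psi$ was shown to have rank $2$ on $T_Y U$ away from the fibre) while perturbing $p$ in a way that can be absorbed. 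Assembling these, the Jacobian matrix of $\Phi$ at the base point is block-triangular with invertible blocks — the $p$-block is the identity-like $5\times5$ matrix already appearing in lemma \ref{lem:firstfoliation} together with the $\CP^1\to\CP^1$ identity from the $X$-variation — so $d\Phi$ is an isomorphism. The inverse function theorem then yields a $C^1$ local inverse on neighbourhoods $B_1^5\times V$ and $B_2^2\times U$ (shrinking the radii/neighbourhoods as needed), and $d$ is its composition with a coordinate projection, hence $C^1$. Uniqueness of $(w,X)$ for given $(p,Y)$ is exactly local injectivity of $\Phi$.

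The main obstacle, and the step that needs genuine care rather than routine computation, is the bookkeeping of dimensions: the naive parameter count $(w,X,\zeta)$ is $7$-dimensional while the target $B_1^5\times V$ is only $7$... actually $5+2=7$, so dimensions do match, but one must check that the differential is onto and that the two ``extra'' directions (the fibre direction inside $\Sigma$, and one direction of $X$-variation) do not degenerate — equivalently that the only way a variation of $(w,X,\zeta)$ fixes both $p$ and $Y$ is the trivial one. This is where I would invoke, at the base point, the explicit normal-form computations: the $5\times5$ Jacobian of $\psi$ from lemma \ref{lem:firstfoliation} (determinant $1$) handles the $(w,\zeta)\mapsto p$ part, while remark \ref{oss:remcmplxlines} guarantees $Y$ is well-defined and the rank-$2$ statement from lemma \ref{lem:polarfoliations} handles the $X\mapsto Y$ part; transversality of $\Sigma_w^X$ to $H_p^4$ (used throughout section \ref{preliminaries}) ensures $T_p\Sigma_w^X\cap H_p^4$ is genuinely $2$-dimensional so that the map $(w,X,\zeta)\mapsto Y$ is defined and $C^1$ on the whole neighbourhood, not just at the base point. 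Once the differential is shown invertible at one point, $C^1$-openness of invertibility and the inverse function theorem finish the argument on a possibly smaller neighbourhood, which is all that is claimed.
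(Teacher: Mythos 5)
Your proposal is correct in substance but takes a route dual to the paper's. You invert the forward evaluation map $\Phi:(w,X,\zeta)\mapsto(p,Y)$ between two $7$-dimensional spaces via the inverse function theorem, computing its differential at the single base point $(1,0,0)$ and exploiting the block-triangular structure there (which relies on the fact that the Hopf fibre through $(1,0,0)$ is fixed by the rotations $R_{Y,X}$, so the $\partial p/\partial X$ block vanishes \emph{on the fibre} but not elsewhere). The paper instead first uses the foliation property for each fixed $X$ to define directly the smooth functions $\tilde{w}(p,X)$ (the base point of the unique leaf of $\{\Sigma_w^X\}_w$ through $p$) and $\tilde{X}(p,X)$ (the complex line $T_p\Sigma_{\tilde{w}}^X\cap H_p^4$), and then applies the implicit function theorem to $D(p,Y,w,X)=(w-\tilde{w}(p,X),\,\tilde{X}(p,X)-Y)$; the $(w,X)$-Jacobian of $D$ is exactly triangular at \emph{every} point of $B_1^5\times U$, with diagonal entries $1$ and $\partial\tilde{X}/\partial X\approx 1$, which is what makes the conclusion uniform over the whole of $B_1^5$ as the lemma asserts. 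Your argument, as written, only yields invertibility of $d\Phi$ on some neighbourhood of the base point by $C^1$-openness; to cover all of $B_1^5$ you would need to add the (standard in this paper, but worth stating) remark that after rescaling every point of $B_1^5$ is an $\eps$-perturbation of the flat model, so the relevant Jacobian stays invertible everywhere. Also, your digression on dimension counting is muddled: the domain of $\Phi$ is honestly $7$-dimensional ($2+2+3$) and matches the $7$-dimensional target $B_1^5\times\CP^1$, so nothing needs to be ``pinned'' or removed — you recover the correct count in the end, but the detour is a distraction. What the paper's formulation buys is a $2\times 2$ (rather than $7\times 7$) linear-algebra check that is valid at every point without further perturbation arguments, plus genuine uniqueness of $(w,X)$ on the stated neighbourhoods rather than only local injectivity near the base point.
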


\begin{proof}[\bf{proof of lemma \ref{lem:setcoords}}]
Take the following neighbourhood $U$ of $[0,1]$ in $\mathbb{CP}^1$, $U=\{[Z;W] \in \mathbb{CP}^1 :\; |W|> 2 |Z|\}$. Define the function
\[\tilde{w}:B_1^5 \times U \to B_2^2\]
where $\tilde{w}=\tilde{w}(p,X)$ is the point in $B_2^2 \cong L_0 \cap B_2^5$ such that $p \in \Sigma_{\tilde{w}}^X$ ($\tilde{w}$ is uniquely defined since $\{\Sigma_w^X\}$ foliates $B_1^5$ as the base point runs over $L_0$). $\tilde{w}$ is a smooth function. 
\newline Recall remark \ref{oss:remcmplxlines}. Denote by $\tilde{X}=\tilde{X}(p,X) \in \mathbb{CP}^1$ the complex line\footnote{Recall that $\CP^1 \cong \mathbb{P}H_p^4$.} in $H_p^4$ such that $\tilde{X}$, as a 2-dimensional plane, is contained in the tangent to $\Sigma_{\tilde{w}}^X$ at $p$. $\tilde{X}(p,X)$ is a smooth perturbation of $X$, since the contact structure in $B_1^5$ is a smooth perturbation of the integrable structure $\mathbb{C}^2 \times \mathbb{R}$.
Consider
\[D: B_1^5 \times U \times B_2^2 \times U \to \mathbb{C} \times \mathbb{CP}^1\]
\[D:(p,Y,w,X) \to (w-\tilde{w}(p,X), \tilde{X}(p,X) - Y).\]
The function $D$ is $C^1$ and we can compute its $(w,X)$-differential
\[\frac{\partial D}{\partial(w,X)} = 
\left( \begin{array}{cc}
1 \;\;\; & \frac{\partial \tilde{w}}{\partial X} \\
0 \;\;\;& \frac{\partial \tilde{X}}{\partial X}\approx 1 \end{array} \right)\]
and its determinant is non-zero, therefore, by the implicit function theorem, the set $\{D=0\}$ can be described as a graph over $B_1^5 \times V$
\[(p,Y, d(p,Y))\]
for some $d \in C^1$ and $V \subset U$. The condition $D(p,Y,w,X)=0$ expresses the fact that $\Sigma_w^X$ goes through $p$ with direction $Y$, thus $d$ satisfies the statement of lemma \ref{lem:setcoords}.
\end{proof}

Before starting the proof of non-accumulation of singularities of order $Q$ to a singular point $x_0$ having tangent cone of the form $Q \llbracket D \rrbracket$, we are going to set coordinates so that the current and the leaves of the chosen foliation $\Sigma^X$ have only isolated and at most countably many points of non-transversality. 

Recall that a parallel foliation $\{\Sigma_p^X\}$ for $X$ fixed, of the type constructed in section \ref{preliminaries}, locally induces a system of 5 real coordinates around $x_0=(1,0,0)$, the first two, $(s,t)$, lying in the space of parameters $L_0$ (the chosen Special Legendrian 2-sphere) and the remaining three in $\Sigma$, see lemma \ref{lem:firstfoliation} and the discussion about families of parallel foliations. We can also think of having a complex coordinate on $L_0 \cap B^5_2 \cong B_2^2$ rather than two real ones. This means, for instance, that in this coordinates, if $q \in L_0$ has coordinate $z_0 \in \C$, the leaf $\Sigma_q^X$ is described by $\{(z_0, b,c,a)\}$, as $(b,c,a)$ describes to $B_2^3\subset \R^3$. In the same vein, $L_0$ is described by $\{(z,0,0,0)\}$ or by $\{(s,t,0,0,0)\}$, where we used respectively a complex and two real coordinates for $L_0 \cap B^5_2 \cong B_2^2$.
\newline In the coordinates so induced by $\{\Sigma_p^X\}$, introduce the projection map $\pi:B_2^2 \times B_2^3 \to B_2^2$ sending $(z, b,c,a)$ to $z$.

Now we want to choose a privileged direction $X$ to ensure the transversality announced above. Recall that we are working in a neighbourhood of $x_0$ where the multiplicity is everywhere $\leq Q$. Start with coordinates set in such a way that $x_0 = 0$, $D= D_0\cong[1,0]$ and the foliation we are using is given by $\{\Sigma_p^{[0,1]}\}$, and assume that we have blown up enough in order to ensure that $C_{\rho, 0} \subset E^\delta$ for some small $\delta $ (lemma \ref{lem:cono}) and that $T_y C_{\rho, 0}$ makes an angle smaller than $\delta$ for any $y \in Sing^Q$ (theorem \ref{thm:lipesttan}). 
\newline Recall lemma \ref{lem:setcoords} and let $S$ be the smooth part of the current $C_{\rho, 0}$ where the tangent planes are in $V$. Define the following function $\psi: S \to \mathbb{CP}^1$
\[\psi(p):= d(p, T_p S).\]
The tangent on $S$ is a smooth function, thus, by composition, $\psi$ is also smooth. Therefore we can find a regular value $X$ for $\psi$ as close as we want to $[0,1]$. \underline{We choose then the coordinates induced by this $\Sigma ^X$}, which we will denote by $\{(z,b,c,a)\}$ or by $\{(s,t,b,c,a)\}$, where $z=s+it$. They have the property that the leaves $\Sigma_z ^X$ are tangent to the smooth part of the current only at isolated points $\{t_i\}_{i=1}^\infty$ (they can possibly accumulate on the singular set). As for the singular set, the points of multiplicity up to $Q-1$ are also isolated singularities by inductive assumption, so we can assume that there is transversality there up to picking a new $X$, again among the regular values (only a countable set of $X$ must be avoided). On the set $Sing^Q$ the tangent cone makes a small angle with the horizontal, thanks to the Lipschitz estimate from theorem \ref{thm:lipesttan}.

\medskip

\textbf{Multi-valued graph}. Denote by $\pi$ the projection onto $D_0 \cong \{(z,0,0)\}$. We can now say that, by intersection theory, except on the countable set $\{\pi(t_i)\}$, the leaves intersect $C$ transversally and positively; as explained in remark \ref{oss:remmult}, for some $R<1$, $\Sigma_z ^X$ intersect the current at exactly $Q$ points (counted with multiplicities) for a.e. $|z| \leq R $. We have thus defined a $Q$-valued  function
\[\{b_i, c_i, \alpha_i\}_{i=1}^Q (z): D_R \to \mathbb{R}^3, \text{ or}\]
\[\{\varphi_i, \alpha_i\}_{i=1}^Q (z): D_R \to \mathbb{C} \times \mathbb{R},\]
with $D_R = \{(z,0,0), |z| \leq R\}$, $\varphi_j=b_j + i c_j$. Equivalently, we have a function from $D_R$ into the Q-th symmetric product 
\[\mathcal{S}^Q(\mathbb{C} \times \mathbb{R})= \frac{(\mathbb{C} \times \mathbb{R})^Q}{ \sim} ,\]
where two $Q$-tuples are equivalent if one is a permutation of the other.
When using the notation $\{\varphi_i, \alpha_i\}_{i=1}^Q$ it should be kept in mind that the indexation is arbitrary and not global.
\newline The $Q$-valued  function just constructed is $L^\infty$ since the current was contained in a cone $E^{2 \delta}$ around $D_R$. 

\begin{oss}
Introduce the following notation: 
\[\mathcal{A}= D_R \setminus \pi(Sing^Q), \; \mathcal{B}= \mathcal{A} \setminus \pi(Sing^{\leq Q-1}),\; \displaystyle \mathcal{G}=\mathcal{B} \setminus \cup_{i=1}^\infty \{\pi(t_i)\}.\]
$\pi(Sing^Q)$ is a closed set since we are working in a neighbourhood where $Q$ is the highest multiplicity, therefore $\mathcal{A}$ is open. $\mathcal{B}=D_R \setminus \pi(Sing^{\leq Q})$ is also open since $Sing^Q$ is a closed set. $\mathcal{G}$ is open since we are taking away from the open set $\mathcal{B}$ a countable set of isolated points that can only accumulate on the complement of $\mathcal{B}$. 
\newline Observe that, locally on $\mathcal{B}$, it is possible to give a coherent global indexation of $\{\varphi_i, \alpha_i\}_{i=1}^Q$; i.e., for any point in $\mathcal{B}$ there is a small ball centered at this point on which the multifunction is made of $Q$ distinct smooth functions.
\end{oss}

\textbf{Average} Define the average of the branches $\{\varphi_i, \alpha_i\}_{i=1}^Q$ by
\[\tilde{\Psi}=(\tilde{\varphi}, \tilde{\alpha}) := \left( \frac{\sum_{i=1}^Q \varphi_i}{Q}, \frac{\sum_{i=1}^Q \alpha_i}{Q}\right ),\]
which is a single-valued $L^\infty$ function of $D_R$. The next steps aim to prove that this average is actually a $W^{1,2}$ function. This will be achieved with theorem \ref{thm:w12}. The strategy is as follows: 
\begin{itemize}
	\item after writing the PDEs satisfied by the branches of the $Q$-valued function at smooth points, we will estimate that the $W^{1,2}$-norm on $\mathcal{G}$ is finite and bounded by the mass of the current ;
	\item we will successively extend the estimate to $\mathcal{B}$ and $\mathcal{A}$ by using the fact that, in dimension two, the $W^{1,2}$-capacity of an isolated point is zero;
	\item eventually, thanks to theorem \ref{thm:lipesttan}, we will conclude that $(\tilde{\varphi}, \tilde{\alpha})$ is $W^{1,2}$ on the whole of $D_R$.
\end{itemize}

\textbf{PDEs} As noted above, on the open set $\mathcal{G}$ the branches $\{\varphi_i, \alpha_i\}_{i=1}^Q$ are locally smooth functions. We restrict ourselves to a small ball $\Delta \subset \mathcal{G}$ on which they can be globally indexed and we are going to write the PDEs satisfied by these $Q$ functions coming from the fact that these (smooth) pieces are calibrated by $\omega$. Notice that also the derivatives of the $Q$ branches are well-defined functions. 
We are using coordinates $(z,\zeta,a) = (s,t,b,c,a)$, where $z=s+it,\zeta=b+ic$ are complex and the others real. Recall that $\Sigma$ were built so that the coordinate vectors $\frac{\partial}{\partial b}$ and $\frac{\partial}{\partial c}$ are always tangent to the 4-planes $H^4$ of the horizontal distribution. Denote by $J$ the J-structure defined on these hyperplanes, 
\[J_p : H_p^4 \to H_p^4.\]
We can assume that each leaf $\Sigma_z$ is parametrized in such a way that 
\begin{equation}
\label{eq:bcj}
J \left ( \frac{\partial}{\partial b}\right )= \frac{\partial}{\partial c},\;\;\;\;\; J\left ( \frac{\partial}{\partial c}\right ) = -\frac{\partial}{\partial b}.
\end{equation}
Recall that we are assuming, without loss of generality, that $D_R$ is centered at $0=(1,0,0) \in \mathbb{C}^3$ (this can be done by rotating $S^5$ via a rotation in $SU(3)$). We also assume that $(s,t)$ are such that $\frac{\p}{\p s}$ and $\frac{\p}{\p s}$ coincide respectively with $\frac{\p}{\p x^2}$ and $\frac{\p}{\p x^3}$ in $\C^3$ at the point $0$, so $\omega(0)= dx^2 \wedge d x^3 - dy^2 \wedge d y^3$ as a form in $\mathbb{C}^3$ is $ds\wedge dt + db \wedge dc$ in the new coordinates.
Moreover,
\[\frac{\partial}{\partial b},\frac{\partial}{\partial c},\frac{\partial}{\partial a} \text{ are always orthogonal to each other,}\]
\[\frac{\partial}{\partial b},\frac{\partial}{\partial c} \text{ are also orthogonal to the unit fiber vector $v$ ($v=i \frac{\p}{\p r}$ in $\C^3$).}\]
All the other scalar products of the\footnote{Throughout the section, $K$ will always represent a constant independent of the chosen $\Delta \subset D_R$.} coordinate vectors $\frac{\partial}{\partial s}, \frac{\partial}{\partial t},\frac{\partial}{\partial b},\frac{\partial}{\partial c},\frac{\partial}{\partial a}$ at a point $p$ are bounded by $K \eps$, for an arbitrarily small $\eps$, as long as we blow-up of a factor $r$ small enough, since they are orthogonal at the point $0$ and the structure is smooth.

Analogously, since the fiber vector at $0$ is also equal to $\frac{\partial}{\partial a}$ and orthogonal to $\frac{\partial}{\partial s}, \frac{\partial}{\partial t}$, we have  
\begin{equation}
\label{eq:scprod}
-K \eps \leq \langle \frac{\partial}{\partial s}, v \rangle , \langle \frac{\partial}{\partial t}, v \rangle, \langle \frac{\partial}{\partial s}, \frac{\partial}{\partial a} \rangle, \langle \frac{\partial}{\partial t}, \frac{\partial}{\partial a} \rangle \leq K \eps.
\end{equation}

Further, with $\omega_r:=\frac{1}{r^2}((rx)^*(\omega))$, for any $l$ we have that $\|\omega_r - \omega(0)\|_{C^l(B_1)} \to 0$ as $r \to 0$. Remark that $\omega_r$ calibrates the blown-up current $C_r$.
\newline Each branch $\psv_j =(\varphi_j, \alpha_j)$ is a graph on $\Delta$ (we are going to drop the subscript $j$ since we can focus on one precise branch - with this notation $b$ and $c$ denote both the coordinates and the functions describing this graph); the parametrization of this smooth piece is 
\[\Lambda(s,t):=(s,t, b(s,t), c(s,t), \alpha(s,t))\]
with tangent vectors
\begin{equation}
\label{eq:tangvect}
\frac{\partial \Lambda}{\partial s}= \left( \begin{array}{ccccc}
1, 
0 ,
\frac{\partial b}{\partial s},
\frac{\partial c}{\partial s} ,
\frac{\partial \alpha}{\partial s} \end{array} \right) , \;\;\; 
\frac{\partial \Lambda}{\partial t}= \left( \begin{array}{ccccc}
0, 
1 ,
\frac{\partial b}{\partial t},
\frac{\partial c}{\partial t} ,
\frac{\partial \alpha}{\partial t} \end{array} \right).
\end{equation}
On each tangent space $T_p S^5$ extend $J$ to a linear map defined on the whole of $T_p S^5$
\[J: T_p S^5 \to T_p S^5,\]
by setting $\displaystyle J\left (\frac{\partial}{\partial a} \right )= \frac{\partial}{\partial a}$ (this is quite arbitrary).
Introduce the following notation for the coefficient of this map in the given basis:
\[J\left (\frac{\partial}{\partial s} \right )= \varsigma \frac{\partial}{\partial s}+ \lambda \frac{\partial}{\partial t}+\eta \frac{\partial}{\partial a} + \beta \frac{\partial}{\partial b}+\gamma \frac{\partial}{\partial c},\]
where $\varsigma, \eta, \beta, \gamma$ are small in modulus, say less than some $K \cdot r$ since they are equal to $0$ at the point $0$, while $|\lambda|$ is close to $1$. These five functions depend on the variables $(s,t,b,c,a)$, but we will not explicitly write this dependence. For the other coefficients of $J$, recall (\ref{eq:bcj}) and the extension of $J$ done above.
The condition of being a Special Legendrian expressed by proposition \ref{Prop:slc} is then given by the two relations valid at any point:
\begin{equation}
\label{eq:spleg1}
\frac{\partial \Lambda}{\partial s} \wedge \frac{\partial \Lambda}{\partial t} \subset H^4,
\end{equation}
\begin{equation}
\label{eq:spleg2}
J\left (\frac{\partial \Lambda}{\partial s} \right )= \lambda \frac{\partial \Lambda}{\partial t} + \varsigma \frac{\partial \Lambda}{\partial s}
\end{equation}
(the fact that the last two coefficients must be exactly $\lambda$ and $\varsigma$ will be clear in a moment). We explicit now (\ref{eq:spleg2}) using (\ref{eq:tangvect}):

\[J\left (\frac{\partial \Lambda}{\partial s} \right )= \varsigma \frac{\partial}{\partial s}+ \lambda \frac{\partial}{\partial t}+\eta \frac{\partial}{\partial a} + \beta \frac{\partial}{\partial b}+\gamma \frac{\partial}{\partial c} + \frac{\partial b}{\partial s}\frac{\partial}{\partial c} - \frac{\partial c}{\partial s}\frac{\partial}{\partial b} + \frac{\partial \alpha}{\partial s} \frac{\partial}{\partial a}=\] 
\begin{equation}
\label{eq:coeffeq}
=\lambda \frac{\partial \Lambda}{\partial t} + \varsigma \frac{\partial \Lambda}{\partial s}=
\end{equation} 
\[=\lambda \left (\frac{\partial}{\partial t} + \frac{\partial b}{\partial t} \frac{\partial}{\partial b}+ \frac{\partial c}{\partial t}\frac{\partial}{\partial c}+ \frac{\partial \alpha}{\partial t} \frac{\partial}{\partial a}\right ) + \varsigma \left (\frac{\partial}{\partial s} + \frac{\partial b}{\partial s} \frac{\partial}{\partial b}+ \frac{\partial c}{\partial s}\frac{\partial}{\partial c}+ \frac{\partial \alpha}{\partial s} \frac{\partial}{\partial a}\right )\]
(from comparing the coefficients of $\frac{\partial}{\partial s}$ and  $\frac{\partial}{\partial t}$ we can see why we needed  $\lambda$ and $\varsigma$ in (\ref{eq:spleg2})). Identifying the coefficients of the coordinate vectors $\frac{\partial}{\partial b}$ and  $\frac{\partial}{\partial c}$ in the first and third line of (\ref{eq:coeffeq}) leads to

\begin{equation}
\label{eq:firstattempt}
\left \{ \begin{array}{c} 
-\frac{\partial c}{\partial s}+\beta = \lambda \frac{\partial b}{\partial t} + \varsigma \frac{\partial b}{\partial s}, \\
\frac{\partial b}{\partial s}+ \gamma = \lambda \frac{\partial c}{\partial t} + \varsigma \frac{\partial c}{\partial s}. \end{array} \right .
\end{equation}

Substituting the expression for $\frac{\partial c}{\partial s}$ given by the first line of (\ref{eq:firstattempt}) into the second we get 
\[\frac{\partial b}{\partial s} = \lambda \frac{\partial c}{\partial t} + \varsigma \left ( \beta- \lambda \frac{\partial b}{\partial t}- \varsigma \frac{\partial b}{\partial s} \right ) - \gamma, \]
which implies
\begin{equation}
\label{eq:req1}
\frac{\partial b}{\partial s} = \frac{\lambda}{1 + \varsigma ^2}\left ( \frac{\partial c}{\partial t} - \varsigma \frac{\partial b}{\partial t} + \frac{\varsigma \beta - \gamma}{\lambda} \right ).
\end{equation}
Plugging this back into the first identity of (\ref{eq:firstattempt}) we get 
\begin{equation}
\label{eq:req2}
\frac{\partial c}{\partial s} = -\frac{\lambda}{1 + \varsigma ^2}\left ( \frac{\partial b}{\partial t} + \varsigma \frac{\partial c}{\partial t} - \frac{\beta + \varsigma \gamma}{\lambda} \right ).
\end{equation}

\medskip

Let us now draw some conclusions from (\ref{eq:spleg1}). We have to impose that $\frac{\partial \Lambda}{\partial s}$ and $\frac{\partial \Lambda}{\partial t}$ are always orthogonal to the vertical fiber vector $v$.
Since the first two components of $\frac{\partial \Lambda}{\partial s}$ are fixed and equal $(1,0)$ and $\frac{\partial}{\partial b},\frac{\partial}{\partial c}$ are orthogonal to $v$, (\ref{eq:spleg1}) means 
\begin{equation}
\label{eq:££}
\langle \frac{\partial}{\partial s}, v \rangle = -\frac{\partial \alpha}{\partial s}\;\; \langle \frac{\partial}{\partial a}, v \rangle.
\end{equation}

Doing the same with $\frac{\partial \Lambda}{\partial t}$ we obtain
\begin{equation}
\label{eq:£}
\langle \frac{\partial}{\partial t}, v \rangle = -\frac{\partial \alpha}{\partial t}\;\; \langle \frac{\partial}{\partial a}, v \rangle. 
\end{equation}

Since $\displaystyle \langle \frac{\partial}{\partial a}, v \rangle$ is close to $1$ (see (\ref{eq:scprod})), we get 
\begin{equation}
\label{eq:smallcomp}
\left |\frac{\partial \alpha}{\partial s}\right |, \left |\frac{\partial \alpha}{\partial t}\right | \leq K \eps.
\end{equation}

We can rewrite\footnote{Recall again that here we are dropping the subscript $j$ in $\psv_j =(b_j, c_j, \alpha_j)$, which describes a smooth piece of the multi-valued graph above $\Delta$.} equations (\ref{eq:req1}), (\ref{eq:req2}), (\ref{eq:££}) and (\ref{eq:£}) as
\begin{equation}
\label{eq:realcrsystem}
\left \{ \begin{array}{c} 
\frac{\partial b}{\partial s} = A \frac{\partial c}{\partial t} + B \frac{\partial b}{\partial t} + C\\
\frac{\partial c}{\partial s} = -A \frac{\partial b}{\partial t} + B \frac{\partial c}{\partial t} + F \\ 
\nabla \alpha = h(s,t,\Psi). \end{array} \right .
\end{equation}
Here $A,B,C,F$ are smooth real functions of $(s,t,b(s,t),c(s,t),\alpha(s,t))$ with $A(0,0,0,0,0)=1$, $B(0,0,0,0,0)=C(0,0,0,0,0)=F(0,0,0,0,0)=0$, so $A$ is close to $1$ and $B,C,F$ are less than $\eps$ in modulus\footnote{$\eps$ is a positive number which can be assumed as small as we wish: it is of order $R$, the rescaling factor that we use for the blow-up.}. The $\R^2$-valued function $h$ is Lipschitz thanks to (\ref{eq:smallcomp}).

\textbf{Complex PDE}.
We are going to rewrite equations (\ref{eq:req1}) and (\ref{eq:req2}) in complex form, so we use the complex coordinate $z=s+it$, and observe the function $\varphi(z) = b(s,t) + ic(s,t)$. The complex derivatives $\frac{\partial}{\partial z}= \frac{1}{\sqrt{2}} \left( \frac{\partial}{\partial s} -i \frac{\partial}{\partial t}\right)$ and $\frac{\partial}{\partial \overline{z}}=\frac{1}{\sqrt{2}} \left(\frac{\partial}{\partial s} +i \frac{\partial}{\partial t}\right)$ will be denoted respectively by $\partial$ and $\overline{\partial}$. Compute the first equation in (\ref{eq:realcrsystem}) plus $i$ times the second:
\[\frac{\partial \varphi}{\partial s}= (-iA + B) \frac{\partial \varphi}{\partial t} +C+iF.\]
Then 
\[\overline{\partial} \varphi = ((1-A)i + B)\frac{\partial \varphi}{\partial t} + C + iF,\]
\[\partial \varphi = (-(1+A)i + B)\frac{\partial \varphi}{\partial t} + C + iF.\]
We seek a function $\nu=\nu_1 + i \nu_2$ so that
\[(1-A)i + B = -(\nu_1 + i \nu_2)(-(1+A)i + B), \] 
which rewrites, separating imaginary and real parts:
\[\left ( \begin{array}{cc}
1+A & -B \\ 
B & 1+A \end{array} \right)
\left ( \begin{array}{c}
\nu_1 \\ 
\nu_2 \end{array} \right)
= \left ( \begin{array}{c}
1-A \\ 
-B \end{array} \right).\]
The matrix on the l.h.s. is a perturbation of $2\; Id$, and the vector on the r.h.s. has norm bounded by $\eps$, therefore we can invert the system and find that there is a unique solution for $\nu=\nu_1 + i \nu_2$ whose norm is bounded by $\eps$. Then, setting $\mu=(1+\nu)(C+iF)$ we can rewrite (\ref{eq:req1}) and (\ref{eq:req2}) as
\begin{equation}
\label{eq:ceq}
\overline{\partial} \varphi + \nu(z,\varphi, \alpha) \partial \varphi + \mu(z,\varphi, \alpha) =0,
\end{equation}
with $\nu,\mu:\mathbb{C}_z \times \mathbb{C}_\zeta \times \mathbb{R}_a \rightarrow \mathbb{C}$ smooth functions, $\nu(0)=\mu(0)=0$, $|\nu|,|\mu| \leq \eps$.

\medskip

The first two equations in (\ref{eq:realcrsystem}), or equivalently equation (\ref{eq:ceq}), are perturbations of the classical Cauchy-Riemann equations. Notice however that the coefficients depend on $s,t,b(s,t)$, $c(s,t)$ and $\alpha(s,t)$, and we need the third equation in (\ref{eq:realcrsystem}), to clarify the "$\alpha$-dependence". 

\medskip

At this stage, we can estimate the $L^2$-norm of the jacobian of $\psv$ using (\ref{eq:req1}), (\ref{eq:req2}) and (\ref{eq:smallcomp}). Recall that the functions $\varsigma, \eta, \beta, \gamma$ are in modulus smaller than $K \eps$ and $\lambda$ is close to $1$. The metrics in the base space $\Delta_{s,t}$ and in the target $\mathbb{R}^3_{b,c,a}$ are perturbation of the standard euclidean metrics (at $0$ they coincide with them), so
\[|D \psv |^2 \leq K \left ( \left |\frac{\partial b}{\partial s}\right |^2 + \left |\frac{\partial b}{\partial t}\right |^2 + \left |\frac{\partial c}{\partial s}\right |^2 + \left |\frac{\partial c}{\partial t}\right |^2 + \left |\frac{\partial \alpha}{\partial s}\right |^2 + \left |\frac{\partial \alpha}{\partial t}\right |^2 \right ) \leq \]
\begin{equation}
\label{eq:estprew12}
\leq K \left ( \left |\frac{\partial b}{\partial t}\right |^2  + \left |\frac{\partial c}{\partial t}\right |^2 + C {\eps}^2 \right ) + K {\eps}^2 \leq K \left ( 1 + \left |\frac{\partial b}{\partial t}\right |^2  + \left |\frac{\partial c}{\partial t}\right |^2  \right ).
\end{equation}

The constant $K$ obtained at the end only depends on the factor $r_0$ that we used for the blow-up and is valid for any $r \leq r_0$, moreover it is independent of the chosen $\Delta$. We can assume that $K=2$, since this constant gets closer to $1$ as $r \to 0$.

\medskip

\textbf{$W^{1,2}$ estimate.}
For $\Psi=(b,c,\alpha)$ (we are still dropping the subscript $j$ since we are locally on $\Delta$ focusing on a single smooth branch), consider $\displaystyle \left( {\psv}\right) ^* \omega_0 = \left (  1 +\frac{\partial b}{\partial s} \frac{\partial c}{\partial t} - \frac{\partial b}{\partial t}  \frac{\partial c}{\partial s} \right )ds \wedge dt$ and plug in (\ref{eq:req1}) and (\ref{eq:req2}):
\[\left( {\psv}\right) ^* \omega_0 \geq 1 + \frac{\lambda}{1 + \varsigma ^2}\left ( \frac{\partial c}{\partial t} \right )^2+ \frac{\lambda}{1 + \varsigma ^2}\left ( \frac{\partial b}{\partial t} \right )^2 - \eps \left | \frac{\partial b}{\partial t} \frac{\partial c}{\partial t} \right | - \eps \left | \frac{\partial b}{\partial t} \right | - \eps \left | \frac{\partial c}{\partial t}\right | \geq\]
\begin{equation}
\label{eq:kaeler}
\geq \frac{1}{2} \left ( 1 + \left |\frac{\partial b}{\partial t}\right |^2  + \left |\frac{\partial c}{\partial t}\right |^2\right) \geq   \frac{1}{4} \left ( 1 + |\nabla b|^2 + |\nabla c|^2 \right),
\end{equation}
where we used $\eps \left| \frac{\partial b}{\partial t} \frac{\partial c}{\partial t} \right| \leq \frac{1}{2}\left( \eps \left( \frac{\partial b}{\partial t}\right)^2 + \eps \left( \frac{\partial c}{\partial t}\right)^2\right)$ and $\eps \left|\frac{\partial c}{\partial t}\right| \leq \frac{1}{2}\left( \eps  + \eps \left( \frac{\partial c}{\partial t}\right)^2\right)$, the hypothesis on $\varsigma, \eta, \beta, \gamma, \lambda$ and (\ref{eq:estprew12}) with $K=2$ as said above.
\newline Consider now $\omega_r - \omega_0$. Write this 2-form in the canonical basis in the coordinates $s,t,b,c,a$. All the coefficients are smaller than $\eps$ in modulus, if $r$ was chosen small enough. Therefore
\[\left( {\psv}\right) ^* (\omega_r - \omega_0)\]
is a 2-form in $ds \wedge dt$ whose coefficient comes from summing products of derivatives of $\psv$. As above, we can bound this coefficient by $\eps \left ( 1 + \left |\frac{\partial b}{\partial t}\right |^2  + \left |\frac{\partial c}{\partial t}\right |^2 \right)$.
Using this fact, together with (\ref{eq:kaeler}) and the triangle inequality we have
\[\int_{\Delta} \left( {\psv}\right) ^* \omega_r \geq \left(\frac{1}{4} - \eps \right)\int_{\Delta}1 + |\nabla b|^2 + |\nabla c|^2.\]
Recalling (\ref{eq:estprew12}) we can finally write the desired estimate:
\begin{equation}
\label{eq:w12smooth}
\int_{\Delta} |D \psv |^2 \leq K \int_{\Delta} \left( {\psv}\right) ^* \omega_r \leq K \int_{\psv(\Delta)} \omega_r = K \cdot \mathcal{H}^2 (C_r \res \psv(\Delta)),
\end{equation}
with a constant $K$ independent of the chosen $\Delta$. We can therefore conclude, recalling the notations taken during the inductive assumptions,

\begin{lem}
On the set $\displaystyle \mathcal{G}= D_R \setminus Sing^{\leq Q} \setminus \cup_{i=1}^\infty \{\pi(t_i)\}$ there holds
\[\sum_{i=1}^Q \int_{\mathcal{G}} |D \varphi_i |^2 + |D \alpha_i |^2\leq K \cdot \mathcal{H}^2 (C_r) < \infty\]
and therefore the average function $\tilde{\Psi}=(\tilde{\varphi}, \tilde{\alpha})$ is $W^{1,2}(\mathcal{G})$ with norm bounded by the mass of $C_r$ (we already knew that it was $L^\infty$).
\end{lem}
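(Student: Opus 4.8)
The plan is to upgrade the local bound (\ref{eq:w12smooth}) --- valid on any small ball $\Delta\subset\mathcal G$ over which the $Q$-valued graph splits into $Q$ smooth single-valued branches $\Psi_i=(\varphi_i,\alpha_i)$ --- to a global estimate on $\mathcal G$. First I would cover $\mathcal G=\bigcup_k\Delta_k$ by countably many such balls; this is possible because $\mathcal G$ is open and, as observed above, every point of $\mathcal B\supset\mathcal G$ has a neighbourhood admitting a coherent global indexation of the branches. I would then replace this cover by a Borel partition $\mathcal G=\bigsqcup_k E_k$ with $E_k\subset\Delta_k$, for instance $E_k=\Delta_k\setminus\bigcup_{j<k}\Delta_j$. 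Note that the integrand $\sum_{i=1}^Q(|D\varphi_i|^2+|D\alpha_i|^2)$ is invariant under permutations of the branches, hence a well-defined measurable function on all of $\mathcal G$ despite the absence of a global labelling, so the quantity to be bounded is unambiguous.

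Next I would remark that (\ref{eq:w12smooth}) is pointwise in nature --- it chains the pointwise inequality $|D\Psi|^2\le K\,\Psi^*\omega_r$ coming from (\ref{eq:estprew12})--(\ref{eq:kaeler}) with the identity $\int_{\Psi(\Delta)}\omega_r=\mathcal H^2(C_r\res\Psi(\Delta))$ --- and so it holds verbatim with $\Delta$ replaced by any Borel subset. Applying it branch by branch over each $E_k$ and summing in $i$ yields
\[\sum_{i=1}^Q\int_{E_k}|D\Psi_i|^2\;\le\;K\,\mathcal H^2\!\Big(C_r\res\bigcup_{i=1}^Q\Psi_i(E_k)\Big),\]
the key point being that the $Q$ sheets $\Psi_i(E_k)$ are pairwise disjoint and together account, with integer multiplicity, for exactly the portion of $C_r$ lying over $E_k$: this is precisely what it means for the multivalued graph to parametrize $C_r$ faithfully away from the null sets $\pi(Sing^{\le Q})$ and $\{\pi(t_i)\}$, and it uses that a.e.\ leaf $\Sigma_z^X$ meets $C_r$ in $Q$ points counted with multiplicity. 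Summing over $k$, the portions of $C_r$ lying over the disjoint sets $E_k$ are themselves disjoint, so their masses add up to at most $\mathcal H^2(C_r)=M(C_r)$, which is finite by the monotonicity formula (the blow-ups have equibounded mass). This gives $\sum_{i=1}^Q\int_{\mathcal G}|D\varphi_i|^2+|D\alpha_i|^2\le K\,M(C_r)<\infty$.

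Finally, for the statement about the average, I would use that $\tilde\Psi=(\tilde\varphi,\tilde\alpha)$ is globally single-valued, that on each $\Delta_k$ its derivative equals $\frac1Q\sum_{i=1}^Q D\Psi_i$, and hence $|D\tilde\Psi|^2\le\frac1Q\sum_{i=1}^Q|D\Psi_i|^2$ by Cauchy--Schwarz; integrating over $\mathcal G$ and combining with the already known $L^\infty$ bound (the current sits inside a thin cone around $D_R$) yields $\tilde\Psi\in W^{1,2}(\mathcal G)$ with norm controlled by $M(C_r)$. The one genuinely delicate point is the bookkeeping in the middle step: one must be certain that summing the areas of the $Q$ sheets over the countably many disjoint $E_k$ does not overcount, i.e.\ that $\sum_k\sum_i\mathcal H^2(\Psi_i(E_k))$ equals the mass of $C_r$ over $\pi^{-1}(\mathcal G)$ rather than something larger; this rests on the faithful-parametrization property established, via the positive-intersection and slicing arguments, in the earlier part of the section.
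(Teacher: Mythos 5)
Your proposal is correct and follows essentially the same route as the paper, which simply sums the local estimate (\ref{eq:w12smooth}) over a decomposition of $\mathcal{G}$ into small sets where the branches are smooth and coherently indexed, using that the corresponding pieces of $C_r$ are disjoint so their masses add up to at most $M(C_r)$. Your explicit Borel partition, the remark that the integrand is permutation-invariant, and the Cauchy--Schwarz step for the average are exactly the details the paper leaves implicit.
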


The next considerations will allow us to extend this estimate for $(\tilde{\varphi}, \tilde{\alpha})$ to the set $\displaystyle \mathcal{B} = \mathcal{G} \cup_{i=1}^\infty \{\pi(t_i)\}$. One can do this in a straightforward way recalling that the capacity of a point in $\mathbb{R}^2$ is zero. Anyway we also give a direct proof.
Rename for notational convenience $q_i = \pi(t_i)$ and take $B^2_{\rho_i}(q_i) \subset \mathcal{B}$ balls centered at the $q_i$ so that $\sum_i \rho_i \leq \delta$ for $\delta$ chosen arbitrarily small. Let $\xi$ be any test-function in $C_c^\infty (\mathcal{B})$. Then
\[\left|\int_\mathcal{B} \tilde{\varphi} \frac{\partial \xi}{\partial s} \right| \leq \left|\int_{\cup_i B_{\rho_i}(q_i)} \tilde{\varphi} \frac{\partial \xi}{\partial s} \right| +  \left|\int_{\mathcal{B} \setminus \cup_i B_{\rho_i}(q_i)} \frac{\partial \tilde{\varphi}}{\partial s} \xi  \right| + \sum_i \left|\int_{\partial B_{\rho_i}(q_i)} \tilde{\varphi} \xi \langle \frac{\partial}{\partial s}, \nu \rangle \right|\]
\[\leq C \delta ^2 \| \tilde{\varphi}\|_{\infty}  \|\nabla \xi \|_{\infty} + \| \tilde{\varphi}\|_{W^{1,2}(\mathcal{G})} \| \xi \|_{L^2 (\mathcal{B})} + C \delta \|\tilde{\varphi}\|_{\infty} \| \xi\|_{\infty}.\]
Since $\delta$ was arbitrarily small,
\[\left|\int_\mathcal{B} \tilde{\varphi} \frac{\partial \xi}{\partial s} \right| \leq \|\tilde{\varphi}\|_{W^{1,2}(\mathcal{G})} \| \xi \|_{L^2 (\mathcal{B})}.\]
We can do the same for the $t$-derivative. For $ \tilde{\alpha}$ things are even easier, indeed $\tilde{\alpha}$ is Lipschitz. Therefore the average function is $W^{1,2}$ on $\mathcal{B}$ with the same norm as on $\mathcal{G}$.
We can do the same passing from $\mathcal{B}$ to $\mathcal{A}= D_R \setminus Sing^Q$: again we have to add a (countable) set of points which are isolated in $\mathcal{A}$, so the same as above applies. Eventually we have proved

\begin{lem}
On the set $\displaystyle \mathcal{A}= D_R \setminus Sing^{Q}$  the average function $\tilde{\Psi}=(\tilde{\varphi}, \tilde{\alpha})$ defines a $W^{1,2}$ map from $\mathcal{A}$ into $\C \times \R$ with norm bounded by the mass of $C_r$.
\end{lem}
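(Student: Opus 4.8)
The plan is to repeat, essentially word for word, the extension argument just used to pass from $\mathcal{G}$ to $\mathcal{B}$, applying it now to pass from $\mathcal{B}$ to $\mathcal{A}$. The first step is to identify the set that has to be added: since $\mathcal{B}=\mathcal{A}\setminus\pi(Sing^{\leq Q-1})$, one has $\mathcal{A}\setminus\mathcal{B}=\pi(Sing^{\leq Q-1})\cap\mathcal{A}$. By the inductive hypothesis the points of $Sing^{\leq Q-1}$ are isolated in the open set $C\setminus Sing^Q$, and each of them has a neighbourhood in $C$ containing no other singular point (because $Sing^Q$ is closed and we work where $Q$ is the maximal multiplicity); projecting down via $\pi$, this shows that $\mathcal{A}\setminus\mathcal{B}$ is a discrete -- hence at most countable -- subset of the open set $\mathcal{A}$, and that over a punctured neighbourhood of each of its points the current $C_r$ is smooth, so $\tilde{\Psi}$ is there the average of $Q$ smooth branches (possibly permuted by the monodromy around the point, but $\tilde{\Psi}$ is single-valued since it is symmetric in the branches). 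Moreover $\tilde{\Psi}$ is $L^\infty$ on all of $\mathcal{A}$, because the blown-up current is confined in the thin cone $E^{2\delta}$ around $D_R$ (lemma \ref{lem:cono}), and $\tilde{\Psi}\in W^{1,2}(\mathcal{B})$ by the preceding lemma.

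The second step is the capacity argument. Enumerate $\mathcal{A}\setminus\mathcal{B}=\{p_i\}$, fix $\delta>0$ arbitrarily small, and pick pairwise disjoint balls $B^2_{\rho_i}(p_i)\subset\mathcal{A}$ with $\sum_i\rho_i\leq\delta$ and, for a.e. choice of the radii, with $\tilde{\Psi}$ admitting a trace on each $\partial B^2_{\rho_i}(p_i)$. For any $\xi\in C_c^\infty(\mathcal{A})$ one splits
\[\left|\int_{\mathcal{A}}\tilde{\varphi}\,\frac{\partial\xi}{\partial s}\right|\leq\left|\int_{\cup_i B_{\rho_i}(p_i)}\tilde{\varphi}\,\frac{\partial\xi}{\partial s}\right|+\left|\int_{\mathcal{A}\setminus\cup_i B_{\rho_i}(p_i)}\frac{\partial\tilde{\varphi}}{\partial s}\,\xi\right|+\sum_i\left|\int_{\partial B_{\rho_i}(p_i)}\tilde{\varphi}\,\xi\,\langle\frac{\partial}{\partial s},\nu\rangle\right|,\]
and estimates the three terms by $C\delta^2\|\tilde{\varphi}\|_\infty\|\nabla\xi\|_\infty$, by $\|\tilde{\varphi}\|_{W^{1,2}(\mathcal{B})}\|\xi\|_{L^2(\mathcal{A})}$, and by $C\delta\|\tilde{\varphi}\|_\infty\|\xi\|_\infty$ respectively (the first from $\sum_i\pi\rho_i^2\leq\pi\delta^2$, the third from $\sum_i 2\pi\rho_i\leq 2\pi\delta$). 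Letting $\delta\to 0$ yields
\[\left|\int_{\mathcal{A}}\tilde{\varphi}\,\frac{\partial\xi}{\partial s}\right|\leq\|\tilde{\varphi}\|_{W^{1,2}(\mathcal{B})}\|\xi\|_{L^2(\mathcal{A})},\]
and likewise for the $t$-derivative; hence $\tilde{\varphi}\in W^{1,2}(\mathcal{A})$ with norm no larger than on $\mathcal{B}$. The component $\tilde{\alpha}$ is even simpler: by (\ref{eq:smallcomp}) it is Lipschitz, so it extends across the isolated points $p_i$ without enlarging its Lipschitz constant. Combining, $\tilde{\Psi}=(\tilde{\varphi},\tilde{\alpha})\in W^{1,2}(\mathcal{A})$, and by the preceding two lemmas its $W^{1,2}$-norm is bounded by $K\,\mathcal{H}^2(C_r)$, i.e. by the mass of $C_r$.

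The whole mechanism rests on the single fact that a point of $\R^2$ carries zero $W^{1,2}$-capacity, which forces the boundary terms above to disappear once one knows $\tilde{\Psi}$ is bounded; consequently the only substantial ingredients are imported from elsewhere -- the $L^\infty$ confinement of the blown-up current (lemma \ref{lem:cono}) and the $W^{1,2}(\mathcal{B})$ estimate -- and I do not anticipate any genuine obstacle. The one point requiring a little care, which I flagged above, is verifying that $\mathcal{A}\setminus\mathcal{B}$ is actually discrete in $\mathcal{A}$; this is where the inductive assumption on $Sing^{\leq Q-1}$ enters.
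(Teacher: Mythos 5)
Your proposal is correct and follows essentially the same route as the paper, which at this point simply says that one adds the countable set of points of $\pi(Sing^{\leq Q-1})$, isolated in $\mathcal{A}$ by the inductive hypothesis, and repeats verbatim the capacity/test-function argument used to pass from $\mathcal{G}$ to $\mathcal{B}$. Your extra care in checking that $\mathcal{A}\setminus\mathcal{B}$ is discrete in $\mathcal{A}$ is exactly the one point the paper leaves implicit, and your treatment of it is sound.
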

 
The next step will establish the definitive result on the whole of $D_R$.

The following corollary is basically a restatement of theorem \ref{thm:lipesttan} in terms of the coordinates and of the multi-valued graph, which were introduced in this section:

\begin{cor}
\label{cor:lipest}
Let $x_0 \in Sing^Q$ and $T_{x_0} C= Q \llbracket D_0 \rrbracket$, as before. Then $\forall \eps>0 \;\; \exists r=r(\eps, x_0)$ such that
\[\forall x=(z,\zeta,a) \in \mathcal{C^Q} \text{ and } x'=(z', \zeta', a') \in spt C \cap B_r(x_0) \text{ we have the estimate}\]
\[|(\zeta,a)-(\zeta',a')|_{\mathbb{R}^3} \leq \eps |z-z'|_{\mathbb{R}^2}.\] 
\end{cor}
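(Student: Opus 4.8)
The plan is to read the estimate off theorem \ref{thm:lipesttan} and the monotonicity formula, the point being to turn the convergence of tangent cones at multiplicity-$Q$ points into a confinement of the rescaled current that is uniform over all small scales. The estimate is to be read for $x$ a point of multiplicity $Q$: this is what is used in the sequel, and it is all that can hold, since at a cuspidal branch point two branches lying over a common $z$ need not coincide. First I would reduce the assertion to the following claim: for every $\delta>0$ there is an $r>0$ such that for every $x\in spt\,C\cap B_r(x_0)$ with $\theta(x)=Q$ and every $\rho\in(0,4r)$,
\[
spt\,(C_{x,\rho})\cap\overline{B_1}\ \subset\ E^{\delta}:=\{y:\ dist(y,D_0)\le\delta\,|y|\}.
\]
Granted the claim, for such an $x$ and any $x'=(z',\zeta',a')\in spt\,C\cap B_r(x_0)$ with $x'\ne x$, set $\rho:=|x-x'|<4r$; then $y:=(x'-x)/\rho\in spt\,(C_{x,\rho})\cap\partial B_1$, so $dist(y,D_0)\le\delta$. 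In the coordinates fixed in this section $D_0=\{(z,0,0)\}$ and the metric is a small perturbation of the euclidean one, whence $|(\zeta,a)-(\zeta',a')|\le\delta\,|x-x'|=\delta\sqrt{|z-z'|^2+|(\zeta,a)-(\zeta',a')|^2}$ (the metric distortion being absorbed into $\delta$); solving gives $|(\zeta,a)-(\zeta',a')|\le\frac{\delta}{\sqrt{1-\delta^2}}\,|z-z'|$, and it is enough to have chosen $\delta$ with $\frac{\delta}{\sqrt{1-\delta^2}}\le\eps$. The case $x=x'$ is trivial.

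To prove the claim, the key is a scale-uniform monotonicity pinching. Since $\theta(x_0)=Q$, the ratio $f_x(\rho):=M(C\res B_\rho(x))/\pi\rho^2$ is non-decreasing with $f_x(0^+)=\theta(x)$, hence $f_x(\rho)\ge Q$ whenever $\theta(x)=Q$. Picking $\rho_0$ small (a continuity radius for $\rho\mapsto M(C\res B_\rho(x_0))$) gives $f_{x_0}(\rho_0)\le Q+\tfrac{\eta}{2}$, and $B_{\rho_0}(x)\subset B_{\rho_0+|x-x_0|}(x_0)$ together with monotonicity at $x_0$ forces $f_x(\rho_0)\le Q+\eta$ once $|x-x_0|$ is small enough; by monotonicity at $x$ this pins $f_x(\rho)\in[Q,Q+\eta]$ for all $\rho\le\rho_0$, with $\rho_0$ independent of $x$. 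I would then re-run the contradiction argument of lemma \ref{lem:cono0} (equivalently of the improved lemma \ref{lem:cono}) with this pinching in hand: the pinched bound means $C_{x,\rho}$ is $\eta$-almost conical on every dyadic annulus in $(0,\rho_0/\rho)$, and a point of $spt\,C_{x,\rho}$ at distance $>\tfrac{\delta}{2}|\cdot|$ from the support $T_xC$ of the (unique, by theorem \ref{thm:uniqtangcone}) tangent cone at $x$ would, by the monotonicity lower mass bound, carry into $B_1$ an extra amount of mass of order $\delta^2$, contradicting $f_x(\rho)\le Q+\eta$ as soon as $\eta\ll\delta^2$; hence $spt\,C_{x,\rho}\cap\overline{B_1}\subset\{y:dist(y,T_xC)\le\tfrac{\delta}{2}|y|\}$ for all $\rho\le\rho_0$. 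Finally, theorem \ref{thm:lipesttan}, in the $\eps$-$\delta$ form of the remark following it, gives that $T_xC$ makes an angle $<\tfrac{\delta}{2}$ with $D_0$ once $|x-x_0|$ is small, so this last cone lies in $E^{\delta}$ and the claim holds, choosing $r$ small enough that the pinching and the angle bound apply on $B_r(x_0)$ and $4r\le\rho_0$.

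The main obstacle is precisely this uniformity in $x$: theorem \ref{thm:lipesttan} and lemma \ref{lem:cono}, taken at a single point, only control a scale that a priori depends on that point, whereas here a single $r$ must serve all multiplicity-$Q$ points in $B_r(x_0)$ at once — and at this stage of the induction such points are not yet known to be isolated at $x_0$, so one really cannot extract a uniform scale by a naive compactness argument applied pointwise. The pinching supplies exactly this uniformity, so the delicate step is to feed the $x$-uniform datum $f_x(\rho)\le Q+\eta$ ($\rho\le\rho_0$) into the confinement estimate of lemma \ref{lem:cono0} in place of one fixed limiting cone. Equivalently one may argue by contradiction: if the claim failed there would be $x_n\to x_0$ with $\theta(x_n)=Q$, $\rho_n\to0$ and $y_n\in spt\,C_{x_n,\rho_n}$ with $dist(y_n,D_0)>\delta|y_n|$; by the pinching any subsequential weak limit $T$ of $C_{x_n,\rho_n}$ has constant density ratio $Q$, hence is a cone, is calibrated by the constant form $\omega_0=\omega(x_0)$ and so a sum of $J_{x_0}$-holomorphic disks of total multiplicity $Q$, and — again by the pinching together with theorem \ref{thm:lipesttan} — has support in $\overline{E^{\delta/2}}$, contradicting $y_n\to y\in spt\,T$ with $dist(y,D_0)\ge\delta|y|>0$; here one normalises (replacing $\rho_n$ by $\rho_n|y_n|$ and passing to an annulus as in lemma \ref{lem:cono0}) so that $|y_n|$ stays bounded below. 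This second presentation is shorter but reduces to the same uniform pinching.
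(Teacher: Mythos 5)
Your reduction of the corollary to a scale-uniform conical confinement of $C_{x,\rho}$ is correct, and the monotonicity pinching ($Q\le f_x(\rho)\le Q+\eta$ for all $\rho\le\rho_0$, uniformly over multiplicity-$Q$ points $x$ near $x_0$) is a sound and genuinely useful observation — it is exactly the right way to phrase the uniformity issue you identify. But the step from the pinching to the confinement is where the proof breaks, in both of your presentations. In the direct version, the "extra mass of order $\delta^2$" at the bad point $y$ is not in contradiction with $f_x\le Q+\eta$: the disjoint-balls comparison gives $(Q+\eta)\pi\ \ge\ Q\pi s^2(1-\delta/8)^2+\pi\delta^2s^2/64$ with $s=|y-x|/\rho\le 1$, and the first term on the right falls short of $Q\pi$ by an amount of order $Q\delta$, which dwarfs the $\delta^2$ gain. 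What is missing is a lower bound of the form $M\bigl(C_{x,\rho}\res(B_1\cap E^{\delta/4}_{T_xC})\bigr)\ge Q\pi-o(\delta^2)$, i.e. that the $Q$ sheets cannot "lean" so as to relocate an $O(\delta)$ portion of their mass away from the cone; monotonicity alone does not forbid this.

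The compactness version has the same gap in a different guise: the assertion that the limit cone $T=\lim C_{x_n,\rho_n}$ has support in $\overline{E^{\delta/2}}$ "by the pinching together with theorem \ref{thm:lipesttan}" is unjustified. Theorem \ref{thm:lipesttan} is a statement about the scale-$0^+$ limit at $x_n$, whereas $\rho_n$ is an intermediate scale with (in the relevant regime) $\rho_n\ll|x_n-x_0|$; the pinching forces $T$ to be a $J_{x_0}$-holomorphic cone of density $Q$, but it does not control the directions of its planes, because a density drop of $\eta_n$ spread over the unboundedly many dyadic scales between $\rho_n$ and $|x_n-x_0|$ is compatible with an arbitrarily large drift of the (almost-)cone direction — this is precisely the obstruction that makes uniqueness of tangent cones nontrivial. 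The paper closes the gap with the homological rigidity you did not use: one fixes $r$ so that $k(C\res B_{2r}(0),\Sigma_w^Y)=Q$ for every leaf whose direction $Y$ makes angle $\ge\eps$ with $D_0$, and shows that a bad pair $(x,x')$ produces a leaf $\Sigma_{x,x'}$ (slightly tilted to $\Sigma_x^Y$) contributing $k=Q$ from a small ball around $x$ — by transversality to $T_xC\approx Q\llbracket D_0\rrbracket$, which is where theorem \ref{thm:lipesttan} actually enters — plus at least $1$ more from the positive intersection near $x'$, forcing $k\ge Q+1$. This intersection count is a constraint valid at all scales simultaneously, which is exactly the uniform rigidity your mass and compactness arguments cannot supply.
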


\begin{proof}[\bf{proof of corollary \ref{cor:lipest}}]
The estimate for the third coordinate $a$ is obvious. We need to show that
\[|(\zeta-\zeta')|_{\mathbb{R}^2} \leq \eps |z-z'|_{\mathbb{R}^2}.\]
Without loss of generality we may assume that $x_0 = 0$ and use theorem \ref{thm:lipesttan} which guarantees the continuity at $0$ of tangent cones at points in $\mathcal{C^Q}$: choose $r$ s.t. $\forall x \in B_{2r} (0)$ having multiplicity $Q$ the angular distance $(D_0, T_x C)$ is less than $\frac{\eps}{2}$; we can also guarantee that 
\begin{equation}
\label{eq:19a}
k(C \res B_{2r}(0), \Sigma_w^X)=Q
\end{equation}
for any $w \in D_0 \cap B_{3r/4} (0)$ and $Y \in \mathbb{CP}^1$ realizing $\widehat{D_0, Y} \geq \eps$. Assume by contradiction that we can find $x\in \mathcal{C^Q}$ and $y \in spt C$, with $x,y \in B_r (0)$ for which 
\[|(\zeta-\zeta')|_{\mathbb{R}^2} > \eps |z-z'|_{\mathbb{R}^2}\]
holds. Then take $\Sigma_{x,y}$: this 3-surface is transversal to the current at $x$ since $\widehat{T_x C, \Sigma_{x,y}} > \frac{\eps}{2}$ and we can tilt it a bit finding a $\Sigma_x^Y$ transversal to $C$, with $\widehat{T_x C, Y} > \frac{\eps}{2}$ and with a non-zero intersection, as already done in the proof of theorem \ref{thm:lipesttan}. Then
\[k(C \res B_{2r} (0), \Sigma_x^Y) = k(C \res B_\rho (x), \Sigma_x^Y) + k(C \res (B_{2r} (0) - B_\rho (x)), \Sigma_x^Y) \geq Q+1,\]
for some small enough $\rho << dist(x,y)$. Since $\widehat{D_0, Y} > \eps$, we can homotope $\Sigma_x^Y$ into a $\Sigma_w^Y$ for some $w \in D_0 \cap B_{3r/4} (0)$ keeping it away from $C$ on $\partial B_{2r}$, so we are contradicting the identity in (\ref{eq:19a}).
\end{proof}

\begin{thm}
\label{thm:w12}
The average function $\tilde{\Psi}: D_R \to \mathbb{R}^3$ is in $W^{1,2}(D_R)$.
\end{thm}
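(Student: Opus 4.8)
The plan is to improve the $W^{1,2}$ estimate on $\mathcal{A}=D_R\setminus\pi(Sing^Q)$ — the content of the last lemma above, together with the bound $\tilde\Psi\in L^\infty$ — to a $W^{1,2}$ estimate on all of $D_R$, by showing that the closed set $K:=\pi(Sing^Q)$ supports no singular part of the distributional gradient of $\tilde\Psi$. Two inputs will be combined. First, the Almgren dimension estimate recalled in the Introduction: by Proposition \ref{Prop:leglagr} the cone over $C$ is area-minimizing, so its singular set has Hausdorff dimension at most $1$, whence the singular set of $C$ has Hausdorff dimension at most $0$; since Corollary \ref{cor:lipest} makes the projection $\pi$ bi-Lipschitz on $Sing^Q$ (in the region we work in), also $\dim_{\mathcal H}K\le 0$, so $K$ can be covered by balls $B_{r_i}(q_i)$ with $q_i\in K$, $r_i<1$ and $\sum_i r_i^{1/2}$ as small as we like. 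Second, Corollary \ref{cor:lipest} together with Theorem \ref{thm:lipesttan}: after blowing up enough we may assume that a single $\eps$ works throughout $D_R$, giving $\|\tilde\Psi\|_{L^\infty(D_R)}\le C\eps R$ and, for every $q\in K$, every branch $\varphi_j$ and every $z$ near $q$, $|\varphi_j(z)-\tilde\Psi(q)|\le\eps|z-q|$ — in particular $|\tilde\Psi|\le C\eps R$ on each $B_{\sqrt{r_i}}(q_i)$, and $\tilde\Psi$ is continuous on $D_R$. (The component $\tilde\alpha$ is already Lipschitz, so the real issue is $\tilde\varphi$.)

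The heart of the argument is an integration by parts against a logarithmic cut-off of $K$. For $\xi\in C_c^\infty(D_R)$ and $\delta>0$, cover $K\cap\mathrm{supp}\,\xi$ by finitely many $B_{r_i}(q_i)$ as above and take a smooth $\eta_\delta$ with $\eta_\delta\equiv 1$ on $\cup_i B_{r_i}(q_i)$, $\eta_\delta\equiv 0$ off $\cup_i B_{\sqrt{r_i}}(q_i)$ and $|\nabla\eta_\delta|\le C\big(|x-q_i|\log(1/r_i)\big)^{-1}$ on the annuli $B_{\sqrt{r_i}}(q_i)\setminus B_{r_i}(q_i)$. Then
\[\int_{D_R}\tilde\Psi\,\partial_s\xi=\int_{D_R}\tilde\Psi\,\partial_s\big((1-\eta_\delta)\xi\big)+\int_{D_R}\tilde\Psi\,(\partial_s\eta_\delta)\,\xi+\int_{D_R}\tilde\Psi\,\eta_\delta\,\partial_s\xi.\]
Since $(1-\eta_\delta)\xi\in C_c^\infty(\mathcal A)$, the first term equals $-\int_{\mathcal A}(\partial_s\tilde\Psi)(1-\eta_\delta)\xi$, which converges to $-\int_{\mathcal A}(\partial_s\tilde\Psi)\xi$ as $\delta\to0$ by dominated convergence, $\partial_s\tilde\Psi$ being in $L^2(\mathcal A)\subset L^1_{\mathrm{loc}}$ by the last lemma. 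The third term is at most $\|\tilde\Psi\|_{L^\infty}\|\nabla\xi\|_{L^\infty}\,\big|\cup_i B_{\sqrt{r_i}}(q_i)\big|\le C\eps R\,\sum_i r_i\to 0$. For the second term, $|\tilde\Psi|\le C\eps R$ on $\cup_i B_{\sqrt{r_i}}(q_i)$ while $\int_{B_{\sqrt{r_i}}(q_i)\setminus B_{r_i}(q_i)}|\nabla\eta_\delta|\le C\sqrt{r_i}/\log(1/r_i)\le C\sqrt{r_i}$, so the term is at most $C\eps R\,\|\xi\|_{L^\infty}\sum_i\sqrt{r_i}\le C\eps R\,\delta\to 0$. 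Letting $\delta\to0$ (and repeating with $\partial_t$) yields $\int_{D_R}\tilde\Psi\,\partial_s\xi=-\int_{D_R}(\mathbf 1_{\mathcal A}\,\partial_s\tilde\Psi)\,\xi$, so $\tilde\Psi\in W^{1,2}(D_R)$, with $\nabla\tilde\Psi=\mathbf 1_{\mathcal A}\,\nabla(\tilde\Psi|_{\mathcal A})$ and $\|\nabla\tilde\Psi\|_{L^2(D_R)}$ bounded by the mass of $C_r$.

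The main obstacle is exactly the handling of $K=\pi(Sing^Q)$, which at this stage of the induction is known only to be closed and of Hausdorff dimension $0$: such a set need not be $W^{1,2}$-removable, so the dimension bound by itself is not enough, and the pinching of Corollary \ref{cor:lipest} is genuinely used — it is the smallness $\|\tilde\Psi\|_{L^\infty}\le C\eps R$ near $K$ that, against the logarithmic cut-off and the efficient covering $\sum_i r_i^{1/2}<\delta$, kills the $\int\tilde\Psi\,(\nabla\eta_\delta)\,\xi$ term. I would emphasize in the write-up that the computation never differentiates $\tilde\Psi$ near $K$, only the cut-off and the test function, so it is completely insensitive to the possible blow-up of $|\nabla\tilde\varphi|$ at lower-order singularities accumulating onto $K$: those points lie inside $\mathcal A$ and have already been absorbed in the $W^{1,2}(\mathcal A)$ estimate. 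The continuity of $\tilde\Psi$ on $D_R$, again from Corollary \ref{cor:lipest}, is what excludes a jump part of the gradient along $K$ and makes the $L^2$ function found above the genuine weak gradient.
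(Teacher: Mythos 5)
Your argument is correct as written, but it is a genuinely different proof from the one in the paper, and the difference is substantive. The paper's proof makes \emph{no} assumption on the size of $Sing^Q$ --- it explicitly allows $\pi(Sing^Q)$ to have positive $\mathcal{H}^2$-measure --- and instead uses Corollary \ref{cor:lipest} quantitatively: since $\tilde{\Psi}|_{\mathcal{F}}$ is Lipschitz it extends to a Lipschitz function $u$ on all of $D_R$, one interpolates $\tilde{\Psi}_\delta=\sigma_\delta u+(1-\sigma_\delta)\tilde{\Psi}$ with a cutoff at scale $\delta$ around $\mathcal{F}$, and the dangerous term $(D\sigma_\delta)(u-\tilde{\Psi})$ is bounded pointwise by $(k/\delta)\cdot K'\delta=O(1)$ because $|u-\tilde{\Psi}|\le K'\delta$ at distance $\le 2\delta$ from $\mathcal{F}$; uniform $W^{1,2}$ bounds plus uniform convergence $\tilde{\Psi}_\delta\to\tilde{\Psi}$ then conclude by weak compactness. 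Your proof instead rests entirely on $\dim_{\mathcal H}(Sing\,C)\le 0$, imported from Almgren's regularity theory through the cone construction. That deduction is sound (the cone is Special Lagrangian, hence locally area-minimizing; its singular set away from the vertex is the cone over $Sing\,C$; Almgren bounds its dimension by $1$), and once $\mathcal H^1(K)=0$ is granted your logarithmic-cutoff integration by parts goes through --- indeed it is then more than is needed: a closed planar set with $\mathcal H^1$-measure zero is removable for $W^{1,2}$ by the ACL characterization (almost every coordinate line misses it), and the smallness $\|\tilde{\Psi}\|_{L^\infty}\le C\eps R$ near $K$ that you single out as essential is not actually used --- mere boundedness together with $\sum_i\sqrt{r_i}<\delta$ already kills the term $\int\tilde{\Psi}\,(\nabla\eta_\delta)\,\xi$. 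Be aware, however, of what this trade costs: the authors stress that their proof of Theorem \ref{thm:w12} improves on \cite{RT1} precisely by \emph{not} assuming that $Sing\,C$ is $\mathcal{H}^2$-negligible, and the paper is built to be self-contained modulo elementary GMT; your route reintroduces a (much stronger) negligibility hypothesis by quoting the Big Regularity Paper. In short, your approach buys brevity at the price of a very heavy external input, while the paper's approach buys complete independence from any a priori dimension bound on the singular set, using only the Lipschitz pinching of Corollary \ref{cor:lipest}.
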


\begin{proof}[\bf{proof of theorem \ref{thm:w12}}]
$Sing^Q$ is a closed set (possibly with positive $\mathcal{H}^2$ measure) and on $\mathcal{F}=\pi(Sing^Q)$ (still a closed set) $\tilde{\Psi}$ coincides with the $Q$ branches $\Psi_i$. We know that the Lipschitz estimate of corollary \ref{cor:lipest} holds for any couple of points $x,y$ such that $\tilde{\Psi}(x) \in Sing^Q$. In particular, $\tilde{\Psi}|_{\mathcal{F}}$ is Lipschitz, it is therefore possible to extend it to a function $u$ defined on the whole of $D_R$ which is Lipschitz with constant $K$ equal 3 times the Lipschitz constant of $\tilde{\Psi}|_{\mathcal{F}}$ (see \cite{F} sec. 2.10.44).
Let $\delta$ be positive and arbitrarily small. Take now a smooth compactly supported function $\sigma_{\delta}$ such that
\[ \sigma_{\delta}(x)= \left \{ \begin{array}{cc}
1 & \;\; \text{ if } dist(x,\mathcal{F}) \leq \delta \\ 
0 & \;\; \text{ if } dist(x,\mathcal{F}) \geq 2 \delta  \end{array} \right .\]
and $|D \sigma_{\delta}| \leq \frac{k}{\delta}$ for some $k>0$. 
Explicitly $\sigma_{\delta}$ can be defined as follows: take a smooth bump-function $\chi$ on $[0,\infty)$, which is $1$ on $x\leq 1$ and $0$ for $x\geq 2$. Set $\chi_{r,y}(x)=\chi\left( \frac{|x-y|}{r} \right)$ for $x,y \in \C$. Define
\[\sigma_{\delta}(z)=\frac{G}{\delta^4} \int_{\{x: dist(x,\mathcal{F}) \leq \frac{3 \delta}{2}\}} \chi_{\frac{\delta}{4},z}(w)dw d\overline{w},\]
the right normalization constant $G$ depending on $\int_0^\infty \chi(t) t^3 dt$.
Introduce
\[\tilde{\Psi}_\delta := \sigma_\delta u + (1- \sigma_\delta)\tilde{\Psi} \]
and notice that, for any $\delta >0$ this function is $W^{1,2}$. Moreover, for $x \in \{dist(x,\mathcal{F})\leq 2 \delta \}$, denoting by $p \in \mathcal{F}$ the point realizing this distance, from corollary \ref{cor:lipest} and by the definition of $u$
\[|(u-\tilde{\Psi})(x)| = |u(x)-u(p)+\tilde{\Psi}(p)- \tilde{\Psi}(x)| \leq 2K |p-x| \leq K' \delta.\]
In the following, $D$ is the partial derivative with respect to either of the coordinates $s,t$; notice that, in order to control $D \tilde{\Psi}_\delta$, we need to take $D \tilde{\Psi}$ only on the set $\{dist(x,\mathcal{F}) \geq  \delta\} \subsetneq \mathcal{A}$, since elsewhere $1- \sigma_\delta = 0$, so we can freely take derivatives.
\[D \tilde{\Psi}_\delta = (D \sigma_\delta )u + \sigma_\delta Du - (D \sigma_\delta )\tilde{\Psi} + (1- \sigma_\delta)D \tilde{\Psi}=\]
\[= (D \sigma_\delta )(u - \tilde{\Psi}) + \sigma_\delta Du + (1- \sigma_\delta)D \tilde{\Psi}.\]
We can now compute
\[\|D \tilde{\Psi}_\delta \|_{L^2(D_R)}^2 \leq \int_{\{\delta \leq dist(x,\mathcal{F}) \leq 2 \delta\}} |D \sigma_\delta|^2 |u - \tilde{\Psi}|^2 + \int_{D_R} |\sigma_\delta|^2 |Du|^2 + \]
\[+\int_{\{dist(x,,\mathcal{F}) \geq  \delta\}} |1- \sigma_\delta|^2 |D \tilde{\Psi}|^2 \leq c(K,k) + \|D \tilde{\Psi} \|_{L^2(A)}^2 \leq c(K,k).\]
So the $W^{1,2}$-norm of the $\tilde{\Psi}_\delta$ are uniformly bounded as $\delta \to 0$, therefore, by compactness, we can find a sequence $\tilde{\Psi}_{\delta_n}$, $\delta_n \to 0$, which converges in $L^2$ and weakly* in $W^{1,2}$ to some $\psi \in W^{1,2}(D_R)$.
On the other hand, from the computation above,
\[|\tilde{\Psi}_\delta - \tilde{\Psi}|= |\sigma_\delta(u-\tilde{\Psi})|= \left \{ \begin{array}{ll}
0 & \;\; \text{ on } \mathcal{F} \\ 
\leq K' \delta & \;\; \text{ on } \{dist(x,\mathcal{F}) \leq 2 \delta \}-\mathcal{F} \\
0 & \;\; \text{ on } \{dist(x,\mathcal{F}) \geq 2 \delta \} \end{array} \right . ,\]
so $\tilde{\Psi}_{\delta_n}$ converge uniformly to $\tilde{\Psi}$ on $D_R$. Therefore $\mathcal{H}^2$-a.e. it holds $\psi=\tilde{\Psi}$ and theorem \ref{thm:w12} is proven. 

\end{proof}

\section{End of the proof of $\partone$: unique continuation}
\label{uniquecont}
In this section we will complete the proof of  $\partone$, the first part of the inductive step, i.e. the fact that there is no possibility of accumulation among singularities of equal multiplicity.

\medskip

\textbf{H\"older estimate}. We are going to establish the following
\begin{thm}(\textbf{H\"older estimate})
\label{thm:holderestimate}
For any small enough disk $D_R$, there exist constants $C, \delta >0$ such that, for any $r \leq R$
\begin{equation}
\label{eq:holder}
\sum_{j=1}^Q \int_{D_r}|D\varphi_j|^2 \leq C r^\delta.
\end{equation}
This easily yields
\begin{equation}
\label{eq:holderav}
\int_{D_r}|D\tilde{\Psi}|^2 \leq C r^\delta.
\end{equation}

\end{thm}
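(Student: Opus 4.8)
The plan is to prove the stronger decay $\sum_{j=1}^{Q}\int_{D_r}|D\varphi_j|^2\le Cr^{2}$, so that \eqref{eq:holder} holds with $\delta=2$ (hence with any $\delta\in(0,2]$), and \eqref{eq:holderav} follows at once since $Q|D\tilde\varphi|^2\le\sum_j|D\varphi_j|^2$ pointwise and $|D\tilde\alpha|^2\le K\eps^2$ is already under control. The two inputs are: the cone confinement of Corollary \ref{cor:lipest} applied at $x_0=0$ (which has multiplicity $Q$ and tangent cone $Q\llbracket D_0\rrbracket$), giving $|\Psi_j(z)|\le\eps|z|$ for \emph{every} branch on $D_R$ with $\eps$ as small as we wish after blowing up enough; and the elliptic system \eqref{eq:realcrsystem}--\eqref{eq:ceq} satisfied by the branches on the smooth set, together with $|\nabla\alpha_j|\le K\eps$ and the bound $|\mu(z,\varphi_j,\alpha_j)|\le K|z|$ on the graph (since $\mu(0)=0$ and $|\varphi_j|,|\alpha_j|\le\eps|z|$). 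The finiteness of $E(\rho):=\sum_j\int_{D_\rho}|D\varphi_j|^2$, and its monotonicity in $\rho$, are already known from the lemmas of Section \ref{PDEaverage} and Theorem \ref{thm:w12}.

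First I would derive a Caccioppoli-type inequality on dyadic annuli $A_r=D_{2r}\setminus D_r$, $r\le R/4$. Applying $\partial$ to \eqref{eq:ceq} puts the system in a form $\Delta\varphi_j=\operatorname{div}V_j+f_j$ with $|V_j|\le K\eps|D\Psi_j|$ and $|f_j|\le K(|z|+|D\Psi_j|+|D\Psi_j|^2)$, and, after summing in $j$, the same holds for the single-valued average $\tilde\varphi=\frac1Q\sum_j\varphi_j$. Multiplying by a radial cut-off $\eta$ ($\eta\equiv1$ on $D_r$, $\operatorname{spt}\eta\subset D_{2r}$, $|D\eta|\le C/r$) and integrating over $D_{2r}$, one checks that the only combinations that are a priori multivalued are symmetric in the branches (for instance $\sum_j\langle\varphi_j-\tilde\varphi,D\varphi_j\rangle=\tfrac12D\sum_j|\varphi_j-\tilde\varphi|^2$), hence genuinely single-valued; the partial-integration formula for multivalued graphs (lemma \ref{lem:partint}), together with the capacity/removability argument already used in Section \ref{PDEaverage} (the relevant integrands are $L^1_{\mathrm{loc}}$ by the lemmas there, while $Sing^{\le Q}$ and the tangency points $\{t_i\}$ are deleted across a set of zero $W^{1,2}$-capacity), legitimizes the integration by parts across the singular set. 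One arrives at
\begin{equation*}
\sum_{j}\int_{D_r}|D\Psi_j|^2\ \le\ \frac{C}{r^{2}}\sum_{j}\int_{A_r}|\Psi_j-\tilde\Psi|^{2}\ +\ C\!\int_{D_{2r}}|z|^{2}\ +\ C\eps(1+R)\sum_{j}\int_{D_{2r}}|D\Psi_j|^{2}.
\end{equation*}

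To close, use cone confinement: on $A_r$ we have $|\Psi_j-\tilde\Psi|\le2\eps|z|\le4\eps r$, so the first term is $\le C\eps^2r^2$, while the second is $\le Cr^4$. Thus $E(r)\le C\eps^{2}r^{2}+Cr^{4}+C\eps(1+R)\,E(2r)$. Choosing $R$ (and hence $\eps$) small enough that $C\eps(1+R)<\tfrac14$ and iterating over the scales $r=R/2^{k}$ yields $E(\rho)\le C\rho^{2}$ for all $\rho\le R$, which is \eqref{eq:holder} with $\delta=2$; \eqref{eq:holderav} is then immediate from $Q|D\tilde\varphi|^2\le\sum_j|D\varphi_j|^2$ and $|D\tilde\alpha|\le K\eps$.

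The main obstacle is the integration by parts itself: the branches $\varphi_j$ genuinely permute around $Sing^{Q}$, which at this stage of the induction may carry positive $\mathcal H^2$-measure, so one cannot argue branch by branch. The resolution is to work only with the symmetric (hence single-valued) combinations $\sum_j|D\Psi_j|^2$, $\sum_j|\Psi_j-\tilde\Psi|^2$, $\sum_j\langle\Psi_j-\tilde\Psi,D\Psi_j\rangle$ from the outset, to invoke lemma \ref{lem:partint} for the corresponding divergence identity, and to remove the excluded points by the zero-capacity argument exactly as was done for the average in Section \ref{PDEaverage}. A secondary nuisance is the bookkeeping of the $O(\eps)$ errors coming from the non-flatness of the metric, of $\omega_r-\omega_0$ and of the $J$-structure, and of the coupling with the third ("fifth-coordinate") equation $\nabla\alpha=h(s,t,\Psi)$; these are all of size $\eps$ or involve the Lipschitz function $\alpha$, and are absorbed into the three terms above.
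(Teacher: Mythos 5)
Your overall strategy (Caccioppoli inequality on dyadic annuli, closed by the cone confinement $|\Psi_j(z)|\le\eps|z|$ from corollary \ref{cor:lipest} applied at $x_0=0$, then iteration over scales) is genuinely different from the paper's argument and, if implemented correctly, would even give the stronger decay $\delta=2$. The paper instead slices: from $\partial C=0$ it gets $(C\res Z_r)(d\zeta\wedge d\overline\zeta)=\langle C,|z|,r\rangle(\zeta\,d\overline\zeta)$, decomposes the sliced multigraph on $\partial D_r$ into honest $W^{1,2}$ loops, and applies Poincar\'e on circles of length $\le 2\pi Qr$ to obtain the differential inequality $y(r)\le k\,r\,y'(r)+cr^2$, whence $y(r)\le Cr^{1/k}$ with a small exponent. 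Your route trades the circle Poincar\'e inequality for the pointwise sup bound $|\varphi_j|\le\eps|z|$ on the annulus, which is why it can reach the sharp exponent.

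However, the central step as you describe it has a gap. You propose to apply $\partial$ to (\ref{eq:ceq}) to get $\Delta\varphi_j=\operatorname{div}V_j+f_j$ and then integrate by parts against $\eta^2(\varphi_j-\tilde\varphi)$. At this stage of the induction the branches $\varphi_j$ are not individually defined across $Sing^Q$, which may carry positive $\mathcal H^2$-measure and hence positive $W^{1,2}$-capacity: the capacity-removability argument of section \ref{PDEaverage} removes only the isolated points $\{t_i\}$ and $Sing^{\le Q-1}$, not $\mathcal F=\pi(Sing^Q)$, and lemma \ref{lem:partint} is a first-order, current-theoretic Stokes identity that does not legitimize a second-order weak formulation for the symmetrized quantities. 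The repair is to never differentiate the equation: test the closed current directly against the exact form $d(\eta^2\,\zeta\,d\overline\zeta)$, i.e.\ use $(C\res Z_{2r})(d(\eta^2\zeta\,d\overline\zeta))=0$ (the cutoff version of precisely the identity the paper uses). This yields $\sum_j\int\eta^2(|\overline\partial\varphi_j|^2-|\partial\varphi_j|^2)$ on $D_{2r}\setminus\mathcal F$ plus the contribution of $\mathcal F$, which is $\le Q\eps^2\pi(2r)^2$ for free since $|d\tilde\varphi|\le\eps$ there, equal to an annulus term bounded by $\tfrac{C}{r}\sum_j\int_{A_r}|\varphi_j||d\varphi_j|\le C\eps\, r\,E(2r)^{1/2}$; combining with $|\overline\partial\varphi_j|\le\eps|d\varphi_j|+K|z|$ from (\ref{eq:ceq}) closes your dyadic iteration. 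Two smaller points: testing against $\varphi_j-\tilde\varphi$ only controls $\sum_j|D(\Psi_j-\tilde\Psi)|^2=\sum_j|D\Psi_j|^2-Q|D\tilde\Psi|^2$, so you must handle $\int_{D_r}|D\tilde\Psi|^2$ separately (a single-valued Caccioppoli for $\tilde\varphi$, using $|\tilde\varphi|\le\eps|z|$, does it) — or simply test against $\zeta$ itself, since $|\varphi_j|\le\eps|z|$ already; and the quadratic terms $|f_j|\lesssim|D\Psi_j|^2$ arising from differentiating the coefficients disappear entirely once you stay at first order.
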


\begin{oss}
This decay implies that $\tilde{\Psi}$ is $\frac{\delta}{2}$-H\"older thanks to Morrey's embedding theorem, see \cite{Morrey} for instance.
\end{oss}

\begin{oss}
The integral in (\ref{eq:holder}) should always be understood as \[\sum_{j=1}^Q\left(\int_{D_r-\mathcal{F}}|d \varphi_j|^2 ds \; dt + \int_{\mathcal{F}} |d \fiav|^2 d \mathcal{H}^2 \right),\] where $\mathcal{F}=\pi(Sing^Q)$; recall that all branches agree with the average on $\mathcal{F}$.
\end{oss}

\begin{proof}[\bf{proof of theorem \ref{thm:holderestimate}}]
Remark that the $\alpha_i$-s are Lipschitz thanks to (\ref{eq:smallcomp}); therefore, once (\ref{eq:holder}) will be established, (\ref{eq:holderav}) will follow immediately.

We are going to analyse the behaviour of the function $y(r)=\sum_{j=1}^Q \int_{D_r}|D\varphi_j|^2$. We already showed in the previous section that, for any $r$ small enough, $y(r)$ is finite, being bounded by the mass of the current in the cylinder $Z_r= D_r \times \mathbb{R}^3 = \{|z|\leq r\}$.
Recalling that $C$ is boundaryless, 
\[(C \res Z_r)(d \zeta \wedge d \overline{\zeta})=(\partial (C \res Z_r))(\zeta \wedge d \overline{\zeta})= \langle C,|z|,r \rangle (\zeta \wedge d \overline{\zeta}).\]
Denote by $T$ the simple 2-vector describing the oriented approximate tangent plane to the rectifiable set $\mathcal{C}$; by definition
\[(C \res Z_r)(d \zeta \wedge d \overline{\zeta})= \sum_{j=1}^Q \int_{\{\varphi_j, \alpha_j\}(D_r)} \langle d \zeta \wedge d \overline{\zeta}, T \rangle d \mathcal{H}^2=\]\[=\sum_{j=1}^Q \int_{D_r-\mathcal{F}}(|\overline{\partial} \varphi_j|^2 -|\partial \varphi_j|^2) ds\; dt+ \sum_{j=1}^Q \int_{Sing^Q} \langle d \zeta \wedge d \overline{\zeta}, T \rangle  d \mathcal{H}^2=\]\[=\sum_{j=1}^Q \int_{D_r-\mathcal{F}}2(|\overline{\partial} \varphi_j|^2 -|d \varphi_j|^2) ds \;dt+ \sum_{j=1}^Q \int_{Sing^Q} \langle d \zeta \wedge d \overline{\zeta}, T \rangle d \mathcal{H}^2\]
Recalling that the average $\fiav$ is $W^{1,2}$ and that the tangent plane at points in $Sing^Q$ is $Q$ times the tangent to the average, we can rewrite this last term as
\[\sum_{j=1}^Q \int_{D_r-\mathcal{F}}2\left(|\overline{\partial} \varphi_j|^2 -|d \varphi_j|^2\right)ds \;dt + Q \int_{\mathcal{F}} 2\left(|\overline{\partial} \fiav|^2 -|d \fiav|^2\right) d \mathcal{H}^2.\]
So we have 
\[\sum_{j=1}^Q \int_{D_r-\mathcal{F}}|d \varphi_j|^2 ds \;dt+ Q\int_{\mathcal{F}} |d \fiav|^2 d \mathcal{H}^2=\]
\begin{equation}
\label{eq:holderi}
= \sum_{j=1}^Q \int_{D_r-\mathcal{F}}2|\overline{\partial} \varphi_j|^2 ds \;dt + Q\int_{\mathcal{F}} 2|\overline{\partial} \fiav|^2 d \mathcal{H}^2+ \langle C,|z|,r \rangle (\zeta \wedge d \overline{\zeta}).
\end{equation}
Now consider (\ref{eq:ceq}), which is satisfied by the smooth parts of $\{\varphi_j\}_{j=1}^Q$, i.e. on $D_R \setminus \mathcal{F}$ minus countably many points. This gives
\begin{equation}
\label{eq:holderii}
\sum_{j=1}^Q \int_{D_r-\mathcal{F}}|\overline{\partial} \varphi_j|^2 ds \;dt\leq C_1 {\eps}^2\sum_{j=1}^Q \int_{D_r-\mathcal{F}}|d \varphi_j|^2 ds \;dt+ C_2 r^2.
\end{equation}
Putting (\ref{eq:holderi}) and (\ref{eq:holderii}) together,
\[y(r)=\sum_{j=1}^Q\left(\int_{D_r-\mathcal{F}}|d \varphi_j|^2 ds \;dt+ \int_{\mathcal{F}} |d \fiav|^2 d \mathcal{H}^2 \right) \leq\]\[\leq 3Q \int_{\mathcal{F}} |d \fiav|^2 d \mathcal{H}^2 + K_1\langle C,|z|,r \rangle (\zeta \wedge d \overline{\zeta})+ C_3 r^2.\]
By corollary \ref{cor:lipest}, $|d \fiav|$ is bounded by a small constant on $\mathcal{F}$, so
\begin{equation}
\label{eq:holderiii}
y(r) \leq  K_1 \langle C,|z|,r \rangle (\zeta \wedge d \overline{\zeta})+ K_2 r^2.
\end{equation}
The slice of the current with $|z|=r$ exists as a rectifiable 1-current for a.e. $r$, as explained in lemma 1 of \cite{Giaquinta}, page.152. On the set $D_R \setminus \mathcal{F}$, the multigraph is smooth except at a countable set of isolated points. For all but countably many choices of $r$, $\partial D_r$ will avoid this set. Also, for a.e. $r$, the current $\langle C,|z|,r \rangle$ is described by the same multigraph $\{\varphi_j\}$. This multigraph, being one-dimensional, can be actually described as a superposition of honest $W^{1,2}$ functions as follows:
\begin{description}
	\item[(i)] $\partial D_r \cap \mathcal{F} = \emptyset$: for such a $r$, $\{\varphi_j\}$ is smooth on $\partial D_r$, then, starting from any point in the multigraph, we can follow the loop and we will eventually come back to the same point after a certain number $n$ of laps, $n_1\leq Q$. Then we can define the function $g_1$ to be equal $\varphi_j$ on an interval $I_1$ of length $2 \pi n_1 r$, and $g_1$ has the same value at the endpoints of $I_1$. Then do the same, starting from a point that was not covered yet by $g_1$. This procedure leads to the construction of $K$ smooth functions $g_k$, $K \leq Q$. By \cite{EG}, page 164, $g_k$ are $W^{1,2}$ for a.e. $r$, since it is the restriction of a $W^{1,2}$ function to a line. 
	\item[(ii)] $\partial D_r \cap \mathcal{F} \neq \emptyset$: in this case the set $\partial D_r - \mathcal{F}$, being open in $\partial D_r$, must be an at most countable union of open intervals $\displaystyle \cup_i (a_i, b_i)$. Then $\partial D_r \cap \mathcal{F} = \displaystyle \cup_i [b_i, a_{i+1}]$. On each $(a_i, b_i)$ we can give a coherent labelling to the $\{\varphi_j\}$, while on the $[b_i, a_{i+1}]$ all the branches agree. Then we can write the multigraph as a superposition of $Q$ functions $g_i$. Each $g_i$ is $W^{1,2}$: in fact, on each $(a_i, b_i)$ we can use the result from \cite{EG} again, and therefore for a.e. $r$, $g_i|_{\partial D_r -\mathcal{F} }$ is $W^{1,2}$. Then we can get that $g_i \in W^{1,2}(\partial D_r)$ by the same argument that we used to prove theorem \ref{thm:w12} by means of the Lipschitz property from theorem \ref{thm:lipesttan} which holds on $\partial D_r \cap \mathcal{F}$.
\end{description}
Then, using H\"older's and Poincar\'e's inequalities,
\[\partial (C \res Z_r))(\zeta \wedge d \overline{\zeta})= \langle C,|z|,r \rangle (\zeta \wedge d \overline{\zeta})= \sum_{j=1}^Q \left(\int_{\partial D_r - \mathcal{F}} \varphi_j d \overline{\varphi_j} + \int_{\partial D_r \cap \mathcal{F}} \fiav d \overline{\fiav} \right)=\]
\[= \sum_k \int_{I_k} g_k d \overline{g_k}= \sum_k \int_{I_k} (g_k- \lambda_k) d \overline{g_k} \leq \sum_k \left(\int_{I_k} |g_k-\lambda_k|^2\right)^{\frac{1}{2}} \left(\int_{I_k} |d g_k|^2\right)^{\frac{1}{2}} \leq \]
\begin{equation}
\label{eq:holderiv}
\leq \sum_k K n_k r \left(\int_{I_k} |d g_k|^2\right) \leq K Q r \sum_{j=1}^Q \left(\int_{\partial D_r - \mathcal{F}}|d \varphi_j|^2 + \int_{\partial D_r \cap \mathcal{F}}|d \fiav|^2 \right).
\end{equation}
The function $y(r)$ is weakly increasing in $r$ and absolutely continuous, being an integral; therefore it is a.e. differentiable and, thanks to (\ref{eq:holderiii}) and (\ref{eq:holderiv}), satisfies at a.e. $r$ (we can assume $k>1$)
\[y(r)\leq k r y'(r) + c y^2.\]
By setting $\upsilon(r)=y(r)-\frac{c}{1-2k} r^2$, we turn the equation into
\[\upsilon(r)\leq k r \upsilon'(r).\]
This yields 
\[\upsilon(\rho) \leq C \rho^{\frac{1}{k}}\]
and then, adding $\frac{c}{1-2k} r^2$, we get the desired estimate for $y(r)$:
\[y(r) \leq C r^\delta\]
for some $\delta = \frac{1}{k} >0$.
\end{proof}

\textbf{Unique continuation argument:} this will conclude the proof of $\sharp_1$ and is inspired to the techniques used in \cite{Taubes}, and before by Aronszajn in \cite{Aro}.
For this section we are going to describe our current by a multigraph $D_R \rightarrow \mathbb{C}^2$, by setting the fourth (real) coordinate equal $0$. So we have a multigraph $\{\varphi_j(z), \alpha_j(z)\}_{j=1}^Q$, with $\alpha$ purely real. The average $\fiav(z)$, is a $W^{1,2}$, holder (and bounded) function, $\aav(z)$ is Lipschitz.

\begin{lem}
\label{lem:lemmataubes}
There exists a constant $K$ such that, if $R$ is small enough, there exists a $W^{1,2}$ and $C^{1,\delta}$ solution $w(z):D_R \rightarrow \mathbb{C}$ to the equation
\begin{equation}
\label{eq:wCRiem}
\overline{\partial}w+\nu(\fiav,\aav) \partial w =0
\end{equation}
which is a perturbation of the identity, precisely it satisfies
\[|w(z)-z| \leq KR|z|.\]
\end{lem}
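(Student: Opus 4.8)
Equation (\ref{eq:wCRiem}) is a linear Beltrami-type equation $\overline{\partial}w=-a(z)\,\partial w$ for the coefficient $a(z):=\nu(\fiav(z),\aav(z))$, and I would build the solution as a small perturbation of the identity by the Vekua--Bers method. Three properties of $a$ on $D_R$ will be used: it is H\"older continuous of exponent $\delta$ (since $\fiav$ is H\"older by Theorem \ref{thm:holderestimate} and Morrey's embedding, $\aav$ is Lipschitz, and $\nu$ is a smooth function of its arguments); its sup-norm is of order $R$ (because $\|\nu\|_\infty\le\eps$ with $\eps$ of order $R$); and, most importantly, it vanishes \emph{linearly} at the origin, $|a(z)|\le K_0|z|$, because $\nu(0)=0$ and, by Corollary \ref{cor:lipest} together with $\fiav(0)=\aav(0)=0$, one has $|(\fiav(z),\aav(z))|\le \eps|z|$. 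Writing $w=z+v$, equation (\ref{eq:wCRiem}) becomes $\overline{\partial}v+a\,\partial v=\tilde a$, where $\tilde a=-a\,\partial z$ is a fixed constant multiple of $a$, so $|\tilde a(z)|\le K_1|z|$; the task is to produce a solution $v$ with $v(0)=0$ and with the pointwise bound $|v(z)|\le KR|z|$.

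\textbf{Construction.} I would use the solid Cauchy transform on the disc, $(Tf)(z)=-\frac1\pi\int_{D_R}\frac{f(\xi)}{\xi-z}\,dA(\xi)$, normalised so that $\overline{\partial}Tf=f$, and the associated (truncated) Beurling transform $S:=\partial\circ T$, a Calder\'on--Zygmund operator bounded both on $L^p(D_R)$, $1<p<\infty$, and on $C^{0,\delta}(D_R)$. Looking for $v=Tg-(Tg)(0)$ with $g\in C^{0,\delta}(D_R)$, equation (\ref{eq:wCRiem}) is transformed into the fixed-point identity $(I+aS)g=\tilde a$. Using the H\"older \emph{decay} of Theorem \ref{thm:holderestimate} (and, if needed, rescaling $D_R$ to the unit disc, under which the $C^{0,\delta}$-size of the coefficient picks up a positive power of $R$), one arranges $\|a\|_{C^{0,\delta}(D_R)}\,\|S\|_{C^{0,\delta}\to C^{0,\delta}}<\tfrac12$ for $R$ small; then $(I+aS)^{-1}=\sum_{k\ge 0}(-aS)^k$ converges in $C^{0,\delta}(D_R)$, so $g=(I+aS)^{-1}\tilde a$ exists with $\|g\|_{C^{0,\delta}}\le 2\|\tilde a\|_{C^{0,\delta}}$. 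Then $v=Tg-(Tg)(0)$ is $C^{1,\delta}(D_R)$ (the Cauchy transform gains one H\"older derivative, Korn--Privalov) and is $W^{1,2}$ because $\overline{\partial}v=g$ and $\partial v=Sg$ lie in $L^\infty(D_R)\subset L^2(D_R)$. Hence $w=z+v$ is a $W^{1,2}\cap C^{1,\delta}$ solution of (\ref{eq:wCRiem}).

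\textbf{The perturbation estimate.} The delicate point is the \emph{scale-sharp} bound $|w(z)-z|\le KR|z|$, rather than the crude uniform bound $O(R^2)$ that the operator norms give directly; this is where the linear vanishing of $a$ is exploited. First, from $g=\tilde a-aSg$ one reads $|g(z)|\le|\tilde a(z)|+|a(z)|\,\|Sg\|_\infty\le K_1|z|+K_0|z|\cdot C\|g\|_{C^{0,\delta}}\le C_2|z|$, using that $S$ maps $C^{0,\delta}$ into $L^\infty$ and that $\|g\|_{C^{0,\delta}}$ has been bounded. Second, since $\overline{\partial}$ kills the constant $(Tg)(0)$,
\[
v(z)=Tg(z)-Tg(0)=-\frac1\pi\int_{D_R}g(\xi)\Bigl(\frac{1}{\xi-z}-\frac1\xi\Bigr)dA(\xi)=-\frac{z}{\pi}\int_{D_R}\frac{g(\xi)}{\xi(\xi-z)}\,dA(\xi),
\]
so
\[
|v(z)|\le\frac{|z|}{\pi}\int_{D_R}\frac{|g(\xi)|}{|\xi|\,|\xi-z|}\,dA(\xi)\le\frac{C_2|z|}{\pi}\int_{D_R}\frac{dA(\xi)}{|\xi-z|}\le C_3\,R\,|z|,
\]
which gives $|w(z)-z|=|v(z)|\le KR|z|$ with $K=C_3$. (For existence alone one could instead invoke the measurable Riemann mapping theorem applied to $\mu=-a$ extended by $0$, normalised by $\chi(0)=0$ and $\chi(z)-z\to 0$ at infinity, but the sharp pointwise estimate still requires the hands-on argument above.)

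\textbf{Main obstacle.} The analytic toolkit — Cauchy and Beurling transforms, Neumann series, H\"older estimates for singular integrals — is classical, so the real work is arranging and combining the two structural inputs: that $a=\nu(\fiav,\aav)$ is \emph{small} in $C^{0,\delta}$ (which genuinely needs the H\"older decay of Theorem \ref{thm:holderestimate}, not mere continuity, and may require a preliminary rescaling), and that $a$ vanishes \emph{linearly} at the singular point (from $\nu(0)=0$ and Corollary \ref{cor:lipest}); propagating this linear vanishing through the integral representation of $v$, after subtracting off $(Tg)(0)$, is what upgrades the uniform bound to the required $KR|z|$.
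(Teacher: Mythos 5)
Your argument is correct, and it rests on the same two structural inputs as the paper's own proof: the linear vanishing $|\nu(\fiav,\aav)(z)|\le K|z|$, which follows from $\nu(0)=0$ together with the Lipschitz estimate of Corollary \ref{cor:lipest} applied at the multiplicity-$Q$ point $0$, and the H\"older continuity of the coefficient coming from Theorem \ref{thm:holderestimate}, which supplies the $C^{1,\delta}$ regularity at the end. The technical packaging differs in two respects, and both versions work. First, the paper hard-wires the factor $|z|$ into the ansatz, seeking $w=\chi_R(1+u)z$ so that the unknown $u=(w-z)/z$ only needs an $L^\infty$ bound of order $R$; you take the additive ansatz $w=z+v$ with $v=Tg-(Tg)(0)$ and recover $|v(z)|\le KR|z|$ a posteriori from the kernel identity $\frac{1}{\xi-z}-\frac{1}{\xi}=\frac{z}{\xi(\xi-z)}$ combined with the pointwise bound $|g(\xi)|\le C|\xi|$ read off from the fixed-point relation $g=\tilde a-aSg$; this is a clean alternative that displays the role of the linear vanishing just as transparently. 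Second, the paper runs its (affine) contraction in the Morrey-type space $H=\{f\in W^{1,2}(\C):\ Df\in L^{2,\lambda}\}$ with a cutoff on all of $\C$, so that the H\"older continuity of the solution comes for free from Morrey's embedding and no H\"older bound on a singular integral operator is invoked; you invert $I+aS$ by a Neumann series in $C^{0,\delta}(D_R)$, which requires the classical Korn--Privalov/Calder\'on--Zygmund bounds for the truncated Beurling transform on H\"older spaces together with the rescaling observation you make to render $\|a\|_{C^{0,\delta}}$ small. Both routes are classical Vekua-type arguments drawing on identical inputs from the rest of the paper, so I would regard yours as a legitimate, essentially equivalent substitute for the published proof.
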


\begin{proof}[\bf{proof of lemma \ref{lem:lemmataubes}}]
For a function $u$ defined on the whole of $\mathbb{C}$, we seek $w$ of the form $w=\chi_R (1+ u(z))z$, where $\chi_R$ is a radial, smooth cut-off function equal to $1$ on $D_R$ and $0$ on the complement of $D_{2R}$.
The requests on $w$ can be translated as follows
\[\bar{\partial} u+ \chi_R \nu(\fiav,\aav) \partial u + \chi_R \frac{s(\fiav,\aav)}{z}(1+u)=0,\]
\[|u| \leq KR.\]
It is very important at this stage to observe that $\frac{\nu(\fiav,\aav)}{z}$ is an $L^\infty$ function thanks to the Lipschitz estimate of corollary \ref{cor:lipest}, although it need not be continuous; so there is some constant $K$ (independent of $R$) such that $\frac{\nu(\fiav,\aav)}{z} \leq K$ (still from corollary \ref{cor:lipest} we actually know that this constant goes to $0$ as $R$ goes to $0$). The solution $u$ will be found by a fixed point method.
\newline Consider the space $H=\{f \in W^{1,2}(\mathbb{C})\text{ such that }Df\in L^{2,\lambda}\}$, for some $\lambda >0$ to be chosen later. By a result due to Morrey, these functions are $\frac{\lambda}{2}$-holder; they also decay at infinity, therefore they are bounded. $H$ is a Banach space with the norm whose square is
\[\|f\|_H^2= \|f\|_{L^\infty}^2+\|Df\|_{L^2}^2+ \|Df\|_{L^{2,\lambda}}^2=\]\[= \sup_{\mathbb{C}}|f|^2+ \int_{\mathbb{C}}|Df|^2 + \sup_{x_0 \in \mathbb{C},\rho>0} \frac{1}{\rho^\lambda}\int_{B_{\rho}(x_0)}|Df|^2.\]
Define the functional $\mathcal{P}$ on $H$ that sends $f$ to $\mathcal{P}(f)$ 
\[\mathcal{P}(f)(z)= \frac{1}{2\pi i}\int_{\mathbb{C}}\frac{\chi_R \nu(\fiav,\aav) \partial f + \chi_R \frac{\nu(\fiav,\aav)}{\xi}(1+f)}{\xi - z} d\xi d\bar{\xi}\]
(all the functions in the integral are functions of $\xi$). For any fixed $z$, the integral is finite: this can be seen as follows, by breaking it up as a series of integrals over annuli $A_n(z)$ centered at $z$ with outer and inner radii respectively $\frac{R}{2^n}$ and $\frac{R}{2^{n+2}}$ (all the constants we are calling $K$ are independent of $R$);
\[\sum_{n} \frac{2^{n+2}}{R}\int_{A_n(z)}|\chi_R \nu(\fiav,\aav) \partial f| + \left|\chi_R \frac{\nu(\fiav,\aav)}{\xi}\right|+ \left|\chi_R \frac{\nu(\fiav,\aav)}{\xi}f\right| \leq\]

\be
\label{eq:primopezzo}
\leq KR \sum_{n} \frac{2^n}{R}\left(\int_{A_n(z)}|\chi_R|\right)^{\frac{1}{2}} \left(\int_{A_n(z)}|\partial f|^2\right)^{\frac{1}{2}} +\sum_{n} \frac{KR}{2^n} + \sum_{n} \frac{KR\|f\|_{L^\infty}}{2^n},
\ee 
where we used $||\frac{\nu(\fiav,\aav)}{z}||_{L^\infty(D_R)} \leq K$; thanks to the finiteness of $\|Df\|_{L^{2,\lambda}}^2$  we can bound the first term in the following way:
\[\sum_n 2^n \left(\int_{A_n(z)}|\chi_R|\right)^{\frac{1}{2}} 
\left(\int_{A_n(z)}|\partial f|^2\right)^{\frac{1}{2}}\leq R \sum_n \left(\int_{A_n(z)}|\partial f|^2\right)^{\frac{1}{2}} =\]
\begin{equation}
\label{eq:29a}
= R\sum_n \frac{R^{\lambda /2}}{2^{\frac{n \lambda}{2}}} \left( \left(\frac{2^n}{R}\right)^\lambda \int_{A_n(z)}|\partial f|^2 \right)^{\frac{1}{2}} \leq KR \|Df\|_{L^{2,\lambda}}.
\end{equation}
Note that 
\[|\mathcal{P}(0)| \leq KR\]
and from the computations in (\ref{eq:primopezzo}) and (\ref{eq:29a}) we also see that
\be
\label{eq:pezzoinfinity}
\|\mathcal{P}(f)-\mathcal{P}(0)\|_{L^\infty}\leq KR \|f\|_{H}.
\ee
Also observe that, since we only need to integrate on $\xi \in B_{2R}(0)$, for $|z| \geq 4 R$ we have $|\xi-z| \geq \frac{|z|}{2}$, so $|\mathcal{P}(f)|$ is bounded by $\frac{K R^2}{|z|}(  \|Df\|_{L^2}+ \|f\|_{L^\infty})$.

$\mathcal{P}(f)$ is in $W^{1,2}$ (we will shortly show that $\mathcal{P}(f)\in H$) and solves
\begin{equation}
\label{eq:fixedpoint}
\bar{\partial}(\mathcal{P}(f))= -\chi_R \nu(\fiav,\aav) \partial f - \chi_R \frac{\nu(\fiav,\aav)}{z}(1+f),
\end{equation}
since $\frac{1}{z-\xi}$ is the fundamental solution for the operator $\bar{\partial}$; in fact, $\frac{1}{z-\xi}=\frac{\partial}{\partial z} (\ln|z-\xi|)$, and $\bar{\partial} \partial = i \Delta$, compare \cite{Gilbarg}, page.17.

Therefore, what we are looking for is a fixed point for $\mathcal{P}$ in $H$. Observe that $\mathcal{P}$ is an affine functional, therefore, to show that it is a contraction in $H$, it will be enough to show
\[\mathcal{P}(0)\in H,\]
\[\|\mathcal{P}(f)-\mathcal{P}(0)\|_H \leq k \|f\|_H,\]
for any $f$ and for some $0<k<1$. From (\ref{eq:fixedpoint}),
\[\|\bar{\partial}(\mathcal{P}(f)-\mathcal{P}(0))\|_{L^2}\leq KR (\|Df\|_{L^2} + \|f\|_{L^\infty})\]
and, since $\mathcal{P}(f)-\mathcal{P}(0)$ decays at infinity as $\frac{1}{|z|}$, we can integrate by parts to get
\be
\label{eq:pezzol2}
\|D(\mathcal{P}(f)-\mathcal{P}(0))\|_{L^2}=K\|\bar{\partial}(\mathcal{P}(f)-\mathcal{P}(0))\|_{L^2}\leq KR (\|Df\|_{L^2} + \|f\|_{L^\infty}).
\ee
The fact that
\[\|D(\mathcal{P}(f)-\mathcal{P}(0))\|_{L^{2,\lambda}} \leq KR (\|Df\|_{L^2}+ \|f\|_{L^{2,\lambda}})\]
follows from equation (\ref{eq:fixedpoint}) by theorem 5.4.1. in \cite{Morrey}, page. 146. 

The last estimate, together with (\ref{eq:pezzoinfinity}) and (\ref{eq:pezzol2}), implies
\[\|\mathcal{P}(f)-\mathcal{P}(0)\|_H\leq KR \|f\|_{H}.\]
Similarly we can show that $\mathcal{P}(0)\in H$, with $\|\mathcal{P}(0)\|_H \leq KR$.
If $R$ is small enough (recall that $\frac{\nu}{z} \leq K$ independently of $R$), we have a contraction  and by Caccioppoli's fixed point theorem we have the existence of a unique fixed point $u$ for $\mathcal{P}$ and 
\[\|u\|_{L^\infty}\leq 2KR.\]
So we have a H\"older function $w= z(1+u)$ solution to (\ref{eq:wCRiem}). Since $\nu(\fiav,\aav)$ is H\"older continuous of exponent $\delta$ thanks to theorem \ref{thm:holderestimate}, by means of a Shauder-type estimate $w$ is $C^{1,\delta}$.
\end{proof}

\begin{oss}
Observe that $|w(z)-z| \leq KR|z|$ implies that at $0$, $\partial w\approx 1, \bar{\partial}w \approx 0$, with perturbations of order $KR$. By taking $R$ smaller if necessary, we can assume, since $w$ is $C^{1,\delta}$, that $\partial w$ and $\bar{\partial}w$  stay as close as we like to $1$ and $0$ in $B_R$.
\end{oss}

We are now ready to complete the proof of non-accumulation, which will go on until the end of this section.
Take the function $G: \mathbb{C}^3 \rightarrow \mathbb{C}^3$ given by
\[G(z,\zeta, a) = (z,\zeta - \fiav(z), a - \aav (z)),\]
and consider the pushforward $\Gamma := G_* C$. Since $G$ is proper (if $K$ is compact, $G^{-1}(K)$ is closed by continuity and bounded since the average function is $L^\infty$) and $W^{1,2}$, the pushforward commutes with the boundary operator (see \cite{Giaquinta}, the point is that a $W^{1,2}$ function from a domain in $\mathbb{R}^2$ into $\mathbb{R}^3$ is approximable by $C^1$ functions), therefore $\partial\Gamma=0$. The current $\Gamma$ is described by the multigraph \[\{\sigma_j, \tau_j\}=\{\varphi_j - \fiav, \alpha_j-\aav\}.\] From (\ref{eq:ceq}), the smooth parts of $\{\sigma_j\}$ solve
\begin{equation}
\label{eq:eqav1}
\overline{\partial}\sigma_j+\nu(\fiav,\aav) \partial \sigma_j + \sum_{k=1}^Q S_j^k \sigma_k + \sum_{k=1}^Q T_j^k \tau_k =0,
\end{equation}
with $\displaystyle |T_j^k|, |S_j^k| \leq K (1 + \sum_{i=1}^Q |D\varphi_i| + \sum_{i=1}^Q |D\alpha_i|)$. Therefore, by the H\"older estimate in theorem \ref{thm:holderestimate}, $\displaystyle |T_j^k|, |S_j^k|$ are in $L^2(D_R)$. As for $\{\tau_j\}$, from (\ref{eq:££}) and (\ref{eq:£}) we have that 
\[\nabla \alpha_j(z) =h(z,\varphi_j(z), \alpha_j(z)),\]
for a smooth $\R^2$-valued $h$, so
\[\overline{\partial}\tau_j=\sum_{k=1}^Q A_j^k \sigma_k + \sum_{k=1}^Q B_j^k \tau_k\]
with $A_j^k, B_j^k$ bounded; for $\partial\tau_j$ we have a similar equation, since the $\tau_j$ are real (so the equation we wrote actually contains the whole information on the two real derivatives). Putting them together (we keep writing $A,B$ although these coefficient are different)
\begin{equation}
\label{eq:eqav2}
\overline{\partial}\tau_j+\nu(\fiav,\aav) \partial \tau_j + \sum_{k=1}^Q A_j^k \sigma_k + \sum_{k=1}^Q B_j^k \tau_k =0,
\end{equation}
with $A_j^k, B_j^k$ bounded.

Observe that singularities of order $Q$ in $C$ have the property that all the branches coincide at those points, therefore they are zeros of the multigraph $\{\sigma_j, \tau_j\}$.
Assume by contradiction the existence of a sequence of singular points in $Sing^Q$ accumulating to $0$. Then we can take $N$ points $q_n \in F = \pi(Sing^Q)$  which lie in $D_r$, with $N$ as large as we want and $r<R$ arbitrarily small and $\{\sigma_j, \tau_j\}(q_n)=0$, $n=1,...,N$. In the estimates to come, one should always pay attention to the fact that the constants obtained must not depend on the chosen $N$ and $r$, unless otherwise specified. 

Define the function
\[g(z):= \Pi_{i=1}^N (w(z)-w(q_i)),\]
with the $w$ obtained in the previous lemma. Then $g$ is a $C^1$, $W^{1,2}$ function and it solves on $D_R$
\[\overline{\partial}g +\nu(\fiav,\aav) \partial g =0.\]
Take $F: \mathbb{C}^3 \rightarrow \mathbb{C}^3$
\[F(z,\zeta, a) =\left(z,\chi_r(z)\frac{\zeta}{g(z)},\chi_r(z)\frac{a}{g(z)}\right),\]
where $\chi_r$ is a radial, smooth cut-off, $1$ on $B_r$, $0$ on the complement of $B_{2r}$, with gradient bounded by $\frac{K}{r}$;
we are going to analyse the pushforward $F_*(G_*(C))$. First observe that, on any set of the form $\displaystyle D_R \setminus \cup_{i=1}^N B_\delta(q_i)$ for $\delta$ as small as we want, $F$ is a $C^1$, Lipschitz and proper function. Thus, on 
\[A_\delta := (D_R \setminus \cup_{i=1}^N B_\delta(q_i)) \times \mathbb{C} \times \mathbb{C}\]
the pushforward $\Delta_\delta := F_*(\Gamma)$ is a well defined i.m. rectifiable current with finite mass, and it can develop boundary only on $\displaystyle (\cup_{i=1}^N \partial B_\delta(q_i)) \times \mathbb{C} \times \mathbb{C}$. Now we will prove

\begin{lem}
\label{lem:partint}
Sending $\delta \to 0$, we can define the pushforward $\Delta:= F_*(G_*(C))$ on the whole of $D_R \times \mathbb{C} \times \mathbb{C}$, and $\Delta$ is a boundaryless current of finite mass.
Then we can rewrite the following relation
\[\Delta(d\zeta d\bar{\zeta}) = \partial \Delta (\zeta d\bar{\zeta})\]
as a standard integration by parts formula, where both integrals are finite:
\begin{equation}
\label{eq:partint}
\int_{B_{2r}(0)} \sum_j \left|\bar{\partial} \left( \frac{\chi_r \sigma_j}{g} \right) \right|^2 = \int_{B_{2r}(0)} \sum_j\left|\partial \left( \frac{\chi_r \sigma_j}{g} \right) \right|^2 .
\end{equation}
\end{lem}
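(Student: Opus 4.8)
The plan is to deduce both assertions from one reduction: that the $Q$-valued map
\[
\{u_j,v_j\}_{j=1}^{Q}:=\Big\{\chi_r\,\frac{\sigma_j}{g}\,,\ \chi_r\,\frac{\tau_j}{g}\Big\}\colon D_R\longrightarrow \mathbb C\times\mathbb C
\]
lies in $W^{1,2}$ and extends across the finite set $\{q_1,\dots,q_N\}$. Granting this, $F$ carries the graph of $\{\sigma_j,\tau_j\}$ onto the graph of $\{u_j,v_j\}$, so $\Delta=F_*\Gamma$ is precisely the integer rectifiable current carried by this $W^{1,2}$ multigraph (see \cite{DeLelSpa}, \cite{Alm}); by the area formula $M(\Delta)\le K\sum_j\int_{D_R}\big(1+|Du_j|^2+|Dv_j|^2\big)<\infty$, and since $\partial\Gamma=0$ and $W^{1,2}$ maps are $W^{1,2}$-approximable by $C^1$ maps (so $F_*$ commutes with $\partial$, cf. \cite{Giaquinta}), $\partial\Delta=0$. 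Then $\Delta(d\zeta\wedge d\overline\zeta)=\partial\Delta(\zeta\, d\overline\zeta)=0$ (legitimate after a cutoff, as $\Delta$ is compactly supported), and on the $j$-th sheet $z\mapsto(z,u_j(z),v_j(z))$ the form $d\zeta\wedge d\overline\zeta$ pulls back to $du_j\wedge d\overline{u_j}=2i\big(|\overline\partial u_j|^2-|\partial u_j|^2\big)\,ds\wedge dt$; summing over $j$ is exactly (\ref{eq:partint}). Everything therefore reduces to the $W^{1,2}$-regularity of $\{u_j,v_j\}$.

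Away from the $q_i$ there is nothing to do: on $D_R\setminus\bigcup_i B_\eta(q_i)$ the factors $\chi_r$ and $1/g$ are $C^1$ with $|g|$ bounded below, and $\sigma_j,\tau_j\in W^{1,2}\cap L^\infty$ by Theorems \ref{thm:w12} and \ref{thm:holderestimate} (with $\tau_j$ even Lipschitz, by (\ref{eq:smallcomp})), so $u_j,v_j\in W^{1,2}$ there; since $\sigma_j\equiv\tau_j\equiv 0$ on $\mathcal F=\pi(\mathrm{Sing}^Q)$, the only possible singular points of $u_j,v_j$ in $D_R$ are the finitely many $q_i$.

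The crux --- and the point glossed over in \cite{Taubes} --- is the behaviour at a single $q_i$, where one must overcome the simple zero of $g$. Here two facts combine. (i) Because the $q_i$ are accumulated by points of $\mathrm{Sing}^Q$, Theorem \ref{thm:nonacchigh} forces the tangent cone at $q_i$ to be monoplanar, $T_{q_i}C=Q\llbracket D_i'\rrbracket$ with $D_i'$ close to $D_0$ (Corollary \ref{cor:lipest}); hence all branches $\varphi_j$, and therefore also $\tilde\varphi$, have the same linear part at $q_i$, so the order-one blow-up of the multigraph $\{\sigma_j,\tau_j\}$ at $q_i$ is trivial. A standard frequency-function (or Carleman/unique-continuation, in the spirit of \cite{Aro}) argument upgrades this to a power decay: there is $\mu>0$ with
\[
|\sigma_j(z)|+|\tau_j(z)|\le C\,|z-q_i|^{1+\mu}\,,\qquad \sum_j\int_{D_\rho(q_i)}\big(|D\sigma_j|^2+|D\tau_j|^2\big)\le C\,\rho^{2+2\mu}\,;
\]
for the $\tau_j$ one also uses the third line of (\ref{eq:realcrsystem}), which bounds $|\nabla\tau_j|$ by $\max_k(|\sigma_k|+|\tau_k|)$ and so forces $\tau_j$ to vanish at $q_i$ at least as fast as $\sigma_j$. (ii) Since $w$ is $C^{1,\delta}$ with $|\partial w|$ close to $1$ and $|\overline\partial w|$ small (Lemma \ref{lem:lemmataubes} and the remark after it), $g=\prod_i(w-w(q_i))$ is $C^{1,\delta}$, has a simple zero at each $q_i$, and $|g(z)|\ge c\,|z-q_i|$ nearby. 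Combining (i) and (ii): $|u_j|+|v_j|\le C|z-q_i|^{\mu}\to 0$ (so $\Delta$ is compactly supported), and from (\ref{eq:eqav1}), (\ref{eq:eqav2}) together with $\overline\partial g=-\nu\,\partial g$ one computes, on $B_\eta(q_i)$,
\[
\overline\partial u_j+\nu\,\partial u_j=-\frac{R_j}{g}\,,\qquad \overline\partial v_j+\nu\,\partial v_j=-\frac{\tilde R_j}{g}\,,
\]
where $R_j=\sum_kS_j^k\sigma_k+\sum_kT_j^k\tau_k$ and $\tilde R_j$ is the analogous combination from (\ref{eq:eqav2}). A dyadic decomposition around $q_i$, pitting $|\sigma_k|+|\tau_k|\le C|z-q_i|^{1+\mu}$ against the Morrey decay $\int_{B_\rho(q_i)}\big(\sum_i|D\varphi_i|^2+|D\alpha_i|^2\big)\le C\rho^{\delta'}$ of the ($L^2$) coefficients $S_j^k,T_j^k$, shows $R_j/g,\ \tilde R_j/g\in L^2(B_\eta(q_i))$; hence $\overline\partial u_j,\overline\partial v_j\in L^2$ and, by the Cauchy transform, $u_j,v_j\in W^{1,2}(B_\eta(q_i))$. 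Finally, a $W^{1,2}$-capacity argument --- points in $\mathbb R^2$ have zero $W^{1,2}$-capacity, exactly the device used already in the proof of Theorem \ref{thm:w12} --- removes the finitely many $q_i$ and yields $u_j,v_j\in W^{1,2}(D_R)$, completing the reduction.

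I expect step (i) to be the main obstacle: the whole argument stands or falls on the extra vanishing $|z-q_i|^{1+\mu}$ of $\sigma_j,\tau_j$, i.e. on the observation that an order-$Q$ point approached by $\mathrm{Sing}^Q$ must have a monoplanar tangent cone, so that subtracting the average kills the linear term and the residual multigraph has vanishing order strictly above $1$. A secondary technical nuisance is the interchange of $F_*$ with $\partial$ for the merely $W^{1,2}$ map $F$; this can be done through the $C^1$-approximation of $W^{1,2}$ maps, or equivalently by working with the truncations $\Delta_\delta=F_*\big(\Gamma\res\{\mathrm{dist}(\cdot,\{q_i\})>\delta\}\big)$ and checking directly that $M(\partial\Delta_\delta)=M\big(\sum_iF_*\langle\Gamma,|z-q_i|,\delta\rangle\big)\to0$ as $\delta\to0$, which follows by slicing from $|u_j|\le C\delta^{\mu}$ and $\int_{\partial B_\delta(q_i)}|Du_j|^2\le C\delta^{1+2\mu}$ (valid for a.e. $\delta$).
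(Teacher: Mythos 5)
Your reduction to the $W^{1,2}$-regularity of the multigraph $\{\chi_r\sigma_j/g,\chi_r\tau_j/g\}$ is a reasonable reformulation, but the way you carry it out rests on an unproven claim, namely your step (i): the extra vanishing $|\sigma_j(z)|+|\tau_j(z)|\le C|z-q_i|^{1+\mu}$. The justification you offer is not a proof. First, the $q_i$ in the contradiction argument are merely $N$ points of $\pi(Sing^Q)$; they need not be accumulation points of $Sing^Q$, so theorem \ref{thm:nonacchigh} does not force the tangent cone at $q_i$ to be $Q$ times a single disk. Second, even granting a monoplanar cone at $q_i$, passing from "the blow-up of $\{\sigma_j,\tau_j\}$ at first order is trivial" to a quantitative decay of order $1+\mu$ is precisely the kind of higher-order asymptotic expansion at a branch point that nothing in the paper establishes (and that a "standard frequency function argument" does not deliver for free for a $Q$-valued solution of a perturbed Cauchy--Riemann system). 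Since you yourself state that the whole argument "stands or falls" on this decay, the proof as written has a genuine gap. A secondary gap: the step "$\overline\partial u_j\in L^2$, hence by the Cauchy transform $u_j\in W^{1,2}$" ignores that $u_j$ is only a local branch of a multivalued function near $q_i$; the branches permute around the branch point, so the Cauchy-transform decomposition does not apply to $u_j$ as a single-valued function on a punctured disk.

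The paper's proof shows that the extra decay is in fact unnecessary. Corollary \ref{cor:lipest} already gives $|\sigma_j(z)|\le K|z-q_i|$ (all branches agree with the average at $q_i\in\mathcal F$), and since $g$ has a simple zero at each $q_i$ this yields only \emph{boundedness} of $\sigma_j/g$ and $\tau_j/g$ --- which is all that is needed: combined with $S_j^k,T_j^k\in L^2$ it makes the inhomogeneity $\sum_k S_j^k(\sigma_k/g)+\sum_k T_j^k(\tau_k/g)$ an $L^2$ function, and the equation (\ref{eq:g1}) plus a triangle inequality then controls $\int|\overline\partial(\sigma_j/g)|^2$ up to the boundary term $\partial\Delta_\rho(\zeta\,d\bar\zeta)$ of the truncated pushforward. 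The real technical content of the paper's proof --- absent from yours --- is the control of that boundary term: one selects radii $\rho_n\downarrow 0$ via slicing theory and the monotonicity formula so that $M(\partial\Gamma_{\rho_n})\le K\rho_n$, whence $M(\partial\Delta_{\rho_n})\le \frac{K}{\rho_n}M(\partial\Gamma_{\rho_n})$ is equibounded and $\partial\Delta_{\rho_n}(\zeta\,d\bar\zeta)$ is uniformly controlled by the $L^\infty$ bound on $\sigma_j/g$. Finiteness of $\int|\partial(\sigma_j/g)|^2$ then follows from the truncated integration-by-parts identity, the mass is finite by the area formula, and the absence of boundary is checked by testing on $\chi_\rho\zeta\,d\bar\zeta$ and letting $\rho\to 0$ using only the established $L^2$ bounds. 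If you replace your step (i) by this boundedness-plus-good-radii argument, your outline becomes essentially the paper's proof.
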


\begin{oss}
Formula (\ref{eq:partint}) is the only thing we will need in the sequel. The finiteness of the integrals was not clear in the analogous formula used in \cite{Taubes}. The reader might skip the proof of this lemma on a first reading.
\end{oss}

\begin{oss}
\label{oss:nablag}
In this formula $\nabla \left( \frac{\chi_r \sigma_j}{g} \right)$ is understood to be $0$ on the set $F=\pi(Sing^Q)$. The reason for this will be clear during the proof. On the complement $D_R \setminus \pi(Sing^Q)$ the gradient is well-defined since the functions are smooth except at the isolated points $\pi(Sing^{\leq Q-1})$.
\end{oss}

\begin{proof}[\bf{proof of lemma \ref{lem:partint}}]

From what we said before, $\Delta$ can develop boundary only on $\displaystyle (\cup_{i=1}^N q_i) \times \mathbb{C} \times \mathbb{C}$. Moreover, $\Delta$ is described by the multigraph \[\left\{\frac{\chi_r \sigma_j}{g}, \frac{\chi_r \tau_j}{g}\right\}_{j=1}^Q.\]
From theorem \ref{thm:lipesttan}, this multigraph is bounded on $D_R$, indeed we only have to check it at the points $q_i$: on some neighbourhood of a chosen $q_k$, thanks to corollary \ref{cor:lipest}, 
\[|\sigma_j(z)|= |\sigma_j(z)-\sigma(q_k)| \leq K |z-q_k|.\]
By Lagrange's theorem, if the mentioned neighbourhood was chosen small enough (its size should be much smaller than the distances between the $q_i$-s), then $g(z)\approx \Pi_{i=1}^N (z-q_i)$; more precisely, $K_1\Pi_{i=1}^N |z-q_i| \leq |g(z)|\leq K_2\Pi_{i=1}^N |z-q_i|$ with $K_1, K_2$ close to $1$ (the perturbation is due to the perturbations $\partial w \approx 1$ and $\overline{\partial} w \approx 0$). Therefore
\[|\sigma_j(z)|\leq K_{\{q_i\}} |g(z)|.\]
Notice that the constant obtained is not independent of the choices of $r$ and the set ${\{q_i\}}$, but all that matters to us is the fact that 
\[\left\{\frac{\chi_r \sigma_j}{g}, \frac{\chi_r \tau_j}{g}\right\}_{j=1}^Q.\]
is bounded, although its $L^\infty$-norm is about $\Pi_{i\neq j} |q_i-q_j|$.
We further observe that, thanks to the equation solved by $g$, the multigraph 
\[\left\{\frac{ \sigma_j}{g}, \frac{ \tau_j}{g}\right\}_{j=1}^Q\]
satisfies, on $D_R \setminus \mathcal{F}$,
\begin{equation}
\label{eq:g1}
\overline{\partial}\left(\frac{ \sigma_j}{g}\right)+\nu(\fiav,\aav) \partial \left(\frac{ \sigma_j}{g}\right) + \sum_{k=1}^Q S_j^k \left(\frac{ \sigma_k}{g}\right) + \sum_{k=1}^Q T_j^k \left(\frac{ \tau_k}{g}\right) =0,
\end{equation}

\begin{equation}
\label{eq:g2}
\overline{\partial}\left(\frac{ \tau_j}{g}\right)+\nu(\fiav,\aav) \partial \left(\frac{ \tau_j}{g}\right) + \sum_{k=1}^Q A_j^k \left(\frac{ \sigma_k}{g}\right) + \sum_{k=1}^Q B_j^k \left(\frac{ \tau_k}{g}\right) =0,
\end{equation}
with the coefficients $A,B,S,T$ as above.

\textsl{Step 1: $\Delta$ has finite mass.}
Remark that the set $\mathcal{F}=\pi(Sing^Q)$ is included in $\{z: \; \forall i \;\sigma_i(z)=\tau_i(z)=0 \}$. The integer multiplicity rectifiable current $\Delta_\delta$ possesses a.e. on $\mathcal{F} \setminus \cup_{i=1}^N B_\delta(q_i)$ an approximate tangent plane which must be horizontal, i.e. it must be the plane $(z,0,0)$. Indeed, this is true at any point of $\{z: \; \forall i \;\sigma_i(z)=\tau_i(z)=0 \}$ of density $1$, as can be seen from the definition of tangent plane (see \cite{Giaquinta} page 92).

Let us observe the action of $\Delta_\delta$ on $d\zeta \wedge d\bar{\zeta}$. By the observation we just made, this action gives $0$ on $\mathcal{F}$, therefore we should understand, in the following computations, $\nabla \left( \frac{\chi_r \sigma_j}{g} \right)=0$ on $\mathcal{F}$ (compare remark \ref{oss:nablag}). So we get:

\[\Delta_\delta (d\zeta \wedge d\bar{\zeta})=\int_{B_{2r}\setminus \cup_{i=1}^N B_\delta(q_i)} \sum_j d\left(\frac{\chi_r \sigma_j}{g}\right) \wedge d\overline{\left(\frac{\chi_r \sigma_j}{g}\right)} =\]
\begin{equation}
\label{eq:smoothaction}
= \int_{B_{2r}\setminus \cup_{i=1}^N B_\delta(q_i)}\sum_j\left |\bar{\partial}\left(\frac{\chi_r \sigma_j}{g}\right)\right |^2- \left | \partial\left(\frac{\chi_r \sigma_j}{g}\right)\right |^2.
\end{equation}
By (\ref{eq:g1}) and the triangle inequality $|a-b|^2 \geq \frac{|b|^2}{2}-|a|^2$ we get (recall $|\nu| \leq \eps$)
\[\int \sum_j \left|\sum_{k=1}^Q S_j^k \left(\frac{ \sigma_k}{g}\right) + \sum_{k=1}^Q T_j^k \left(\frac{ \tau_k}{g}\right) \right|^2 = \int \sum_j \left| \overline{\partial}\left(\frac{ \sigma_j}{g}\right)+\nu(\fiav,\aav) \partial \left(\frac{ \sigma_j}{g}\right) \right|^2 \geq\]

\[\geq \int \sum_j \left( \frac{\left| \overline{\partial}\left(\frac{ \sigma_j}{g}\right) \right|^2}{2} - {\eps}^2 \left|\partial \left(\frac{ \sigma_j}{g}\right) \right|^2 \right)\]

\[= \int \sum_j \left(\frac{1}{2} +{\eps}^2 \right) \left| \overline{\partial}\left(\frac{\sigma_j}{g}\right) \right|^2 + {\eps}^2 \sum_j \left( \left |\bar{\partial}\left(\frac{\chi_r \sigma_j}{g}\right)\right |^2- \left | \partial\left(\frac{\chi_r \sigma_j}{g}\right)\right |^2\right)=\]

\[= \int \sum_j \left(\frac{1}{2} +{\eps}^2 \right) \left| \overline{\partial}\left(\frac{\sigma_j}{g}\right) \right|^2 + {\eps}^2 \Delta_\delta (d\zeta \wedge d\bar{\zeta})=\]

\begin{equation}
\label{eq:triangle}
= \int \sum_j \left(\frac{1}{2} +{\eps}^2 \right) \left| \overline{\partial}\left(\frac{\sigma_j}{g}\right) \right|^2 + {\eps}^2 \partial\Delta_\delta (\zeta \wedge d\bar{\zeta}).
\end{equation}

Notice that the first term at the beginning of the last chain of inequalities is finite, from the condition on the $T$-s and $S$-s, and the fact that $\frac{\sigma_k}{g},\frac{\tau_k}{g}$ is bounded.

Let us restrict to a small ball $B_{\lambda}(q_i)$: we will show that
\[\lim_{\rho \to 0}\int_{B_{\lambda}(q_i)\setminus B_\rho(q_i)} \sum_j  \left| \overline{\partial}\left(\frac{\sigma_j}{g}\right) \right|^2 \]
is finite; the global finiteness on $D_R$ will follow since the $q_i$-s are finite and there are no poles elsewhere. 
In a first moment we are going to construct a sequence $\rho_n \downarrow 0$ for which $M(\partial \Delta_{\rho_n})$ is equibounded. 
Since $\Delta_\rho = F_*(\Gamma)$ and $||\nabla F||_{L^\infty(B_{\lambda}(q_i) \setminus B_\rho(q_i))} \leq \frac{K}{\rho}$, from \cite{Giaquinta}, page 134, we get
\begin{equation}
\label{eq:slicebounding}
M(\partial \Delta_\rho)\leq \frac{K}{\rho} M(\partial \Gamma_\rho).
\end{equation}
Moreover, from slicing theory, see Prop. 2 in \cite{Giaquinta}, page 154 
\[\frac{1}{{(\lambda/n)}^2} \int_0^{\frac{\lambda}{n}} M(\partial \Gamma_\rho) d\rho= \frac{1}{{(\lambda/n)}^2} \int_0^{\frac{\lambda}{n}} M(\langle  \Gamma,|z|, \rho \rangle) \leq \frac{1}{{(\lambda/n)}^2} M(\Gamma \res (B_{\frac{\lambda}{n}}(q_i)\times \mathbb{C}^2))\]
and this is bounded as $n \to \infty$ by the monotonicity formula, since the tangent is horizontal at $(q_i,0,0)$ and the multiplicity of this point is $Q$.
Then 
\[\frac{1}{{(\lambda/n)}}\int_0^{\frac{\lambda}{n}} M(\partial \Gamma_\rho) d\rho \leq K \frac{\lambda}{n}\]
so by the mean-value theorem there is $\frac{\lambda}{4n} \leq \rho_n \leq \frac{\lambda}{n}$ such that 
\[M(\partial \Gamma_{\rho_n}) \leq 2 \frac{1}{{(\lambda/n)}}\int_0^{\frac{\lambda}{n}} M(\partial \Gamma_\rho) d\rho \leq 2K \frac{\lambda}{n}\leq 8K \rho_n.\]
Now (\ref{eq:slicebounding}) yields that $M(\partial \Delta_{\rho_n})$ are equibounded. 

As observed above, $\frac{\sigma_j}{g}$ is $L^\infty$, therefore 
the function $\zeta$ is bounded on $\Delta$, so there is some constant which bounds uniformly in $n$
\[|\partial \Delta_{\rho_n} (\zeta d\bar{\zeta})|.\]
This yields, together with the $B_{\lambda}$-version of (\ref{eq:triangle}),
\[\lim_{{\rho_n} \to 0}\int_{B_\lambda(q_i) \setminus B_{\rho_n}(q_i)} \sum_j  \left| \overline{\partial}\left(\frac{\sigma_j}{g}\right) \right|^2 <\infty,\]
and consequently
\[\lim_{\rho \to 0}\int_{B_{\lambda}(q_i) \setminus B_\rho(q_i)} \sum_j  \left| \overline{\partial}\left(\frac{\sigma_j}{g}\right) \right|^2 <\infty\]
since this integral is a monotone function of $\rho$, so the limit must exist and it is enough to check in on a sequence.
Once we have the finiteness of 
\[\int_{D_R} \sum_j  \left| \overline{\partial}\left(\frac{\sigma_j}{g}\right) \right|^2,\]
using $|\Delta_{\rho_n} (d\zeta \wedge d\bar{\zeta})|=|\partial \Delta_{\rho_n} (\zeta d\bar{\zeta})|<\infty$ again, by (\ref{eq:smoothaction}) we also get the finiteness of 
\[\int_{D_R} \sum_j  \left| \partial\left(\frac{\sigma_j}{g}\right) \right|^2.\]
This implies that the Jacobian minors of $\frac{\sigma_j}{g}$ are in $L^1$, so the finiteness of the mass can be obtained by the Area formula\footnote{Recall that it is enough to apply the Area formula to the smooth parts of the current $\Delta$ which are above $D_R \setminus \mathcal{F}$. The rest of the current lies in $\mathcal{F}$, which has finite measure.}, see \cite{Giaquinta} page 225.

\medskip

\textsl{Step 2: $\Delta$ has no boundary.} As said above, we only have to exclude boundary terms localized at the points $q_i$. As before, we restrict ourselves to $\Delta \; \res\; {B_{\lambda}(q_i)}\times \mathbb{C}^2$. During this step, we will keep denoting this current by $\Delta$. To simplify things, we will test $\partial \Delta$ only on the 1-forms $\chi_\rho(z)\zeta d\bar{\zeta}$ which is needed for the integration by parts formula (\ref{eq:partint}); the proof for other 1-forms is similar\footnote{For the reader who is familiar with the support theorem for Flat-currents (see \cite{Giaquinta} page 525), we remark that the absence of boundary can be obtained by showing, via an approximation argument, that $\p \Delta$ is a Flat 1-current. The quoted theorem then implies that $\p \Delta=0$. }. Since the possible boundary in the interior of $D_R$ is localized only in $q_i \times \mathbb{C}^2$, the result will be the same for any $\rho$.
\[\partial \Delta (\chi_\rho(z)\zeta d\bar{\zeta}) = \Delta(d \chi_\rho \wedge \zeta d\bar{\zeta}) + \Delta( \chi_\rho  d\zeta \wedge d\bar{\zeta}).\]
From the previous step, 
\[|\Delta( \chi_\rho  d\zeta \wedge d\bar{\zeta})| \leq  \int_{B_{2 \rho}(q_i)} \sum_j  \left| d\left(\frac{\sigma_j}{g}\right) \right|^2 \to 0\]
for $\rho \to 0$. Let us now analyse the first term:
\[\left|\Delta(d \chi_\rho \wedge \zeta d\bar{\zeta})\right| = \left|\int_{B_{2 \rho}(q_i) \setminus B_{\rho}(q_i) } \sum_j \partial \chi_\rho \frac{\sigma_j}{g} \bar{\partial}\left( \frac{\sigma_j}{g} \right)\right| \leq \]
\[\leq \int_{B_{2 \rho}(q_i) \setminus B_{\rho}(q_i)} \frac{K}{\rho} \left\| \frac{\sigma_j}{g}\right\|_{L^\infty} \left|d \left( \frac{\sigma_j}{g} \right) \right|\]
and by H\"older's inequality
\[\leq \left\| \frac{\sigma_j}{g}\right\|_{L^\infty} \frac{K}{\rho} 2 \rho \left( \int_{B_{2 \rho}(q_i)} \sum_j \left|d \left( \frac{\sigma_j}{g} \right) \right|^2 \right)^{\frac{1}{2}}.\]
This integral goes to $0$ as $\rho \to 0$ thanks to the previous step. So there is no boundary term at any of the $q_i$ when we test on the one form $\zeta d\bar{\zeta}$.
\end{proof}

\medskip

We are now ready to finish the proof of non accumulation started before lemma \ref{lem:partint}: recall that we assumed, by contradiction, the existence of $N$ points $q_n \in \mathcal{F} = \pi(Sing^Q)$  which lie in $D_r$, with $N$ as large as we want and $r<R$ arbitrarily small and $\{\sigma_j, \tau_j\}(q_n)=0$, $n=1,...,N$. From Leibnitz rule and (\ref{eq:g1}) 

\[\int_{B_{2r}}\sum_j\left|\bar{\partial} \left( \frac{\chi_r \sigma_j}{g} \right) \right|^2 \leq K r^{-2} \int_{B_{2r}\setminus B_r} \sum_j \left|\frac{\sigma_j}{g}\right|^2 + \int_{B_{2r}}\sum_j \left|\chi_r \right|^2 \left|\bar{\partial} \left( \frac{\sigma_j}{g} \right) \right|^2\]

\[= K r^{-2} \int_{B_{2r} \setminus B_r} \sum_j \left|\frac{\sigma_j}{g}\right|^2 + \int_{B_{2r}}\sum_j |\nu(\fiav, \aav)|^2 \left|\chi_r \right|^2 \left|\partial \left( \frac{\sigma_j}{g} \right) \right|^2 +\]\[+ K \int_{B_{2r}} \left( 1 + \sum_{i=1}^Q |D\varphi_i|^2 + \sum_{i=1}^Q |D\alpha_i|^2 \right)\sum_j \left( \left|\chi_r \frac{\sigma_j}{g}\right|^2 + \left|\chi_r \frac{\tau_j}{g}\right|^2 \right).\]

Now, using $|\nu|\leq \eps$ and (\ref{eq:partint}) (notice that the previous lemma and the fact that $\{\frac{\sigma}{g},\frac{\tau}{g}\}$ is bounded guarantee the finiteness of all terms),
\[\int_{B_{2r}}\sum_j |\nu(\fiav, \aav)|^2 \left|\chi_r \partial \left( \frac{\sigma_j}{g} \right) \right|^2 = \int_{B_{2r}}\sum_j |\nu(\fiav, \aav)|^2 \left|-\partial \chi_r \left( \frac{\sigma_j}{g} \right) + \partial \left( \frac{\chi_r \sigma_j}{g} \right) \right|^2\]

\[\leq K{\eps}^2 r^{-2} \int_{B_{2r} \setminus B_r} \sum_j \left|\frac{\sigma_j}{g}\right|^2  + {\eps}^2 \int_{B_{2r}} \left|\bar{\partial} \left( \frac{\chi_r \sigma_j}{g} \right) \right|^2.\]

Putting all together, with a further use of (\ref{eq:partint}) on the l.h.s., we get
\begin{equation}
\label{eq:sigma}
\int_{B_{2r}}\sum_j\left|d \left( \frac{\chi_r \sigma_j}{g} \right) \right|^2= 2\int_{B_{2r}}\sum_j\left|\bar{\partial} \left( \frac{\chi_r \sigma_j}{g} \right) \right|^2 \leq K r^{-2} \int_{B_{2r}-B_r} \sum_j \left|\frac{\sigma_j}{g}\right|^2 + 
\end{equation}
\[+K \int_{B_{2r}} \left( 1 + \sum_{i=1}^Q |D\varphi_i|^2 + \sum_{i=1}^Q |D\alpha_i|^2 \right)\sum_j \left( \left|\chi_r \frac{\sigma_j}{g}\right|^2 + \left|\chi_r \frac{\tau_j}{g}\right|^2 \right).\]

Similarly, from (\ref{eq:g2}), and using the analogous partial integration
\[\int_{B_{2r}(0)} \sum_j \left|\bar{\partial} \left( \frac{\chi_r \tau_j}{g} \right) \right|^2 = \int_{B_{2r}(0)} \sum_j\left|\partial \left( \frac{\chi_r \tau_j}{g} \right) \right|^2,\]
we get

\begin{equation}
\label{eq:tau}
\int_{B_{2r}}\sum_j\left|d\left( \frac{\chi_r \tau_j}{g} \right) \right|^2 \leq K r^{-2} \int_{B_{2r} \setminus B_r} \sum_j \left|\frac{\tau_j}{g}\right|^2 + 
\end{equation}
\[+K \int_{B_{2r}} \left( 1 + \sum_{i=1}^Q |D\varphi_i|^2 + \sum_{i=1}^Q |D\alpha_i|^2 \right)\sum_j \left( \left|\chi_r \frac{\sigma_j}{g}\right|^2 + \left|\chi_r \frac{\tau_j}{g}\right|^2 \right).\]

Set now $\displaystyle v:=\max_j\left \{\left|\chi_r \frac{\sigma_j}{g}\right|, \left|\chi_r \frac{\tau_j}{g}\right|\right\}$. This function is $W^{1,2}$: indeed, this is true on $D_{2r} \setminus \pi(Sing^{\leq Q})$, since it is the maximum of $W^{1,2}$ functions; then by arguments already used,
\begin{itemize}
	\item $\pi(Sing^{\leq Q-1})$ are isolated points so we can extend the $W^{1,2}$  estimate to $D_{2r} \setminus \pi(Sing^{Q})$;
	\item then we extend to $\displaystyle D_{2r} \setminus(\cup_{i=1}^N B_\delta(q_i)\cap \mathcal{F})$ for any arbitrarily small $\delta$, thanks to the fact that $v=0$ on on $Sing^Q \setminus \{q_i\}$;
	\item finally, sending $\delta \to 0$, to the whole of $D_R$ since the $q_i$ are isolated.
\end{itemize}

Also observe that, by the Cauchy-Schwarz inequality, $|d(|v|)| \leq |dv|$, so 
\[\int_{B_{2r}}|dv|^2 \leq \int_{B_{2r}}\sum_j\left|d \left( \frac{\chi_r \sigma_j}{g} \right) \right|^2 + \sum_j\left|d \left( \frac{\chi_r \tau_j}{g} \right) \right|^2,\]
so (\ref{eq:sigma}) and (\ref{eq:tau}) imply
\[\int_{B_{2r}}|dv|^2 \leq K r^{-2} \int_{B_{2r} \setminus B_r} \sum_j \left(\left|\frac{\sigma_j}{g}\right|^2+\left|\frac{\tau_j}{g}\right|^2\right)+\]\[ +K \int_{B_{2r}} \left( 1 + \sum_{i=1}^Q |D\varphi_i|^2 + \sum_{i=1}^Q |D\alpha_i|^2 \right) v^2.\]
Recall that $\left( 1 + \sum_{i=1}^Q |D\varphi_i|^2 + \sum_{i=1}^Q |D\alpha_i|^2 \right)$ is $L^1$ by theorem \ref{thm:holderestimate} (H\"older estimate); then, by lemma 5.4.1. in \cite{Morrey}, we get the existence of $\delta >0$ such that the last term can be bounded by
\[\int_{B_{2r}} \left( 1 + \sum_{i=1}^Q |D\varphi_i|^2 + \sum_{i=1}^Q |D\alpha_i|^2 \right) v^2 \leq K r^\delta \int_{B_{2r}}|dv|^2,\]
so we can write
\[r^2 \int_{B_{2r}}|dv|^2 \leq K \int_{B_{2r} \setminus B_r} \sum_j \left(\left|\frac{\sigma_j}{g}\right|^2+\left|\frac{\tau_j}{g}\right|^2\right);\]
now, since $v \in W^{1,2}_0 (B_{2r})$, by Poincar\'e's inequality
\[\int_{B_{2r}}v^2 \leq K \int_{B_{2r} \setminus B_r} \sum_j \left(\left|\frac{\sigma_j}{g}\right|^2+\left|\frac{\tau_j}{g}\right|^2\right).\]
Since $\displaystyle \sum_j \left(\left|\chi_r \frac{\sigma_j}{g}\right|^2+\left|\chi_r\frac{\tau_j}{g}\right|^2\right) \leq 2Q v^2$ by definition of $v$, and $\chi_r = 1$ on $B_r$, the last inequality implies the following Carleman-type estimate
\begin{equation}
\label{eq:carleman}
\int_{B_{r/4}} \sum_j \left(\left|\frac{\sigma_j}{g}\right|^2+\left|\frac{\tau_j}{g}\right|^2\right) \leq K \int_{B_{2r} \setminus B_r} \sum_j \left(\left|\frac{\sigma_j}{g}\right|^2+\left|\frac{\tau_j}{g}\right|^2\right)
\end{equation}
with $K$ independent of $r$ and the cardinality $N$ of the set $\{q_i\}$.
Assume that the $\{q_i\}$ were chosen much inside $D_r$, say in $D_{r/4}$. Then, from the definition of $g$, if $r$ was chosen small enough (which doesn't influence $K$), on the l.h.s. of (\ref{eq:carleman}) $g\leq \left(\frac{3r}{4}\right)^N$, while on the r.h.s. $g \geq \left(\frac{3|z|}{4}\right)^N$, so we get
\[\int_{B_{r/4}} \sum_j |\sigma_j|^2+|\tau_j|^2 \leq K \int_{B_{2r} \setminus B_r} \left(\frac{r}{|z|}\right)^{2N}\sum_j |\sigma_j|^2+|\tau_j|^2;\]
letting $N$ go to infinity, we can make the r.h.s. as small as we wish, which implies 
\[\int_{B_{r/4}} \sum_j |\sigma_j|^2+|\tau_j|^2 = 0,\]
i.e. all the branches of the multigraph describing our original current must agree with the average on a neighbourhood of $0$. But then this average must be itself a Special Legendrian counted $Q$ times, therefore it must be smooth in this neighbourhood thanks to the basic step of the induction. We have therefore completed the proof of $\partone$:

\begin{thm}
\label{thm:partone}
Let $B^5$ be a ball in which the highest multiplicity for the Special Legendrian cycle $C$ is $Q$. Assume that $Sing^{\leq Q-1}$ is made of isolated points in $(C \res B^5) \setminus Sing^Q$. Then the set $Sing^Q$ is made of isolated points in $B^5$.
\end{thm}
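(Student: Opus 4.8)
The plan is to fix a point $p\in Sing^Q\cap B^5$ and show it is isolated in $Sing^Q$; since $Sing^Q$ is closed in the ball where $Q$ is the top multiplicity, this gives the theorem. By Lemma \ref{lem:uniqtangconeI} and Theorem \ref{thm:uniqtangcone} the tangent cone at $p$ is a well-defined $\oplus_{k=1}^n N_kD_k$. If $n\geq 2$ the conclusion is exactly Theorem \ref{thm:nonacchigh}. So the entire remaining content is the case $n=1$, i.e.\ $T_pC=Q\llbracket D\rrbracket$; after a rotation in $SU(3)$ and a blow-up I may assume $p=0$ and $D=D_0\cong[1,0]$.

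In that case I would first run the coordinate construction of Section \ref{PDEaverage}: using the inductive hypothesis that $Sing^{\leq Q-1}$ is discrete in $(C\res B^5)\setminus Sing^Q$, together with Lemma \ref{lem:cono}, Theorem \ref{thm:lipesttan} and Lemma \ref{lem:setcoords}, I choose a privileged direction $X$ near $[0,1]$ (a regular value of $p\mapsto d(p,T_pS)$) so that the leaves $\{\Sigma_z^X\}$ are transversal to $C$ off a countable set; then $C$ is realized over $D_R$ as a $Q$-valued graph $\{\varphi_j,\alpha_j\}$ into $\mathbb{C}\times\mathbb{R}$ whose branches solve the perturbed Cauchy--Riemann equation (\ref{eq:ceq}) together with $\nabla\alpha_j=h(z,\varphi_j,\alpha_j)$. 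Theorem \ref{thm:w12} gives that the average $\tilde{\Psi}=(\fiav,\aav)$ is $W^{1,2}(D_R)$, Theorem \ref{thm:holderestimate} gives the decay $\int_{D_r}|D\tilde{\Psi}|^2\leq Cr^\delta$, and corollary \ref{cor:lipest} gives the Lipschitz bound $|(\zeta,a)-(\zeta',a')|\leq\eps|z-z'|$ near $0$. Subtracting the average (pushing forward by $G(z,\zeta,a)=(z,\zeta-\fiav,a-\aav)$) produces a boundaryless current $\Gamma$ described by the centered multigraph $\{\sigma_j,\tau_j\}=\{\varphi_j-\fiav,\alpha_j-\aav\}$ solving (\ref{eq:eqav1})--(\ref{eq:eqav2}), whose zero set contains $\pi(Sing^Q)$.

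The final step is the unique continuation argument. Assuming for contradiction that $Sing^Q$ accumulates at $0$, for every $N$ and every small $r$ there are $N$ points $q_1,\dots,q_N\in\pi(Sing^Q)\cap D_{r/4}$ at which all $\sigma_j,\tau_j$ vanish. With $w$ the $C^{1,\delta}$ solution of $\overline{\partial}w+\nu(\fiav,\aav)\partial w=0$, $|w(z)-z|\leq KR|z|$, from Lemma \ref{lem:lemmataubes}, set $g=\prod_{i=1}^N(w(z)-w(q_i))$; then $g$ solves the same equation, $\{\sigma_j/g,\tau_j/g\}$ is bounded, and it satisfies (\ref{eq:g1})--(\ref{eq:g2}). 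Using the partial integration identity (\ref{eq:partint}) for $\Delta=F_*(\Gamma)$ with $F(z,\zeta,a)=(z,\chi_r\zeta/g,\chi_r a/g)$ (Lemma \ref{lem:partint}), together with $|\nu|\leq\eps$, a Poincaré inequality, and lemma 5.4.1 of \cite{Morrey} applied to the $L^1$ function $1+\sum_i|D\varphi_i|^2+\sum_i|D\alpha_i|^2$, I obtain the Carleman-type estimate (\ref{eq:carleman}) with a constant $K$ independent of $N$ and $r$. Bounding $|g|\leq(3r/4)^N$ on $B_{r/4}$ and $|g|\geq(3|z|/4)^N$ on $B_{2r}\setminus B_r$ and letting $N\to\infty$ forces $\sigma_j\equiv\tau_j\equiv 0$ on $B_{r/4}$; hence $C\res B_{r/4}(0)$ is $Q$ times its average, a Special Legendrian, which is smooth by the $Q=1$ basis of the induction, contradicting $0\in Sing^Q$. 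Thus every $p\in Sing^Q$ is isolated, as claimed.

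The hard part is everything feeding the Carleman inequality: showing $\tilde{\Psi}\in W^{1,2}$ across the closed set $\pi(Sing^Q)$ (possibly of positive $\mathcal{H}^2$ measure) through a capacity/cutoff argument, the Hölder decay via the differential inequality $y(r)\leq kry'(r)+cy^2$, and above all Lemma \ref{lem:partint} --- that $\Delta$ has finite mass and, crucially, no boundary at the poles $q_i$ --- which requires the slicing estimates and the monotonicity formula to control $M(\partial\Delta_{\rho_n})$ along a well-chosen sequence $\rho_n\downarrow 0$, all with constants that do not degenerate as $N\to\infty$. The remaining manipulations are routine bookkeeping with the elliptic equations (\ref{eq:g1})--(\ref{eq:g2}) and the Hölder estimate.
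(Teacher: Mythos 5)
Your proposal is correct and follows essentially the same route as the paper: reduction to the case $T_pC=Q\llbracket D\rrbracket$ via Theorem \ref{thm:nonacchigh}, then the coordinate/multigraph construction, the $W^{1,2}$ and H\"older estimates for the average, and the unique continuation argument through $g=\prod_i(w(z)-w(q_i))$, the partial integration of Lemma \ref{lem:partint}, and the Carleman-type inequality with constants independent of $N$ and $r$. You have also correctly identified where the real work lies (the $W^{1,2}$ extension across $\pi(Sing^Q)$, the boundarylessness and finite mass of $\Delta$ at the poles, and the uniformity of the constants), so nothing further is needed.
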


\section{Proof of $\parttwo$: non-accumulation of lower-order singularities}

To complete the proof of the inductive step, we have to exclude the possibility of accumulation of points in $Sing^{\leq Q-1}$ to a singularity of order $Q$. 

Let $x_0 \in Sing^Q$; from theorem \ref{thm:partone} (and recalling the monotonicity formula) we can assume that we work in a ball $B^5$ centered at $x_0$ such that all the points of $C$ in this ball are of multiplicity at most $Q$ and 
\[B^5 \cap Sing^Q = \{x_0\}.\]
By the inductive assumption, the other singularities in $B^5$ are isolated and of multiplicity $\leq Q-1$. 

Thus we can take local coordinates about $x_0$ in such a way that $C$ is given by a $Q-$valued graph over $D^2$ that we denote by
\[
\{(\varphi_j(z),\al_j(z))\}_{j=1\cdots Q}\quad,
\]
where $z=x+iy$ is the coordinate in the Disk $D^2$, $\varphi_i\in{\C}$ and $\al_i\in{\R}$ and where for all $j=1\cdots Q$ $(\varphi_j(0),\al_j(0))=(0,0)$. 

\medskip

\textbf{Assumption on the multiplicity}.
In order to simplify the exposition, we assume that all smooth points of $C \res B^5$ have multiplicity exactly $1$. The following argument shows that there is no loss of generality in doing so\footnote{This assumption is not really needed to perform the proof presented in this last section, however it makes it less technical.}. 

If a smooth point $p$ has multiplicity $M\geq 2$, it must have a neighbourhood all made of smooth points of equal multiplicity $M$. Take the maximal of such neighbourhoods and denote it by $\mathcal{U}$. This smooth submanifold, counted once, constitutes an i.m. current $U$ in $B^5$, whose smooth points have multiplicity $1$, possibly having singularities located at the same points where the singular points of $C$ were. 

We claim that $U$ is a boundaryless current. Let us prove it. Let $\{q_i\}$ be the at most countable singularities of $C$ of order $\leq Q-1$, possibly accumulating onto $0$. First of all, from the maximality of $\mathcal{U}$ we can deduce that the topological boundary $\p \mathcal{U}$ inside the smooth 2-dimensional submanifold $\displaystyle (\mathcal{C} \setminus \{0\}) \setminus \cup q_i$ is empty. This implies that $\p U$ must be supported at the singularities. Thanks to this, we can localize $U$ to a neighbourhood $V^5_i$ of each isolated singularity and we can exclude the presence of boundary at each $q_i$ as follows. By abuse of notation we keep denoting by $U$ the localized current.

We will write $B_\lambda$ for the ball $B^5_\lambda(q_i)$. For almost any choice of $\lambda >0$, the slice of $U$ with $\p B_\lambda$ exists as a 1-dimensional rectifiable current of finite mass and it is the same current, with opposite sign, as the boundary of $U_\lambda:=U \res (V_i^5 \setminus B_\lambda)$. Moreover from slicing theory we have $$\int_0^{\overline{\lambda}} M(\p U_\lambda) d\lambda \leq M(U \res B_{\overline{\lambda}}) \leq M(C \res B_{\overline{\lambda}}).$$ 
From the monotonicity formula and by the mean value theorem, we get the existence of a sequence $\{\lambda_n \}\to 0$ of positive real numbers such that 
$$M(\p U_{\lambda_n}) \leq K \lambda_n,$$
which implies that $\p U_{\lambda_n} \rightharpoonup 0$. On the other hand, $U_{\lambda_n} \rightharpoonup U$ since $M(U - U_{\lambda_n})=M(U \res B_{\lambda_n}) \to 0$, therefore $\p U_{\lambda_n} \rightharpoonup \p U$ and we get $\p U=0$.

Once we have excluded the presence of boundary located at the singularities $q_i$, we can perform the same argument to exclude boundary located at $0$. So $U$ is boundaryless\footnote{An alternative argument to exclude boundary located at the singular set, is to use an analogous approximation $U_n$ of $U$ obtained by "cutting out" smaller and smaller balls around the singular set and show that $\p U_n$ is a Cauchy sequence in the Flat-norm, therefore obtaining that $\p U$ is a Flat 1-dimensional current. The support theorem (see \cite{Giaquinta} page 525) tells us that a non-zero Flat $1$-current cannot be supported on a set of $0$-Hausdorff dimension, therefore $\p U =0$. }.

The current $C - (M-1)U$ is thus still a Special Legendrian cycle and has exactly the same singularities as $C$; it is therefore enough to prove the result about non accumulation for this Special Legendrian ``subcurrent'', in order to get in for $C$. Starting now from $C-(M-1)U$, we can inductively repeat the argument and get to the desired assumption of having multiplicity $1$ at all smooth points.

\medskip

We still denote by $\pi$
the map on $C$ which assigns the coordinate $z$.
The singularities of order $\leq Q-1$ are located exactly at the points $\pi^{-1}(z_l)$ for which $z_l\neq 0$ and
\be
\label{I.1}
\exists j\ne k\quad\mbox{ s.t. } \quad (\varphi_j(z_0),\al_j(z_0))=(\varphi_k(z_0),\al_k(z_0))\quad .
\ee
As recalled at the beginning of this section, we are working under the assumption that the points in (\ref{I.1}) form a discrete set in $D^2 \setminus {0}$, therefore at most countable. Away from them, each branch $j$ of the multiple valued graph satisfies a system\footnote{These are the equations we derived in (\ref{eq:realcrsystem}) and (\ref{eq:ceq}). With respect to the notations in sections \ref{PDEaverage} and \ref{uniquecont}, we are changing here the signs of the functions $\nu$ and $\mu$.} of
the form
\be
\label{I.1a}
\left\{
\begin{array}{l}
\ds\partial_{\ov{z}}\varphi_j=\nu((\varphi_j,\al_j),z)\ \partial_{z}\varphi_j+\mu((\varphi_j,\al_j),z)\\[5mm]
\ds\nabla\al_j=h((\varphi_j,\al_j),z)
\end{array}\right.
\ee
where $\nu$ and $\mu$ are smooth complex valued functions on ${\R}^5$ such that $\nu(0)=\mu(0)=0$ and
$h$ is a smooth ${\R}^2-$valued map on ${\R}^5$.

\medskip

To complete the proof of the main result we need to show

\begin{thm}
\label{th-I.1}
With the previous notations, let $0$ be a singular point of multiplicity $Q$ of the Special Legendrian cycle. If we are working under the (inductive) assumption that all the other singularities are of order $\leq Q-1$ and are isolated in $B^5 \setminus \{0\}$, then there is no accumulation at $0$ of singularities of the form (\ref{I.1}).
\end{thm}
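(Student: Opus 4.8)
The plan is to argue by contradiction, following the homological strategy of \cite{Taubes} but with genuine holomorphicity replaced by the perturbed first--order system (\ref{I.1a}). Suppose a sequence $z_l\to 0$ of points of the form (\ref{I.1}) existed. The device I would use is the single--valued \emph{discriminant}
\[
\delta^2(z):=\prod_{j<k}\bigl(\varphi_j(z)-\varphi_k(z)\bigr)^2 ,
\]
which is well defined on $D^2$ (it is symmetric in the branches, hence insensitive to the merely local labelling), bounded (the multigraph is $L^\infty$), and vanishes at each $z_l$. First I would check that $\delta^2\not\equiv 0$. Indeed, having reduced to the case where every smooth point of $C\res B^5$ has multiplicity $1$, a generic fibre $\Sigma_z^X\cap C$ consists of $Q$ distinct smooth points, so $\delta^2\equiv 0$ could only occur if $\varphi_j\equiv\varphi_k$ for some $j\neq k$; but then the second equation of (\ref{I.1a}), $\nabla\al_i=h((\varphi_i,\al_i),z)$, becomes for $i=j,k$ one and the same first--order system with Lipschitz right--hand side, and since $\al_j(0)=\al_k(0)=0$ (all branches pass through the origin) uniqueness forces $\al_j\equiv\al_k$, so the two branches would coincide identically, against the multiplicity--$1$ normalisation. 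Hence $\delta^2\not\equiv 0$.

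Next I would derive the equation satisfied by $\delta^2$. Working on a sector of $D^2$ issuing from $0$ and avoiding the branch locus, where the branches $j,k$ can be consistently labelled, set $\gamma=\varphi_j-\varphi_k$ and $\beta=\al_j-\al_k$; subtracting the two copies of (\ref{I.1a}) and Taylor--expanding the smooth coefficients $\nu,\mu,h$ in their first argument gives a coupled system of the schematic form
\begin{equation}
\label{eq:diffsys}
\left\{\begin{array}{l}
\overline{\partial}\gamma=a\,\partial\gamma+b\,\gamma+c\,\beta ,\\[3mm]
\nabla\beta=A\,\gamma+B\,\beta ,
\end{array}\right.
\end{equation}
with $a$ small and bounded, $A,B$ bounded, and $b,c\in L^2(D^2)$ --- the $L^2$ (rather than $L^\infty$) bound coming precisely from the factor $\partial\varphi_k$ produced by the expansion, which lies in $L^2$ by the H\"older estimate of theorem \ref{thm:holderestimate}. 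Integrating the second line along rays from $0$, using that $\beta$ extends continuously with $\beta(0)=0$ and Gronwall's inequality, yields $\|\beta\|_{L^\infty(D_\rho)}\le C\rho\,\|\gamma\|_{L^\infty(D_\rho)}$, so the coupling term $c\beta$ is subordinate to $\gamma$. Multiplying the finitely many pairwise equations and applying Leibniz's rule, $\delta^2$ is then seen to satisfy, on $D^2$ minus $\{0\}$ and the $z_l$'s, a pointwise inequality of the form $|\overline{\partial}(\delta^2)|\le a|\partial(\delta^2)|+f|\delta^2|$ with $a$ small bounded and $f\in L^1(D^2)$ (again by theorem \ref{thm:holderestimate}), the $\beta$--contributions having been absorbed via the subordination estimate. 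Finally, since each $z_l$ is an isolated singularity of $C$ by the inductive hypothesis and $\delta^2$ is bounded, $z_l$ is a removable point for this inequality; so $\delta^2$ satisfies it on all of $D^2\setminus\{0\}$, and by boundedness across $0$ as well.

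At this point I would invoke unique continuation: an inequality $|\overline{\partial}w|\le a|\partial w|+f|w|$ with $a$ small bounded and $f\in L^1$ is of the Carleman/Aronszajn type already exploited in section \ref{uniquecont} (cf.\ \cite{Aro}, \cite{Taubes}), so a solution that is not identically zero cannot vanish to infinite order anywhere, i.e.\ its zero set is discrete; equivalently --- the homological phrasing --- for a.e.\ small $r$ the degree of $\delta^2/|\delta^2|:\partial D_r\to S^1$ is a finite integer which, by the positivity intrinsic to the perturbed--holomorphic structure (compare lemma \ref{lem:positiveintersectionplanes}), equals the number of zeros of $\delta^2$ inside $D_r$ counted with positive multiplicities. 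Either way $\delta^2$ has only finitely many zeros in $\overline{D_{R/2}}$, contradicting the existence of the infinitely many distinct zeros $z_l\to 0$; this proves theorem \ref{th-I.1}, and together with theorem \ref{thm:partone} it completes the inductive step, hence the proof of theorem \ref{thm:main}. I expect the main obstacle to be the second step: because the discriminant only sees the $\varphi$--component, one must both rule out $\varphi_j\equiv\varphi_k$ with $\al_j\not\equiv\al_k$ and control the $\al$--differences $\beta$ by the $\varphi$--differences $\gamma$ tightly enough to collapse the coupled system (\ref{eq:diffsys}) into a single scalar perturbed Cauchy--Riemann inequality with only $L^1/L^2$ coefficients --- this is precisely the new difficulty created by the fifth coordinate, and the reason the H\"older decay of theorem \ref{thm:holderestimate} is indispensable here.
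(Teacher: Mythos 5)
Your strategy (a single\-/valued discriminant satisfying a perturbed Cauchy--Riemann inequality, then unique continuation) is genuinely different from the paper's homological argument, but it has a gap at exactly the point you flag as the main obstacle, and the gap is not closed. The zero set of $\delta^2=\prod_{j<k}(\varphi_j-\varphi_k)^2$ is not the set of singularities of the form (\ref{I.1}): it also contains every point where $\varphi_j(z)=\varphi_k(z)$ but $\alpha_j(z)\neq\alpha_k(z)$, i.e.\ where two sheets are vertically aligned over the $(z,\zeta)$-space while remaining disjoint in $\R^5$. On that set the equation for $\gamma=\varphi_j-\varphi_k$ is genuinely inhomogeneous ($\overline{\partial}\gamma=a\,\partial\gamma+b\gamma+c\beta$ with $\beta\neq 0$), and an inhomogeneous perturbed CR equation can perfectly well have a one\-/dimensional zero set (compare $\gamma=\mathrm{Re}\,z$, which solves $\overline{\partial}\gamma=\tfrac12$). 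So ``the zeros of $\delta^2$ are isolated'' is strictly stronger than the theorem and may simply be false; your argument must, and does not, distinguish crossings from true singularities. The step that is supposed to do this --- absorbing $c\beta$ into $f|\delta^2|$ --- relies on the estimate $\|\beta\|_{L^\infty(D_\rho)}\le C\rho\|\gamma\|_{L^\infty(D_\rho)}$, which is a bound between norms, not a pointwise bound: at a point where $\gamma$ vanishes but $\beta$ does not, $c\beta$ cannot be dominated by $f|\gamma|$ for any $f$, so the homogeneous differential inequality for $\delta^2$ is never established. (There is a second, related defect in the Leibniz step: the coefficients $a_{jk}=\nu(\varphi_k,\alpha_k)$ differ from branch to branch, so $\sum_{j<k}a_{jk}\gamma_{jk}\partial\gamma_{jk}\prod(\cdot)$ is not $a\,\partial(\delta^2)$ up to an $f|\delta^2|$ error; the discrepancy involves $|\beta_{jk}|/|\gamma_{jk}|$, which is again uncontrolled pointwise. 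And unique continuation for $|\overline{\partial}w|\le a|\partial w|+f|w|$ with $f$ merely in $L^1$ of the plane is not available; the paper's own Carleman argument needs the Morrey decay of theorem \ref{thm:holderestimate}, not just integrability.)

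This is precisely the ``fifth coordinate'' difficulty, and the paper's proof is built around it rather than through it: pointwise subordination of $|\alpha_i-\alpha_j|$ to $|\varphi_i-\varphi_j|$ is obtained only \emph{locally near each isolated lower\-/order singularity}, by a blow\-/up argument (lemma \ref{lem:lemma1}), which yields a strictly positive local degree for $(\varphi_i-\varphi_j)/|\varphi_i-\varphi_j|$ there. Globally, the region where the $\alpha$-difference dominates (the set $\{\Delta>\delta\}$) is not subordinated at all; it is handled topologically, via the claim (\ref{eq:propdelta}) that its relevant connected components do not reach $\partial B_R$, the stretching map $S_\delta$ that collapses the nearly vertical directions, and the $S^2$-valued map $u$ on $(\pi^\ast C\setminus C)\times\R$ whose boundary degree is bounded below uniformly in $r,\eps$ (lemma \ref{lem:I.1}). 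If you want to salvage a discriminant\-/type argument you would need a pointwise (not $L^\infty$) comparison between $\alpha$- and $\varphi$-differences valid on all of $D_R$, which is not known and is effectively what the paper's construction avoids having to prove.
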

 
The proof of the theorem~\ref{th-I.1} we are giving below is inspired by the homological type argument in \cite{Taubes}, pages 83-84. In view of this, we are now going to analyse the structure of the Special Legendrian current in a neighbourhood of an isolated singular point $q$. 

\medskip

\textbf{The structure of an isolated singularity.}
Recalling our assumption on multiplicities, given an isolated singular point $q$ in $C$, for a small enough radius $\rho$, $C \res B^5_{\rho}(q)$ can be represented as
\[C \res B^5_\rho(q) = \oplus_{i=1}^N L_i,\]
where each $L_i$ is either a smooth Special Legendrian embedded disk, or an immersed one branched at $q$; $N$ is bounded by the multiplicity of $q$ in $C$ and $L_i \neq L_j$ if $i \neq j$.

We give a brief description of the reason why this is true. Consider the slice $\langle C, |p-q|=\rho \rangle $: this is a smooth, one-dimensional, boundaryless current $\gamma$, so it is made of several smooth simple closed curves $\gamma_i$, each one counted with multiplicity $1$.

Each $\gamma_i$ can be obtained as the image of a circle $(\rho \cos t, \rho \sin t )\subset \mathbb{R}^2 \equiv \mathbb{C}^2$ through a smooth simple map. By the smoothness assumption on all points of $\gamma_i$, we can get a smooth parametrization from an annulus in $\mathbb{C}^2$ to a subset of $C$ contained in a corresponding annulus. Take the maximal extension: since there are no other singularities, this must be a smooth simple map from $B_\rho \setminus \{0\}$ into $(C \res B_\rho) \setminus \{ q \}$.

By a removable singularity theorem, this map can be extended smoothly in $0$. There is no real need to invoke such a theorem: the extension to $0$ is obviously continuous, and it is indeed smooth by standard elliptic theory. Thus get a smooth map from  $B_\rho$ into $C \res B_\rho$; repeat the same argument for all connected components $i$-s. A mass comparison shows that this procedure must cover the whole of $C \res B_\rho$.

The following discussion is needed to understand the behaviour of the difference functions $\varphi_i - \varphi_j$ and $\alpha_i - \alpha_j$ for $i \neq j$ in a small neighbourhood of an isolated singularity $q$; let $z_l=\pi(q)$. Take the neighbourhood of the cylindrical form $B^2_\rho \times B^3_\rho$ and denote by
\[\{(\varphi_j(z),\alpha_j(z))\}_{j=1\cdots M}\quad\]
the multivalued graph describing $C \res (B^2_\rho \times B^3_\rho)$ above $B^2_{\rho}(z_l)$, where $M$ is the multiplicity of the singularity $q$. Remark that $(\varphi_j(z_l),\alpha_j(z_l))$ coincide for all $j=1,...,M$, while for $z \neq z_l$ we have $(\varphi_j(z),\alpha_j(z)) \neq (\varphi_i(z),\alpha_i(z))$ whenever $i \neq j$ (this follows from the assumption on multiplicities taken at the beginning of this section).

Above any $z \in B^2_{\rho}(z_l)$, consider the difference vector $((\varphi_i-\varphi_j)(z), (\alpha_i-\alpha_j)(z)) \in \R^3$ for any choice of $i \neq j$. The tail and head of this vector will belong respectively to some $L_k$ and $L_l$, possibly with $k=l$. Observe that, moving this vector by continuity, this condition will be preserved with the same $k$ and $l$; remark that if $k=l$, the difference vector is joining two points of the same branched disk, while if $k \neq l$ it is joining points belonging to different disks. 

For any fixed choice of $(k,l) \in \{1,..., N\} \times \{1,..., N\}$, we are now going to analyse the functions $\varphi_i-\varphi_j$ and $\alpha_i-\alpha_j$ for $i\neq j$ s.t. 
\be
\label{eq:branches}
\text{$(\varphi_i, \alpha_i)$ belongs to a branch of $L_k$ and $(\varphi_j, \alpha_j)$ to a branch of $L_l$.}
\ee
From the second equation of the Special Legendrian system (\ref{I.1a}), taking differences, we get locally
\be
\label{eq:FG}
\nabla (\alpha_i - \alpha_j) = F \cdot (\varphi_i - \varphi_j) + G \cdot (\alpha_i - \alpha_j),
\ee
where $F,G$ are bounded functions of $(z,\varphi_i(z),\varphi_j(z),\alpha_i(z),\alpha_j(z))$ depending on the derivatives of $A$; they satisfy $|F|, |G| \leq K_0$. Take a positive $\overline{t} < \frac{1}{4 K_0}$. In the ball $\{ |z-z_l| \leq \overline{t})\}$, consider the point $w$ where $|\alpha_i - \alpha_j|$ realizes its maximum, with $i\neq j$ satisfying (\ref{eq:branches}). It makes sense to integrate the equation above along the segment $I$ joining $z_l$ to $w$ and get
\[(\alpha_i - \alpha_j)(w) = \int_0^t (F|_I)(s) (\varphi_i - \varphi_j)(s) ds + \int_0^t (G|_I)(s) (\alpha_i - \alpha_j)(s) ds\]
for $t=|w| \leq \overline{t}$. Thus, for $i \neq j$ as in (\ref{eq:branches}) we have
\[||\alpha_i - \alpha_j||_{L^\infty}= |\alpha_i - \alpha_j|(t) \leq K_0 t ||\varphi_i - \varphi_j||_{L^\infty} + K_0 t ||\alpha_i - \alpha_j||_{L^\infty}\]
which implies
\[||\alpha_i - \alpha_j||_{L^\infty (B^2(z_l, \overline{t}))} \leq \frac{1}{2}||\varphi_i - \varphi_j||_{L^\infty (B^2(z_l, \overline{t}))},\]
with $i$ and $j$ as prescribed in (\ref{eq:branches}).
Choosing $\overline{t}$ smaller at the beginning, we can get an arbitrarily small constant instead of $\frac{1}{2}$: therefore
\be
\label{eq:branchesl}
\frac{||\alpha_i - \alpha_j||_{L^\infty (B^2(z_l, t))}}{||\varphi_i - \varphi_j||_{L^\infty (B^2(z_l,t))}} \to 0 \text{ as } t \to 0,
\ee
for $i,j$ as in (\ref{eq:branches}).

For each fixed choice of $i \neq j$ as in (\ref{eq:branches}), we introduce the following multivalued graph on $\{|z|\leq 1 \}$, with $\rho>0$:
\[\left( \Theta^\rho_{ij}(z), \Xi^\rho_{ij}(z) \right ) = \left (\frac{(\varphi_i - \varphi_j)(z_l+\rho z)}{||\varphi_i - \varphi_j||_{L^\infty (B^2(z_l, \rho))}}, \frac{(\alpha_i - \alpha_j)(z_l+\rho z)}{||\varphi_i - \varphi_j||_{L^\infty (B^2(z_l, \rho))}}\right ).\]
Thanks to (\ref{eq:branchesl}), both $\Theta^\rho_{ij}$ and $\Xi^\rho_{ij}$ are smaller or equal than $1$ in modulus; more precisely $\Xi^\rho_{ij}$ goes uniformly to $0$ as $\rho \to 0$ and $|\Theta^\rho_{ij}|$ always realizes the value $1$ by definition.
From (\ref{I.1a}) and (\ref{eq:FG}), the branches of this multivalued graph solve locally on $\{0<|z|\leq 1 \}$ equations of the following type:
\be
\label{eq:systemsingp}
\left \{ \begin{array}{c} 
\overline{\partial}\Theta^\rho_{ij}(z) + \nu(z_l+\rho z)\partial \Theta^\rho_{ij}(z) + \rho S(\rho z) \Theta^\rho_{ij}(z)+ \rho T(\rho z) \Xi^\rho_{ij}(z)=0 \\
\nabla \Xi^\rho_{ij}(z) = \rho F(\rho z) \Theta^\rho_{ij}(z) + \rho G(\rho z) \Xi^\rho_{ij}(z), \end{array} \right .
\ee
with $F, G \in L^\infty$ and $S,T \in L^2$.
All the multivalued graphs of the sequence are pinched at $0$. By an argument similar to the one used in theorem \ref{thm:holderestimate}, we can deduce a uniform H\"older estimate on $\left( \Theta^\rho_{ij}(z), \Xi^\rho_{ij}(z) \right )$ independent of $\rho$. 
By Ascoli-Arzel\`a's theorem, as $\rho \to 0$, we can extract a subsequence converging uniformly to a m.-v. graph $\left( \Theta_{ij}(z), \Xi_{ij}(z) \right )$. Sending the equations in (\ref{eq:systemsingp}) to the limit as $\rho \to 0$, we also get that the limiting $\left( \Theta_{ij}(z), \Xi_{ij}(z) \right )$ must solve locally on $\{0<|z|\leq 1 \}$
\be
\label{eq:systemsingh}
\left \{ \begin{array}{c} 
\overline{\partial}\Theta_{ij}(z) + \nu(z_l)\partial \Theta_{ij}(z)=0, \text{ with } |\nu(z_l)|<<1,\\
\nabla \Xi_{ij}(z) = 0.\end{array} \right .
\ee
Therefore, since $\left( \Theta_{ij}(0), \Xi_{ij}(0) \right ) = (0,0)$, from the second equation we recover once again $\left( \Theta_{ij}(z), \Xi_{ij}(z) \right )$ must be of the form $\left(\Theta_{ij}(z), 0 \right)$. Consider now the equation for $\Theta_{ij}$: with the linear change of complex variable $z \to w$, 
\[w= \sqrt{\frac{1}{1+|\nu(z_l)|^2}}\;\;z + \nu(z_l)\sqrt{\frac{1}{1+|\nu(z_l)|^2}}\;\;\overline{z}\]
we can deduce that $\Theta_{ij}$ solves 
\[\frac{\partial}{\partial \overline{w}}\Theta_{ij}(w)=0 ;\]
thus $\Theta_{ij}$ is holomorphic w.r.t. the variable $w$. We will also say that it is almost-holomorphic in $z$; more precisely, it can be shown that $\Theta= (\lambda z + \mu \overline{z})^{\tau}$, with $|\mu| << |\lambda|$, $ \Q \ni \tau >0$.

\begin{lem}
\label{lem:lemma1}
Fix $(k,l) \in \{1,..., N\} \times \{1,..., N\}$; for $i\neq j$ s.t. $(\varphi_i, \alpha_i)$ belongs to a branch of $L_k$ and $(\varphi_j, \alpha_j)$ to a branch of $L_l$ (possibly with $k=l$), the following holds: for any $\delta >0$ there is $\rho$ small enough, s.t. 
\[\frac{|\alpha_i - \alpha_j|^2}{|\varphi_i - \varphi_j|^2+|\alpha_i - \alpha_j|^2}(z) < \delta\]
for $|z|< \rho$.
\end{lem}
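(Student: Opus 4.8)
The plan is to argue by contradiction, re-running the blow-up procedure developed in the paragraphs preceding the statement, but with dilation factors adapted to the hypothetical accumulating points. Suppose the conclusion fails for some pair $i\ne j$ as in (\ref{eq:branches}): then there are $\delta_0>0$ and points $z_m$ with $0<|z_m-z_l|\to 0$ at which
\[
\frac{|\alpha_i-\alpha_j|^2}{|\varphi_i-\varphi_j|^2+|\alpha_i-\alpha_j|^2}(z_m)\ \geq\ \delta_0 .
\]
I would set $\rho_m:=2|z_m-z_l|\to 0$ and $\zeta_m:=(z_m-z_l)/\rho_m$, so that $|\zeta_m|=1/2$ and $z_l+\rho_m\zeta_m=z_m$; in particular $z_m\in B^2(z_l,\rho_m)$, hence, writing $N_m:=\|\varphi_i-\varphi_j\|_{L^\infty(B^2(z_l,\rho_m))}>0$, the very definition of the rescaled multivalued graph gives $\Theta^{\rho_m}_{ij}(\zeta_m)=(\varphi_i-\varphi_j)(z_m)/N_m$ and $\Xi^{\rho_m}_{ij}(\zeta_m)=(\alpha_i-\alpha_j)(z_m)/N_m$, so that the ratio above equals
\[
\frac{|\Xi^{\rho_m}_{ij}(\zeta_m)|^2}{|\Theta^{\rho_m}_{ij}(\zeta_m)|^2+|\Xi^{\rho_m}_{ij}(\zeta_m)|^2}.
\]
The choice $\rho_m=2|z_m-z_l|$ (rather than $|z_m-z_l|$) is made on purpose, so that $\zeta_m$ stays in the interior of the unit disk and I only ever need interior uniform convergence.

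Next I would invoke the compactness already established for these rescaled graphs. The uniform Hölder bound (independent of the dilation factor) lets me extract, along a subsequence of $\rho_m\to 0$, a uniform limit $(\Theta_{ij},\Xi_{ij})$ on the compact annulus $\{1/4\le|z|\le 3/4\}$, and after a further extraction $\zeta_m\to\zeta_\infty$ with $|\zeta_\infty|=1/2$. By the analysis carried out just above the statement (see (\ref{eq:systemsingh}) and the lines following it), $\Xi_{ij}\equiv 0$ and $\Theta_{ij}=(\lambda z+\mu\overline z)^{\tau}$ with $\tau\in\Q$, $\tau>0$ and $|\mu|\ll|\lambda|$. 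Consequently $\Xi^{\rho_m}_{ij}(\zeta_m)\to 0$, while $|\Theta^{\rho_m}_{ij}(\zeta_m)|\to|\lambda\zeta_\infty+\mu\overline{\zeta_\infty}|^{\tau}$, which is a strictly positive constant $c_1$ because $|\zeta_\infty|=1/2$ and $|\mu|<|\lambda|$. Plugging this into the displayed quotient shows it tends to $c_1^{-2}\cdot 0=0$, contradicting that it is $\geq\delta_0$ for all $m$; this would finish the proof.

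The point that I expect to require the most care — and really the only obstacle — is the multivalued nature of $\Theta^{\rho}_{ij}$ and $\Xi^{\rho}_{ij}$: the branches of $L_k$ and of $L_l$ get permuted as one loops around $z_l$, so the individual functions $\varphi_i-\varphi_j$, $\alpha_i-\alpha_j$ are only locally defined on the punctured disk. However, the moduli $|\Theta^{\rho}_{ij}|$, $|\Xi^{\rho}_{ij}|$ and the quotient above are genuine single-valued functions there; the Ascoli–Arzelà extraction is performed for multivalued graphs exactly as in the proof of theorem \ref{thm:holderestimate}; and all the (finitely many, when $\tau\notin\Z$) values of $\Theta_{ij}(\zeta_\infty)$ share the same modulus $|\lambda\zeta_\infty+\mu\overline{\zeta_\infty}|^{\tau}>0$, so it is irrelevant which value $\Theta^{\rho_m}_{ij}(\zeta_m)$ converges to. A routine secondary check is that $N_m>0$, which is immediate since the branches $i$ and $j$ are distinct away from $z_l$ (and equality $\varphi_i\equiv\varphi_j$ near $z_l$ would force $\alpha_i\equiv\alpha_j$ through the second equation in (\ref{I.1a}), a contradiction).
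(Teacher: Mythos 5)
Your proof is correct and follows essentially the same route as the paper's: both rest on the rescaled families $(\Theta^\rho_{ij},\Xi^\rho_{ij})$, the uniform H\"older bound with Ascoli--Arzel\`a, and the identification of every limit as $(\Theta_{ij},0)$ with $\Theta_{ij}$ almost-holomorphic and non-vanishing away from the origin. Your contradiction argument with the adapted dilation factor $\rho_m=2|z_m-z_l|$ (placing the rescaled point at radius $1/2$, where $|\Theta_{ij}|$ is bounded below) in fact makes explicit a uniformity point --- the possible degeneration of the denominator near the centre of the disk --- that the paper's two-line proof leaves implicit.
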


\begin{proof}[\bf{proof of lemma \ref{lem:lemma1}}]
All the possible uniform limits of sequences 
\[\left( \Theta_{ij}^{\rho}(z), \Xi_{ij}^{\rho}(z) \right )\]
as $\rho \to 0$, must be of the form $\left(\Theta_{ij}(z), 0 \right)$ with $\Theta$ almost-holomorphic, of modulus $1$ and satisfying the H\"older-estimate (in terms of the explicit form of $\Theta$, this estimate is reflected upon $\tau$). Therefore, for any $\delta >0$ we can choose $\rho$ small enough so that 
\[\frac{|\alpha_i - \alpha_j|^2}{|\varphi_i - \varphi_j|^2+|\alpha_i - \alpha_j|^2}= \frac{|\Xi_{ij}|^2}{|\Theta_{ij}|^2+ |\Xi_{ij}|^2} < \delta.\]
\end{proof}

\begin{lem}
\label{lem:lemmadegree}
Fix $(k,l) \in \{1,..., N\} \times \{1,..., N\}$; by lemma \ref{lem:lemma1},  for $\rho$ small enough and for $i\neq j$ s.t. $(\varphi_i, \alpha_i)$ belongs to a branch of $L_k$ and $(\varphi_j, \alpha_j)$ to a branch of $L_l$ (possibly with $k=l$), it makes sense to compute the degree of 
\[\frac{\varphi_i - \varphi_j}{|\varphi_i - \varphi_j|}\]
on the closed curve $\gamma= L_l \cap \pi^{-1}\{|z-z_l|=\rho\}$.
This degree is strictly positive.
\end{lem}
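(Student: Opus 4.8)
The plan is to show that, near $z_l$, the difference $\varphi_i-\varphi_j$ behaves like a holomorphic function with an isolated zero at $z_l$, so that the winding number of $\frac{\varphi_i-\varphi_j}{|\varphi_i-\varphi_j|}$ around $z_l$ is automatically positive; concretely I would read this degree off the blow-up analysis carried out just above the statement.

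First I would check that the degree makes sense and is independent of the (small) radius $\rho$. Applying Lemma \ref{lem:lemma1} with some $\delta<1$, for all $\rho$ small enough $|\alpha_i-\alpha_j|^2<\tfrac12(|\varphi_i-\varphi_j|^2+|\alpha_i-\alpha_j|^2)$ on $B^2(z_l,\rho)$; since $(\varphi_i,\alpha_i)(z)\ne(\varphi_j,\alpha_j)(z)$ for $z\ne z_l$, this forbids $\varphi_i=\varphi_j$ there, so $\varphi_i-\varphi_j$ is zero-free on $\gamma$ and $\frac{\varphi_i-\varphi_j}{|\varphi_i-\varphi_j|}\colon\gamma\to S^1$ has a well-defined integer degree. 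Since $\varphi_i-\varphi_j$ has no zero on the punctured ball $\{0<|z-z_l|<\rho_0\}$, the curves $\gamma_\rho=L_l\cap\pi^{-1}\{|z-z_l|=\rho\}$ foliate an annulus in $L_l$ along which the normalized difference is homotoped, so the degree is the same for all small $\rho$.

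Next I would identify this degree with the exponent $\tau$ from the blow-up. By construction $\Theta^\rho_{ij}(z)$ is $(\varphi_i-\varphi_j)(z_l+\rho z)$ divided by the positive constant $\|\varphi_i-\varphi_j\|_{L^\infty(B^2(z_l,\rho))}$, so the above degree equals that of $\frac{\Theta^\rho_{ij}}{|\Theta^\rho_{ij}|}$ on the loop of $L_l$ lying over $\{|z|=1\}$. Along the subsequence $\rho\to0$ extracted before the statement, $\Theta^\rho_{ij}\to\Theta_{ij}=(\lambda z+\mu\bar z)^\tau$ uniformly, with $|\mu|\ll|\lambda|$ and $\tau\in\Q_{>0}$; on $\{|z|=1\}$ one has $|\Theta_{ij}|\ge(|\lambda|-|\mu|)^\tau>0$, so for $\rho$ small in the subsequence the two normalized maps are homotopic and have the same degree. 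Since $z\mapsto\lambda z+\mu\bar z$ has Jacobian determinant $|\lambda|^2-|\mu|^2>0$ it is orientation-preserving and winds exactly once around $0$ on any circle, and taking the $\tau$-th power multiplies the winding by $\tau$; hence the degree equals $\tau$ times the (positive) covering multiplicity of $\gamma$ over $\{|z|=1\}$, which is strictly positive. Combined with the $\rho$-independence from the first step this yields the claim.

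The only genuinely delicate point I expect is the bookkeeping with the branched structure of $L_k$ and $L_l$: one must know that, going once around $\gamma$, the branches labelled $i$ and $j$ return to themselves, which is precisely why $\gamma$ is taken to be the whole curve $L_l\cap\pi^{-1}\{|z-z_l|=\rho\}$ and why $\tau$ need only be rational; once this is granted, the positivity is forced. If one prefers to bypass the explicit blow-up, one can instead absorb the $(\alpha_i-\alpha_j)$-term in (\ref{eq:FG})--(\ref{eq:systemsingp}) using Lemma \ref{lem:lemma1}, so that $f:=\varphi_i-\varphi_j$ solves $\overline\partial f+\nu\,\partial f+bf=0$ with $b\in L^\infty$ and $|\nu|<1$, and invoke the Bers--Vekua similarity principle $f=e^{g}h$ with $g$ bounded continuous and $h$ holomorphic in a suitable quasiconformal coordinate; then $\deg\frac{f}{|f|}=\deg\frac{h}{|h|}=\mathrm{ord}_{z_l}h>0$.
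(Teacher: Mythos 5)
Your argument is correct and follows essentially the same route as the paper's own proof: both establish that the degree is well defined by tracking the branches once around the closed curve $\gamma$, and then deduce strict positivity from the blow-up limit $\Theta_{ij}=(\lambda z+\mu\overline{z})^\tau$ being almost-holomorphic. Your extra details (zero-freeness of $\varphi_i-\varphi_j$ on the punctured ball via lemma \ref{lem:lemma1}, the $\rho$-independence that handles the subsequential nature of the convergence, and the Bers--Vekua aside) correctly fill in points the paper leaves implicit.
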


\begin{proof}[\bf{proof of lemma \ref{lem:lemmadegree}}]
$\gamma$ is a closed, connected curve; orient it so that its projection on $D^2$ winds positively. Fix then on it any determination of the vector $\varphi_i - \varphi_j$ and let it evolve along $\gamma$ in the given direction, keeping its tail on the curve; meanwhile, its head will move along a closed curve in $L_k$, which could be either the same or a different one. In the former case we are staying inside the same branched disk $L_l$, in the latter we are dealing with two different disks $L_k$ and $L_l$. In any case, the vector will eventually come back to the initial one after having run over the whole of $\gamma$; it makes then sense to consider the degree of the $S^1$-valued map $\frac{\varphi_i - \varphi_j}{|\varphi_i - \varphi_j|}$ on $\gamma$. 
Introduce the m.v. graph $\varphi_i - \varphi_j$ for $i,j$ in the $L_k$ and $L_l$ involved. This m.v.graph will in both cases have a unique connected component. By the blowing-up argument above, 
\[\frac{\varphi_i - \varphi_j}{|\varphi_i - \varphi_j|}(z_l+\rho z)= \frac{\Theta_{ij}^{\rho}}{|\Theta_{ij}^{\rho}|}(z) \rightarrow \frac{\Theta_{ij}}{|\Theta_{ij}|}(z)\]
must contribute with a strictly positive degree on $\gamma$ if $\rho$ was small enough, since $\Theta_{ij}$ is almost-holomorphic.
\end{proof}

\textbf{Proof of the non-accumulation.}
Denote by $\pi^\ast C$ the following subset of ${\R}^3\times {\R}^3\times D^2$ :
\[
\pi^\ast C:=\left\{
\begin{array}{c}
\ds\xi=(\zeta_1,\zeta_2,z)\in {\R}^3\times {\R}^3\times D^2\quad\mbox{s.t. }\quad\exists j,k\in\{1\cdots Q\} \mbox{ satisfying }\\[5mm]
\ds\zeta_1=(\varphi_j(z),\al_j(z))\quad\mbox{ and }\quad\zeta_2=(\varphi_k(z),\al_k(z))
\end{array}
\right\}
\]
By an abuse of notation we will also write $\zeta_1=(\varphi_1,\al_1)$ and $\zeta_2=(\varphi_2,\al_2)$, moreover\footnote{Here $\zeta_i$ ($i\in\{1,2\}$) will always be an element of $\R^3$ of the form $(\varphi_j(z),\al_j(z))$; it should not be confused with the complex coordinate $\zeta$ in $\C_z \times \C_\zeta \times \R_a$ used in sections \ref{PDEaverage} and \ref{uniquecont}, which will anyway not appear in this section.} we denote $z=\pi(\xi)$ - i.e. $\pi$ is extended naturally to $\pi^\ast C$.

Observe that $C\subset \pi^\ast C$ as the result of the identification of $C$ with the points $(\zeta_1,\zeta_2,z)$ such that $\zeta_1=\zeta_2$. Away
from these points, $\pi^\ast C\setminus C$ realizes a smooth 2-dimensional oriented submanifold of ${\R}^3\times {\R}^3\times D^2$
with local chart given by $z$.

On $\pi^\ast C$ we define the function 
\[
d(\xi)=|\zeta_1-\zeta_2|=\sqrt{|\al_1-\al_2|^2+|\varphi_1-\varphi_2|^2}\quad.
\]
which is smooth and non-zero on $\pi^\ast C\setminus C$ and on $\pi^\ast C\setminus C$ we define 
\[
\Delta(\xi)=\frac{|\al_1-\al_2|^2}{|\al_1-\al_2|^2+|\varphi_1-\varphi_2|^2}\quad.
\]
Let $\phi$ be a smooth non negative compactly supported function satisfying 
\[
\phi(s)=\left\{
\begin{array}{l}
1\quad\mbox{ for }s<1\\[5mm]
0\quad\mbox{ for }s>2
\end{array}
\right.
\]
 For $1>\delta>0$ we denote $\phi_\delta(\cdot)=\phi(\cdot/\delta)$. 
 
 \medskip
 
Let $\delta<1$ be a regular value of the function $\Delta$ on $\pi^\ast C\setminus C$ we define a {\it stretching-contracting} map
\[
S_\delta\ :\ {\R}^3\longrightarrow {\R}^3
\]
in the following way : $S_\delta$ is axially symmetric about the $z-$axis, $|S_\delta(x,y,z)|=|(x,y,z)|$ and the following conditions
are satisfied 
\[
S_\delta(x,y,z)=\left\{
\begin{array}{l}
\ds S_\delta(x,y,z)=sgn(z)\ (0,0,\sqrt{x^2+y^2+z^2})\quad\mbox{ if }\quad\frac{z^2}{x^2+y^2+z^2}>\delta\\[5mm]
\ds S_\delta(x,y,z)=(x,y,z)\quad\mbox{ if }\quad\frac{z^2}{x^2+y^2+z^2}<\frac{\delta}{2}
\end{array}
\right.
\]
Denote $N$ the following 3-dimensional manifold:
\[
N:=\lf\{(\xi,t)\in (\pi^\ast C\setminus C)\times {\R}\rg\}\quad .
\]
denote 
$$
D=D_\delta=\frac{1}{\sqrt{\frac{1}{\delta}-1}}\quad .
$$
Observe that $D>0$ has been chosen in particular in such a way that
\be
\label{I.2}
D^{-1}\ |\al_1-\al_2|\le |\varphi_1- \varphi_2|\quad\Longleftrightarrow\quad\Delta(\xi)\le\delta\quad\Longleftrightarrow\quad \phi_\delta(\Delta(\xi))=1\quad .
\ee

At this stage we are going to make a short digression to choose a suitable value for $\delta < 1$, which will be kept throughout the rest of the section.

Let $R$ be the radius of $D^2$. Denote by $B_r$, for $r \leq R$, the part of $\pi^\ast C\setminus C$ above the set $\{ |z|<r\}$. For any $\delta < 1$, express the set $\{\Delta > \delta\}$ as the union of its connected components, i.e. $ \displaystyle \{\Delta > \delta\} = \cup_i A_{\delta}^i$.
We are going to prove the following claim: there exist $\delta <1$ and $\ovr < R$ s.t. 
\be
\label{eq:propdelta}
\forall i \;\;\;\text{ and } \forall r\leq \ovr \;\;\;\  A_{\delta}^i \cap \partial B_r \neq \emptyset \Rightarrow A_{\delta}^i \cap \partial B_R = \emptyset.
\ee

To prove the claim, we argue by contradiction: assume the existence of sequences $\delta_n \to 1$, $r_n \to 0$ for which we can always find a connected component intersecting both $\partial B_{r_n}$ and $\partial B_R$. Then we can choose $C^1$ curves $\gamma_n$, parametrized by arc length, joining $\partial B_{r_n}$ to $\partial B_R$ and staying inside the corresponding connected component.
Up to a subsequence, by the Ascoli-Arzel\`a's theorem, we can assume the existence of a uniform limit curve $\gamma$, joining $0$ to $\partial B_R$.
The function $\Delta$ is greater than $\delta_n$ on the image of $\gamma_n$, therefore
\[\delta_n \to 1 \Rightarrow \Delta \circ \gamma \equiv 1 \Rightarrow |\varphi_1 - \varphi_2|\to 0 \text{ as } n \to \infty.\]
The limit curve $\gamma$ could a priori be merely continuous and not $C^1$.
We can write, from (\ref{eq:FG}), for any $n$ and for any $t$ in the domain of $\gamma_n$:
\[|\alpha_1 - \alpha_2|(\gamma_n(t)) \leq |\varphi_1 - \varphi_2|(\gamma_n(0)) + K_0 \int_0^t |\alpha_1 - \alpha_2|(\gamma_n(s))\; ds.\]
Sending to the limit as $n \to \infty$
\[|\alpha_1 - \alpha_2|(\gamma(t)) \leq  K_0 \int_0^t |\alpha_1 - \alpha_2|(\gamma(s))\; ds,\]
thus $\alpha_1 - \alpha_2$ is identically $0$ on the curve $\gamma$; here $\varphi_1 - \varphi_2$ also vanishes and therefore the image of $\gamma$ is a line of singularities, contradiction. Thus the claim is proved. Of course we can also choose $\delta$ to be a regular value for $\Delta$, since almost all values are as such. End of the digression.

Now, for the $\delta$ given by the claim, take any positive $r \leq \ovr$ arbitrarily small and such that $\pi^{-1}(\partial B^2_r(0))$ does not intersect
the set of $z_l$ satisfying (\ref{I.1}). Let 
$$
\ep_0:=\inf\left\{\frac{d(\xi)}{\sqrt{1+D^2}}\ ;\  \xi\in (\pi^\ast C\setminus C)\cap\pi^{-1}(\p B^2_r(0))\right\}\quad .
$$
By assumption $\ep_0>0$.

Let $\ep>0$ be a regular value less than $\ep_0$ for the function $|\varphi_1-\varphi_2|$. Denote by $g$ the following function on $\pi^\ast C\setminus C$ :
\[
g(\xi):=\frac{\varphi_1-\varphi_2}{\max\{|\al_1-\al_2|, D\ep\}}\quad .
\]
Observe that since $(C\res B^5)\setminus\{0\}$ is assumed to be a smooth Special Legendrian curve and since $(\varphi_j(0),\al_j(0))=(0,0)$ for all $j$,
$|\varphi_1-\varphi_2|^{-1}(\{\ep\})$ is a smooth compact  curve in $\pi^\ast C\setminus C$ for any regular value $\ep>0$.
Observe moreover that since $\ep<\ep_0$ we have that
\be
\label{I.3}
\lf[\pi^\ast C\setminus\lf\{\xi\ ;\ \Delta(\xi)>\delta \rg\}\cap |\varphi_1-\varphi_2|^{-1}(\{\ep\})\rg]\quad\cap\quad\p B^2_r(0)=\emptyset\quad .
\ee

Define the open set $U$ made of the connected components of $\{\Delta > \delta\}$ that intersect $B_r$ (and therefore not $\partial B_R$ thanks to (\ref{eq:propdelta})). 

For any fixed $r \leq \ovr$, choose $\eps$ small enough as follows: firstly, $\eps < \eps_0$; secondly, take 
\[\eps < \min\left\{|\varphi_1- \varphi_2|(\xi): \xi \in \overline{\partial\left( U \cap (B_R - B_r) \right) - \partial B_r} \subset \p U\right\}.\]
The minimum on the r.h.s. is strictly positive. Indeed, if it were $0$, then either we would have a singular point that realizes it, or a smooth point where $\Delta=1$. In the former case, lemma \ref{lem:lemma1} tells us that there is a neighbourhood of the singularity where $\{\Delta < \frac{\delta}{2}\}$, therefore it cannot be a boundary point of $U$, since in $U$ we have $\Delta > \delta$. In the latter case there ought to be a neighbourhood where $\{\Delta > \delta\}$, so it could not be a boundary point. 

Finally define the open set 
\[\Sigma_{r, \eps} = \left (\{|\varphi_1 - \varphi_2| < \eps \} \cap  B_r \right ) \cup U .\]

$\Sigma_{r, \eps}$ has the following properties: 
\begin{description}
	\item[(i)]\[z_l \in \pi(\Sigma_{r, \eps}) \Rightarrow z_l \in \pi(B_r), \text{ since there are no singularities in $U$}\] \[\text{due to lemma \ref{lem:lemma1};}\]
	\item[(ii)]\[p \in \partial \Sigma_{r, \eps} \Rightarrow \left \{ \begin{array}{c}
|\varphi_1 - \varphi_2|(p) = \eps \\
\Delta (p) \geq \delta
\end{array}
\right. \text{ or } \left \{ \begin{array}{c}
|\varphi_1 - \varphi_2|(p) \geq \eps \\
\Delta (p)= \delta
\end{array}
\right.\]
\[\text{ so }\;\; |g|\equiv \sqrt{\frac{1}{\delta} - 1}=D^{-1} \text{ on } \partial \Sigma_{r, \eps}.\]
\end{description}

Thus $\delta$ and $\ep$ have been chosen in such a way that $\p \Sigma_{r,\ep}$ is a closed smooth compact curve in $\pi^\ast C\setminus C$
which is included in the level set $|g|^{-1}(\{D^{-1}\})$. Remark that $\p \Sigma_{r,\ep}$ is obtained by homotopy from the loop $\pi^{-1}\{|z|=r\}$ without crossing any singularity of $C \subset \pi^\ast C$.

On $N$ we define the map $v$ given by
\[
\begin{array}{rcl}
v\ :\ N&\longrightarrow& S^2\\[5mm]
(\xi,t) &\longrightarrow & \ds
\frac{(g(\xi),\al_1-\al_2+t\,\phi_\delta\circ\Delta(\xi))}{\sqrt{|g(\xi)|^2+|\al_1-\al_2+t\,\phi_\delta\circ\Delta(\xi)|^2}}
\end{array}
\]
Observe that $|g(\xi)|^2+|\al_1-\al_2+t\,\phi_\delta\circ\Delta(\xi)|^2=0$ implies that $|\varphi_1-\varphi_2|=0$. If $\al_1-\al_2\ne 0$
then $\phi_\delta\circ\Delta(\xi)=0$ and hence we would have $|\al_1-\al_2|=0$ which is a contradiction. Hence $v$ is well defined smooth map on $N$.
Finally define the $S^2$-valued map $u$  by
\[
u=S_\delta\circ v\quad .
\]
On the complement of $\Sigma_{\ep,r}$ $v$ simplifies to
\be
\label{I.4a}
v(\xi)=\frac{(g(\xi),\al_1-\al_2+t)}{\sqrt{|g(\xi)|^2+|\al_1-\al_2+t|^2}}\quad .
\ee
From the definition of $S_\delta$, for any two form $\om$ on $S^2$ we have hence that, on $N\setminus (\Sigma_{\ep,r}\times {\R})$,
$(S_\delta\circ v)^\ast \om=0$ for $|t|>1/\ep$ (Assuming without loss of generality that $d(\xi)$ is  bounded by 1 on $\pi^\ast C$).
Hence the degree of $u$ restricted to any closed compact curve in the complement of $\Sigma_{\ep,r}$ times ${\R}$ is well defined since in 
$N\setminus (\Sigma_{\ep,r}\times {\R})$ we have $u^\ast\om\ne0$ only on a compact set.

The rest of the section is occupied with the proof of the following two lemmas, which will imply by a simple homotopy argument that can be found at the end of the section, that
the number of $z_l$ is uniformly bounded and theorem~\ref{th-I.1} will be proved.

\begin{lem}
\label{lem:lemmadegree2}
For any $\xi_l=(\varphi,\al,\varphi,\al,z_l)\in C\cap \ov[\pi^\ast C\setminus C]$ for $\rho>0$ small enough
\be
\label{I.5}
\int_{\pi^{-1}( \p B^2_\rho(z_l))\times {\R}}u^\ast\om\ge 1
\ee
where $\om$ is an arbitrary $2-$form on $S^2$ such that $\int_{S^2}\om=1$.
\end{lem}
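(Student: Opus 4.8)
\textbf{Proposed proof of Lemma~\ref{lem:lemmadegree2}.}

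The plan is to read $\int_{\pi^{-1}(\p B^2_\rho(z_l))\times\R}u^\ast\om$ as a sum of degrees over the connected components of the one--manifold $\Gamma_\rho:=\pi^{-1}(\p B^2_\rho(z_l))\cap(\pi^\ast C\setminus C)$, and to evaluate each contribution using Lemmas~\ref{lem:lemma1} and \ref{lem:lemmadegree}. First I would fix the local picture: for $\rho$ small $\p B^2_\rho(z_l)$ avoids all the other points of (\ref{I.1}), so (recalling the immersed--disk description $C\res B^5_\rho(q)=\oplus_iL_i$ and taking monodromy around $z_l$ into account) $\Gamma_\rho$ is a disjoint union of smooth circles $\gamma_c$, one for each monodromy orbit of ordered pairs of distinct branches $(i,j)$. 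I would call $\gamma_c$ \emph{colliding} if the corresponding branches satisfy $(\varphi_i(z_l),\al_i(z_l))=(\varphi_j(z_l),\al_j(z_l))$, and \emph{non--colliding} otherwise. Because $\xi_l\in\ov{[\pi^\ast C\setminus C]}$ there is at least one pair of distinct branches that coincide at $z_l$, hence $\Gamma_\rho$ contains at least one colliding circle for every small $\rho$. Since $u$ is constant (equal to a pole) outside a compact subset of $N$, for each $\gamma_c$ the form $u^\ast\om$ is compactly supported on $\gamma_c\times\R$, so $\int_{\gamma_c\times\R}u^\ast\om$ is a well--defined integer, independent of $\om$ among $2$--forms with $\int_{S^2}\om=1$, and $\int_{\pi^{-1}(\p B^2_\rho(z_l))\times\R}u^\ast\om=\sum_c\int_{\gamma_c\times\R}u^\ast\om$.

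Next I would reduce $u$ to a winding number. Since $S_\delta:S^2\to S^2$ equals the identity on an equatorial band it has degree one, so $\int_{\gamma_c\times\R}u^\ast\om=\int_{\gamma_c\times\R}v^\ast(S_\delta^\ast\om)$ with $\int_{S^2}S_\delta^\ast\om=1$, and it suffices to compute $\deg(v|_{\gamma_c\times\R})$. On a colliding circle, Lemma~\ref{lem:lemma1} gives $\Delta\to 0$, hence $\phi_\delta\circ\Delta\equiv 1$ there for $\rho$ small; moreover $|\al_1-\al_2|\ll|\varphi_1-\varphi_2|\to0<D\ep$, so $\max\{|\al_1-\al_2|,D\ep\}\equiv D\ep$ and $g=(\varphi_1-\varphi_2)/D\ep$, which (again by Lemma~\ref{lem:lemma1}) never vanishes on $\gamma_c$. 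The fibrewise translation $s=\al_1-\al_2+t$ is orientation preserving and turns $v$ into $(\xi,s)\mapsto (g(\xi),s)/|(g(\xi),s)|$; a direct computation shows that this map has degree equal to the winding number of the loop $g$ about $0\in\C$, i.e. to the degree of $(\varphi_1-\varphi_2)/|\varphi_1-\varphi_2|$ on $\gamma_c$. Matching $\gamma_c$ (traversed once) with the curve $L_l\cap\pi^{-1}\{|z-z_l|=\rho\}$ of Lemma~\ref{lem:lemmadegree}, this degree is $\ge 1$. On a non--colliding circle $\varphi_1-\varphi_2$, and hence $g$, converges to a nonzero constant as $\rho\to0$: where $\phi_\delta\circ\Delta$ is bounded away from $0$ the same reduction applies and yields winding number $0$, and where $\phi_\delta\circ\Delta\equiv0$ the map $v$ is independent of $t$ so $v^\ast\om=0$; either way the contribution vanishes.

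Summing over components, $\int_{\pi^{-1}(\p B^2_\rho(z_l))\times\R}u^\ast\om=\sum_{\gamma_c\ \text{colliding}}\deg(v|_{\gamma_c\times\R})\ge 1$, because there is at least one colliding circle and each contributes a strictly positive integer while the non--colliding ones contribute $0$. The orientation conventions are consistent throughout: $\pi^\ast C\setminus C$ carries the orientation for which the $z$--chart is orientation preserving (so that $\p B^2_\rho(z_l)$ winds positively), which is exactly the convention of Lemma~\ref{lem:lemmadegree}, and this forces all colliding contributions to have the same, positive, sign. The point requiring the most care is the combinatorial bookkeeping of the components of $\Gamma_\rho$: one must verify that every component, traversed once, projects onto the appropriate curve $L_l\cap\pi^{-1}\{|z-z_l|=\rho\}$ with the right multiplicity, and that no single component simultaneously exhibits colliding and non--colliding behaviour — this is where the branched--immersed--disk structure of $C\res B^5_\rho(q)$ and the monodromy around $z_l$ enter decisively.
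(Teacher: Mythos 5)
Your proof is correct and follows essentially the same route as the paper's: both reduce the degree of $u$ over $\pi^{-1}(\p B^2_\rho(z_l))\times\R$ to the winding number of $g$ (equivalently of $(\varphi_i-\varphi_j)/|\varphi_i-\varphi_j|$) via the fibrewise computation underlying (\ref{I.19}), and then invoke Lemmas \ref{lem:lemma1} and \ref{lem:lemmadegree} for positivity. Your write-up is in fact more complete than the paper's terse argument, since you explicitly decompose the fibre into monodromy components and verify that non-colliding pairs contribute zero; the only slip is the claim that on such a component $\varphi_1-\varphi_2$ tends to a nonzero constant (the branches may agree in $\varphi$ but differ in $\al$ at $z_l$), but your dichotomy on $\phi_\delta\circ\Delta$ already covers that case, since there $\Delta\to 1$ and $\phi_\delta\circ\Delta\equiv 0$.
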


\begin{lem}
\label{lem:I.1}
Under the previous notations, there exists a constant $K\in {\R}^+$ independent of $r$ and $\ep$ such that
\be
\label{I.5a}
\int_{\p \Sigma_{\ep,r}\times{\R}}u^\ast \sum_{i=1}^3 x^j\ dx^{j+1}\wedge dx^{j-1}\ge -K\quad.
\ee
\end{lem}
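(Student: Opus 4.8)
The plan is to recognise the $2$-form $\varpi:=\sum_{j}x^j\,dx^{j+1}\wedge dx^{j-1}$ (cyclic indices) as the restriction to $S^2$ of $r^2\,d(\text{solid angle})$, so that $\varpi|_{S^2}$ is the round area form, $\int_{S^2}\varpi=4\pi$, and $d\varpi|_{S^2}=0$; since $u$ is $S^2$-valued, $u^\ast\varpi$ is a \emph{closed} $2$-form on $N=(\pi^\ast C\setminus C)\times\R$. I would first show that on $\p\Sigma_{\ep,r}\times\R$ the map $u$ is asymptotically constant in $t$: property (ii) gives $|g|\equiv D^{-1}$ on $\p\Sigma_{\ep,r}$, and on that curve $\phi_\delta\circ\Delta\ge c(\delta)>0$ (for the fixed $\delta\in(\tfrac12,1)$ of the digression, using $\Delta<1$ there since $|\varphi_1-\varphi_2|\ge\ep$ on $\p\Sigma_{\ep,r}$, and $\phi>0$ on $[0,2)$), while $|\al_1-\al_2|\le d(\xi)\le1$; hence for $|t|>T_0:=2/c(\delta)$ the last coordinate of $v$ has modulus exceeding the threshold $\sqrt\delta$, so $S_\delta$ collapses $v$ onto the north pole (for $t\to+\infty$) or the south pole (for $t\to-\infty$). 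Thus $u|_{\p\Sigma_{\ep,r}\times\R}$ descends to a smooth map from the cylinder compactified by collapsing each of its two ends, i.e. from a disjoint union of $2$-spheres, into $S^2$, and
\be
\int_{\p\Sigma_{\ep,r}\times\R}u^\ast\varpi=4\pi\,\deg\bigl(u|_{\p\Sigma_{\ep,r}\times\R}\bigr)\in4\pi\,\Z .
\ee

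Next I would evaluate this degree. On $\p\Sigma_{\ep,r}$ one has $|\varphi_1-\varphi_2|\ge\ep>0$, so $\hat g:=(\varphi_1-\varphi_2)/|\varphi_1-\varphi_2|\colon\p\Sigma_{\ep,r}\to S^1$ is well defined; and since $\p\Sigma_{\ep,r}$ lies outside the interior of $\Sigma_{\ep,r}$, there $v$ simplifies (cf. (\ref{I.4a})), which together with $|g|\equiv D^{-1}$ reads $v(\xi,t)=(D^{-1}\hat g(\xi),s(\xi,t))/\sqrt{D^{-2}+s(\xi,t)^2}$ with $s(\xi,\cdot)$ an increasing affine bijection of $\R$. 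Re-parametrising $t$ by the last coordinate of $v$ exhibits $v|_{\p\Sigma_{\ep,r}\times\R}$, up to homotopy fixing the two polar ends, as the meridian suspension of $\hat g$, whose degree on each component of $\p\Sigma_{\ep,r}$ equals the winding number of $\hat g$ there; and $S_\delta|_{S^2}$ is homotopic to the identity (it only crushes the two polar caps $\{z^2>\delta\}$), so post-composition with it leaves the degree unchanged. Hence
\be
\int_{\p\Sigma_{\ep,r}\times\R}u^\ast\varpi=\pm\,4\pi\,\deg\Bigl(\tfrac{\varphi_1-\varphi_2}{|\varphi_1-\varphi_2|}\ \colon\ \p\Sigma_{\ep,r}\to S^1\Bigr),
\ee
with a sign fixed by orientations. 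Since $\p\Sigma_{\ep,r}$ is obtained from the loop $\pi^{-1}(\p B^2_r(0))$ by a homotopy inside $\pi^\ast C\setminus C$ that crosses no zero of $\varphi_1-\varphi_2$ (it wraps the same set of zeros, only more tightly), this winding equals $\deg(\hat g|_{\pi^{-1}(\p B^2_r(0))})$, which no longer involves $\ep$.

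It then remains to bound $\deg(\hat g|_{\pi^{-1}(\p B^2_r(0))})$ independently of $r\le\ovr$. Each pairwise difference $\varphi_j-\varphi_k$ solves, on the smooth locus, the linear equation obtained by subtracting two copies of the first line of (\ref{I.1a}) and absorbing the $\al$-contributions via (\ref{eq:branchesl}); by Carleman's similarity principle it factors as $e^{s}h$ with $h$ holomorphic, so its zero set is discrete in $B^2_{\ovr}$ (it cannot accumulate at $0$, nor — since $C$ extends past $\p B^2_{\ovr}$ — at the boundary), hence finite. Consequently $\deg(\hat g|_{\pi^{-1}(\p B^2_r(0))})$ — which by the argument principle equals the sum of the (finitely many, and by the same similarity principle nonnegative) local winding numbers of $\varphi_1-\varphi_2$ at the enclosed zeros, organised along the monodromy orbits of branch pairs of the order-$Q$ point — is a nondecreasing function of $r$ taking finitely many integer values on $(0,\ovr]$, and its value at $r=\ovr$ furnishes a bound $K_{\ovr}$ valid for every smaller $r$. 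Therefore $\int_{\p\Sigma_{\ep,r}\times\R}u^\ast\varpi\ge-4\pi K_{\ovr}=:-K$, independent of $r$ and $\ep$. I expect the main difficulty to be the careful justification of the degree identification in the first two paragraphs — tracking the $S_\delta$-collapse, the noncompact $\R$-direction, the compactification into a union of spheres, and the branch monodromy on the possibly disconnected curve $\p\Sigma_{\ep,r}$ — together with checking that the homotopy to $\pi^{-1}(\p B^2_r(0))$ really avoids every zero of $\varphi_1-\varphi_2$, so that the winding there is the bona fide finite integer just described.
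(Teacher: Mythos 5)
Your first two steps (collapsing the ends of the cylinder via $S_\delta$, and reading off $\int_{\p\Sigma_{\ep,r}\times\R}u^\ast\varpi$ as $4\pi$ times the degree of the suspension of $\hat g=(\varphi_1-\varphi_2)/|\varphi_1-\varphi_2|$ over $\p\Sigma_{\ep,r}$, where $|g|\equiv D^{-1}$) are consistent with the set-up. The proof breaks down afterwards, at the reduction to a winding number on $\pi^{-1}(\p B^2_r(0))$ and at its uniform bound. The homotopy between $\p\Sigma_{\ep,r}$ and $\pi^{-1}(\p B^2_r(0))$ sweeps through $U\setminus B_r$, i.e.\ through regions where $\Delta>\delta$; there $|\al_1-\al_2|$ dominates and $\varphi_1-\varphi_2$ may well vanish without the point being a singularity of $C$. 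Hence the winding of $\hat g$ is not preserved along this homotopy, and $\hat g$ need not even be defined on all of $\pi^{-1}(\p B^2_r(0))$ (a component of $\{\Delta>\delta\}$ can meet $\p B_r$, and (\ref{eq:propdelta}) only prevents it from reaching $\p B_R$). What \emph{is} homotopy invariant is $\int u^\ast\varpi$ itself, but on $\pi^{-1}(\p B^2_r(0))\times\R$ the map $u$ is no longer a suspension of $\hat g$, so you cannot convert it back into a winding number there.

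The second, more fundamental gap is the similarity-principle step. The equation satisfied by $\varphi_i-\varphi_j$, namely (\ref{I.12}), is \emph{not} homogeneous in $\varphi_i-\varphi_j$: the terms $\nu(\varphi_1,\al_1)-\nu(\varphi_2,\al_2)$ and $\mu(\varphi_1,\al_1)-\mu(\varphi_2,\al_2)$ are only bounded by $K(|\varphi_1-\varphi_2|+|\al_1-\al_2|)$, and $|\al_1-\al_2|$ is not controlled by $|\varphi_1-\varphi_2|$ outside the region $\{\Delta\le\delta\}$. So $\varphi_i-\varphi_j$ does not factor as $e^{s}h$ with $h$ holomorphic, its zero set need not be discrete, and the local winding numbers need not be nonnegative; positivity is only available at genuine singularities of $C$ after the blow-up analysis of lemmas \ref{lem:lemma1}--\ref{lem:lemmadegree}. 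Worse, asserting discreteness/finiteness of the zeros of the branch differences in $B^2_{\ovr}$ is essentially the conclusion of theorem \ref{th-I.1}, so the argument is circular. This is exactly the difficulty that the whole apparatus of $\Delta$, $\phi_\delta$, $U$ and $S_\delta$ is built to circumvent. The paper's actual proof avoids degree-counting on the inner circle altogether: it extends $u$ to $w$ on $\Sigma_{\ep,r}\times\R$, applies Stokes, factors the bulk integrand as $\eta(z)\,D^{-2}\bigl[|\p_z f|^2-|\p_{\ov z}f|^2\bigr]$ with $0\le\eta\le C_\delta$, and uses the PDE (\ref{I.18}) to absorb $|\p_{\ov z}f|^2$ into $|\p_zf|^2$ up to terms bounded by $\int_{D^2}\sum_j(|\nabla\varphi_j|^2+1)$, which is finite and independent of $r$ and $\ep$. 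Some analytic input of this kind is unavoidable; the purely topological route you propose cannot close.
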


\begin{proof}[\bf{proof of lemma \ref{lem:I.1}}] 
This constitutes the core of the proof of theorem \ref{th-I.1}. 

Observe that $|g(\xi)|\equiv D^{-1}$ on $\p \Sigma_{\ep,r}$. Denote $\la$ the following function on $\Sigma_{\ep,r}\times{\R}$ 
\[
\la(\xi,t):=\sqrt{D^{-2}+(\al_1-\al_2+t)^2}\quad .
\]
We additionally denote by $w$ the following $\C \times \R$-valued map\footnote{Sometimes we will also look at $w$ as a $\R^3$-valued map.} on ${\Sigma}_{\ep,r}\times {\R}$ :
\[
w(\xi,t):=\frac{(g(\xi),\al_1-\al_2+t)}{\la}
\]
Observe that $w=u$ on $\p {\Sigma}_{\ep,r}\times {\R}$.

First we claim that
\be
\label{I.6}
\int_{\Sigma_{\ep,r}\times{\R}}|(S_\delta\circ w)^\ast (dx^1\wedge dx^2\wedge dx^3)|\ d{\mathcal H}^2\ dt\res N<+\infty\quad.
\ee 
We now prove the claim (\ref{I.6}). Let $\xi\in\Sigma_{\ep,r}$, one has
\[
\lf\{\begin{array}{l}
\ds\Delta(\xi)>\delta\quad,\\[5mm]
\ds\mbox{ or }\\[5mm]
\ds |\varphi_1-\varphi_2|<\ep
\end{array}
\rg.\quad\Longleftrightarrow\quad
\lf\{\begin{array}{l}
\ds\frac{|\varphi_1-\varphi_2|}{|\al_1-\al_2|}<D^{-1}\quad,\\[5mm]
\ds\mbox{ or }\\[5mm]
\ds \frac{|\varphi_1-\varphi_2|}{D\,\ep}<D^{-1}
\end{array}
\rg.
\]
which clearly implies that
\be
\label{I.7}
|g(\xi)|<D^{-1}\quad\mbox{ in }\quad{\Sigma}_{\ep,r}\quad \Longrightarrow\quad |w(\xi,t)|\le 1\quad\mbox{ in }\quad{\Sigma}_{\ep,r}\quad . 
\ee
We write on one hand
\[
S_\delta^\ast (dx^1\wedge dx^2\wedge dx^3)=det(\nabla S_\delta)(y)\ dy^1\wedge dy^2\wedge dy^3\quad 
\]
and locally on the other hand
\be
\label{I.8}
\begin{array}{l}
\ds w^\ast(dy^1\wedge dy^2\wedge dy^3)= \la^{-3} df^1\wedge df^2\wedge d(\al_1-\al_2+t) \\[5mm]
\ds\quad\quad+\la^{-2}\  d\la^{-1}\wedge\lf(f^1\, df^2-f^2\, df^1\rg)\wedge d(\al_1-\al_2+t)\\[5mm]
\ds\quad\quad+\la^{-2}\  (\al_1-\al_2+t)\ df^1\wedge df^2\wedge d\la^{-1}
\end{array}
\ee
where\footnote{$g^1$ and $g^2$ denote respectively the real and imaginary part of $g$.} locally $f(z):=g^1(\xi(z))+ig^2(\xi(z))$. Observe now that the following 3 and 2-forms are zero
\be
\label{I.9}
df^1\wedge df^2\wedge d(\al_1-\al_2)\equiv 0\quad\mbox{ and }\quad d\la^{-1}\wedge d(\al_1-\al_2+t)\equiv 0\quad.
\ee
Hence (\ref{I.8}) becomes, from the definition of $\lambda$,
\be
\label{I.9a}
\begin{array}{l}
w^\ast(dy^1\wedge dy^2\wedge dy^3)= \la^{-3} df^1\wedge df^2\wedge dt\\[5mm]
\ds\quad\quad -\la^{-5}(\al_1-\al_2+t)^2\ df^1\wedge df^2\wedge dt\\[5mm]
\ds\quad\quad=\la^{-5}\ D^{-2}\ df^1\wedge df^2\wedge dt
\end{array}
\ee
We rewrite
\be
\label{I.9b}
w^\ast(dy^1\wedge dy^2\wedge dy^3)=\frac{i}{2}\la^{-5}\ D^{-2}\lf[|\p_{z}f|^2-|\p_{\ov{z}}f|^2\rg]\ dz\wedge d\ov{z}\wedge dt\quad.
\ee
We first estimate the following integral :
\be
\label{I.10}
\int_{-\infty}^{+\infty}det(\nabla S_\delta)(w(\xi,t))\ \la^{-5} dt\le D^{-2}\ C_\delta\ \int_{-\infty}^{+\infty}\frac{d\tau}{(D^{-2}+\tau^2)^\frac{5}{2}}\le C_{\delta}
\ee
Observe that 
\be
\label{I.11}
|\nabla f|\le \ep^{-1}D^{-1}|\nabla(\varphi_1-\varphi_2)|+\ep^{-2} D^{-2}|\varphi_1-\varphi_2| |\nabla(\al_1-\al_2)| \quad .
\ee
Since $\int_{D^2}\sum_{j=1}^Q|\nabla \varphi_j|^2+|\nabla \al_j|^2<+\infty$ combining (\ref{I.9}), (\ref{I.10}) and (\ref{I.11}) we obtain the claim (\ref{I.6}).

\medskip

We now establish the lower bound (\ref{I.5a}). To that purpose we compute an equation for $f$.

From the equations in (\ref{I.1a}) we deduce that locally
\be
\label{I.12}
\lf\{
\begin{array}{rl}
\ds\p_{\ov{z}}(\varphi_1-\varphi_2)&\ds=\nu(\varphi_2,\al_2)\ \p_{z}(\varphi_1-\varphi_2)+\lf[\nu(\varphi_1,\al_1)-\nu(\varphi_2,\al_2)\rg]\ \p_z \varphi_1\\[5mm]
&\quad+\mu(\varphi_1,\al_1)-\mu(\varphi_2,\al_2)\\[5mm]
\ds\nabla(\al_1-\al_2)&\ds=h(\varphi_1,\al_1)-h(\varphi_2,\al_2)
\end{array}
\rg.
\ee
We have that
\be
\label{I.13}
\p_{\ov{z}}f=\frac{\p_{\ov{z}}(\varphi_1-\varphi_2)}{\max\{|\al_1-\al_2|,D\ep\}}-f\ {\mathbf 1}_{|\al_1-\al_2|>D\ep}\ \frac{\p_{\ov{z}}|\al_1-\al_2|}{\max\{|\al_1-\al_2|,D\ep\}}
\ee
where ${\mathbf 1}_{|\al_1-\al_2|>D\ep}$ is the characteristic function of the set where $|\al_1-\al_2|>D\ep$.
Inserting now (\ref{I.12}) in (\ref{I.13}) we obtain
\be
\label{I.14}
\begin{array}{l}
\ds\p_{\ov{z}}f=\nu(\varphi_2,\al_2)\ \frac{\p_z(\varphi_1-\varphi_2)}{\max\{|\al_1-\al_2|,D\ep\}}+\frac{\lf[\nu(\varphi_1,\al_1)-\nu(\varphi_2,\al_2)\rg]}{\max\{|\al_1-\al_2|,D\ep\}}\ \p_za_1\\[5mm]
\ds\quad+\frac{\mu(\varphi_1,\al_1)-\mu(\varphi_2,\al_2)}{\max\{|\al_1-\al_2|,D\ep\}}-f\ {\mathbf 1}_{|\al_1-\al_2|>D\ep}\ \frac{\p_{\ov{z}}|\al_1-\al_2|}{\max\{|\al_1-\al_2|,D\ep\}}
\end{array}
\ee
From which we deduce
\be
\label{I.15}
\begin{array}{l}
\ds\p_{\ov{z}}f=\nu(\varphi_2,\al_2)\ \p_zf+\nu(\varphi_2,\al_2)\ f\ {\mathbf 1}_{|\al_1-\al_2|>D\ep}\ \frac{\p_{z}|\al_1-\al_2|}{\max\{|\al_1-\al_2|,D\ep\}}\\[5mm]
\ds\quad+\frac{\lf[\nu(\varphi_1,\al_1)-\nu(\varphi_2,\al_2)\rg]}{\max\{|\al_1-\al_2|,D\ep\}}\ \p_za_1+\frac{\mu(\varphi_1,\al_1)-\mu(\varphi_2,\al_2)}{\max\{|\al_1-\al_2|,D\ep\}}\\[5mm]
\ds\quad-f\ {\mathbf 1}_{|\al_1-\al_2|>D\ep}\ \frac{\p_{\ov{z}}|\al_1-\al_2|}{\max\{|\al_1-\al_2|,D\ep\}}\quad.
\end{array}
\ee
Using now the second equation in (\ref{I.12}) we obtain the existence of a constant $K_0>0$ such that
\be
\label{I.16}
|\nabla(\al_1-\al_2)|\le K_0\ \lf[|\varphi_1-\varphi_2|+|\al_1-\al_2|\rg]\quad.
\ee
This later fact gives
\be
\label{I.17}
\lf|\frac{\nabla(\al_1-\al_2)}{\max\{|\al_1-\al_2|,D\ep\}}\rg|\le K_0\ \lf[|f|+1\rg]\quad.
\ee
Combining (\ref{I.15}) and (\ref{I.17}) we obtain the following bound : There exists $K_1>0$ and $K_2>0$ such that
\be
\label{I.18}
\lf|\p_{\ov{z}}f-\nu(\varphi_2,\al_2)\ \p_zf\rg|\le K_1\lf[|f|+1\rg]\ |\p_z \varphi_1|+K_2\ \lf[|f|^2+1\rg]\quad .
\ee
From (\ref{I.9b}) we have that 
\be
\label{I.19}
\begin{array}{l}
\ds\int_{\Sigma_{\ep,r}\times{\R}}(S_\delta\circ w)^\ast dx^1\wedge dx^2\wedge dx^3\\[5mm]
\ds\quad=\left(\int_{\pi(\Sigma_{\ep,r})}D^{-2}\lf[|\p_{z}f|^2-|\p_{\ov{z}}f|^2\rg]\ \frac{i}{2}dz\wedge d \overline{z}\right) \left( \int_{-\infty}^{+\infty}
det(\nabla S_\delta)\circ w\ \la^{-5}\ dt \right) .
\end{array}
\ee
 Since $det(\nabla S_\delta)(y)\ge 0$ on ${\R}^3$, 
$$
\eta(z):=\int_{-\infty}^{+\infty}
det(\nabla S_\delta)\circ w\ \la^{-5}\ dt \ge 0\quad .
$$
Moreover we also have the following bound given by (\ref{I.10})
\be
\label{I.19a}
\eta\le C_{D}=C_{\delta}\quad .
\ee
Using (\ref{I.18}) we then deduce the following lower bound
\be
\label{I.20}
\begin{array}{l}
\ds\int_{\Sigma_{\ep,r}\times{\R}}(S_\delta\circ w)^\ast dx^1\wedge dx^2\wedge dx^3\\[5mm]
\ds\quad\ge\int_{\Sigma_{\ep,r}}D^{-2}\ \lf[1-\nu^2(\varphi_2,\al_2)|\p_{z}f|^2\rg]\ \eta\ \frac{i}{2} dz\wedge d \overline{z}\\[5mm]
\ds\quad - \ti{C}_\delta\ \int_{\Sigma_{\ep,r}} \lf[4(K_1)^2(|f|+1)^2\ |\p_z \varphi_1|^2+4(K_2)^2\ (|f|^2+1)^2\rg]\ \ \frac{i}{2} dz\wedge d \overline{z}\quad .
\end{array}
\ee
Using the fact that $|f(z)|=|g(\xi)|\le D^{-1}$ on $\Sigma_{\ep,r}$,  and that, for $r$ small enough $|\nu(\varphi_2,\al_2)|<1/2$, we obtain the existence of a constant $K_\delta$ such that
\be
\label{I.21}
\begin{array}{l}
\ds\int_{\Sigma_{\ep,r}\times{\R}}(S_\delta\circ w)^\ast dx^1\wedge dx^2\wedge dx^3\\[5mm]
\ds\quad\ge-K_\delta\int_{D^2}\sum_{j=1}^Q\ \lf[|\nabla \varphi_j|^2+1\rg]\ \frac{i}{2} dz\wedge d \overline{z} \geq -K,\quad.
\end{array}
\ee
with $K>0$ independent of $r$ and $\eps$. 

Recall now that $w=u$ on $\p \Sigma_{r,\eps}\times \R$. Then by Stokes theorem
\[\int_{\Sigma_{\ep,r}\times{\R}}(S_\delta\circ w)^\ast dx^1\wedge dx^2\wedge dx^3 = \int_{\p \Sigma_{\ep,r}\times{\R}}(S_\delta\circ w)^\ast \sum_{i=1}^3 x^j\ dx^{j+1}\wedge dx^{j-1} =\]\[= \int_{\p \Sigma_{\ep,r}\times{\R}}(S_\delta\circ u)^\ast \sum_{i=1}^3 x^j\ dx^{j+1}\wedge dx^{j-1}.\]

This is the desired lower bound (\ref{I.5a}) and lemma \ref{lem:I.1} is proved.

\end{proof}

\begin{proof}[\bf{proof of lemma \ref{lem:lemmadegree2}}]
The result follows straight from lemma \ref{lem:lemmadegree}. Observe that, by lemma \ref{lem:lemma1} and by homotopy, the degree computed there is the same as the degree of the function
\[\frac{\varphi_i - \varphi_j}{D \varepsilon}= \frac{\varphi_i - \varphi_j}{\max{|\alpha_i - \alpha_j|, D \varepsilon}}= g\] 
on the loop $\{|\phi_i - \phi_j|= \varepsilon\}$ around $z_l$.
By the same computation performed in (\ref{I.19}) (we can take without loss of generality $\omega=\sum_{i=1}^3 x^j\ dx^{j+1}\wedge dx^{j-1}$), since the degree of $g$ is exactly $\int_{\pi(\Sigma_{\ep,r})}D^{-2}\lf[|\p_{z}f|^2-|\p_{\ov{z}}f|^2\rg]\ \frac{i}{2} dz\wedge d \overline{z}$,  we get that the degree of $S_{\delta} \circ w$ is strictly positive.
\end{proof}

\begin{proof}[\bf{proof of theorem \ref{th-I.1}}]
We argue by contradiction. If we had countably many singularities of the form (\ref{I.1}) accumulating onto $0$, around each such singular point, on $\pi^{-1}( \p B^2_\rho(z_l))\times {\R}$, we would have a strictly positive degree for $u$, thanks to lemma \ref{lem:lemmadegree2}. Let us observe, however, the degree of $u$ on $\p B_r \times \R$; this is the same as the degree of $u$ on $\p \Sigma_{r,\eps} \times \R$, since these two $2$-surfaces are homotopic and we do not cross any singularity during this homotopy (see $(ii)$ on page 64 and recall that $u$ is smooth out of the singularities). Choosing $r$ smaller and smaller, we must then have, under the contradiction assumption, that the degree of $u$ on $\p B_r \times \R$ goes to $- \infty$ as $r \to 0$, which contradicts lemma \ref{lem:I.1}. 
\end{proof}

\end{document}